\DeclareMathAlphabet{\mathcalligra}{T1}{calligra}{m}{n}
\DeclareFontFamily{OT1}{pzc}{}
\DeclareFontShape{OT1}{pzc}{m}{it}{<-> s * [0.900] pzcmi7t}{}
\DeclareMathAlphabet{\mathpzc}{OT1}{pzc}{m}{it}
\DeclareMathOperator{\Hom}{\mathscr{H}\text{\kern -3pt {\calligra\large om}}\,}
\begin{document}

\title{Mixed Hodge modules without slope}

\author{Matthieu Kochersperger}

\address{UMR 7640 du CNRS, Centre de Mathématiques Laurent Schwartz, École polytechnique,
F–91128 Palaiseau cedex, France}

\email{matthieu.kochersperger@polytechnique.edu}

\subjclass{32S40}

\keywords{mixed Hodge modules, $\mathscr{D}$-modules without slopes, vanishing and nearby cycles, Kashiwara-Malgrange $V$-multifiltration}

\maketitle

\date{}

\let\tilde\widetilde
\let\hat\widehat
\let\bar\overline

\theoremstyle{plain}
\newtheorem{theorem}{Theorem}[section]
\newtheorem{lemma}[theorem]{Lemma}
\newtheorem{proposition}[theorem]{Proposition}
\newtheorem{corollary}[theorem]{Corollary}
\theoremstyle{definition}
\newtheorem{de}[theorem]{Definition}
\newtheorem{ccl}[theorem]{Conclusion}
\theoremstyle{remark}
\newtheorem{expl}[theorem]{Example}
\newtheorem{Rem}[theorem]{Remark}
\theoremstyle{plain}
\newtheorem{theo}[theorem]{Theorem}
\newtheorem{lemm}[theorem]{Lemma}
\newtheorem{prop}[theorem]{Proposition}
\newtheorem{coro}[theorem]{Corollary}
\theoremstyle{definition}
\newtheorem{defi}[theorem]{Definition}
\theoremstyle{remark}
\newtheorem{exem}[theorem]{Example}
\newtheorem{rema}[theorem]{Remark}
\theoremstyle{definition}
\newtheorem{ques}[theorem]{Question}
\theoremstyle{definition}
\newtheorem*{defi*}{Definition}
\newtheorem*{remark*}{Remark}
\theoremstyle{plain}
\newtheorem*{prop*}{Proposition}
\newtheorem*{coro*}{Corollary}
\newtheorem{conja}{Conjecture}
\renewcommand\theconja{\Alph{conja}}
\newtheorem{theorema}{Theorem}
\renewcommand\thetheorema{\Alph{theorema}}
\def\restriction#1#2{\mathchoice
              {\setbox1\hbox{${\displaystyle #1}_{\scriptstyle #2}$}
              \restrictionaux{#1}{#2}}
              {\setbox1\hbox{${\textstyle #1}_{\scriptstyle #2}$}
              \restrictionaux{#1}{#2}}
              {\setbox1\hbox{${\scriptstyle #1}_{\scriptscriptstyle #2}$}
              \restrictionaux{#1}{#2}}
              {\setbox1\hbox{${\scriptscriptstyle #1}_{\scriptscriptstyle #2}$}
              \restrictionaux{#1}{#2}}}
\def\restrictionaux#1#2{{#1\,\smash{\vrule height .8\ht1 depth .85\dp1}}_{\,#2}} 

\begin{abstract} 

In this article we are interested in morphisms without slope for mixed Hodge modules. We first show the commutativity of iterated nearby cycles and vanishing cycles applied to a mixed Hodge module in the case of a morphism without slope. Then we define the notion "strictly without slope" for a mixed Hodge module and we show the preservation of this condition under the direct image by a proper morphism. As an application we prove the compatibility of the Hodge filtration and Kashiwara-Malgrange filtrations for some pure Hodge modules with support an hypersurface with quasi-ordinary singularities.

\end{abstract}

\tableofcontents

\reversemarginpar

\part*{Introduction}

In this article we generalise the work of P. Maisonobe in \cite{Maisonobe} on morphisms
\emph{without slopes} for $\mathscr{D}$-modules to the category of mixed Hodge modules of 
M. Saito.

\section*{Morphisms without slope}

The classical example of Lê shows that in general, for a morphism $\boldsymbol{f}:\mathbb{C}^n\to\mathbb{C}^p$ with $p\geq 2$, there is no Milnor fibration:
\[\begin{array}{ccccc}
\boldsymbol{f} & : & (\mathbb{C}^3,0) & \to & (\mathbb{C}^2,0) \\
&& (x,y,z) & \mapsto & (x^2-y^2z,y).
\end{array}
\]
Indeed it can be shown that there is no closed analytic subset $F$ in $\mathbb{C}^2$ such that the number of connected components of the fiber of a representative of the germ of $\boldsymbol{f}$ is constant on $(\mathbb{C}^2-F,0)$.

In order to study morphisms $\boldsymbol{f}:\mathbb{C}^n\to\mathbb{C}^p$ with $p\geq 2$ we must then add conditions. In \cite{HMS} J. P. Henry, M. Merle and C. Sabbah introduce for the morphism $\boldsymbol{f}=(f_1,\dots,f_p)$ the condition of being \emph{without blow-up in codimension $0$}. Under this condition they manage to construct stratifications of the morphism satisfying Whitney and Thom type conditions. Following ideas of Lê D. T. (\cite{Le}), with such a stratification we get Milnor type fibrations above the strata of the target of the morphism. In \cite{SabbahMaS} C. Sabbah shows that, for any analytical morphism $f$, after a proper sequence of blowings-up of the target we can reduce to the case without blow-up in codimension $0$.

 If furthermore the critical locus of $f$ is a subset of the union of the hypersurfaces $f_j^{-1}(0)$ for $j=1,\dots,p$, following \cite{BMM2}, the morphism  is called \emph{without slope}. For a $\mathscr{D}$-module $\mathcal{M}$ or a perverse sheaf $\mathcal{F}$ they also define what it means for pairs $(\boldsymbol{f},\mathcal{M})$ or $(\boldsymbol{f},\mathcal{F})$ to be \emph{without slope}. In this case the former stratification of the target is the usual stratification associated with the coordinate hyperplanes. 

In \cite{Maisonobe} P. Maisonobe proves important results about these morphisms. He shows that if $\mathcal{M}$ corresponds to $\mathcal{F}$ via the Riemann-Hilbert correspondence, i.e. $\mathbf{DR}(\mathcal{M})\simeq \mathcal{F}$, then it is equivalent to have $(\boldsymbol{f},\mathcal{M})$ without slope or $(\boldsymbol{f},\mathcal{F})$ without slope. He defines the functors $\Psi^{alg}_{\boldsymbol{f}}$, $\Phi^{alg}_{\boldsymbol{f}}$ and $\Psi_{\boldsymbol{f}}$ for pairs without slope. 
He shows that as for the Milnor fibration we have in this case
\[\Psi_{\boldsymbol{f}}(\mathcal{F})_0\simeq R\Gamma(B_\epsilon\cap f^{-1}(D_\eta^*)\cap f^{-1}(t),\mathcal{F})\]
for $t\in D_\eta^*$ and $0<\eta\ll\epsilon\ll 1$, where $D_\eta^*$ is a polydisc of radius $\eta$ minus the coordinate hyperplanes.
He then shows that there exist natural isomorphisms
\[\begin{array}{ccc}
\Psi_{\boldsymbol{f}}\mathcal{F} & \simeq & \Psi_{f_1}\cdots\Psi_{f_p}\mathcal{F}, \\
\Psi^{alg}_{\boldsymbol{f}}\mathcal{M} & \simeq & \Psi^{alg}_{f_1}\cdots\Psi^{alg}_{f_p}\mathcal{M}, \\
\Phi^{alg}_{\boldsymbol{f}}\mathcal{M} & \simeq & \Phi^{alg}_{f_1}\cdots\Phi^{alg}_{f_p}\mathcal{M}.
\end{array}
\]

\section*{Aim of the article}

This work has its origins in the work of P. Maisonobe on $\mathscr{D}$-modules without slopes in \cite{Maisonobe} on the one hand, and in results of M. Saito on mixed Hodge modules of normal crossing type in \cite{HM1} and \cite{SaitoMHM} on the other hand. Normal crossing being a special case of a morphism without slope and having in mind the results and technics of P. Maisonobe, our aim is to extend the properties of mixed Hodge modules of normal crossing type to the without slope case. More precisely we give results on the compatibility of the filtrations involved, namely the Kashiwara-Malgragne filtrations and the Hodge filtration. 

In \cite{SaitoMHM}, M. Saito shows that, for a mixed Hodge module of normal crossing type, the Kashiwara-Malgrange filtrations and the Hodge filtration are compatible in the sense of definition \ref{Fcomp}. Using P. Maisonobe results, we get the following proposition 

\begin{prop*}[\cite{kocher1} Proposition 2.12]

If $((f_1,\dots,f_p),\mathcal{M})$ is without slope then the Kashiwara-Malgrange filtrations for the $p$ hypersurfaces $f_j^{-1}(0)$ are compatible in the sense of the definition \ref{Fcomp}.
 
\end{prop*}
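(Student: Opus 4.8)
The plan is to reduce the statement to the commutativity results of P. Maisonobe recalled above, translating the language of iterated nearby and vanishing cycles into that of the associated $V$-filtrations. First I would pass to the graph embedding $i_{\boldsymbol{f}}\colon X\hookrightarrow X\times\mathbb{C}^p_{t}$ along $\boldsymbol{f}$, replacing $\mathcal{M}$ by $i_{\boldsymbol{f},+}\mathcal{M}$; this turns the hypersurfaces $f_j^{-1}(0)$ into the coordinate hyperplanes $\{t_j=0\}$ and reduces the problem to the study of the $p$ Kashiwara--Malgrange filtrations $V^{(1)},\dots,V^{(p)}$ attached to the commuting coordinate functions $t_1,\dots,t_p$, the without-slope condition being preserved under this operation. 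For each $j$, I would then recall that $V^{(j)}$ is characterised by the nilpotency of $t_j\partial_{t_j}+\alpha$ on $\mathrm{gr}_{V^{(j)}}^{\alpha}\mathcal{M}$, so that $\mathrm{gr}_{V^{(j)}}^{\alpha}$ is exactly the generalised eigenspace of the action of $t_j\partial_{t_j}$ and computes the monodromic data underlying $\Psi^{alg}_{f_j}$ and $\Phi^{alg}_{f_j}$.

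With this dictionary in hand, the core of the argument is to read off the compatibility of Definition \ref{Fcomp} from Maisonobe's isomorphisms $\Psi^{alg}_{\boldsymbol{f}}\mathcal{M}\simeq\Psi^{alg}_{f_1}\cdots\Psi^{alg}_{f_p}\mathcal{M}$ and $\Phi^{alg}_{\boldsymbol{f}}\mathcal{M}\simeq\Phi^{alg}_{f_1}\cdots\Phi^{alg}_{f_p}\mathcal{M}$, valid precisely because $(\boldsymbol{f},\mathcal{M})$ is without slope. The content of Definition \ref{Fcomp} is that the bigraded objects obtained by applying the graded functors $\mathrm{gr}_{V^{(i)}}$ successively are independent of the order of the indices; equivalently, that each $V^{(i)}$ induces on every $\mathrm{gr}_{V^{(j)}}^{\alpha}\mathcal{M}$ a filtration which is again a $V$-filtration and whose formation commutes with $\mathrm{gr}_{V^{(j)}}$. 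Since the functors $\Psi^{alg}$ and $\Phi^{alg}$ are built from the $\mathrm{gr}_{V^{(j)}}$, their commutativity forces the commuting operators $t_1\partial_{t_1},\dots,t_p\partial_{t_p}$ to admit a simultaneous generalised-eigenspace decomposition, which is exactly the existence of the well-defined multifiltration $V^{\underline{\alpha}}$ refining all the $V^{(j)}$, and hence the sought compatibility.

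I expect the main obstacle to lie in the strictness implicit in Definition \ref{Fcomp}: one must check that the filtration induced by $V^{(i)}$ on $\mathrm{gr}_{V^{(j)}}^{\alpha}\mathcal{M}$ does not jump, i.e. that no new slopes appear when two of the Kashiwara--Malgrange filtrations are superposed. This is precisely where the hypothesis ``without slope'' is indispensable, since it guarantees that the relative position of any two of the filtrations is as simple as possible; it is this bifiltered strictness, extracted from Maisonobe's commutation results, that upgrades the mere order-independence of the iterated functors into the full compatibility of $V^{(1)},\dots,V^{(p)}$ in the sense of Definition \ref{Fcomp}.
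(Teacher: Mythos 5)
The paper does not actually prove this proposition; it is quoted from \cite{kocher1}, so there is no in-paper argument to compare against. Judged on its own terms, your sketch identifies the right input (Maisonobe's canonical $V$-multifiltration and the commutation of the iterated $\Psi^{alg}$, $\Phi^{alg}$), but the central deduction has a genuine gap. You argue that the order-independence of the iterated functors ``forces'' the existence of the refining multifiltration and ``hence the sought compatibility.'' That implication runs the wrong way: by Proposition \ref{commutmultgrad}, compatibility in the sense of Definition \ref{Fcomp} \emph{implies} that the iterated graded pieces $\mathrm{gr}^{F_{\sigma(1)}}\cdots\mathrm{gr}^{F_{\sigma(n)}}$ are independent of the order, but the converse is false for $p\geq 3$. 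Compatibility is the existence of the full $n$-hypercomplex with all rows exact, equivalently a distributivity condition on the lattice generated by the subobjects $V^{(j)}_{\alpha_j}$ (equivalently, flatness of the multi-Rees module, Proposition \ref{compatReesplat}); this is strictly stronger than any statement about iterated gradeds. Your closing appeal to ``bifiltered strictness'' cannot close the gap either, since by Remark \ref{remcompatibilité} any \emph{two} filtrations are automatically compatible --- the whole content of the proposition lives in the interaction of three or more filtrations, which no pairwise condition detects.

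What is actually needed, and what the machinery of Appendix \ref{appA} is set up for, is to exploit the finer structure of the canonical $V$-multifiltration granted by the without-slope hypothesis: that $V_{\boldsymbol{\alpha}}\mathcal{M}$ is a good multifiltration computing the intersections $\bigcap_j V^{(j)}_{\alpha_j}\mathcal{M}$ and recovering each $V^{(j)}$ by summing over the other indices (Maisonobe's corollaire 1), together with the isomorphisms \eqref{commutmaisonobe} on graded pieces. From these one verifies that every permutation of the Rees variables $x_1,\dots,x_p$ acts as a regular sequence on the associated multi-Rees module, and concludes compatibility via Corollary \ref{coroKoszul} and Proposition \ref{compatReesplat} --- exactly the template the paper itself follows in the proof of Proposition \ref{strictmulcompat}, where the cited compatibility is used to get injectivity of multiplication by $y_\ell$ after setting $z=1$. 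Your reduction to the graph embedding and your dictionary between $\mathrm{gr}_V$ and the algebraic nearby/vanishing cycles are fine as far as they go, but without the regular-sequence (or equivalent distributivity) argument the proof does not establish the statement.
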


This leads to the following conjecture

\begin{conja}\label{conjH}

Let $\mathscr{M}$ be a mixed Hodge module with underlying $\mathscr{D}$-module $\mathcal{M}$ such that the pair $(\boldsymbol{f},\mathcal{M})$ is without slope. Then the Kashiwara-Malgrange filtrations for the $p$ hypersurfaces $f_j^{-1}(0)$ \emph{and the Hodge filtration $F_{\bullet} \mathcal{M}$} are compatible in the sense of the definition \ref{Fcomp}.

\end{conja}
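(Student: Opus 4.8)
The plan is to proceed by induction on the number $p$ of functions, taking Saito's one-variable theory as the base case and using the commutativity of iterated nearby and vanishing cycles (established earlier in the paper for morphisms without slope) to carry out the inductive step. For a single function $f_1$ the very definition of a mixed Hodge module along $f_1$ forces the Hodge filtration $F_\bullet\mathcal{M}$ to be strictly compatible with the Kashiwara--Malgrange filtration $V^{(1)}_\bullet\mathcal{M}$: strict specialisability says precisely that the actions of $t$ and $\partial_t$ are strict for the bifiltered object attached to $(\mathcal{M},F,V^{(1)})$, so that $F$ and $V^{(1)}$ are compatible in the sense of Definition~\ref{Fcomp}. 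This settles the case $p=1$.

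For the inductive step I would single out one function, say $f_p$, and pass to the graded pieces $\mathrm{gr}^{V^{(p)}}_\alpha\mathcal{M}$, equivalently to the mixed Hodge modules $\Psi^{alg}_{f_p}\mathcal{M}$ and $\Phi^{alg}_{f_p}\mathcal{M}$ supported on $f_p^{-1}(0)$. The without-slope hypothesis, via Maisonobe's results recalled above, guarantees that the pair $\bigl((f_1,\dots,f_{p-1}),\Psi^{alg}_{f_p}\mathcal{M}\bigr)$ is again without slope, so the induction hypothesis applies to the remaining $p-1$ functions on each graded piece. The inputs to combine are then: (i) compatibility of $F$ with $V^{(p)}$ from the base case; (ii) compatibility, on $\mathrm{gr}^{V^{(p)}}\mathcal{M}$, of the induced Hodge filtration with the induced filtrations $V^{(1)},\dots,V^{(p-1)}$ coming from the inductive hypothesis. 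Together with the already known mutual compatibility of the $p$ Kashiwara--Malgrange filtrations (\cite{kocher1}), an inductive distributivity argument should promote these iterated compatibilities to joint compatibility of the full $(p+1)$-tuple $(V^{(1)},\dots,V^{(p)},F)$.

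The crux---and the reason the statement is only a conjecture---is step (ii): one must show that the Hodge filtration induced by $F$ on $\mathrm{gr}^{V^{(p)}}_\alpha\mathcal{M}$ coincides with the intrinsic Hodge filtration of the mixed Hodge module $\Psi^{alg}_{f_p}\mathcal{M}$, and that this identification respects the remaining $V$-filtrations. This is a multivariable strictness statement: whereas Saito's theory yields strictness of $F$ with respect to a single $V$-filtration, here one needs $F$ to remain strict after taking $\mathrm{gr}$ along $V^{(p)}$ while simultaneously keeping $V^{(1)},\dots,V^{(p-1)}$ under control. The without-slope condition governs the underlying $\mathscr{D}$-module and perverse-sheaf data, but it does not by itself deliver this multifiltered strictness for $F$; establishing it in general appears to demand a genuinely new commutation argument between $\mathrm{gr}^F$ and the iterated $\mathrm{gr}^{V^{(j)}}$. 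The expected route is to prove that each elementary operation $\mathrm{gr}^{V^{(j)}}$ is strict for $F$ by a rescaling/limit argument in the spirit of the one-variable case, and then to exploit the commutativity of the $\Psi^{alg}_{f_j}$ to reorder these operations freely. The main obstacle is to make strictness survive the reordering, which is exactly the point verified directly in the quasi-ordinary case treated later in the paper.
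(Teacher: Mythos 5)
This statement is Conjecture~\ref{conjH} in the paper: the author does not prove it, and your text is therefore being measured against a proof that does not exist. What you have written is not a proof either, and to your credit you say so explicitly --- your ``step (ii)'' is precisely the open point. Let me make the gap concrete. Your induction reduces the problem to two pieces of data: compatibility of $F$ with $V^{(p)}$ (true, from strict specialisability in Saito's one-variable theory), and compatibility of the induced filtrations on $\mathrm{gr}^{V^{(p)}}_\alpha\mathcal{M}$ (the inductive hypothesis). But joint compatibility of an $(n+1)$-tuple of filtrations in the sense of Definition~\ref{Fcomp} is strictly stronger than any such iterated or pairwise statement once $n\geq 2$: as the paper's own remark after the definition of compatible sub-objects records, for $n\geq 3$ sub-objects compatibility genuinely fails in general, and there is no formal ``distributivity argument'' that promotes compatibility of $(F,V^{(p)})$ together with compatibility on the graded pieces to compatibility of $(F,V^{(1)},\dots,V^{(p)})$. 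Even granting that the induced filtration of $F$ on $\mathrm{gr}^{V^{(p)}}_\alpha\mathcal{M}$ agrees with the intrinsic Hodge filtration of $\Psi^{alg}_{f_p}\mathscr{M}$ (which does hold by Saito's theory), the passage back from the graded level to $\mathcal{M}$ itself requires exactly the multifiltered strictness you name and do not supply. The commutativity of iterated nearby cycles (Theorem~\ref{commut2MHM} and Corollary~\ref{theocommuthodge}) is a consequence one would deduce \emph{from} the conjecture, not an ingredient strong enough to prove it; the paper is explicit about this in the remark following Theorem~A.

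For comparison, the paper's actual strategy for partial results goes through a different mechanism: it defines strict $\mathbb{R}$-multispecialisability for the Rees module $R_F\mathcal{M}$ and proves (Propositions~\ref{compatstrict} and~\ref{strictmulcompat}) that this condition is \emph{equivalent} to compatibility of $(F_\bullet\mathcal{M},V^1_\bullet\mathcal{M},\dots,V^p_\bullet\mathcal{M})$; it then proves stability of this condition under proper direct image (Theorem~\ref{commutVgrH}) and deduces the conjecture only for mixed Hodge modules obtained as direct images of normal-crossing-type modules, e.g.\ in the quasi-ordinary case, where the compatibility upstairs is supplied by Saito's normal crossing results. If you want to salvage your write-up, the honest conclusion is that your outline identifies the correct obstruction but proves only the case $p=1$; everything from ``an inductive distributivity argument should promote these iterated compatibilities'' onward is a statement of what one would need, not an argument.
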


A result of \cite{DMST} gives an evidence for this conjecture. In this article the authors showed the compatibility of the filtrations in the non-characteristic case which is a particular case of being without slope. \\

We will use two approaches to tackle this problem. The first one will consists in using the description of nearby and vanishing cycles functors involving Nilsson classes. The functoriality of this description will enable us to make use of the power of M. Saito's theory of mixed Hodge modules. In this way, we will prove the commutativity of the various nearby and vanishing cycles functors for mixed Hodge modules, using their commutativity for the underlying $\mathscr{D}$-modules.

For the second approach, we will developed the theory of \emph{strictly multispecialisable} $\tilde{\mathscr{D}}$-modules which mimics the theory of $\mathscr{D}$-modules without slope, while taking the Hodge filtration into account. We will highlight the link between this notion and the compatibility of Kashiwara-Malgrange filtrations and the Hodge filtration. In this way we will prove the stability of these properties by a projective direct image. We will also give results on the monodromy filtrations inspired by the normal crossing case. We will then apply these results to quasi-ordinary hypersurface singularities.\\

\subsection*{Mixed Hodge modules without slope}

Following \cite{SaitoMHM}, a mixed Hodge module is made of a triple $\mathscr{M}=(\mathcal{M},F_\bullet\mathcal{M},\mathcal{F})$ and a finite increasing filtration $W_\bullet \mathscr{M}$ satisfying some Hodge conditions where:
\begin{enumerate}
\item $\mathcal{F}$ is a perverse sheaf over $\mathbb{Q}$,
\item $\mathcal{M}$ is a regular holonomic $\mathscr{D}$-module corresponding to $\mathbb{C}\otimes_\mathbb{Q} \mathcal{F}$ via the Riemann-Hilbert correspondence, in other words $\mathbf{DR}(\mathcal{M})\simeq \mathbb{C}\otimes_\mathbb{Q} \mathcal{F}$.
\item $F_\bullet\mathcal{M}$ is a good filtration by $\mathscr{O}$-coherent submodules of $\mathcal{M}$.
\end{enumerate}  
Nearby and vanishing cycles functors are among the main ingredients used in the inductive definition of Hodge modules by M. Saito. We give results on the relations between Kashiwara-Malgrange filtrations and the Hodge filtration $F_\bullet\mathcal{M}$ for a mixed Hodge module $\mathscr{M}$ such that the pair $(\boldsymbol{f},\mathcal{M})$ is without slope. 

In the part \ref{section1} we prove a theorem of commutation of the nearby and vanishing cycles applied to Hodge modules. 

\begin{theorema}[Corollary \ref{theocommuthodge}]\label{commutMHM}

Let $\mathscr{M}$ be a mixed Hodge module and let us suppose that $(\boldsymbol{f},\mathcal{M})$ without slope where $\mathcal{M}$ is the $\mathscr{D}$-module underlying $\mathscr{M}$. Then we have the following isomorphisms
\[\begin{array}{ccc}
\Psi_{f_1}^{HM}\cdots\Psi_{f_p}^{HM}\mathscr{M} & \simeq & 
\Psi_{f_{\sigma(1)}}^{HM}\cdots\Psi_{f_{\sigma(p)}}^{HM}\mathscr{M}, \\
\Phi_{f_1}^{HM}\cdots\Phi_{f_p}^{HM}\mathscr{M} & \simeq & 
\Phi_{f_{\sigma(1)}}^{HM}\cdots\Phi_{f_{\sigma(p)}}^{HM}\mathscr{M}
\end{array}
\]
where $\sigma$ is a permutation of $\{1,\dots,p\}$ and $\Psi_{\boldsymbol{f}}^{HM}$ and $\Phi_{\boldsymbol{f}}^{HM}$ are the nearby and vanishing cycles functors in the category of mixed Hodge modules.
\end{theorema}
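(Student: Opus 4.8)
The plan is to reduce the statement to the analogous commutation at the level of the underlying $\mathscr{D}$-modules, which is exactly Maisonobe's isomorphism $\Psi^{alg}_{\boldsymbol{f}}\mathcal{M}\simeq\Psi^{alg}_{f_1}\cdots\Psi^{alg}_{f_p}\mathcal{M}$ recalled in the introduction (together with its vanishing-cycle analogue), and then to upgrade the resulting comparison to the category of mixed Hodge modules. The decisive structural input is that the functor $\mathrm{rat}$ sending a mixed Hodge module to its underlying perverse sheaf (equivalently, via $\mathbf{DR}$, to its regular holonomic $\mathscr{D}$-module) is exact and faithful, hence conservative: a morphism of mixed Hodge modules is an isomorphism as soon as its underlying $\mathscr{D}$-module morphism is. Consequently it suffices to produce, for each permutation $\sigma$, a morphism \emph{in the category} $\mathrm{MHM}$ whose underlying $\mathscr{D}$-module map is Maisonobe's comparison isomorphism; its invertibility is then automatic. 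The entire content therefore lies in showing that Maisonobe's isomorphism is the shadow of a genuine morphism of mixed Hodge modules, i.e. that it is compatible with both the Hodge filtration $F_\bullet$ and the weight filtration $W_\bullet$ produced by the iterated functors.

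To build this lift I would use the functorial Nilsson-class description of the nearby-cycle functors announced in the introduction. For $(\boldsymbol{f},\mathcal{M})$ without slope, the iterate $\Psi^{HM}_{f_1}\cdots\Psi^{HM}_{f_p}\mathscr{M}$ is computed from the graded pieces of the Kashiwara–Malgrange $V$-multifiltration of $\mathcal{M}$ along the $p$ hypersurfaces $f_j^{-1}(0)$, equipped with the $p$ commuting nilpotent monodromy operators $N_1,\dots,N_p$, the induced Hodge filtration, and a weight filtration. The compatibility of the $p$ partial $V$-filtrations in the sense of Definition \ref{Fcomp} (Proposition 2.12 of \cite{kocher1}) guarantees that these multigraded pieces are well defined independently of the order in which the successive $V$-filtrations are taken; this description is therefore manifestly symmetric in the coordinates and already yields the order-independence of the underlying bifiltered $\mathscr{D}$-module. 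Functoriality of the Nilsson-class construction then transports the Hodge filtration through each single-variable step, so that the induced $F_\bullet$ on the two iterates is read off the \emph{same} multigraded object and thus agrees. Since adjacent transpositions generate the symmetric group, it is enough to treat a single interchange, and one records at the outset the stability property that the without-slope hypothesis for $(\boldsymbol{f},\mathcal{M})$ restricts to the remaining coordinates after one application of $\Psi^{HM}_{f_k}$ or $\Phi^{HM}_{f_k}$, so that the machinery may be iterated.

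The weight filtration is the main obstacle. Iterating Saito's single-variable functors produces an \emph{iterated relative monodromy filtration} $M(N_1;M(N_2;\cdots;W))$, and its symmetry in the $N_j$ is not formal. To overcome this I would invoke the theory of the monodromy weight filtration attached to several commuting nilpotent operators, as developed by Saito in the normal-crossing case in \cite{SaitoMHM}: precisely because the without-slope condition produces the $N_j$ as \emph{commuting} nilpotents acting on the well-defined multigraded pieces, the relative monodromy filtrations taken in any order coincide with the single multivariable monodromy weight filtration, whence the required symmetry. Verifying that this abstract comparison of relative monodromy filtrations for commuting nilpotents is compatible with the Hodge-theoretic structure — so that the symmetric multigraded model is genuinely a mixed Hodge module and the comparison maps are $\mathrm{MHM}$ morphisms — is the heart of the argument and is where the bulk of the work is concentrated.

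Finally, the vanishing-cycle statement follows by the same scheme, replacing the nearby-cycle Nilsson-class description by its vanishing-cycle counterpart, or equivalently by transporting the nearby-cycle commutation through the canonical and variation morphisms $\mathrm{can}$ and $\mathrm{var}$, which are morphisms of mixed Hodge modules and intertwine the two orders. Composing the pairwise comparison isomorphisms along a reduced expression for $\sigma$ then yields the two families of isomorphisms in the statement, and conservativity of $\mathrm{rat}$ confirms that they are isomorphisms in $\mathrm{MHM}$.
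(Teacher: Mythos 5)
Your overall architecture does coincide with the paper's: use the Nilsson-class modules to build a comparison morphism directly in $\textup{MHM}$, reduce to adjacent transpositions via the stability of the without-slope condition under passage to graded pieces, verify at the level of underlying $\mathscr{D}$-modules that the maps are isomorphisms using Maisonobe's identification of the iterated graded pieces with the multigraded pieces of the $V$-multifiltration, and conclude by conservativity of the forgetful functor (Proposition \ref{isoDH}). The genuine problem is your third paragraph, which you present as ``the heart of the argument.'' Once the comparison is realised as a zigzag of morphisms \emph{in} $\textup{MHM}$ --- in the paper, both iterates are connected to the single object ${_Hi}_{(1,2)*}\mathscr{H}^{-2}{_Hi}_{(1,2)}^*(\mathscr{M}_{\boldsymbol{q},\boldsymbol{k}})$ by morphisms assembled from $j_!j^*$, tensoring with the flat Nilsson module, and the morphism $\mathbf{Nils}$ of Definition \ref{Nils} --- there is nothing left to verify about $F_\bullet$ or $W_\bullet$: every arrow is already a morphism of mixed Hodge modules, and the agreement of the two iterated relative monodromy filtrations is a \emph{consequence} of the resulting isomorphism, not an input to it. More seriously, the justification you offer for that step is false: commuting nilpotent endomorphisms do \emph{not} in general satisfy $W(N_1,W(\dots,W(N_p)))=W(N_1+\dots+N_p)$, nor is the iterated relative monodromy filtration order-independent for formal reasons. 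This is precisely the nontrivial property $(MF)$ that the paper isolates and must work to establish in special situations (nilpotent orbits, and their projective direct images via Theorem \ref{imagedirecteW}); it is not available in the without-slope generality of the present statement. If your proof actually required that step, it would not go through; the correct move is to discard it and let Proposition \ref{isoDH} do the work, exactly as in the proof of Theorem \ref{commut2MHM}.

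Two smaller remarks. First, the decisive technical content you gloss over is the verification that the underlying $\mathscr{D}$-module arrows are isomorphisms: this is not a formal consequence of ``functoriality of the Nilsson-class construction'' but rests on the explicit computation of the $V$-multifiltration of $\mathcal{M}_{\boldsymbol{\alpha},\boldsymbol{k}}$ and the identification of $\textup{gr}^{V^{\mathbf{H}}}_{\boldsymbol{\alpha}}\mathcal{M}$ with $\mathscr{H}^{-2}(\mathbf{Gr}_{\bullet})$ (Propositions \ref{VfiltNils}, \ref{H2tot}, \ref{isomnat} and \ref{aiso}), which is where the without-slope hypothesis actually enters. Second, the paper's proof of Corollary \ref{theocommuthodge} establishes only the nearby-cycle half of the statement; your suggestion to deduce the vanishing-cycle half by transport through $\textup{can}$ and $\textup{var}$ is reasonable in outline but is not carried out in the text either, and would need the commutation isomorphisms to be shown compatible with those morphisms.
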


\begin{remark*}

If Conjecture \ref{conjH} were true, then this theorem would be an easy consequence of it.

\end{remark*}

 In Part \ref{section2}, mimicking the definition of being without slope for a pair $(\boldsymbol{f},\mathcal{M})$, we give a definition of being \emph{strictly multispecialisable} for the pair $(\boldsymbol{f},\mathscr{M})$ and we prove the following theorem

\begin{theorema}[Propositions \ref{compatstrict} et \ref{strictmulcompat}]

Let $\mathscr{M}$ be a mixed Hodge module and $\boldsymbol{f}$ a morphism such that the pair $(\boldsymbol{f},\mathcal{M})$ is without slope, then the following statements are equivalent:
\begin{enumerate}
\item the pair $(\boldsymbol{f},\mathscr{M})$ is strictly multispecialisable
\item the filtrations $(F_\bullet\mathcal{M},V_1^\bullet\mathcal{M},\dots,V_p^\bullet\mathcal{M})$ are compatible.
\end{enumerate}
Here $V_j^\bullet\mathcal{M}$ is the Kashiwara-Malgrange filtration for the hypersurface $f_j^{-1}(0)$.
\end{theorema}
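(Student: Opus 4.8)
The plan is to translate both conditions into the language of Rees modules over the ring $\tilde{\mathscr{D}} = R_F\mathscr{D}$ and to exploit the standard dictionary between \emph{strictness} (absence of $z$-torsion in associated graded modules) and \emph{compatibility} of filtrations. Writing $\tilde{\mathcal{M}} = R_F\mathcal{M} = \bigoplus_k F_k\mathcal{M}\, z^k$ for the Rees module encoding the Hodge filtration, the Kashiwara-Malgrange filtrations $V_j^\bullet$ lift to $V$-multifiltrations on $\tilde{\mathcal{M}}$, and the point is that compatibility of $(F_\bullet\mathcal{M}, V_1^\bullet\mathcal{M},\dots,V_p^\bullet\mathcal{M})$ in the sense of Definition \ref{Fcomp} should be equivalent to the assertion that every iterated graded module $\mathrm{gr}_{V_{j_1}}\cdots\mathrm{gr}_{V_{j_r}}\tilde{\mathcal{M}}$ is $z$-torsion-free. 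First I would record this homological-algebra equivalence as a lemma: for a finite family of filtrations on a filtered module, compatibility (simultaneous splittability) is equivalent to strictness of all the structural inclusion and projection morphisms, which in Rees terms is precisely $z$-torsion-freeness of the successive graded pieces.

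Next I would unwind the definition of \emph{strict multispecialisability} of $(\boldsymbol{f},\mathscr{M})$: it asks that $\tilde{\mathcal{M}}$ admit, along each $f_j^{-1}(0)$, a good $V$-filtration with strict graded pieces, that the structural morphisms $t_j$ and $\partial_{t_j}$ (equivalently the canonical and variation maps of $\Psi_{f_j}$ and $\Phi_{f_j}$) be strict, and that these conditions persist when one passes to $\mathrm{gr}_{V_j}$ and iterates over the remaining variables. Thus strict multispecialisability is, by construction, the statement that all iterated graded pieces of the Rees module are strict and that the transition maps remain strict at each stage. Combined with the lemma above, this is formally the compatibility of the $p+1$ filtrations, so the two conditions become two faces of one strictness statement.

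To organise the equivalence I would argue by induction on $p$, and the without-slope hypothesis enters in two decisive ways. First, by the cited Proposition (\cite{kocher1}, Prop. 2.12) the filtrations $V_1^\bullet,\dots,V_p^\bullet$ are already compatible \emph{among themselves}, so the $V$-multifiltration on $\mathcal{M}$ is well defined and its iterated graded pieces are independent of the order; hence the only new content is the incorporation of $F_\bullet$. Second, the commutativity of Theorem \ref{commutMHM} (and its $\mathscr{D}$-module counterpart from \cite{Maisonobe}) guarantees that $\Psi_{f_p}^{HM}\mathscr{M}$ and $\Phi_{f_p}^{HM}\mathscr{M}$ are mixed Hodge modules on $f_p^{-1}(0)$ for which the pair $((f_1,\dots,f_{p-1}),\mathrm{gr}_{V_p}\mathscr{M})$ is again without slope, so the inductive hypothesis applies after one specialisation. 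For the base case $p=1$ the equivalence is exactly M. Saito's characterisation of strict specialisability along a single function, namely that the filtered $\mathscr{D}$-module underlying a mixed Hodge module has $F_\bullet$ compatible with its $V$-filtration; this yields both implications simultaneously at each stage.

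The main obstacle will be the passage between the step-by-step strictness of strict multispecialisability and the simultaneous compatibility of all $p+1$ filtrations: compatibility of more than two filtrations is strictly stronger than pairwise compatibility, so one cannot merely iterate the one-variable result. The crux is to show that strictness of $F_\bullet$ on each successive graded piece $\mathrm{gr}_{V_{j_1}}\cdots\mathrm{gr}_{V_{j_r}}\mathcal{M}$, verified in \emph{some} order, forces a single multi-splitting realising all filtrations at once, i.e. that no $z$-torsion is created at any later stage of the iterated construction. Here I would use that the $V$-filtrations commute, so that graded pieces are order-independent, in order to reduce to checking strictness one variable at a time and to propagate it through the tower; I expect the delicate point to be the bookkeeping of the monodromy-type decompositions ensuring that $t_j$ and $\partial_{t_j}$ stay strict after graded pieces with respect to the other variables have already been taken.
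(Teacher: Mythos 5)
Your framework (Rees modules, strictness versus compatibility, the role of the without-slope hypothesis in making the $V$-filtrations compatible among themselves) matches the paper's, but two essential steps are missing. For the direction ``compatibility $\Rightarrow$ strictly multispecialisable'' you treat the equivalence as formal, whereas Definition \ref{defmultstrict} demands that the order multifiltration be a \emph{good} $V$-multifiltration and that $t_i$ and $\eth_i$ be surjective on the graded pieces. Neither follows formally from $z$-torsion-freeness: the paper must prove $\mathcal{O}$-coherence of each $F_pV_{\boldsymbol{\alpha}}\mathcal{M}$, invoke a Nakayama argument (with an induction on the number of indices with $\alpha_i=0$) to establish goodness, and run a separate induction through intermediate indices $\beta_i$ to extract surjectivity of $\partial_i$ on $F_p\mathrm{gr}_{\boldsymbol{\alpha}}\mathcal{M}$ from the one-variable statement (strict specialisability of the mixed Hodge module along a single $H_i$) combined with the unfiltered multivariable statement. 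Your proposal contains no substitute for any of this.

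For the converse you correctly flag the obstacle --- compatibility of $p+1$ filtrations is strictly stronger than anything obtainable by iterating the one-variable result --- but you leave it unresolved, and this is precisely where your induction on $p$ fails to close. The paper resolves it by a concrete mechanism you do not identify: form the multi-Rees module $\mathscr{M}$ over $\mathbb{C}[z,y_1,\dots,y_p]$ built from $F_\bullet$ and all the $V^{H_i}_\bullet$, convert compatibility into flatness via Proposition \ref{compatReesplat}, reduce by Corollary \ref{coroKoszul} to regularity of the sequences $y_1,\dots,y_k,z$, and then prove injectivity of $y_\ell$ on $\mathscr{M}/(y_1,\dots,y_{\ell-1})\mathscr{M}$ by combining (i) injectivity after setting $z=1$, which is exactly the compatibility of the $V$-filtrations alone, supplied by the without-slope hypothesis via \cite{kocher1}, with (ii) strictness (point 2 of Definition \ref{defmultstrict}), which says this quotient has no $z$-torsion, so that a degree-zero graded morphism injective at $z=1$ is injective. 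This lifting of injectivity from the fibre $z=1$ through strictness is the crux of the whole proof; without it, or an equivalent device, the step from ``strictness verified in some order'' to ``simultaneous compatibility of all $p+1$ filtrations'' remains a gap, exactly as you yourself suspect.
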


We then prove two theorems of push-forward for mixed Hodge modules strictly multispecialisable.

\begin{theorema}[Theorem \ref{commutVgrH}]

Let $\mathscr{M}$ be a mixed Hodge module. If $\mathscr{M}$ is strictly multispecialisable then the push-forward of $\mathscr{M}$ by a proper morphism is also strictly multispecialisable. Moreover, in this case, the Kashiwara-Malgrange \emph{multi}filtration, the nearby cycles $\Psi_{\boldsymbol{f}}^{HM}$ and the vanishing cycles $\Phi_{\boldsymbol{f}}^{HM}$ commute with the push-forward. 

\end{theorema}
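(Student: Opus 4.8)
The plan is to show that a proper direct image carries the full package — the Hodge filtration $F_\bullet\mathcal{M}$ together with the Kashiwara–Malgrange multifiltration $(V_1^\bullet,\dots,V_p^\bullet)$ — to a multifiltered object of the same type, the only genuine issue being the \emph{strictness} of the direct image complex with respect to all these filtrations at once. Since $\mathscr{M}$ is strictly multispecialisable, Propositions \ref{compatstrict} and \ref{strictmulcompat} let me replace this hypothesis by the compatibility of $(F_\bullet\mathcal{M},V_1^\bullet\mathcal{M},\dots,V_p^\bullet\mathcal{M})$, so that $\mathscr{M}$ is encoded by a well-behaved strictly multispecialisable $\tilde{\mathscr{D}}$-module, and the conclusion I must reach is that its direct image stays strictly multispecialisable with the operations $\mathrm{gr}^{V_j}$ commuting with the direct image. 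I would first reduce, as for any direct image in the theory of mixed Hodge modules, to a projective morphism and then to the two elementary cases of a closed immersion and a projection $\mathbb{P}^N\times Y\to Y$; the closed immersion case is essentially formal (Kashiwara's equivalence), so all the content sits in the projection case.

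For the projection I would argue by induction on $p$. The case $p=1$ is M. Saito's compatibility of the one-variable $V$-filtration and of $\Psi_{f_1}^{HM},\Phi_{f_1}^{HM}$ with proper direct image in \cite{SaitoMHM}, which itself rests on the strictness ($E_1$-degeneration) of the filtered de Rham direct image. For the inductive step I would invoke Theorem \ref{commutMHM} to realise the iterated functor as the composition $\Psi_{f_1}^{HM}\cdots\Psi_{f_p}^{HM}$, peel off the factor $\Psi_{f_p}^{HM}$ by the $p=1$ result, and apply the induction hypothesis to the remaining $p-1$ functions acting on $\Psi_{f_p}^{HM}\mathscr{M}$ — which is again without slope for those functions by the without-slope hypothesis and Maisonobe's results in \cite{Maisonobe}. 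The point of Theorem \ref{commutMHM} here is that the freedom to reorder the $\mathrm{gr}^{V_j}$ is exactly what lets these one-variable steps be glued into a statement about the full multifiltration.

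The main obstacle is to upgrade the one-variable strictness to the \emph{joint} strictness that the gluing requires: one must check that the cohomology of the direct-image complex commutes with all the operations $\mathrm{gr}_F$ and $\mathrm{gr}^{V_1},\dots,\mathrm{gr}^{V_p}$ simultaneously, with no $z$-torsion (equivalently, no jumps) appearing in the $\tilde{\mathscr{D}}$-module direct image. Even for a single filtration this strictness draws on the full strength of Saito's theory — polarizations and the structure of polarizable pure Hodge modules through the decomposition theorem — so the real work is to verify that, because $(\boldsymbol{f},\mathcal{M})$ is without slope and the filtrations are compatible, the successive multi-gradeds are again, up to Tate twists and direct sums, strictly multispecialisable Hodge modules to which the one-variable result applies, and that the interaction of the different $V_j$ with $F$ produces no cross-terms that would spoil the degeneration. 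I expect controlling this multivariable bookkeeping to be where the difficulty lies.

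Once joint strictness is in hand, the three assertions follow immediately: strictness yields $V_j^\bullet(\pi_*\mathcal{M})\simeq\pi_*(V_j^\bullet\mathcal{M})$ for every $j$, so the multifiltration commutes with the direct image and the pushforward is again compatible, hence strictly multispecialisable by Propositions \ref{compatstrict} and \ref{strictmulcompat}; and passing to the relevant multi-gradeds gives the commutations $\pi_*\Psi_{\boldsymbol{f}}^{HM}\mathscr{M}\simeq\Psi_{\boldsymbol{f}}^{HM}\pi_*\mathscr{M}$ and $\pi_*\Phi_{\boldsymbol{f}}^{HM}\mathscr{M}\simeq\Phi_{\boldsymbol{f}}^{HM}\pi_*\mathscr{M}$.
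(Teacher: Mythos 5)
Your outline has the right overall shape---induct on the number of hypersurfaces and reduce each step to Saito's one-variable direct image theorem, which is indeed how the paper proceeds---but it leaves the actual content of the theorem unproved at exactly the point you flag as ``the main obstacle''. Two concrete problems. First, the statement concerns the multifiltration $V_{\boldsymbol{\alpha}}=\bigcap_j V^j_{\alpha_j}$ and not only the iterated graded pieces: knowing that each individual $V^j_\bullet$ commutes with $\mathcal{H}^kf_*$ does not yield $\mathcal{H}^kf_*(V_{\boldsymbol{\alpha}}R_F\mathcal{M})\simeq V_{\boldsymbol{\alpha}}\mathcal{H}^kf_*(R_F\mathcal{M})$, since images of intersections need not be intersections of images; this is precisely the joint strictness you name and then defer with ``I expect controlling this multivariable bookkeeping to be where the difficulty lies''. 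Second, Theorem \ref{commut2MHM}/Corollary \ref{theocommuthodge} (reordering iterated nearby cycles) is not the tool that glues the one-variable steps together: what the inductive step actually requires is the identification $\textup{gr}^{V_1}_{\alpha_1}\bigl(V^{H_2,\dots,H_p}_{\alpha_2,\dots,\alpha_p}R_F\mathcal{M}\bigr)=V^{H_2,\dots,H_p}_{\alpha_2,\dots,\alpha_p}\bigl(\textup{gr}^{V_1}_{\alpha_1}R_F\mathcal{M}\bigr)$, which comes from the compatibility of the filtrations on the source (Proposition \ref{strictmulcompat}), not from permuting the $\Psi_{t_i}$.

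The paper closes the gap as follows. It defines $U_{\boldsymbol{\alpha}}\mathcal{H}^kf_*(R_F\mathcal{M})$ as the image of $\mathcal{H}^kf_*(V_{\boldsymbol{\alpha}}R_F\mathcal{M})$ and proves that the natural map $\mathcal{H}^kf_*(V_{\boldsymbol{\alpha}}R_F\mathcal{M})\to\mathcal{H}^kf_*(R_F\mathcal{M})$ is \emph{injective} by induction on $p$: the complex $\mathcal{N}^\bullet=f_*(V^{H_2,\dots,H_p}_{\alpha_2,\dots,\alpha_p}R_F\mathcal{M})$ is given the single filtration $U_{\alpha_1}\mathcal{N}^\bullet=f_*(V_{\boldsymbol{\alpha}}R_F\mathcal{M})$, and a strictness criterion for filtered direct images (property 7.5.8 of \cite{kaiserslautern}) is applied, whose key hypothesis---strictness of $\textup{gr}^{U}\mathcal{N}^\bullet$---is verified by rewriting $\mathcal{H}^k\textup{gr}^{U}\mathcal{N}^\bullet=\mathcal{H}^kf_*(V^{H_2,\dots,H_p}_{\alpha_2,\dots,\alpha_p}\textup{gr}^{V_1}_{\alpha_1}R_F\mathcal{M})$ via compatibility, then invoking the induction hypothesis for $\textup{gr}^{V_1}_{\alpha_1}R_F\mathcal{M}$ (which again underlies a mixed Hodge module) together with Saito's strictness of direct images. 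Separate arguments (Grauert coherence and Nakayama for the good-filtration property, then the verification of the axioms of Definition \ref{defmultstrict}) show that $U_\bullet$ is the canonical $V$-multifiltration, after which the commutation of $\Psi^{HM}_{\boldsymbol{f}}$ and $\Phi^{HM}_{\boldsymbol{f}}$ with $f_*$ falls out of the graded-pieces identity \eqref{commutUgrV}. Without some version of this injectivity/strictness argument your plan does not close.
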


\begin{defi*}

Let $A$ be an object in an abelian category equipped with $p$ nilpotent endomorphisms  $N_1,\dots,N_p$. We say that $(A,N_1,\dots,N_p)$ satisfies property $(MF)$ if the relative monodromy filtrations satisfy 
\[W(N_1,W(\dots,W(N_p)))A=W(N_1+\dots+N_p)A.\] 
(See section \ref{relatmonofilt} for definitions.)

\end{defi*}

\begin{theorema}[Theorem \ref{imagedirecteW}]

Let $\mathscr{M}$ be a mixed Hodge module. Let $f:X\times \mathbb{C}^p_{\boldsymbol{t}}\to Y\times \mathbb{C}^p_{\boldsymbol{t}}$ be a projective morphism and suppose that $(\mathscr{M},N_1,\dots,N_p)$ satisfy property (MF), then $(\Psi_{\boldsymbol{t}}\mathscr{M},N_1,\dots,N_p)$ also satisfy property (MF).

\end{theorema}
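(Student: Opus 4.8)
The plan is to reduce the statement to two ingredients: the commutation of nearby cycles with the projective push-forward, already available from Theorem~\ref{commutVgrH}, and a study of how relative monodromy filtrations transform under a projective direct image in Saito's theory. First I would use the isomorphism $\Psi_{\boldsymbol{t}}\circ f_{*}\simeq f_{*}\circ\Psi_{\boldsymbol{t}}$, together with its compatibility with the monodromy operators $N_{1},\dots,N_{p}$, to identify $\Psi_{\boldsymbol{t}}f_{*}\mathscr{M}$ with $f_{*}\Psi_{\boldsymbol{t}}\mathscr{M}$ as an object of an abelian category equipped with $p$ commuting nilpotent endomorphisms. This reduces the theorem to the assertion that the projective push-forward $f_{*}$ preserves property (MF) when applied to the nearby cycle module $\Psi_{\boldsymbol{t}}\mathscr{M}$, which one must then treat cohomological degree by cohomological degree since $f_{*}$ is not exact.

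Next I would argue by induction on $p$ and separate out the genuinely new content. Writing $L=W(N_{2},W(\dots,W(N_{p})))$ for the iterated relative monodromy filtration, the identity $W(N_{1},L)=W(N_{1}+\dots+N_{p})$ demanded by (MF) can be split, using the standard relations among relative monodromy filtrations of commuting nilpotent operators in an admissible/polarisable situation (Cattani--Kaplan, Kashiwara), into the inductive identity $L=W(N_{2}+\dots+N_{p})$ and the two-operator assertion $W(N_{1},W(N_{2}+\dots+N_{p}))=W(N_{1}+\dots+N_{p})$. This isolates the behaviour of a single relative monodromy filtration $W(N',L')$ under $f_{*}$.

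The heart of the proof is therefore to show that, for a nilpotent endomorphism $N'$ of type $(-1,-1)$ of a mixed Hodge module and the associated weight filtration $L'$, one has $f_{*}W(N',L')=W(f_{*}N',f_{*}L')$ on each $\mathcal{H}^{j}f_{*}$. Here I would invoke Saito's strictness of the direct image with respect to the weight filtration together with the (relative) Hard Lefschetz theorem and the decomposition theorem for projective push-forwards of pure Hodge modules. The relative monodromy filtration is characterised by the two properties $N'W_{i}\subseteq W_{i-2}$ and the isomorphism property $(N')^{k}\colon\mathrm{gr}^{W}_{\ell+k}\mathrm{gr}^{L'}_{\ell}\xrightarrow{\ \sim\ }\mathrm{gr}^{W}_{\ell-k}\mathrm{gr}^{L'}_{\ell}$; on the graded pieces the latter is exactly a polarisation/Hard Lefschetz statement, which survives projective push-forward because pure Hodge modules, their polarisations and the decomposition theorem are all preserved. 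Strictness then lets one transport these characterising properties from $\Psi_{\boldsymbol{t}}\mathscr{M}$ to its push-forward, identifying $f_{*}W(N',L')$ with the relative monodromy filtration of the image.

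\textbf{Main obstacle.} The delicate point is precisely this last step: because $f_{*}$ fails to be exact, the relative monodromy filtration must be controlled on each cohomology module $\mathcal{H}^{j}f_{*}$ separately, and one must verify that the operator $N=N_{1}+\dots+N_{p}$ acts on the push-forward with the Hard Lefschetz property that governs its monodromy filtration relative to $L'$. Establishing that this isomorphism property is preserved by the non-exact functor $f_{*}$ — equivalently, that one may apply the relative Hard Lefschetz theorem to $N$ in the role of a Lefschetz-type operator — is where the full strength of Saito's theory (strictness, polarisations, semisimplicity of the pure part) is genuinely required, and is the step I expect to demand the most care.
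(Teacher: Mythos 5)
Your first step --- commuting $\Psi_{\boldsymbol{t}}$ with $f_*$ via Theorem \ref{commutVgrH} and reducing to the behaviour of the two filtrations on each $\mathscr{H}^jf_*\Psi_{\boldsymbol{t}}\mathscr{M}$ --- is exactly how the paper begins. After that, however, there are two genuine gaps. First, your inductive reduction of $(MF)$ to the sub-family identity $W(N_2,W(\dots,W(N_p)))=W(N_2+\dots+N_p)$ plus a two-operator statement presupposes that the sub-family identities hold upstairs; the hypothesis of the theorem is only the single identity for the full family, and the sub-identities do not formally follow from it (they do hold in the nilpotent-orbit applications, but the theorem is stated and proved without assuming them). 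The paper's argument never needs this reduction: it uses the hypothesis $(MF)$ exactly once, to identify the two filtrations on $\mathscr{N}:=\Psi_{\boldsymbol{t}}\mathscr{M}$ so that they have the same image $W:=\mathrm{Im}(\mathscr{H}^jf_*W\mathscr{N}\to\mathscr{H}^jf_*\mathscr{N})$ downstairs, and then shows this single image filtration coincides with \emph{each} of the two candidate filtrations on the push-forward (a ``sandwich'' rather than an induction on $p$).

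Second, the step you flag as the main obstacle is left unresolved, and the missing idea is precisely the paper's Lemma \ref{pHLP}: if $(\mathscr{M},N_1,\dots,N_p,S)$ is a polarized $p$-graded Hodge--Lefschetz module, then regrouping two of the operators yields a polarized $(p-1)$-graded Hodge--Lefschetz module $(\mathscr{M},N_1+N_2,N_3,\dots,N_p,S)$ (proved by induction on the dimension of the support, reducing to the linear-algebra statement of Proposition \ref{SHLP}). Iterating this and using Kashiwara's splitting $\textup{Gr}_\ell^{W(N_1,W(\dots))}\mathscr{N}=\bigoplus_{k_1+\dots+k_p=\ell}\textup{Gr}_{k_1}^{W(N_1)}\cdots\textup{Gr}_{k_p}^{W(N_p)}\mathscr{N}$ together with the hypothesis $(MF)$ is what exhibits $\bigl(\bigoplus_\ell\textup{Gr}_\ell^{W(N_1+\dots+N_p)}\mathscr{N},\,N_1+\dots+N_p,\,S\bigr)$ as a polarized \emph{graded} Hodge--Lefschetz module, so that Saito's direct image theorem for such objects (\cite[Prop.\ 5.3.5]{HM1}) applies to the single operator $N_1+\dots+N_p$ and identifies $W(N_1+\dots+N_p)$ downstairs with the image filtration. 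Without this regrouping lemma, ``relative Hard Lefschetz for $N=N_1+\dots+N_p$'' is not available, because $N$ is not a priori a Lefschetz operator for any natural grading. The other half of the sandwich --- that the image of the iterated relative monodromy filtration is the iterated relative monodromy filtration of the push-forward --- is obtained by iterating \cite[Thm.\ 5.3.1, Prop.\ 5.3.4]{HM1} and \cite[Prop.\ 2.16]{SaitoMHM}, which is essentially the content of your ``$f_*W(N',L')=W(f_*N',f_*L')$'' step; that part of your plan is sound.
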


As an application we prove conjecture \ref{conjH} for quasi-ordinary hypersurface singularities.

\section*{Notation}

\subsubsection*{Sheaves}
Let $X$ be a complex variety, we will consider the following sheaves:

\begin{itemize}
\item $\mathcal{O}_X$ the sheaf of holomorphic functions on $X$,
\item $\Omega^p_X$ the sheaf of holomorphic $p$-forms on $X$,
\item $\mathscr{D}_X$ the sheaf of differential operators on $X$,
\item $F_\bullet\mathscr{D}_X$ the order filtration of $\mathscr{D}_X$, 
\item $\tilde{\mathscr{O}}_X:=R_F\mathscr{O}_X$ the sheaf associated to  $(\mathscr{O}_X,F_\bullet\mathscr{O}_X)$ by Rees's construction \ref{Rees}.
\item $\tilde{\mathscr{D}}_X:=R_F\mathscr{D}_X$ the sheaf associated to  $(\mathscr{D}_X,F_\bullet\mathscr{D}_X)$ by Rees's construction \ref{Rees}.

\end{itemize}

\subsubsection*{Categories}
Let $X$ be a complex variety, we will consider the following categories:
\begin{itemize}
\item the category of perverse sheaves $\textup{Perv}(X)$,
\item the category of regular holonomic $\mathscr{D}_X$-modules $\textup{Mod}_{h,r}(\mathscr{D}_X)$,
\item the category of graded holonomic $\tilde{\mathscr{D}}_X$-modules $\textup{Mod}_{r}(\tilde{\mathscr{D}}_X)$,
\item the bounded derived category of sheaves with constructible cohomology $D_c^b(X)$,
\item the bounded derived category of $\mathscr{D}_X$-modules with regular holonomic cohomology $D_{h,r}^b(\mathscr{D}_X)$,
\item the category of polarisable pure Hodge modules of weight $k$ $\textup{MH}^p(X,\mathbb{Q},k)$,
\item the category of mixed Hodge modules $\textup{MHM}(X,\mathbb{Q})$, we will always suppose that the graded pieces for the weight filtration are polarisable.

\end{itemize}

\subsubsection*{Objects}

We will denote the objects of the above categories in the following way:
\begin{itemize}
\item $\mathcal{F},\mathcal{G}\in D_c^b(X)$,
\item $\mathcal{M},\mathcal{N}\in \textup{Mod}_{h,r}(\mathscr{D}_X)$,
\item $\mathfrak{M},\mathfrak{N}\in \textup{Mod}_{r}(\tilde{\mathscr{D}}_X)$,
\item $\mathscr{M},\mathscr{N}\in \textup{MHM}(X,\mathbb{Q})$.

\end{itemize}

\subsubsection*{Multi-indexes}

Let $X$ be a complex variety of dimension $d$. Let $\{H_i\}_{1\leq i\leq d}$ be $p$ smooth hypersurfaces given by the equations $t_i=0$. We will use the following notations:

\begin{itemize}

\item $\partial_{i}  :=  \partial_{t_i}\in\mathscr{D}_X$,
\item $\eth_{i}:=\eth_{t_i}=z\partial_i\in\tilde{\mathscr{D}}_X$,
\item $E_i  :=  t_i\partial_{i}$,
\item $\boldsymbol{x}:=(x_1,\dots,x_{d-p})$,
\item $\boldsymbol{1}_i:=(0,\dots,0,1,0,\dots,0)$ where $1$ is in the $i$-th place,
\item $\boldsymbol{\alpha}:=(\alpha_1,\dots,\alpha_p)$,
\item $\boldsymbol{\alpha}_{I}:=(\alpha_i)_{i\in I}$ for $I\subset \{1,\dots,p\}$,
\item $\boldsymbol{t}:=t_1\dots t_p$,
\item $\boldsymbol{t}^{{\boldsymbol{s}}}:=t_1^{s_1}\dots t_p^{s_p}$,
\item $\mathscr{D}_X[{\boldsymbol{s}}]:=\mathscr{D}_X[s_1,\dots,s_p]$.
\item Let $V_{\boldsymbol{\alpha}}\mathcal{M}$ be a multifiltration, we denote
\[\textup{gr}_{\boldsymbol{\alpha}}^V:=\frac{V_{\boldsymbol{\alpha}}}
{\sum_{1\leq i\leq p}V_{\alpha_1 \dots <\alpha_i \dots \alpha_p}}\]
and for for $I\subset \{1,\dots,p\}$,
\[V_{<\boldsymbol{\alpha_{I}},\boldsymbol{\alpha}_{I^c}}:=\sum_{i\in I}V_{\alpha_1 \dots <\alpha_i \dots \alpha_p}.
\]
\end{itemize}

\part{Commutativity of nearby and vanishing cycles}\label{section1}

In this part, we prove the following commutativity result for mixed Hodge modules:

\textit{Let $\mathscr{M}\in \textup{MHM}(X\times{\Delta}^p_{\boldsymbol{t}})$ be a mixed Hodge module on $X\times{\Delta}^p_{\boldsymbol{t}}$ such that the pair $(\mathbf{H},\mathcal{M})$ is without slope where $\mathcal{M}$ is the right $\mathscr{D}_{X\times\Delta^p_{\boldsymbol{t}}}$-module underlying $\mathscr{M}$. 
Then, for all permutation $\sigma$ of $\{1,\dots ,p\}$, we have an isomorphism}
\[\Psi_{t_1}\cdots \Psi_{t_p}\mathscr{M} \simeq \Psi_{t_{\sigma(1)}}\cdots \Psi_{t_{\sigma(p)}}\mathscr{M}.\]

We are going to use commutativity results of \cite[corollary 2.13]{kocher1} for the  $\mathscr{D}$-module underlying $\mathscr{M}$. However, this proposition enables us to compare iterated vanishing cycles through vanishing cycles for several functions. A priori, we cannot equip the latter with a mixed Hodge module structure. That's why we will rather use the definition of algebraic nearby cycles that uses Nilsson class functions module. With this definition we will be able to use the powerful functorial formalism of the category of mixed Hodge modules.

Thus, we will start by endowing the Nilsson class functions module with a mixed Hodge module structure. Then, in order to make the link between the graded pieces of the $V$-multifiltration and the Nilsson classes, we will define the morphism $\mathbf{Nils}$  for mixed Hodge modules. This will enable us to construct a comparison morphism between iterated vanishing cycles \emph{in the category of mixed Hodge modules}. We will then use results on $\mathscr{D}$-modules thanks to proposition 
\ref{isoDH}.

\begin{rema}\label{defiprodtens}

In this part, we will indicate the category we consider on the bottom left of each functor. We will use the letters $D$, $\tilde{D}$, $P$ and $H$ to denote respectively the categories $\textup{Mod}(\mathscr{D}_X)$, $\textup{Mod}(\tilde{\mathscr{D}}_X)$, $\textup{Perv}(\mathbb{Q}_X)$ and $\textup{MHM}(X)$. For instance, the inverse image functor by a smooth morphism $f$ in the category of perverse sheaves over $\mathbb{Q}$ will be denoted $_Pf^*$ and the tensor product functor in the category of mixed Hodge modules will be denoted  $_H\otimes$.

In the category $\textup{MHM}(X)$, the tensor product of $\mathscr{M}$ and $\mathscr{N}$ is defined by the following formula:
\[\mathscr{M}{_H\otimes}~\mathscr{N}:={_H\delta}^*\left(\mathscr{M}{_H\boxtimes}~\mathscr{N}\right)
\]
where $\delta:X\to X\times X$ is the inclusion of the diagonal.

\end{rema}

\section{The mixed Hodge module $\mathscr{N}_{q,k}$}

In order to make the link between nearby cycles and the Nilsson class, the first step in the proof of the commutativity theorem is to define the following morphism in the category of mixed Hodge modules
\[
\mathbf{Nils}:\Psi_{t}\mathscr{M} \to  {_Hi_*}\mathscr{H}^{-1}{_Hi^*}\left(\mathscr{M}_{q,k}\right)\]
where 
\[\mathscr{M}_{q,k}:=\mathscr{M}~{_H\otimes}
~{_H\pi^{!}}\left(\mathscr{N}_{q,k}\right).\]
with  
\[\begin{array}{cccc}
     \pi:& X\times {\Delta} & \to & {\Delta}\\
       & (x,t) & \mapsto & t
     \end{array}.\] 
Accordingly, in this section we define the mixed Hodge module $\mathscr{N}_{q,k}$.

Let $\Delta\subset \mathbb{C}$ be a small disc with center $0$ and coordinate $s$. We denote $\Delta^*:=\Delta-\{0\}$. We start by defining a polarized mixed Hodge structure  for all $k\in \mathbb{N}$.

\begin{defi}

Let $k\in\mathbb{N}$ and let $V_k$ be the $\mathbb{Q}$-vector space with basis $\{v_\ell\}_{0\leq \ell\leq k}$, we define:
\begin{itemize} 
\item the decreasing filtration $F^pV_{k,\mathbb{C}}:=\text{Vect}(\{v_\ell\}_{p\leq \ell\leq k})$,
\item the nilpotent endomorphism $N$ satisfying $N(v_\ell)=v_{\ell-1}$ for all $\ell$ such that $0\leq \ell\leq k$.
\item the bilinear form $Q$ satisfying $Q(v_\ell,v_{k-\ell'})=\delta_{\ell\ell'}$ for all $\ell$ such that $0\leq \ell\leq k$.
\end{itemize}
The weight filtration of this polarized mixed Hodge structure is given by the formula
\[W_iV_k=\text{Vect}(\{v_\ell\}_{0\leq \ell\leq i/2}).\]

\end{defi}

\begin{rema}

This mixed Hodge structure satisfies the following equality of Hodge structures of weight $2k$
\[P_k(V_k)\simeq\textup{gr}_{2k}^W V_k=\mathbb{Q}(-k).
\]

\end{rema}

Following \cite[Corollary (3.13)]{CKS2}, the nilpotent orbit associated to this polarized mixed Hodge structure is a polarized variation of Hodge structure of weight $k$ on $\Delta^*$, we denote it ${\mathcal{H}}_k$. 
The corresponding $\mathbb{Q}$-local system is given by the monodromy endomorphism $\exp(N)$. The Hodge filtration is given by the following formula
\[F^p{\mathcal{H}}_k:=\exp\left(\frac{\log s}{2i\pi}N\right)F^pV_k.
\]
We denote $\{e_\ell\}_{0\leq \ell\leq k}$ 
the basis of holomorphic sections defined by
\[e_\ell:=(2i\pi)^\ell\sum_{0\leq j\leq \ell}\frac{1}{j!}\left(\frac{\log s}{2i\pi}\right)^jv_{\ell-j},
\]
we have
\[s\partial_s\cdot e_{\ell}=e_{\ell-1},
\]
and
\[F^p{\mathcal{H}}_k=\bigoplus_{p\leq \ell\leq k}\mathcal{O}_{\Delta^*}e_\ell.
\]
We switch to the right $\mathscr{D}_{\Delta^*}$-modules convention and we denote $\mathscr{H}_k$ the polarized Hodge module corresponding to the polarized variation of Hodge structure ${\mathcal{H}}_k$. 

\begin{defi}

We define $\mathscr{N}_{k}\in \textup{MHM}(\Delta)$ as the localisation of $\mathscr{H}_k$
\[\mathscr{N}_{k}:=\mathscr{H}_k[*0].
\]

\end{defi}

The right $\tilde{\mathscr{D}}_{\Delta}$-module underlying $\mathscr{N}_k$ is spanned by the sections $\left\{\frac{\tilde{e}_\ell}{s}\right\}_{0\leq \ell\leq k}$, where $\tilde{e}_{\ell}:=\frac{{e}_{\ell}}{z^\ell}$. We have 
\[\frac{\tilde{e}_\ell}{s}\cdot s\eth_s=-\frac{\tilde{e}_{\ell-1}}{s}.
\]
Let us consider the application
\[\begin{array}{cccc}
     r_q:&  \Delta & \to & {\Delta}\\
       & s & \mapsto & t=s^q
     \end{array}.\]

\begin{defi}\label{MHMNilss}

We define a polarized mixed Hodge module:
\[\mathscr{N}_{q,k}:={_Hr_{q*}}\mathscr{N}_k.\]
we denote $V^{\mathbb{Q}}_{q,k}$ the perverse sheaf underlying $\mathscr{N}_{q,k}$ and
$\{v_{p,\ell}\}_{\substack{0\leq\ell\leq k \\ 1\leq p\leq q}}$ basis of $_P\Psi_tV^{\mathbb{Q}}_{q,k}$ coming from the basis $\{v_\ell\}_{0\leq \ell\leq k}$. The right $\tilde{\mathscr{D}}_{\Delta}$-module underlying $\mathscr{N}_{q,k}$ can be written
\[\bigoplus_{1\leq p\leq q} \mathfrak{N}_{-\frac{p}{q},k},\]
where $\mathfrak{N}_{-\frac{p}{q},k}$ is the right $\tilde{\mathscr{D}}_{\Delta}$-module spanned by the sections $\left\{\tilde{e}_{-\frac{p}{q},\ell}\right\}_{0\leq \ell\leq k}$ 
where $\tilde{e}_{-\frac{p}{q},\ell}:=s^{-p}q^\ell\tilde{e}_\ell$. We have

\begin{equation}\label{eqNilssMHM}
\tilde{e}_{-\frac{p}{q},\ell}\cdot t\eth_t = -\left(1-\frac{p}{q}\right)z\cdot\tilde{e}_{-\frac{p}{q},\ell}-\tilde{e}_{-\frac{p}{q},\ell-1}.
\end{equation}

\end{defi}

Thus, for all $\alpha\in\mathbb{Q}\cap[-1,0[$ and for all $k\in\mathbb{N}$,
$\mathfrak{N}_{\alpha,k}$ is a direct factor of a polarizable mixed Hodge module and, by definition, for all $\alpha\in\mathbb{Q}\cap[-1,0[ $ and for all $k\in\mathbb{N}$, $\mathfrak{N}_{\alpha,k}$ is equal to its localization.

\begin{defi}

Let us consider the projection
 \[\begin{array}{cccc}
     \pi:& X\times {\Delta} & \to & {\Delta}\\
       & (x,t) & \mapsto & t
     \end{array}.\] 
and let $\mathscr{M}$ be a mixed Hodge module, we define 
\[\mathscr{M}_{q,k}:=\mathscr{M}~{_H\otimes}
~{_H\pi^{!}}\left(\mathscr{N}_{q,k}\right).\]
We have a decomposition in the category $\textup{Mod}_{r}(\tilde{\mathscr{D}}_{X\times \Delta})$
\[\mathfrak{M}_{q,k}=\bigoplus_{1\leq p\leq q} \mathfrak{M}_{-\frac{p}{q},k}\]
where
\[\mathfrak{M}_{\alpha,k}:=\mathfrak{M}~{_{\tilde{D}}\otimes}~
\left({_{\tilde{D}}\pi^*}\mathfrak{N}_{\alpha,k}\right).
\]
with $\mathfrak{M}$ the $\tilde{\mathscr{D}}_{X\times \Delta}$-module underlying $\mathscr{M}$.

\end{defi}

\section{The $V$-filtration of $\mathfrak{M}_{\alpha,k}$}

In this section, we compute the Kashiwara-Malgrange filtration of  $\mathfrak{M}_{\alpha,k}$ in terms of the Kashiwara-Malgrange filtration of $\mathfrak{M}[*{X_0}]$, where $X_0:=X\times \{0\}\subset X\times \Delta$. We have the equality
\[\mathfrak{M}_{\alpha,{k}}=\bigoplus_{0\leq\ell\leq{k}}\mathfrak{M}\left[*X_0\right]\tilde{e}_{\alpha,\ell}
\]
compatible with the Kashiwara-Malgrange filtration in the following sense.

\begin{prop}\label{VfilttildeMalpha}
 
Let $\alpha\in \mathbb{Q}\cap[-1,0[$ and $k\in \mathbb{N}$, the $\tilde{\mathscr{D}}_{X\times{\Delta}}$-module $\mathfrak{M}_{\alpha,k}$ is strictly $\mathbb{R}$-specialisable along $X_0$ and, for all $\beta\in{\mathbb{R}}$, we have:
\[V_{\beta}(\mathfrak{M}_{\alpha,{k}})=\bigoplus_{0\leq\ell\leq{k}}V_{\beta+\alpha+1}
 \left(\mathfrak{M}\left[*X_0\right]\right)\tilde{e}_{\alpha,\ell}.
\]
with localisation defined in \ref{localisationF}.

\end{prop}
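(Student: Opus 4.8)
The plan is to verify the two defining properties of the Kashiwara-Malgrange filtration directly on the explicit decomposition
\[
\mathfrak{M}_{\alpha,k}=\bigoplus_{0\leq\ell\leq k}\mathfrak{M}[*X_0]\,\tilde{e}_{\alpha,\ell},
\]
using the shifted candidate filtration
\[
U_\beta:=\bigoplus_{0\leq\ell\leq k}V_{\beta+\alpha+1}\bigl(\mathfrak{M}[*X_0]\bigr)\,\tilde{e}_{\alpha,\ell},
\]
and then invoking uniqueness of the $V$-filtration to conclude $U_\beta=V_\beta(\mathfrak{M}_{\alpha,k})$. Concretely, I would check that $U_\bullet$ is an exhaustive, decreasing, good $V$-filtration indexed by $\mathbb{R}$ (in fact by a shift of the $\mathbb{Q}$-indexing set of $\mathfrak{M}[*X_0]$), that it is compatible with the $\tilde{\mathscr{D}}$-action, and that on each graded piece the operator $t\eth_t$ acts with the correct eigenvalue so that $\mathrm{gr}_\beta^U$ is strict and the minimal polynomial condition holds. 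The strict $\mathbb{R}$-specialisability then follows once the action of $t\eth_t+\beta z$ is shown to be nilpotent on $\mathrm{gr}_\beta^U$ together with strictness of the nearby/vanishing data.

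The computational heart is the action of $t\eth_t$ on the generators, governed by formula~(\ref{eqNilssMHM}):
\[
\tilde{e}_{\alpha,\ell}\cdot t\eth_t=-(1+\alpha)z\,\tilde{e}_{\alpha,\ell}-\tilde{e}_{\alpha,\ell-1},
\]
where $\alpha=-p/q$. The key observation is that for a section $m\,\tilde{e}_{\alpha,\ell}$ with $m\in\mathfrak{M}[*X_0]$ one has
\[
(m\,\tilde{e}_{\alpha,\ell})\cdot(t\eth_t+\beta z)
=\bigl(m\cdot(t\eth_t+(\beta+\alpha+1)z)\bigr)\tilde{e}_{\alpha,\ell}-m\,\tilde{e}_{\alpha,\ell-1}.
\]
So the operator that measures the $V$-filtration on $\mathfrak{M}_{\alpha,k}$ at level $\beta$ corresponds, on the first summand factor, exactly to the operator measuring the $V$-filtration on $\mathfrak{M}[*X_0]$ at the shifted level $\beta+\alpha+1$; this is precisely why the index shift by $\alpha+1$ appears. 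The second, strictly lower-triangular term $-m\,\tilde{e}_{\alpha,\ell-1}$ does not affect the eigenvalue but contributes exactly the nilpotent part $N$ on the graded pieces. I would make this precise by passing to $\mathrm{gr}_\beta^U$ and reading off that $t\eth_t+\beta z$ acts through the Jordan block coming from $N(v_\ell)=v_{\ell-1}$, hence nilpotently, with the eigenvalue controlled by strict $\mathbb{R}$-specialisability of $\mathfrak{M}[*X_0]$ at $\beta+\alpha+1$.

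The main obstacle I anticipate is the \emph{strictness} part, i.e.\ checking that $\mathrm{gr}_\beta^U(\mathfrak{M}_{\alpha,k})$ has no $z$-torsion and that the graded pieces are strict $\tilde{\mathscr{D}}$-modules, since this is where the Rees/Hodge-filtration bookkeeping must interact cleanly with the direct-sum decomposition. The delicate point is that one must know $\mathfrak{M}[*X_0]$ is itself strictly $\mathbb{R}$-specialisable (so that the localisation in the sense of~\ref{localisationF} behaves well and the shifted filtration $V_{\beta+\alpha+1}(\mathfrak{M}[*X_0])$ is a genuine good $V$-filtration), and then that tensoring by the fixed strict object $\mathfrak{N}_{\alpha,k}$—which is a direct factor of a polarizable mixed Hodge module and equals its own localisation—preserves strictness. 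Granting these, the verification reduces to the eigenvalue computation above; the uniqueness characterisation of the $V$-filtration then forces the stated formula, and exhaustivity plus goodness are inherited summand-by-summand from $\mathfrak{M}[*X_0]$ since $\{\tilde{e}_{\alpha,\ell}\}_{0\leq\ell\leq k}$ is a finite free basis.
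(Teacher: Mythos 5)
Your plan is sound and rests on the same key computation as the paper --- the twisted action $(m\otimes \tilde{e}_{\alpha,\ell})(t\eth_t+\gamma z)=(m(t\eth_t+(\gamma-(\alpha+1)) z))\otimes \tilde{e}_{\alpha,\ell}-m\otimes \tilde{e}_{\alpha,\ell-1}$, which explains the shift by $\alpha+1$ and produces the nilpotent lower-triangular part --- but you conclude differently. You propose to verify that the candidate $U_\bullet$ is a good $V$-filtration with strict graded pieces on which $t\eth_t+\beta z$ is nilpotent, and then invoke uniqueness of the canonical $V$-filtration; the paper instead proves the two inclusions $U_\beta\subset V_\beta$ and $V_\beta\subset U_\beta$ directly, using the section-wise Bernstein--Sato characterisation of $V_\beta$ (existence of $b(s)=\prod(s-az)$ with roots $a\le\beta$ annihilating the section modulo $t\cdot(\cdots)$) together with an induction on $\ell$, reducing the converse inclusion to the case $k=0$ via the exact sequence $0\to U_\beta(\mathfrak{M}_{\alpha,k-1})\to U_\beta(\mathfrak{M}_{\alpha,k})\to U_\beta(\mathfrak{M}_{\alpha,0})\to 0$. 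The practical difference is where the strictness burden lands: the obstacle you flag (strictness of $\textup{gr}^U_\beta$ and good behaviour of the localisation) is dispatched in one line in the paper, because $\mathfrak{M}_{\alpha,k}$ is by construction a direct factor of the module underlying the polarizable mixed Hodge module $\mathscr{M}_{q,k}=\mathscr{M}\,{_H\otimes}\,{_H\pi^!}\mathscr{N}_{q,k}$, hence strictly $\mathbb{R}$-specialisable for free; after that, the Bernstein-polynomial argument identifies the filtration without ever checking goodness or strictness of the candidate by hand. Your route is viable (the graded pieces of $U_\bullet$ are direct sums of graded pieces of $\mathfrak{M}[*X_0]$ as graded $\mathbb{C}[z]$-modules, so strictness does transfer), but it makes you prove the strict specialisability of $\mathfrak{M}[*X_0]$ and the goodness of $U_\bullet$ as prerequisites, whereas the paper's direct-factor observation makes both points unnecessary. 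If you keep your approach, state explicitly the uniqueness lemma you are invoking for the canonical $V$-filtration of strictly $\mathbb{R}$-specialisable $\tilde{\mathscr{D}}$-modules, since that is the hinge of your argument.
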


\begin{proof}

According to the above, $\mathfrak{M}_{\alpha,k}$ is a direct factor of a $\tilde{\mathscr{D}}_{X\times{\Delta}}$-module underlyinga polarizable mixed Hodge module. Thus it is strictly $\mathbb{R}$-specialisable along $X_0$.

We denote
\[U_{\beta}(\mathfrak{M}_{\alpha,{k}}):=\bigoplus_{0\leq\ell\leq{k}}V_{\beta+\alpha+1}
 \left(\mathfrak{M}[*X_0]\right)\tilde{e}_{\alpha,\ell}.
\]
 Let $m\in \mathfrak{M}[*X_0]$, $\ell\in \mathbb{N}$ and $\gamma\in {\mathbb{R}}$, we have
\begin{equation}\label{calcul2}
(m\otimes \tilde{e}_{\alpha,\ell})(t\eth_t+\gamma z)=(m(t\eth_t+\gamma z-(\alpha+1)z))\otimes \tilde{e}_{\alpha,\ell} - m\otimes \tilde{e}_{\alpha,\ell-1},
\end{equation}
Thus, for all $\beta\in{\mathbb{R}}$, $U_\beta(\mathfrak{M}_{\alpha,{k}})$ is a $V_0\tilde{\mathscr{D}}_{X\times{\Delta}}$-module.

Let us show that $U_{\beta}(\mathfrak{M}_{\alpha,{k}})\subset V_{\beta}(\mathfrak{M}_{\alpha,{k}})$. Let $m\in V_{\beta+\alpha+1}\left(\mathfrak{M}[*X_0]\right)$, there exist polynomials
\begin{itemize}
\item $b(s)=\prod_{a\in A} (s-az)$ such that for all $a\in A$, $a\leq \beta+\alpha+1$, 
\item $P(s)\in\tilde{\mathscr{D}}_{X\times{\Delta}/{\Delta}}[s]$ with coefficients independent from $\eth_t$ 
\end{itemize}
such that
\[m\cdot(b(t\eth_t)-tP(t\eth_t))=0.\]
We deduce from \eqref{calcul2} that
\[(m\otimes \tilde{e}_{\alpha,\ell})\cdot(b\left(t\eth_t+(\alpha+1)z\right)-tP(t\eth_t+(\alpha+1)z))
\in \bigoplus_{0\leq\ell'<{\ell}}V_{\beta+\alpha+1}
 \left(\mathfrak{M}[*X_0]\right)\tilde{e}_{\alpha,\ell'},
\]
yet the roots of $b\left(s+(\alpha+1)z\right)$ are less or equal than $\beta$, 
thus we show by induction that $(m\otimes \tilde{e}_{\alpha,\ell})\in V_{\beta}(\mathfrak{M}_{\alpha,{k}})$ and therefore that $U_{\beta}(\mathfrak{M}_{\alpha,{k}})\subset V_{\beta}(\mathfrak{M}_{\alpha,{k}})$. 

To prove the converse Let us consider the exact sequence of $V_0(\tilde{\mathscr{D}}_X)$-modules:
\[
0 \to U_\beta(\mathfrak{M}_{{\alpha},{{k}}-1}) \to U_\beta(\mathfrak{M}_{{\alpha},{{k}}}) \to  U_\beta(\mathfrak{M}_{{\alpha},0}) \to 0,
\]
from the above, it is enough to show that $U_\beta(\mathfrak{M}_{{\alpha},0})=V_\beta(\mathfrak{M}_{{\alpha},0})$ to conclude that $U_\beta(\mathfrak{M}_{{\alpha},k})=V_\beta(\mathfrak{M}_{{\alpha},k})$ for all $k\in\mathbb{N}$. Let $(m\otimes \tilde{e}_{\alpha,0})\in V_{\beta}(\mathfrak{M}_{\alpha,0})$, there exists polynomials
\begin{itemize}
\item $b(s)=\prod_{a\in A} (s-az)$ such that for all $a\in A$, $a\leq \beta$, 
\item $P(s)\in\tilde{\mathscr{D}}_{X\times{\Delta}/{\Delta}}[s]$ with coefficients independent from $\eth_t$ 
\end{itemize}
such that
\[(m\otimes \tilde{e}_{\alpha,0})\cdot(b(t\eth_t)-tP(t\eth_t))=0.\]
We deduce from \eqref{calcul2} that 
\[\left[m\cdot(b(t\eth_t-(\alpha+1)z)-tP(t\eth_t-(\alpha+1)z))\right]\otimes \tilde{e}_{\alpha,0}=0,\]
and thus
\[m\cdot(b(t\eth_t-(\alpha+1)z)-tP(t\eth_t-(\alpha+1)z))=0.\]
Finally, $m\in V_{\beta+\alpha+1}
 \left(\mathfrak{M}[*X_0]\right)$, which concludes the proof.

\end{proof}

\section{The morphism Nils}

Here we define the morphism $\bold{Nils}$ for $\tilde{\mathscr{D}}_{X\times{\Delta}}$-modules and for perverse sheaves and then combine both definitions for mixed Hodge modules.

\begin{defi}\label{DNils}

Let $\mathfrak{M}$ be a $\tilde{\mathscr{D}}_{X\times{\Delta}}$-module underlying a polarizable mixed Hodge module. Following proposition \ref{VfilttildeMalpha}, we define the following morphism

\[\begin{array}{cccc}
{_{\tilde{{D}}}\mathbf{N}}: & \text{gr}^V_{{\alpha}}\left(\mathfrak{M}\right) & \longrightarrow & \text{gr}^V_{-{1}}(\mathfrak{M}_{{\alpha},{{k}}}) \\
& m & \longmapsto & \displaystyle\sum_{0\leq {\ell}\leq{{k}}}\left[m\cdot(t\eth_t-\alpha z)^{\ell}\right]\otimes \tilde{e}_{{\alpha},{\ell}}.
\end{array}\]

\end{defi}

\begin{defi}\label{PervNils}

We consider here the rational perverse sheaf $V^{\mathbb{Q}}_{q,k}$ of definition \ref{MHMNilss}. Let $K\in{Perv}(\mathbb{Q}_{X\times{\Delta}})$ be a perverse sheaf on $X\times{\Delta}$. We define ${_{P}}\mathbf{N}:  {_{P}\Psi_t(K)}  \longrightarrow  {_{P}\Psi_{t,1}}(K~{_{P}\otimes}~ {_{P}\pi^!} V^{\mathbb{Q}}_{q,k}[1])$ as the morphism of perverse sheaves
\[\displaystyle\sum_{\substack{  0 \leq \ell \leq  k \\
 1 \leq  p \leq  q
}}
\left({\log T_u} \right)^\ell\otimes v_{p,\ell}
\]
composed with the projection from $_{P}\Psi_t$ to $_{P}\Psi_{t,1}$. Here, $T_u$
is the unipotent part of the monodromy.
\end{defi}

\begin{defi}

Let $\mathscr{M}\in \textup{MHM}(X\times{\Delta})$ be a mixed Hodge module on $X\times{\Delta}$. There exists a finite set of rational numbers in $[-1,0[$, $A$ such that the $V$-filtration of the $\tilde{\mathscr{D}}_{X\times{\Delta}}$-module underlying 
$\mathscr{M}$ is indexed by $A+\mathbb{Z}$. Let $q\in\mathbb{N}$ be such that $A\subset \{-\frac{p}{q}|1\leq p\leq q\}$, we consider
\[\mathscr{M}_{q,k}:=\mathscr{M}~{_H\otimes}
~{_H\pi^{!}}\left(\mathscr{N}_{q,k}\right).\]
If $\mathcal{M}$ is a $\mathscr{D}_{X\times \Delta}$-module, the monodromy endomorphism of ${_{P}\Psi_t\mathbf{DR}_{X\times\Delta}\mathcal{M}}$ correspond to the action of
 $\exp(2i\pi t\partial_t)$ on ${_{D}\Psi_t\mathcal{M}}$. Combining definitions \ref{DNils} and \ref{PervNils}, we get a morphism of mixed Hodge modules
\[_H\mathbf{N}:  \Psi_t(\mathscr{M})  \longrightarrow  \Psi_{t,1}(\mathscr{M}_{q,k}).\]

\end{defi}

\begin{lemm}\label{canul}

Let $k_{\alpha}$ be the exponent of the nilpotent endomorphism of right multiplication by
$(t\eth_t -\alpha z)$ on $\Psi_{t,\lambda}\mathscr{M}$ where $\lambda:=e^{2i\pi\alpha}$.
Then, for all $k\geq\max_{\alpha\in[-1,0[}k_\alpha$, the morphism
\[\textup{can}\circ {_{H}\textup{\textbf{{N}}}} : \Psi_{t}\mathscr{M} \to \Phi_{t,1}\left(\mathscr{M}_{q,k}\right)\]
is equal to zero.

\end{lemm}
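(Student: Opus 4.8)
The plan is to reduce the vanishing to a statement about the underlying graded $\tilde{\mathscr{D}}_{X\times\Delta}$-modules and then to produce an explicit telescoping cancellation governed by the nilpotent operator $t\eth_t-\alpha z$. First I would invoke proposition \ref{isoDH}: since the functor sending a mixed Hodge module to its underlying $\tilde{\mathscr{D}}$-module is faithful, the morphism $\textup{can}\circ{_H\mathbf{N}}$ of mixed Hodge modules is zero as soon as the underlying morphism $\textup{can}\circ{_{\tilde{D}}\mathbf{N}}$ of $\tilde{\mathscr{D}}$-modules is. As $\Psi_t\mathscr{M}$ decomposes over the eigenvalues $\lambda=e^{2i\pi\alpha}$ as $\bigoplus_\alpha\textup{gr}^V_\alpha\mathfrak{M}$ and $\mathfrak{M}_{q,k}=\bigoplus_\alpha\mathfrak{M}_{\alpha,k}$, with ${_{\tilde{D}}\mathbf{N}}$ sending $\textup{gr}^V_\alpha\mathfrak{M}$ into $\textup{gr}^V_{-1}(\mathfrak{M}_{\alpha,k})$, I would fix $\alpha\in\mathbb{Q}\cap[-1,0[$ and show that the composite $\textup{gr}^V_\alpha\mathfrak{M}\to\textup{gr}^V_{-1}(\mathfrak{M}_{\alpha,k})\to\textup{gr}^V_0(\mathfrak{M}_{\alpha,k})$ vanishes whenever $k\geq k_\alpha$.

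Next I would make both arrows explicit using proposition \ref{VfilttildeMalpha}, which identifies $\textup{gr}^V_{-1}(\mathfrak{M}_{\alpha,k})=\bigoplus_\ell\textup{gr}^V_\alpha\mathfrak{M}\,\tilde{e}_{\alpha,\ell}$ and $\textup{gr}^V_0(\mathfrak{M}_{\alpha,k})=\bigoplus_\ell\textup{gr}^V_{\alpha+1}(\mathfrak{M}[*X_0])\,\tilde{e}_{\alpha,\ell}$ (and $\textup{gr}^V_\alpha\mathfrak{M}=\textup{gr}^V_\alpha(\mathfrak{M}[*X_0])$ since $\alpha<0$). The morphism $\textup{can}$ is induced by right multiplication by $\eth_t$; because $\mathfrak{M}_{\alpha,k}$ is localized, $t$ acts invertibly and the $\eth_t$-action is recovered from the $t\eth_t$-action of \eqref{calcul2} through the operator identity $\eth_t=(t\eth_t)\,t^{-1}+z\,t^{-1}$. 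Writing $N:=t\eth_t-\alpha z$ for the nilpotent part on $\textup{gr}^V_\alpha$ and using the commutator relation $(t\eth_t-(\alpha+1)z)\,t^{-1}+z\,t^{-1}=(t\eth_t-\alpha z)\,t^{-1}$, a direct computation yields for each $\ell$
\[\bigl((mN^\ell)\otimes\tilde{e}_{\alpha,\ell}\bigr)\cdot\eth_t=(mN^{\ell+1})t^{-1}\otimes\tilde{e}_{\alpha,\ell}-(mN^\ell)t^{-1}\otimes\tilde{e}_{\alpha,\ell-1}.\]

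Then I would sum this over $0\leq\ell\leq k$ in the defining formula ${_{\tilde{D}}\mathbf{N}}(m)=\sum_\ell(mN^\ell)\otimes\tilde{e}_{\alpha,\ell}$ of definition \ref{DNils}. Since $\tilde{e}_{\alpha,-1}=0$, the sum telescopes and only the extreme term survives, giving
\[\textup{can}\bigl({_{\tilde{D}}\mathbf{N}}(m)\bigr)=(mN^{k+1})t^{-1}\otimes\tilde{e}_{\alpha,k}.\]
As $k_\alpha$ is by definition the nilpotency index of $N$ on $\textup{gr}^V_\alpha\mathfrak{M}=\Psi_{t,\lambda}\mathscr{M}$, the hypothesis $k\geq\max_{\alpha}k_\alpha$ forces $N^{k+1}=0$, so the composite vanishes for every $\alpha$ and the lemma follows.

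I expect the main difficulty to lie not in the telescoping but in setting up the explicit description of $\textup{can}$ in the Rees (graded) setting: identifying $\textup{can}$ with right multiplication by $\eth_t$, extracting the $\eth_t$-action from the $t\eth_t$-action of \eqref{calcul2} via the invertibility of $t$ on the localized module, and carefully tracking the powers of the Rees parameter $z$ together with the shift of $V$-indices from $\alpha$ to $\alpha+1$, so that the commutator produces exactly $N$ and not a twisted variant. Once the per-term formula is correct, the cancellation is purely formal.
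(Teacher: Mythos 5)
Your proof is correct and takes essentially the same route as the paper: reduce, via faithfulness of the forgetful functor (proposition \ref{isoDH}), to showing that $\textup{can}\circ{_{\tilde{D}}\mathbf{N}}$ vanishes on the underlying $\tilde{\mathscr{D}}$-modules, and then check this on the definition \ref{DNils} of ${_{\tilde{D}}\mathbf{N}}$ using proposition \ref{VfilttildeMalpha} and the relation \eqref{calcul2}. The paper compresses the second step into the single sentence ``we check this on the definition of $_{\tilde{D}}\mathbf{N}$''; your telescoping computation, which correctly leaves only the term $(mN^{k+1})t^{-1}\otimes\tilde{e}_{\alpha,k}=0$, is precisely the verification it leaves to the reader.
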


\begin{proof}

We check this on the definition of $_{\tilde{D}}\mathbf{N}$. The forgetful functor from the category of mixed Hodge modules to the category of $\mathscr{\tilde{D}}$-modules is faithful, therefore if $\textup{can}\circ {_{\tilde{D}}\textup{\textbf{{N}}}}$ is equal to zero, then $\textup{can}\circ {_{H}\textup{\textbf{{N}}}}$ is also equal to zero.

\end{proof}

\begin{defi}\label{Nils}

Let us consider the exact sequence (2.24.3) fo \cite[2.24]{SaitoMHM}

\begin{equation}\label{SEsaito}0 \to {_Hi_*}\mathscr{H}^{-1}{_Hi^*}\left(\mathscr{M}_{q,k}\right) \to \Psi_{t,1}\left(\mathscr{M}_{q,k}\right) \xrightarrow{\textup{can}} \Phi_{t,1}\left(\mathscr{M}_{q,k}\right) \to
{_Hi_*}\mathscr{H}^{0}{_Hi^*}\left(\mathscr{M}_{q,k}\right) \to 0.
\end{equation}
Conbined with the previous lemma \ref{canul}, this gives a natural morphism
\[\mathbf{Nils}:\Psi_{t}\mathscr{M} \to  {_Hi_*}\mathscr{H}^{-1}{_Hi^*}\left(\mathscr{M}_{q,k}\right).\]

\end{defi}

\begin{rema}\label{defiminv}

Here, $i$ is the inclusion of $X_0\times\{0\}$ in $X\times\Delta$. We denote by $j:X\times \Delta^*\hookrightarrow X\times \Delta$ the inclusion. 
According to definition \ref{defimageinverseH}, ${_Hi_*}\mathscr{H}^{-1}{_Hi^*}\mathscr{M}$ is $\mathscr{H}^{-1}$ of the complex
\[0 \to {_Hj_!}{_Hj^{*}}\mathscr{M} \to \underset{\bullet}{\mathscr{M}} \to 0.\]

\end{rema}

\section{The natural morphism for two hypersurfaces}

In this section we construct a comparison morphism between $\Psi_{t_1}\Psi_{t_2}\mathscr{M}$ and $\Psi_{t_1}\Psi_{t_2}\mathscr{M}$. The intermediate objects occurring in this construction will be defined using usual functors. Thanks to the functorial formalism of M. Saito, the comparison morphism will thus be defined \emph{in the category of mixed Hodge modules}.

\begin{defi}\label{MHMplat}

Let $\mathscr{N}\in \textup{MHM}(X)$, we say that $\mathscr{N}$ is \emph{flat} if its underlying $\mathscr{D}_X$-module is flat as an $\mathscr{O}_X$-module.

\end{defi}

\begin{lemm}

If $\mathscr{N}\in \textup{MHM}(X)$ is flat, then the functor $\cdot{_H\otimes}~\mathscr{N}$ is exact.

\end{lemm}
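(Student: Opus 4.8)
The plan is to reduce the statement to the level of underlying $\mathscr{D}_X$-modules, where flatness directly controls the derived tensor product. The key structural fact is that the forgetful functor from $\textup{MHM}(X)$ to $\textup{Mod}_{h,r}(\mathscr{D}_X)$ is faithful and exact, so a sequence of mixed Hodge modules is exact if and only if the underlying sequence of $\mathscr{D}_X$-modules is exact. Consequently, to prove that $\cdot{_H\otimes}~\mathscr{N}$ is exact it suffices to show that the underlying functor $\cdot{_D\otimes}~\mathcal{N}$ on $\mathscr{D}_X$-modules is exact, where $\mathcal{N}$ is the $\mathscr{D}_X$-module underlying $\mathscr{N}$.

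First I would unwind the definition $\mathscr{M}{_H\otimes}~\mathscr{N}={_H\delta}^*\left(\mathscr{M}{_H\boxtimes}~\mathscr{N}\right)$ and its $\mathscr{D}$-module analogue $\mathcal{M}{_D\otimes}~\mathcal{N}={_D\delta}^*\left(\mathcal{M}{_D\boxtimes}~\mathcal{N}\right)$, where $\delta:X\to X\times X$ is the diagonal inclusion. The external product $_D\boxtimes$ is exact in each variable, since on underlying $\mathscr{O}$-modules it is an external tensor product over the two disjoint factors and introduces no higher $\mathrm{Tor}$. Thus the only possible failure of exactness comes from the inverse image $_D\delta^*$. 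On underlying $\mathscr{O}_{X\times X}$-modules, restriction to the diagonal computes the derived tensor product $\mathcal{M}\otimes^L_{\mathscr{O}_X}\mathcal{N}$, whose cohomology in degree $-i$ is $\mathrm{Tor}^{\mathscr{O}_X}_i(\mathcal{M},\mathcal{N})$.

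The heart of the argument is then that the hypothesis ``$\mathscr{N}$ flat'', i.e. $\mathcal{N}$ flat as an $\mathscr{O}_X$-module (Definition \ref{MHMplat}), forces $\mathrm{Tor}^{\mathscr{O}_X}_i(\mathcal{M},\mathcal{N})=0$ for all $i>0$ and every $\mathcal{M}$. Hence $_D\delta^*\left(\mathcal{M}{_D\boxtimes}~\mathcal{N}\right)$ is concentrated in a single cohomological degree, and the functor $\cdot{_D\otimes}~\mathcal{N}$, being the degree-zero part, sends short exact sequences of $\mathscr{D}_X$-modules to short exact sequences: applying the right-exact composite $_D\delta^*\circ(\cdot{_D\boxtimes}~\mathcal{N})$ to a short exact sequence produces a long exact cohomology sequence whose higher terms all vanish by flatness, leaving a short exact sequence. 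Lifting this back through the faithful exact forgetful functor gives exactness of $\cdot{_H\otimes}~\mathscr{N}$ in $\textup{MHM}(X)$.

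The main obstacle is the bookkeeping needed to identify the abstract inverse image $_H\delta^*$, which a priori lands in the bounded derived category $D^b\textup{MHM}(X)$, with the honest $\mathscr{O}_X$-module derived tensor product on underlying modules, and in particular to verify that flatness is exactly the condition guaranteeing concentration in a single degree. I expect this step to require only the compatibility of $_H\delta^*$ and $_H\boxtimes$ with the forgetful functor, together with the standard computation of $\delta^*$ of an external product for $\mathscr{O}$-modules; once that compatibility is in place the vanishing of higher $\mathrm{Tor}$ does the rest, and no nontrivial Hodge-theoretic input beyond the exactness of the forgetful functor is needed.
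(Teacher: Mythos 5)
Your proposal is correct and follows essentially the same route as the paper: reduce to the underlying $\mathscr{D}_X$-modules via the exact, faithful forgetful functor, use flatness of the $\mathscr{O}_X$-module underlying $\mathscr{N}$ to kill all higher cohomology of the tensored complex, and lift the resulting exactness back to $\textup{MHM}(X)$ by faithfulness. The paper's proof is merely terser, leaving implicit the unwinding of ${_H\delta}^*(\cdot~{_H\boxtimes}~\mathscr{N})$ and the $\mathrm{Tor}$-vanishing that you spell out.
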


\begin{proof}

The tensor product functor in the category $\textup{MHM}(X)$ correspond to the tensor product of the underlying $\mathscr{D}_X$-modules through the natural forgetful functor. Let $0\to\mathscr{M}_1\to\mathscr{M}_2\to\mathscr{M}_3\to 0$ be an exact sequence in $\textup{MHM}(X)$ considered as a complex $\mathscr{M}_\bullet$. Since the  
$\mathscr{O}_X$-module underlying $\mathscr{N}$ is flat, for all $i\in \mathbb{Z}$, the $\mathscr{D}_X$-module underlying $\mathscr{H}^i(\mathscr{M}_{\bullet}{_H\otimes}~\mathscr{N})$ is zero. Thus, for all $i\in\mathbb{Z}$, $\mathscr{H}^i(\mathscr{M}_{\bullet}{_H\otimes}~\mathscr{N})$ is equal to zero and  $\mathscr{M}_{\bullet}{_H\otimes}~\mathscr{N}$ is a short exact sequence.

\end{proof}

\begin{lemm}\label{morphismenat}

Let $i:Y\to X$ be the inclusion of a smooth hypersurface, $\mathscr{M}$ and $\mathscr{N}$ objetcs in $\textup{MHM}(X)$ such that $\mathscr{N}$ is flat. There exists a natural morphism
\begin{equation}\label{commuttensor}
{_Hi}_*\mathscr{H}^{-1}{_Hi}^*(\mathscr{M}{_H\otimes}\mathscr{N})\to ({_Hi}_*\mathscr{H}^{-1}{_Hi}^*\mathscr{M}){_H\otimes}\mathscr{N}.\end{equation}

\end{lemm}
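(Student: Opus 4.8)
The plan is to construct the natural morphism in \eqref{commuttensor} by using the description of ${_Hi_*}\mathscr{H}^{-1}{_Hi^*}$ recalled in Remark \ref{defiminv}. Namely, for any mixed Hodge module $\mathscr{P}$ on $X$, the object ${_Hi_*}\mathscr{H}^{-1}{_Hi^*}\mathscr{P}$ is $\mathscr{H}^{-1}$ of the two-term complex $\left[{_Hj_!}{_Hj^*}\mathscr{P}\to\mathscr{P}\right]$, where $j:X\times\Delta^*\hookrightarrow X\times\Delta$ is the open inclusion. Applying this with $\mathscr{P}=\mathscr{M}{_H\otimes}\mathscr{N}$ on the left and with $\mathscr{P}=\mathscr{M}$ on the right (afterwards tensored with $\mathscr{N}$), I would reduce the statement to producing a comparison morphism at the level of these complexes.

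First I would observe that since $\mathscr{N}$ is flat, Lemma (the one immediately preceding this statement) tells us that $\cdot{_H\otimes}\mathscr{N}$ is exact; hence it commutes with taking $\mathscr{H}^{-1}$ of a complex and, in particular, $\left(\left[{_Hj_!}{_Hj^*}\mathscr{M}\to\mathscr{M}\right]\right){_H\otimes}\mathscr{N}$ computes $\left({_Hi_*}\mathscr{H}^{-1}{_Hi^*}\mathscr{M}\right){_H\otimes}\mathscr{N}$ in its degree $-1$ cohomology. This turns the right-hand side of \eqref{commuttensor} into the $\mathscr{H}^{-1}$ of the complex $\left[{_Hj_!}{_Hj^*}\mathscr{M}{_H\otimes}\mathscr{N}\to\mathscr{M}{_H\otimes}\mathscr{N}\right]$. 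The left-hand side is the $\mathscr{H}^{-1}$ of $\left[{_Hj_!}{_Hj^*}(\mathscr{M}{_H\otimes}\mathscr{N})\to\mathscr{M}{_H\otimes}\mathscr{N}\right]$. Thus the two complexes share the same term in degree $0$, and the desired morphism will be induced by a natural morphism between their degree $-1$ terms that is compatible with the maps to the common degree $0$ term.

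Next I would build the needed morphism ${_Hj_!}{_Hj^*}(\mathscr{M}{_H\otimes}\mathscr{N})\to\left({_Hj_!}{_Hj^*}\mathscr{M}\right){_H\otimes}\mathscr{N}$. The key point is the projection-type compatibility of $j_!j^*$ with tensoring by a flat module: since $j^*$ is monoidal and $j_!$ satisfies a projection formula, one has a natural arrow ${_Hj_!}{_Hj^*}\mathscr{M}{_H\otimes}\mathscr{N}\to{_Hj_!}{_Hj^*}(\mathscr{M}{_H\otimes}\mathscr{N})$; flatness of $\mathscr{N}$ makes $j^*(\mathscr{M}{_H\otimes}\mathscr{N})\simeq j^*\mathscr{M}{_H\otimes}j^*\mathscr{N}$ on the open part, and I would check this identification descends to a morphism in the stated direction after applying $j_!$. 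Together with the identity on $\mathscr{M}{_H\otimes}\mathscr{N}$ in degree $0$, this gives a morphism of two-term complexes, and passing to $\mathscr{H}^{-1}$ yields \eqref{commuttensor}.

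The main obstacle I anticipate is verifying that this morphism of complexes is genuinely well defined \emph{in the category of mixed Hodge modules} and not merely at the level of underlying $\mathscr{D}$-modules or perverse sheaves: the functors ${_Hj_!}$, ${_Hj^*}$ and ${_H\otimes}$ carry Hodge and weight data, and the projection formula compatibility must be realized by a morphism in $\textup{MHM}(X)$. I would handle this by invoking Saito's functorial formalism, which provides these functors and their natural transformations in the category of mixed Hodge modules, so that each arrow above is a morphism of mixed Hodge modules; the flatness hypothesis on $\mathscr{N}$ is precisely what guarantees exactness of the tensor functor and hence that $\mathscr{H}^{-1}$ commutes with $\cdot{_H\otimes}\mathscr{N}$, pinning down the comparison uniquely.
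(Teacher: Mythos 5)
Your reduction is exactly the paper's: both sides are computed as $\mathscr{H}^{-1}$ of the two-term complexes of Remark \ref{defiminv}, and the exactness of $\cdot\,{_H\otimes}\,\mathscr{N}$ (the lemma preceding this one) identifies $({_Hi}_*\mathscr{H}^{-1}{_Hi}^*\mathscr{M}){_H\otimes}\mathscr{N}$ with the $\mathscr{H}^{-1}$ of $\left[({_Hj_!}{_Hj^*}\mathscr{M}){_H\otimes}\mathscr{N}\to\mathscr{M}{_H\otimes}\mathscr{N}\right]$, so everything hinges on producing an arrow ${_Hj_!}{_Hj^*}(\mathscr{M}{_H\otimes}\mathscr{N})\to({_Hj_!}{_Hj^*}\mathscr{M}){_H\otimes}\mathscr{N}$ over the identity in degree $0$. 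Where you genuinely differ from the paper is in how this arrow is produced, and this is also where your write-up is weakest. You invoke monoidality of ${_Hj^*}$ and a projection formula for ${_Hj_!}$; but note that the natural transformation these adjunctions hand you — the arrow $({_Hj_!}{_Hj^*}\mathscr{M}){_H\otimes}\mathscr{N}\to{_Hj_!}{_Hj^*}(\mathscr{M}{_H\otimes}\mathscr{N})$ you write first — points in the direction \emph{opposite} to the one needed, so your plan is complete only once you (i) establish these compatibilities as morphisms in $\textup{MHM}(X)$, which are not off-the-shelf facts for Saito's tensor product ${_H\delta}^*(\cdot\,{_H\boxtimes}\,\cdot)$ and are not proved in the paper, and (ii) show that arrow is invertible, for instance by checking it is an isomorphism of underlying $\mathscr{D}$-modules (where flatness makes it the classical projection formula for a tensor with an integrable connection) and then citing Proposition \ref{isoDH}. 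The paper sidesteps both points with a more economical device: it applies ${_Hj_!}{_Hj^*}$ to the counit-tensored-with-identity map $({_Hj_!}{_Hj^*}\mathscr{M}){_H\otimes}\mathscr{N}\to\mathscr{M}{_H\otimes}\mathscr{N}$, observes that the resulting morphism $\phi$ is an isomorphism simply because ${_Hj_!}{_Hj^*}$ depends only on the restriction to $X\setminus Y$, where that map is invertible, and takes $\psi\circ\phi^{-1}$, with $\psi$ the counit of $({_Hj_!}{_Hj^*}\mathscr{M}){_H\otimes}\mathscr{N}$; this uses nothing beyond the $({_Hj_!},{_Hj^*})$ adjunction already available in $\textup{MHM}(X)$ together with the exactness you also use. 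Both routes lead to \eqref{commuttensor}, but the paper's is self-contained, whereas yours delegates real content to compatibilities that would themselves need the check-on-underlying-objects argument you only gesture at in your final paragraph.
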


\begin{proof}

In this proof, we will omit the index $H$ of functors to lighten notation. We will use the definition of the functor ${i}_*\mathscr{H}^{-1}{i}^*$ in remark \ref{defiminv}. Let us denote by $j:X\setminus Y\to X$ the inclusion. We consider the following commutative square:
\[\xymatrix{
 (j_!j^{*}\mathscr{M})\otimes\mathscr{N}  \ar[r]  & \mathscr{M}\otimes\mathscr{N}  \\ 
 j_!j^{*}\left((j_!j^{*}\mathscr{M})\otimes\mathscr{N}\right) \ar[u]^{\psi} \ar[r]^-{\phi}  & j_!j^{*}(\mathscr{M}\otimes\mathscr{N}).  \ar[u]
}\]
The functor $j_!j^{*}$ depends only on the restriction to $X\setminus Y$, thus the morphism $\phi$ is an isomorphism. We deduce that the morphism of complexes induced by $f$ is a quasi-isomorphism. Therefore we have a morphism of complexes
\[\xymatrix{
0 \ar[r] & (j_!j^{*}\mathscr{M})\otimes\mathscr{N}  \ar[r]  & \mathscr{M}\otimes\mathscr{N} \ar[r] & 0 \\ 
0 \ar[r] & j_!j^{*}(\mathscr{M}\otimes\mathscr{N}) \ar[u]^{\psi\circ\phi^{-1}} \ar[r]  & \underset{\bullet}{\mathscr{M}\otimes\mathscr{N}}  \ar@{=}[u] \ar[r] & 0.
}\]
This induces a morphism between the $\mathscr{H}^{-1}$ of these two complexes, applying the definition of the functor ${i}_*\mathscr{H}^{-1}{i}^*$ in remark \ref{defiminv}, we get the morphism
\[{i}_*\mathscr{H}^{-1}{i}^*(\mathscr{M}\otimes\mathscr{N})\longrightarrow
({i}_*\mathscr{H}^{-1}{i}^*\mathscr{M})\otimes\mathscr{N},
\]
we use here the exactness of the functor $\cdot\otimes\mathscr{N}$.

\end{proof}

%








Let us consider the projection
 \[\begin{array}{cccc}
     \pi=(\pi_1,\pi_2):& X\times {\Delta}^2 & \to & {\Delta}^2\\
       & (x,t_1,t_2) & \mapsto & (t_1,t_2).
     \end{array}\] 
We denote $H_i:=\{t_i=0\}$ for $i\in\{1,2\}$, we have the natural inclusions
\begin{itemize}
\item $i_1:H_1\hookrightarrow X\times {\Delta}^2$, 
\item $i_2:H_2 \hookrightarrow X\times {\Delta}^2 $,
\item $i_{(1,2)}:H_1\cap H_2\hookrightarrow X\times {\Delta}^2$.
\end{itemize}
We define the following polarizable mixed Hodge module on $\Delta^2$:
\[\mathscr{N}_{\boldsymbol{q},\boldsymbol{k}}:=
\mathscr{N}_{q_1,k_1}~{_H\boxtimes}~\mathscr{N}_{q_2,k_2}\]
and
\[\mathscr{M}_{\boldsymbol{q},\boldsymbol{k}}:=
\mathscr{M}~{_H\otimes}~{_H\pi}^!\mathscr{N}_{\boldsymbol{q},\boldsymbol{k}}.
\]

\begin{prop}\label{constructiondemidiag}

Let $\mathscr{M}\in \textup{MHM}(X\times\Delta^2)$, there exist natural morphisms
\begin{equation}\label{demidiag}
\xymatrix{{_Hi}_{1*}\mathscr{H}^{-1}{_Hi}_{1}^*({_Hi}_{2*}\mathscr{H}^{-1}{_Hi}_{2}^*
(\mathscr{M}_{q_1,k_1})~{_H\otimes}~{_H\pi_2}^!\mathscr{N}_{q_2,k_2}) \\
 {_Hi}_{(1,2)*}\mathscr{H}^{-2}{_Hi}_{(1,2)}^*(\mathscr{M}_{\boldsymbol{q},\boldsymbol{k}}) 
 \ar[u] \ar[d] \\
{_Hi}_{2*}\mathscr{H}^{-1}{_Hi}_{2}^*({_Hi}_{1*}\mathscr{H}^{-1}{_Hi}_{1}^*
(\mathscr{M}_{q_2,k_2})~{_H\otimes}~{_H\pi_1}^!\mathscr{N}_{q_1,k_1}).}
\end{equation}

\end{prop}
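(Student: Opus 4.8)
The plan is to construct the upper arrow of \eqref{demidiag}; the lower one is then obtained by exchanging the roles of the indices $1$ and $2$. Throughout, write $G_j:={_Hi_{j*}}\mathscr{H}^{-1}{_Hi_j^*}$ for $j\in\{1,2\}$, so that the upper target is $G_1\big(G_2\mathscr{M}_{q_1,k_1}\,{_H\otimes}\,{_H\pi_2^!}\mathscr{N}_{q_2,k_2}\big)$ and the lower one is $G_2\big(G_1\mathscr{M}_{q_2,k_2}\,{_H\otimes}\,{_H\pi_1^!}\mathscr{N}_{q_1,k_1}\big)$. I would combine three ingredients: a box-product rewriting of $\mathscr{M}_{\boldsymbol{q},\boldsymbol{k}}$, an identification of the central object as an iterate $G_1G_2$, and the commutation morphism of Lemma~\ref{morphismenat}. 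First I would record
\[
\mathscr{M}_{\boldsymbol{q},\boldsymbol{k}}\simeq\mathscr{M}_{q_1,k_1}\,{_H\otimes}\,{_H\pi_2^!}\mathscr{N}_{q_2,k_2}\simeq\mathscr{M}_{q_2,k_2}\,{_H\otimes}\,{_H\pi_1^!}\mathscr{N}_{q_1,k_1},
\]
which follows from $\mathscr{N}_{\boldsymbol{q},\boldsymbol{k}}=\mathscr{N}_{q_1,k_1}\,{_H\boxtimes}\,\mathscr{N}_{q_2,k_2}$ and the Künneth-type identity ${_H\pi^!}(\mathscr{A}\,{_H\boxtimes}\,\mathscr{B})\simeq{_H\pi_1^!}\mathscr{A}\,{_H\otimes}\,{_H\pi_2^!}\mathscr{B}$, the latter deduced from ${_H\otimes}=\delta^*\circ{_H\boxtimes}$ and $\pi=(\pi_1\times\pi_2)\circ\delta$ together with associativity of $_H\otimes$. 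The shifts and Tate twists produced by the smooth pullbacks must be tracked but cause no real difficulty. Since $\mathfrak{N}_{\alpha,k}$ equals its localisation, $_H\pi_2^!\mathscr{N}_{q_2,k_2}$ is flat in the sense of Definition~\ref{MHMplat}, so Lemma~\ref{morphismenat} applies to it.

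Next I would identify the central object as $G_1G_2\mathscr{M}_{\boldsymbol{q},\boldsymbol{k}}$. Factoring $i_{(1,2)}=i_1\circ a'$ with $a':H_1\cap H_2\hookrightarrow H_1$ and $b:H_1\cap H_2\hookrightarrow H_2$, the transverse cartesian square gives the proper base-change isomorphism ${_Hi_1^*}{_Hi_{2*}}\simeq{_Ha'_*}{_Hb^*}$, whence ${_Hi_{(1,2)*}}{_Hi_{(1,2)}^*}\mathscr{P}\simeq{_Hi_{1*}}{_Hi_1^*}{_Hi_{2*}}{_Hi_2^*}\mathscr{P}$ for every $\mathscr{P}$. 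As the inclusions of the smooth divisors $H_1,H_2$ make each $_Hi_j^*$ of cohomological amplitude $[-1,0]$ and each $_Hi_{j*}$ exact, the truncation triangle of $C:={_Hi_{2*}}{_Hi_2^*}\mathscr{P}$ gives $\mathscr{H}^{-2}({_Hi_1^*}C)\simeq\mathscr{H}^{-1}({_Hi_1^*}\mathscr{H}^{-1}C)$, hence a natural isomorphism
\[
{_Hi_{(1,2)*}}\mathscr{H}^{-2}{_Hi_{(1,2)}^*}\mathscr{P}\simeq G_1G_2\mathscr{P},
\]
and the symmetric one with $1$ and $2$ exchanged.

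I would then assemble the upper arrow as the composite
\[
{_Hi_{(1,2)*}}\mathscr{H}^{-2}{_Hi_{(1,2)}^*}(\mathscr{M}_{\boldsymbol{q},\boldsymbol{k}})\xrightarrow{\ \sim\ }G_1G_2\mathscr{M}_{\boldsymbol{q},\boldsymbol{k}}\longrightarrow G_1\big(G_2\mathscr{M}_{q_1,k_1}\,{_H\otimes}\,{_H\pi_2^!}\mathscr{N}_{q_2,k_2}\big),
\]
where the first map is the isomorphism above, and the second is obtained by substituting $\mathscr{M}_{\boldsymbol{q},\boldsymbol{k}}\simeq\mathscr{M}_{q_1,k_1}\,{_H\otimes}\,{_H\pi_2^!}\mathscr{N}_{q_2,k_2}$ inside the inner functor $G_2$, applying the commutation morphism of Lemma~\ref{morphismenat} to $G_2$, and finally applying the functor $G_1$. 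Exchanging $1$ and $2$ throughout yields the lower arrow.

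The delicate step is the second one: establishing, within Saito's formalism, the proper base-change isomorphism in $D^b\textup{MHM}$ and the truncation identification $\mathscr{H}^{-2}\simeq\mathscr{H}^{-1}\mathscr{H}^{-1}$ for the composed functor, which rests on the $t$-exactness of the closed pushforwards $_Hi_{j*}$, the amplitude $[-1,0]$ of the $_Hi_j^*$, and base change for the transverse square. The remaining point requiring care is the shift-and-twist bookkeeping in the Künneth identity of the first paragraph.
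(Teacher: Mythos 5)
Your proposal is correct, and its skeleton matches the paper's proof: (i) rewrite $\mathscr{M}_{\boldsymbol{q},\boldsymbol{k}}\simeq\mathscr{M}_{q_1,k_1}\,{_H\otimes}\,{_H\pi_2}^!\mathscr{N}_{q_2,k_2}$; (ii) identify the central object with the iterate $G_1G_2\mathscr{M}_{\boldsymbol{q},\boldsymbol{k}}$ (in your notation $G_j={_Hi_{j*}}\mathscr{H}^{-1}{_Hi_j^*}$); (iii) use flatness of ${_H\pi_2}^!\mathscr{N}_{q_2,k_2}$ in the sense of Definition \ref{MHMplat} to apply Lemma \ref{morphismenat} inside $G_1$; (iv) obtain the second arrow by exchanging the indices. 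The genuine divergence is in step (ii), which is the bulk of the paper's proof. The paper never leaves the abelian category $\textup{MHM}$: by Definition \ref{defimageinverseH}, the central object is the $\mathscr{H}^{-2}$ of the double complex \eqref{doublecomp} built from the \emph{open} inclusions $j_1,j_2,j_{(1,2)}$, and the identification \eqref{isom101} is obtained from the base-change morphism $j_{2!}j_1^{*}\to j_1^{*}j_{2!}$ --- an isomorphism because its kernel and cokernel have zero underlying perverse sheaves --- followed by a diagram chase through short exact sequences. You instead argue in $D^b\textup{MHM}$ with the \emph{closed} immersions: proper base change $i_1^*i_{2*}\simeq a'_*b^*$ for the transverse square, exactness of the $i_{j*}$, amplitude $[-1,0]$ of the $i_j^*$, and the truncation identity $\mathscr{H}^{-2}(i_1^*C)\simeq\mathscr{H}^{-1}(i_1^*\mathscr{H}^{-1}C)$ for $C$ with cohomology in degrees $-1,0$; that computation is sound. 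The trade-off: the paper's route is self-contained given its definitions, since the only base change it needs is for open immersions and is verified on the spot by reduction to perverse sheaves (the device of Proposition \ref{isoDH}), whereas your route is shorter and more conceptual but delegates to Saito's derived formalism the adjunctions and closed-immersion base change, whose verification ultimately runs through the very same reduction to perverse sheaves --- precisely the delicate step you flag. One point where you are more careful than the paper: you make explicit the K\"unneth identification in step (i), which the paper uses silently when it feeds $\mathscr{M}_{q_1,k_1}$ and ${_H\pi_2}^!\mathscr{N}_{q_2,k_2}$ into Lemma \ref{morphismenat} to produce its morphism \eqref{adjonc}.
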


\begin{rema}

Here, the functor ${_Hi}_{(1,2)*}\mathscr{H}^{-2}{_Hi}_{(1,2)}^*$ is also defined using definition \ref{defimageinverseH}.

\end{rema}

\begin{proof}

We are going to construct the natural morphism
\[{_Hi}_{(1,2)*}\mathscr{H}^{-2}{_Hi}_{(1,2)}^*(\mathscr{M}_{\boldsymbol{q},\boldsymbol{k}}) \to
{_Hi}_{1*}\mathscr{H}^{-1}{_Hi}_{1}^*({_Hi}_{2*}\mathscr{H}^{-1}{_Hi}_{2}^*
(\mathscr{M}_{q_1,k_1})~{_H\otimes}~{_H\pi_2}^!\mathscr{N}_{q_2,k_2}), 
 \] 
the second one being deduced by symmetry. We will omit the index $H$ in functors to lighten the notation. We denote $U_i:=H_i^c=\{t_i\neq 0\}$ for $i\in\{1,2\}$, we have the natural inclusions
\begin{itemize}
\item $j_1:U_1\hookrightarrow X\times {\Delta}^2$, 
\item $j_2:U_2 \hookrightarrow X\times {\Delta}^2 $,
\item $j_{(1,2)}:U_1\cap U_2=X\times(\Delta^*)^2\hookrightarrow X\times {\Delta}^2$.
\end{itemize}
Following definition \ref{defimageinverseH}, ${i}_{(1,2)*}\mathscr{H}^{-2}{i}_{(1,2)}^*(\mathscr{M}_{\boldsymbol{q},\boldsymbol{k}})$ is the $\mathscr{H}^{-2}$ 
of the simple complex associated to the following double complex:
\begin{equation}\label{doublecomp}\xymatrix{
j_{1!}j_1^{*}\mathscr{M}_{\boldsymbol{q},\boldsymbol{k}} \ar[r] & \overset{\bullet}{\mathscr{M}_{\boldsymbol{q},\boldsymbol{k}}} \\
j_{(1,2)!}j_{(1,2)}^{*}\mathscr{M}_{\boldsymbol{q},\boldsymbol{k}} \ar[r] \ar[u] &
j_{2!}j_2^{*}\mathscr{M}_{\boldsymbol{q},\boldsymbol{k}}. \ar[u]
}\end{equation}
Let us consider the cartesian square
\[\xymatrix{U_1\cap U_2 \ar[r]^{j_2} \ar[d]^{j_1} & U_1 \ar[d]^{j_1} \\
U_2 \ar[r]^{j_2} & X.
}\]
By adjunction, we have a natural transformation of proper base change
\[
{j}_{2!}{j}_{1}^{*}\xrightarrow{}{j}_{1}^{*}{j}_{2!}.
\]
If we apply this morphism to a mixed Hodge module, since proper base change is an isomorphism in the category of perverse sheaves, the perverse sheaves underlying the kernel and the cokernel are zero. Thus, the kernel and cokernel are zero in the category 
$\textup{MHM}(X)$. The natural transformation of proper base change is therefore an isomorphism in $\textup{MHM}(X)$. 

We deduce from that the isomorphism of functors
\[j_{(1,2)!}j_{(1,2)}^{*}\xrightarrow{\sim}j_{1!}j_1^{*}j_{2!}j_2^{*}
\]
using the equality $j_{(1,2)}=j_{2}\circ j_{1}$. We then deduce for the double complex \eqref{doublecomp} the following commutative diagram with short exact sequences:
\[\xymatrix{
i_{1*}\mathscr{H}^{-1}i_1^*\left(\mathscr{M}_{\boldsymbol{q},\boldsymbol{k}}\right) \ar[r] &
j_{1!}j_1^{*}\mathscr{M}_{\boldsymbol{q},\boldsymbol{k}} \ar[r] & \overset{\bullet}{\mathscr{M}_{\boldsymbol{q},\boldsymbol{k}}} \\
i_{1*}\mathscr{H}^{-1}i_1^*\left(j_{2!}j_2^{*}\mathscr{M}_{\boldsymbol{q},\boldsymbol{k}}\right) \ar[r] \ar[u] & 
j_{1!}j_1^{*}j_{2!}j_2^{*}\mathscr{M}_{\boldsymbol{q},\boldsymbol{k}} \ar[r] \ar[u] &
j_{2!}j_2^{*}\mathscr{M}_{\boldsymbol{q},\boldsymbol{k}} \ar[u] \\
i_{1*}\mathscr{H}^{-1}i_1^*\left(i_{2*}\mathscr{H}^{-1}i_2^*\left(\mathscr{M}_{\boldsymbol{q},\boldsymbol{k}}\right)\right). \ar[u]
}\]
We have $i_{1*}\mathscr{H}^{-2}i_1^*=0$, therefore the exactness of the vertical short sequence comes from the same result for perverse sheaves. Since ${i}_{(1,2)*}\mathscr{H}^{-2}{i}_{(1,2)}^*(\mathscr{M}_{\boldsymbol{q},\boldsymbol{k}})$ is the $\mathscr{H}^{-2}$ of the simple complex associated to the double complex
 \eqref{doublecomp}, we have the isomorphism
\begin{equation}\label{isom101}
{i}_{(1,2)*}\mathscr{H}^{-2}{i}_{(1,2)}^*(\mathscr{M}_{\boldsymbol{q},\boldsymbol{k}})\simeq i_{1*}\mathscr{H}^{-1}i_1^*\left(i_{2*}\mathscr{H}^{-1}i_2^*\left(\mathscr{M}_{\boldsymbol{q},\boldsymbol{k}}\right)\right).\end{equation}
The mixed Hodge module ${_H\pi_2}^!\mathscr{N}_{q_2,k_2}$ is flat in the sense of definition \ref{MHMplat}. Indeed, following definition \ref{imageinverse!}, the underlying $\mathscr{D}_{X\times {\Delta}^2}$-module is the inverse image for  $\mathscr{D}$-modules of the module underlying $\mathscr{N}_{q_2,k_2}$. The latter being a free $\mathscr{O}_\Delta$-module, its inverse image by the projection on $\Delta$ is also free hence flat. We then apply lemma \ref{morphismenat} to $i_2$, $\mathscr{M}_{q_1,k_1}$ et ${_H\pi_2}^!\mathscr{N}_{q_2,k_2}$. We get the morphism
\begin{equation}\label{adjonc}{i}_{2*}\mathscr{H}^{-1}{i}_{2}^*(\mathscr{M}_{\boldsymbol{q},\boldsymbol{k}}) \to 
{i}_{2*}\mathscr{H}^{-1}{i}_{2}^*
(\mathscr{M}_{q_1,k_1})~{\otimes}~{\pi_2}^!\mathscr{N}_{q_2,k_2}.
\end{equation}
Combining the morphisms \eqref{isom101} and \eqref{adjonc}, we get the expected morphism.

\end{proof}

By applying the morphism $\mathbf{Nils}$ of definition \ref{Nils} for the functions $t_1:X\times {\Delta}^2\to\mathbb{C}$ and $t_2:X\times {\Delta}^2\to\mathbb{C}$, 
we add two morphisms to diagram \eqref{demidiag} in the following way:

\begin{equation}\label{presquediag}
\xymatrix{\Psi_{t_1}\Psi_{t_2}\mathscr{M} \ar[r] &
{_Hi}_{1*}\mathscr{H}^{-1}{_Hi}_{1}^*\left({_Hi}_{2*}\mathscr{H}^{-1}{_Hi}_{2}^*
(\mathscr{M}_{q_2,k_2})~{_H\otimes}~{_H\pi_2}^!\mathscr{N}_{q_1,k_1}\right) \\
 & {_Hi}_{(1,2)*}\mathscr{H}^{-2}{_Hi}_{(1,2)}^*(\mathscr{M}_{\boldsymbol{q},\boldsymbol{k}}) 
 \ar[u] \ar[d] \\
 \Psi_{t_2}\Psi_{t_1}\mathscr{M} \ar[r] &
{_Hi}_{2*}\mathscr{H}^{-1}{_Hi}_{2}^*\left({_Hi}_{1*}\mathscr{H}^{-1}{_Hi}_{1}^*
(\mathscr{M}_{q_1,k_1})~{_H\otimes}~{_H\pi_1}^!\mathscr{N}_{q_2,k_2}\right).}
\end{equation}

\section{Applying the forgetful functor}

In this section, we consider the projection
 \[\begin{array}{cccc}
    \pi=(\pi_1,\pi_2):& X\times {\Delta}^2 & \to & {\Delta}^2\\
       & (x,t_1,t_2) & \mapsto & (t_1,t_2).
     \end{array}\] 
We denote $\mathbf{H}:=\{H_1,H_2\}$. Let $\mathcal{M}$ be a coherent right $\mathscr{D}_{X\times\Delta^2}$-module such that the pair $(\mathbf{H},\mathcal{M})$ is \emph{without slope}. According to \cite[proposition 2]{Maisonobe}, the canonical $V$-multifiltration satisfies, for all $(\alpha_1,\alpha_2)\in\mathbb{C}^2$,
\begin{equation}\label{commutmaisonobe}\textup{gr}_{\alpha_1}^{V^{H_1}}\textup{gr}_{\alpha_2}^{V^{H_2}}\mathcal{M}
\xleftarrow{\sim}
\textup{gr}_{\alpha_1,\alpha_2}^{V^{\mathbf{H}}}\mathcal{M}
\xrightarrow{\sim}
\textup{gr}_{\alpha_2}^{V^{H_2}}\textup{gr}_{\alpha_1}^{V^{H_1}}\mathcal{M}.
\end{equation}

\begin{defi}

Let $\mathcal{N}_{\alpha,k}$ be the $\mathscr{D}_\Delta$-module underlying the $\tilde{\mathscr{D}}_\Delta$-module of definition \ref{MHMNilss}. Let $\boldsymbol{\alpha}\in[-1,0[^2$ and $\boldsymbol{k}\in \mathbb{N}^2$, we define the following $\mathscr{D}_{\Delta^2}$-module:
\[\mathcal{N}_{\boldsymbol{\alpha},\boldsymbol{k}}:=\mathcal{N}_{\alpha_1,k_1}~{_D\boxtimes}~\mathcal{N}_{\alpha_2,k_2},
\]
for $\mathbf{0}\leq\boldsymbol{\ell}\leq\boldsymbol{k}$, we denote
\[e_{\boldsymbol{\alpha},\boldsymbol{\ell}}:=e_{\alpha_1,\ell_1}\otimes e_{\alpha_2,\ell_2}.\]
We define
\[\mathcal{M}_{\boldsymbol{\alpha},\boldsymbol{k}}:=\mathcal{M}~{_D\otimes}~{_D\pi^*}\mathcal{N}_{\boldsymbol{\alpha},\boldsymbol{k}}.\]

\end{defi}

\begin{prop}\label{VfiltNils}
 
Let $\boldsymbol{\alpha}\in[-1,0[^2$ and $\boldsymbol{k}\in \mathbb{N}^2$. Under the previous hypothesis, the pair $(\mathbf{H}, \mathcal{M}_{\boldsymbol{\alpha},\boldsymbol{k}})$ is without slope and, for all $\boldsymbol{\beta}\in{\mathbb{C}^2}$, we have:
\[V_{\boldsymbol{\beta}}(\mathcal{M}_{\boldsymbol{\alpha},\boldsymbol{k}})=\bigoplus_{\boldsymbol{0}\leq\boldsymbol{\ell}\leq\boldsymbol{k}}
V_{\boldsymbol{\beta}+\boldsymbol{\alpha}+\boldsymbol{1}}
 \left(\mathcal{M}[\frac{1}{t_1t_2}]\right){e}_{\boldsymbol{\alpha},\boldsymbol{\ell}}.
\]

\end{prop}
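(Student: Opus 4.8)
The plan is to mimic the single-variable computation of Proposition \ref{VfilttildeMalpha}, now carried out for the two-parameter $V$-multifiltration, with the \emph{without slope} hypothesis supplying the coordination between the two directions. Write $U_{\boldsymbol{\beta}}(\mathcal{M}_{\boldsymbol{\alpha},\boldsymbol{k}})$ for the right-hand side $\bigoplus_{\boldsymbol{0}\leq\boldsymbol{\ell}\leq\boldsymbol{k}} V_{\boldsymbol{\beta}+\boldsymbol{\alpha}+\boldsymbol{1}}(\mathcal{M}[\frac{1}{t_1t_2}])\,e_{\boldsymbol{\alpha},\boldsymbol{\ell}}$; the goal is to identify it with $V_{\boldsymbol{\beta}}(\mathcal{M}_{\boldsymbol{\alpha},\boldsymbol{k}})$. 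First I would record the two-variable analogue of the commutation relation \eqref{calcul2}: for each $i\in\{1,2\}$ and $\gamma\in\mathbb{C}$,
\[(m\otimes e_{\boldsymbol{\alpha},\boldsymbol{\ell}})(E_i+\gamma)=\bigl(m(E_i+\gamma-(\alpha_i+1))\bigr)\otimes e_{\boldsymbol{\alpha},\boldsymbol{\ell}}-m\otimes e_{\boldsymbol{\alpha},\boldsymbol{\ell}-\boldsymbol{1}_i},\]
which follows from the relation $s\partial_s\cdot e_\ell=e_{\ell-1}$ defining $\mathcal{N}_{\alpha_i,k_i}$ applied in each factor. This immediately shows that each $U_{\boldsymbol{\beta}}$ is a $V_0\mathscr{D}_{X\times\Delta^2}$-submodule, stable in both directions in the multifiltered sense.

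For the \emph{without slope} assertion, I would argue that $\mathcal{M}_{\boldsymbol{\alpha},\boldsymbol{k}}$ is, as an $\mathscr{O}$-module, a direct sum of copies of the localization $\mathcal{M}[\frac{1}{t_1t_2}]$ carrying a twisted $\mathscr{D}$-structure in which the $E_i$ mix the copies. Since localization along the coordinate hyperplanes preserves the without-slope condition, and the external-product module $\mathcal{N}_{\boldsymbol{\alpha},\boldsymbol{k}}=\mathcal{N}_{\alpha_1,k_1}\boxtimes\mathcal{N}_{\alpha_2,k_2}$ is on $\Delta^2$ a successive extension of rank-one meromorphic connections regular along $t_1t_2=0$, tensoring by $\pi^*\mathcal{N}_{\boldsymbol{\alpha},\boldsymbol{k}}$ does not enlarge the characteristic variety away from $H_1\cup H_2$. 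Hence the critical locus of $\mathcal{M}_{\boldsymbol{\alpha},\boldsymbol{k}}$ stays inside $H_1\cup H_2$, the pair $(\mathbf{H},\mathcal{M}_{\boldsymbol{\alpha},\boldsymbol{k}})$ is without slope, and in particular its canonical $V$-multifiltration exists and satisfies the commutativity \eqref{commutmaisonobe}.

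To prove $U_{\boldsymbol{\beta}}\subset V_{\boldsymbol{\beta}}$ I would use the multivariable Bernstein functional equations available because $(\mathbf{H},\mathcal{M})$ is without slope (\cite{Maisonobe}): for $m\in V_{\boldsymbol{\beta}+\boldsymbol{\alpha}+\boldsymbol{1}}(\mathcal{M}[\frac{1}{t_1t_2}])$ there are $b$-polynomials in $E_1$ and in $E_2$ whose roots are bounded by $\boldsymbol{\beta}+\boldsymbol{\alpha}+\boldsymbol{1}$; substituting $E_i\mapsto E_i+(\alpha_i+1)$ via the displayed relation shifts these roots down to $\boldsymbol{\beta}$, modulo terms of smaller $\boldsymbol{\ell}$, and an induction on $\boldsymbol{\ell}$ then places $m\otimes e_{\boldsymbol{\alpha},\boldsymbol{\ell}}$ in $V_{\boldsymbol{\beta}}(\mathcal{M}_{\boldsymbol{\alpha},\boldsymbol{k}})$. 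For the reverse inclusion I would, as in Proposition \ref{VfilttildeMalpha}, first establish equality at the base case $\boldsymbol{k}=\boldsymbol{0}$, where $\mathcal{M}_{\boldsymbol{\alpha},\boldsymbol{0}}=\mathcal{M}[\frac{1}{t_1t_2}]\,e_{\boldsymbol{\alpha},\boldsymbol{0}}$ and the claim is exactly that the rank-one twist by $e_{\boldsymbol{\alpha},\boldsymbol{0}}$ shifts the whole multifiltration by $\boldsymbol{\alpha}+\boldsymbol{1}$; this is checked by the reverse $b$-function computation applied to each $E_i$. I would then propagate equality to all $\boldsymbol{k}$ by induction on $k_1+k_2$, using in each variable the short exact sequence of $V_0\mathscr{D}_{X\times\Delta^2}$-modules
\[0\to U_{\boldsymbol{\beta}}(\mathcal{M}_{\boldsymbol{\alpha},\boldsymbol{k}-\boldsymbol{1}_i})\to U_{\boldsymbol{\beta}}(\mathcal{M}_{\boldsymbol{\alpha},\boldsymbol{k}})\to U_{\boldsymbol{\beta}}(\mathcal{M}_{\boldsymbol{\alpha},\boldsymbol{k}-k_i\boldsymbol{1}_i})\to 0,\]
together with the already proven inclusion $U_{\boldsymbol{\beta}}\subset V_{\boldsymbol{\beta}}$.

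I expect the main obstacle to be precisely this reverse inclusion, namely establishing equality as a statement about the genuine two-parameter multifiltration rather than about the two one-variable filtrations separately. It is here that the without-slope hypothesis is indispensable, since \eqref{commutmaisonobe} is what guarantees that the multivariable $b$-function characterization of the canonical $V$-multifiltration holds and that the root-shifting performed independently in the $E_1$- and $E_2$-directions is compatible, so that the result assembles into the correct bidegree $\boldsymbol{\beta}$.
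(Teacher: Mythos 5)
The paper itself gives no argument here: its ``proof'' is a citation of \cite[Proposition 2.26]{kocher1}, of which the statement is the case $p=2$. So your self-contained reconstruction, modelled on the proof of Proposition \ref{VfilttildeMalpha}, is necessarily a different route, and most of it is the right $\mathscr{D}$-module analogue of that proof: the two-variable commutation relation is correct, the root-shifting argument for $U_{\boldsymbol{\beta}}\subset V_{\boldsymbol{\beta}}$ is the right transposition of \eqref{calcul2}, and the reverse inclusion via the base case $\boldsymbol{k}=\boldsymbol{0}$ plus induction through the short exact sequences parallels the one-variable argument (with the tacit, but standard, use of the exactness of the canonical $V$-multifiltration on these sequences).

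The genuine gap is your justification of the assertion that $(\mathbf{H},\mathcal{M}_{\boldsymbol{\alpha},\boldsymbol{k}})$ is without slope. You argue: the characteristic variety is not enlarged away from $H_1\cup H_2$, hence the critical locus of $\mathcal{M}_{\boldsymbol{\alpha},\boldsymbol{k}}$ lies in $H_1\cup H_2$, hence the pair is without slope. The last implication is false. Being without slope is a condition on the geometry of $\textup{Ch}(\mathcal{M}_{\boldsymbol{\alpha},\boldsymbol{k}})$ \emph{over} $H_1\cup H_2$, and above $H_1\cap H_2$ in particular; it is strictly stronger than containment of the critical locus in $H_1\cup H_2$. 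Lê's example from the introduction makes this concrete: for $\boldsymbol{f}=(x^2-y^2z,y)$ the critical locus is the $z$-axis, which is contained in $\{y=0\}=f_2^{-1}(0)$, yet the morphism is not without slope (the condition of being without blow-up in codimension $0$ fails); an estimate of the characteristic variety away from the hypersurfaces, as in your argument, sees none of this. The gap is not cosmetic, because it is precisely the without-slope (equivalently, multispecialisability) property of $\mathcal{M}_{\boldsymbol{\alpha},\boldsymbol{k}}$ that guarantees the existence of the canonical $V$-multifiltration $V_{\boldsymbol{\beta}}(\mathcal{M}_{\boldsymbol{\alpha},\boldsymbol{k}})$ and its section-wise Bernstein characterization, on which both of your inclusions rest; and the shortcut used in Proposition \ref{VfilttildeMalpha} (being a direct factor of a module underlying a mixed Hodge module) is unavailable, since that only yields strict specialisability along each $H_i$ separately, never the joint without-slope condition. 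The repair is in your own material: run the Bernstein computation first. Your forward-inclusion calculation produces, for every section $m\otimes e_{\boldsymbol{\alpha},\boldsymbol{\ell}}$, functional equations $b_i(E_i)-P_i$ with $P_i\in V_{-\boldsymbol{1}_i}\mathscr{D}_{X\times\Delta^2}$, deduced via the commutation relation from those of $\mathcal{M}[\frac{1}{t_1t_2}]$ (which is without slope because localization preserves that property); by Maisonobe's equivalence between the without-slope condition and multispecialisability, this establishes the missing point, after which the identification of the canonical $V$-multifiltration with $U_{\boldsymbol{\beta}}$ proceeds as you wrote it. Alternatively, one can observe that $\mathcal{M}_{\boldsymbol{\alpha},\boldsymbol{k}}$ is a successive extension of rank-one twists of $\mathcal{M}[\frac{1}{t_1t_2}]$, each without slope, and that the without-slope condition is stable under extensions since the characteristic variety of an extension is the union of those of the sub and the quotient.
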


\begin{proof}

It is the case when $p=2$ of \cite[Proposition 2.26]{kocher1}.

\end{proof}

\begin{defi}\label{DmultNils}

We define the following morphism:

\[\begin{array}{cccc}
\mathbf{{_{D}}N}_{(1,2)}: & \text{gr}^{V^{\mathbf{H}}}_{\boldsymbol{\alpha}}\mathcal{M} & \longrightarrow & \text{gr}^{V^{\mathbf{H}}}_{-1,-1}\mathcal{M}_{\boldsymbol{\alpha},\boldsymbol{k}} \\
& m & \longmapsto & \displaystyle\sum_{\boldsymbol{0}\leq\boldsymbol{\ell}\leq\boldsymbol{k}}
m\cdot(t_1\partial_{t_1}-\alpha_1)^{\ell_1}(t_2\partial_{t_2}-\alpha_2)^{\ell_2}\otimes e_{\boldsymbol{\alpha},\boldsymbol{\ell}}.
\end{array}\]

\end{defi}

\begin{prop}\label{H2tot}

Let us consider the following double complex:

\begin{equation}\label{cplxdouble}\xymatrix{ \textup{gr}^{V^{\mathbf{H}}}_{-1,0}\mathcal{M}_{\boldsymbol{\alpha},\boldsymbol{k}} \ar[r]^{\cdot\partial_{t_1}} &
\textup{gr}^{V^{\mathbf{H}}}_{0,0}\mathcal{M}_{\boldsymbol{\alpha},\boldsymbol{k}} \\
\textup{gr}^{V^{\mathbf{H}}}_{-1,-1}\mathcal{M}_{\boldsymbol{\alpha},\boldsymbol{k}} \ar[r]^{\cdot\partial_{t_1}} \ar[u]^{\cdot\partial_{t_2}} &
\textup{gr}^{V^{\mathbf{H}}}_{0,-1}\mathcal{M}_{\boldsymbol{\alpha},\boldsymbol{k}}  \ar[u]^{\cdot\partial_{t_2}}
}\end{equation}
and Let us denote $\mathbf{Gr_{\bullet}}$ the associated simple complex, concentrated in degree $-2$, $-1$ and $0$. If, for $i=1$ and $i=2$, $k_i$ is bigger than the exponent of the nilpotent endomorphism of right multiplication by $(t_i\partial_{t_i}-\alpha_i)$ on $\textup{gr}^{V^{\mathbf{H}}}_{\boldsymbol{\alpha}}\mathcal{M}$, then the morphism $\mathbf{{_{D}}N}_{(1,2)}$ factorises as an isomorphism
\[\mathbf{{_{D}}N}_{(1,2)}:  \textup{gr}^{V^{\mathbf{H}}}_{\boldsymbol{\alpha}}\mathcal{M}
\xrightarrow{\sim} \mathscr{H}^{-2}(\mathbf{Gr_{\bullet}}).
\]

\end{prop}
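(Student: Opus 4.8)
The plan is to compute $\mathscr{H}^{-2}(\mathbf{Gr_{\bullet}})$ explicitly, to use the absence of slope to decouple the two coordinates, and then to recognise $\mathbf{{_{D}}N}_{(1,2)}$ as the product of two one-variable comparison maps whose behaviour is already governed by Lemma \ref{canul}.

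\emph{Step 1: unwinding the bottom cohomology.} Both differentials of the double complex \eqref{cplxdouble} raise one of the two $V$-degrees, and $-2$ is the lowest total degree of $\mathbf{Gr_{\bullet}}$; hence
\[
\mathscr{H}^{-2}(\mathbf{Gr_{\bullet}})=\ker(\cdot\partial_{t_1})\cap\ker(\cdot\partial_{t_2})\subset\textup{gr}^{V^{\mathbf{H}}}_{-1,-1}\mathcal{M}_{\boldsymbol{\alpha},\boldsymbol{k}}.
\]
Applying Proposition \ref{VfiltNils} at the four bidegrees $(-1,-1),(0,-1),(-1,0),(0,0)$, each corner of \eqref{cplxdouble} becomes a finite sum $\bigoplus_{\boldsymbol{0}\leq\boldsymbol{\ell}\leq\boldsymbol{k}}\textup{gr}^{V^{\mathbf{H}}}_{\boldsymbol{\gamma}+\boldsymbol{\alpha}+\boldsymbol{1}}(\mathcal{M}[\tfrac{1}{t_1t_2}])\,e_{\boldsymbol{\alpha},\boldsymbol{\ell}}$ indexed by $\boldsymbol{\ell}$; in particular the source $\textup{gr}^{V^{\mathbf{H}}}_{\boldsymbol{\alpha}}\mathcal{M}$ of $\mathbf{{_{D}}N}_{(1,2)}$ is identified with the $\boldsymbol{\ell}=\boldsymbol{0}$ summand of the corner of degree $-2$, using that for $\boldsymbol{\alpha}\in[-1,0[^2$ localisation does not change $\textup{gr}^{V^{\mathbf{H}}}_{\boldsymbol{\alpha}}$.

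\emph{Step 2: decoupling via the absence of slope.} Since $(\mathbf{H},\mathcal{M})$ and hence, by Proposition \ref{VfiltNils}, $(\mathbf{H},\mathcal{M}_{\boldsymbol{\alpha},\boldsymbol{k}})$ is without slope, the commutation \eqref{commutmaisonobe} shows that every multigraded piece occurring above is an iterated one-variable graded piece, and that $\cdot\partial_{t_1}$ (which only moves the first index) and $\cdot\partial_{t_2}$ (which only moves the second) act independently on this bigrading. Thus $\mathbf{Gr_{\bullet}}$ is, heuristically, the total complex of the tensor product of the two one-variable two-term complexes
\[
C_i:=\bigl[\,\textup{gr}^{V^{H_i}}_{-1}(\mathcal{M}_{\boldsymbol{\alpha},\boldsymbol{k}})\xrightarrow{\;\cdot\partial_{t_i}\;}\textup{gr}^{V^{H_i}}_{0}(\mathcal{M}_{\boldsymbol{\alpha},\boldsymbol{k}})\,\bigr],
\]
so that its bottom cohomology is the product $\ker(\cdot\partial_{t_1})\otimes\ker(\cdot\partial_{t_2})$ of the two one-variable kernels, and the formula of Definition \ref{DmultNils} exhibits $\mathbf{{_{D}}N}_{(1,2)}$ as the product $\mathbf{{_{D}}N}_{1}\otimes\mathbf{{_{D}}N}_{2}$ of the one-variable maps $m\mapsto\sum_{\ell}m\,(t_i\partial_{t_i}-\alpha_i)^{\ell}\otimes e_{\alpha_i,\ell}$.

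\emph{Step 3: the one-variable comparison, and conclusion.} It then suffices to prove, for each $i$, that $\mathbf{{_{D}}N}_{i}$ is an isomorphism from $\textup{gr}^{V^{H_i}}_{\alpha_i}\mathcal{M}$ onto $\ker(\cdot\partial_{t_i})$. That $\mathbf{{_{D}}N}_{i}(m)$ is $\partial_{t_i}$-closed is precisely the $\mathscr{D}$-module computation performed in the proof of Lemma \ref{canul}, and this is exactly what forces the hypothesis $k_i\geq k_{\alpha_i}$ (the top relation $m\,(t_i\partial_{t_i}-\alpha_i)^{k_i+1}=0$ must hold). Injectivity follows from the leading term, namely the inclusion of the $\ell=0$ summand. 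For surjectivity one writes a $\partial_{t_i}$-closed element as $\sum_{\ell}m_\ell\otimes e_{\alpha_i,\ell}$ and uses the relation \eqref{calcul2} (with $z=1$): closedness recursively forces $m_\ell=m_0\,(t_i\partial_{t_i}-\alpha_i)^{\ell}$, the recursion terminating at $\ell=k_i$ exactly because $k_i\geq k_{\alpha_i}$, so that every such element is $\mathbf{{_{D}}N}_{i}(m_0)$. Combining the two one-variable isomorphisms through Step 2 yields that $\mathbf{{_{D}}N}_{(1,2)}$ is an isomorphism onto $\mathscr{H}^{-2}(\mathbf{Gr_{\bullet}})$. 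I expect the main obstacle to be the rigorous form of Step 2: one must verify that the absence of slope really makes the double complex split directionally, so that taking the bottom cohomology commutes with the two directions and no correction terms mix $\ker(\cdot\partial_{t_1})$ with $\ker(\cdot\partial_{t_2})$ — this is where \eqref{commutmaisonobe} and the without-slope property of $\mathcal{M}_{\boldsymbol{\alpha},\boldsymbol{k}}$ are indispensable; the one-variable surjectivity, where the bound on $k_i$ closes the induction, is the other delicate point.
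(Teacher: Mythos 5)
Your Steps 1 and 3, taken together, are exactly the paper's proof: the paper identifies $\mathscr{H}^{-2}(\mathbf{Gr_{\bullet}})$ with $\ker(\cdot\partial_{t_1})\cap\ker(\cdot\partial_{t_2})$ inside $\textup{gr}^{V^{\mathbf{H}}}_{-1,-1}\mathcal{M}_{\boldsymbol{\alpha},\boldsymbol{k}}$, uses the bijectivity of $\cdot t_i$ on these graded pieces to replace the differentials by the endomorphisms $\cdot\partial_{t_i}t_i$, expands a section as $\sum_{\boldsymbol{\ell}}m_{\ell_1,\ell_2}\otimes e_{\boldsymbol{\alpha},\boldsymbol{\ell}}$ via Proposition \ref{VfiltNils}, and observes that membership in the joint kernel is equivalent to the recursion $m_{\ell_1,\ell_2}=m_{0,0}\cdot(t_1\partial_{t_1}-\alpha_1)^{\ell_1}(t_2\partial_{t_2}-\alpha_2)^{\ell_2}$, the boundary equations at $\ell_i=k_i$ being absorbed by the nilpotency hypothesis on $k_i$; injectivity is read off the $\boldsymbol{\ell}=\boldsymbol{0}$ coefficient, as you say.

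The deviation is your Step 2, and it is indeed the weak point, but not because it is hard to make rigorous: as stated it is incorrect, and it is also unnecessary. The double complex \eqref{cplxdouble} is not the total complex of a tensor product of your two complexes $C_i$ (each $C_i$, as you define it, already carries both Nilsson factors), and the phrase ``the product $\ker(\cdot\partial_{t_1})\otimes\ker(\cdot\partial_{t_2})$'' has no meaning here: the two kernels are subobjects of one and the same module, and both operators act nontrivially on the common coefficient factor $\textup{gr}^{V^{\mathbf{H}}}_{\boldsymbol{\alpha}}\mathcal{M}$, so no K\"unneth-type splitting is available (likewise $\mathbf{{_{D}}N}_{(1,2)}$ is not a tensor product of two one-variable maps; at best it is an iterate of the two one-variable constructions). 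What rescues your plan is that no decoupling statement is needed at all: the $\partial_{t_1}$-equations shift only the index $\ell_1$, the $\partial_{t_2}$-equations shift only $\ell_2$, and the two recursions of your Step 3 are simultaneously solvable because $t_1\partial_{t_1}-\alpha_1$ and $t_2\partial_{t_2}-\alpha_2$ commute on $\textup{gr}^{V^{\mathbf{H}}}_{\boldsymbol{\alpha}}\mathcal{M}$; running your Step 3 recursion in both indices at once is literally the paper's argument. If you insist on a one-variable-at-a-time proof, the correct formalisation is an iterated-kernel computation, not a K\"unneth formula: first show that $\ker(\cdot\partial_{t_1}t_1)$ consists of the sections $\sum_{\boldsymbol{\ell}}m_{\ell_2}\cdot(t_1\partial_{t_1}-\alpha_1)^{\ell_1}\otimes e_{\boldsymbol{\alpha},\boldsymbol{\ell}}$, hence is a direct sum of copies of $\textup{gr}^{V^{\mathbf{H}}}_{\boldsymbol{\alpha}}\mathcal{M}$ indexed by $\ell_2$, then check (using the commutation of the two Euler operators) that $\cdot\partial_{t_2}t_2$ induces on it the one-variable operator, and apply Step 3 once more. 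This iterated viewpoint is in fact the one the paper uses for the neighbouring statements, Lemma \ref{isomnat} and Proposition \ref{aiso}, rather than for Proposition \ref{H2tot} itself.
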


\begin{proof}

By definition, $\mathscr{H}^{-2}(\mathbf{Gr_{\bullet}})$ is the intersection of kernels of the right multiplication by $\partial_{t_1}:\textup{gr}^{V^{\mathbf{H}}}_{-1,-1}\mathcal{M}_{\boldsymbol{\alpha},\boldsymbol{k}}\to \textup{gr}^{V^{\mathbf{H}}}_{0,-1}\mathcal{M}_{\boldsymbol{\alpha},\boldsymbol{k}}$ and by
$\partial_{t_2}:\textup{gr}^{V^{\mathbf{H}}}_{-1,-1}\mathcal{M}_{\boldsymbol{\alpha},\boldsymbol{k}}\to \textup{gr}^{V^{\mathbf{H}}}_{-1,0}\mathcal{M}_{\boldsymbol{\alpha},\boldsymbol{k}}$. Furthermore the right multiplication by $t_1$ and $t_2$ on $\mathcal{M}_{\boldsymbol{\alpha},\boldsymbol{k}}$ being bijective, the properties of the canonical $V$-multifiltration imply that the morphisms of right multiplication by
$t_1:\textup{gr}^{V^{\mathbf{H}}}_{0,-1}\mathcal{M}_{\boldsymbol{\alpha},\boldsymbol{k}}\to \textup{gr}^{V^{\mathbf{H}}}_{-1,-1}\mathcal{M}_{\boldsymbol{\alpha},\boldsymbol{k}}$ and by $t_2:\textup{gr}^{V^{\mathbf{H}}}_{-1,0}\mathcal{M}_{\boldsymbol{\alpha},\boldsymbol{k}}\to \textup{gr}^{V^{\mathbf{H}}}_{-1,-1}\mathcal{M}_{\boldsymbol{\alpha},\boldsymbol{k}}$ are bijectives. Therefore we have
\begin{equation}\label{H-2}
\mathscr{H}^{-2}(\mathbf{Gr_{\bullet}})=\bigcap_{1\leq i\leq 2}\ker[\cdot\partial_{t_i}t_i:\textup{gr}^{V^{\mathbf{H}}}_{-1,-1}\mathcal{M}_{\boldsymbol{\alpha},\boldsymbol{k}} \to \textup{gr}^{V^{\mathbf{H}}}_{-1,-1}\mathcal{M}_{\boldsymbol{\alpha},\boldsymbol{k}}].
\end{equation}
According to proposition \ref{VfiltNils}, a section of $\textup{gr}^{V^{\mathbf{H}}}_{-1,-1}\mathcal{M}_{\boldsymbol{\alpha},\boldsymbol{k}}$ can be written in the following way :
\[\sum_{\substack{
0  \leq  \ell_1  \leq  k_1 \\
0 \leq  \ell_2  \leq  k_2
}}
m_{\ell_1,\ell_2}\otimes e_{\boldsymbol{\alpha},\boldsymbol{\ell}}
\]
where the $m_{\ell_1,\ell_2}$ are sections of $\text{gr}^{V^{\mathbf{H}}}_{\boldsymbol{\alpha}}\left(\mathcal{M}[\frac{1}{t_1t_2}]\right)$. According to \eqref{H-2}, this section is in $\mathscr{H}^{-2}(\mathbf{Gr_{\bullet}})$ if and only if for all pairs $(\ell_1,\ell_2)$:
\[m_{\ell_1,\ell_2}\cdot (t_1\partial_{t_1}-\alpha_1)-m_{\ell_1+1,\ell_2}=
m_{\ell_1,\ell_2}\cdot (t_2\partial_{t_2}-\alpha_2)-m_{\ell_1,\ell_2+1}=0,\]
equivalently if and only if for all pair $(\ell_1,\ell_2)$:
\[m_{\ell_1,\ell_2}=m_{0,0}\cdot(t_1\partial_{t_1}-\alpha_1)^{\ell_1}(t_2\partial_{t_2}-\alpha_2)^{\ell_2}.\]
On the other hand, since $\alpha_1<0$ et $\alpha_2<0$, we have $\text{gr}^{V^{\mathbf{H}}}_{\boldsymbol{\alpha}}\left(\mathcal{M}[\frac{1}{t_1t_2}]\right)=\text{gr}^{V^{\mathbf{H}}}_{\boldsymbol{\alpha}}\left(\mathcal{M}\right)$. 
Noticing that, according to the hypothesis made on the size of the $k_i$, for all $m\in\text{gr}^{V^{\mathbf{H}}}_{\boldsymbol{\alpha}}\left(\mathcal{M}\right)$
\[m\cdot(t_1\partial_{t_1}-\alpha_1)^{k_1}=m\cdot(t_2\partial_{t_2}-\alpha_2)^{k_2}=0,\]
we conclude that $\mathbf{{_{D}}N}_{(1,2)}$ factorises as an isomorphism
\[\mathbf{{_{D}}N}_{(1,2)}:  \textup{gr}^{V^{\mathbf{H}}}_{\boldsymbol{\alpha}}\mathcal{M}
\xrightarrow{\sim} \mathscr{H}^{-2}(\mathbf{Gr_{\bullet}}).
\]

\end{proof}

\begin{lemm}\label{isomnat}

We have a natural isomorphism
\[{_Di}_{(1,2)*}\mathscr{H}^{-2}{_Di}_{(1,2)}^!(\mathcal{M}_{\boldsymbol{\alpha},\boldsymbol{k}}) 
\xrightarrow{\sim}
\mathscr{H}^{-2}(\mathbf{Gr_{\bullet}}).
\]

\end{lemm}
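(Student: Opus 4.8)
The plan is to unfold the definition of the inverse image functor ${_Di}_{(1,2)}^!$ and, after passing to $V$-graded pieces, to recognise the resulting complex as the total complex of the double complex \eqref{cplxdouble}. I would first factor the inclusion of the codimension-two intersection as
\[H_1\cap H_2\stackrel{\iota}{\hookrightarrow}H_2\stackrel{i_2}{\hookrightarrow}X\times{\Delta}^2,\]
where $\iota$ is the inclusion of $\{t_1=0\}$ inside $H_2$, so that ${_Di}_{(1,2)}^!={_D\iota}^!{_Di}_2^!$ and ${_Di}_{(1,2)*}={_Di}_{2*}{_D\iota}_*$. The starting point is the description, coming from definition \ref{imageinverse!}, of the inverse image along a single smooth hypersurface $H_i=\{t_i=0\}$ for a $\mathscr{D}$-module $\mathcal{P}$ strictly specialisable along $H_i$:
\[{_Di}_{i*}{_Di}_i^!\mathcal{P}\simeq\left[\textup{gr}^{V^{H_i}}_{-1}\mathcal{P}\xrightarrow{\cdot\partial_{t_i}}\textup{gr}^{V^{H_i}}_0\mathcal{P}\right],\]
concentrated in degrees $-1$ and $0$.

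Applying this to $i_2$, and using that $\mathcal{M}_{\boldsymbol{\alpha},\boldsymbol{k}}$ is strictly specialisable along $H_2$ (Proposition \ref{VfiltNils}), I obtain
\[{_Di}_{2*}{_Di}_2^!\mathcal{M}_{\boldsymbol{\alpha},\boldsymbol{k}}\simeq\left[\textup{gr}^{V^{H_2}}_{-1}\mathcal{M}_{\boldsymbol{\alpha},\boldsymbol{k}}\xrightarrow{\cdot\partial_{t_2}}\textup{gr}^{V^{H_2}}_0\mathcal{M}_{\boldsymbol{\alpha},\boldsymbol{k}}\right].\]
By Proposition \ref{VfiltNils} the pair $(\mathbf{H},\mathcal{M}_{\boldsymbol{\alpha},\boldsymbol{k}})$ is again without slope, so each $\textup{gr}^{V^{H_2}}_b\mathcal{M}_{\boldsymbol{\alpha},\boldsymbol{k}}$ is strictly specialisable along $H_1$ and, by Maisonobe's commutation \eqref{commutmaisonobe}, the $V^{H_1}$-graded pieces of $\textup{gr}^{V^{H_2}}_b\mathcal{M}_{\boldsymbol{\alpha},\boldsymbol{k}}$ are $\textup{gr}^{V^{H_1}}_a\textup{gr}^{V^{H_2}}_b\mathcal{M}_{\boldsymbol{\alpha},\boldsymbol{k}}=\textup{gr}^{V^{\mathbf{H}}}_{a,b}\mathcal{M}_{\boldsymbol{\alpha},\boldsymbol{k}}$. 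Hence applying the single-hypersurface formula to ${_D\iota}^!$ term by term replaces each $\textup{gr}^{V^{H_2}}_b$ by the two-term complex $[\textup{gr}^{V^{\mathbf{H}}}_{-1,b}\xrightarrow{\cdot\partial_{t_1}}\textup{gr}^{V^{\mathbf{H}}}_{0,b}]$. Since $\partial_{t_1}$ and $\partial_{t_2}$ commute and are compatible with the bigrading, the total complex so obtained is exactly $\mathbf{Gr_{\bullet}}$ of \eqref{cplxdouble}, with $\textup{gr}^{V^{\mathbf{H}}}_{-1,-1}$ sitting in degree $-2$.

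To conclude, I would take $\mathscr{H}^{-2}$: as ${_Di}_{(1,2)*}$ is exact it commutes with $\mathscr{H}^{-2}$, which gives the desired natural isomorphism ${_Di}_{(1,2)*}\mathscr{H}^{-2}{_Di}_{(1,2)}^!(\mathcal{M}_{\boldsymbol{\alpha},\boldsymbol{k}})\xrightarrow{\sim}\mathscr{H}^{-2}(\mathbf{Gr_{\bullet}})$ (and agrees, via Proposition \ref{H2tot}, with the kernel-at-the-corner description). The step I expect to be the main obstacle is precisely the passage from ${_D\iota}^!$ applied to the $\textup{gr}^{V^{H_2}}_b$ pieces to the bigraded pieces $\textup{gr}^{V^{\mathbf{H}}}_{a,b}$: this requires that strict specialisability be inherited after the first restriction and that the two Kashiwara--Malgrange filtrations commute, which is exactly where the \emph{without slope} hypothesis enters through \eqref{commutmaisonobe}. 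One must also check that the two differentials genuinely assemble into the double complex \eqref{cplxdouble}, i.e.\ that the maps induced by $\partial_{t_1}$ and $\partial_{t_2}$ on the bigraded pieces are the ones appearing there; this is a formal verification once the identification of the terms is in place.
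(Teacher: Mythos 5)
Your argument is correct and rests on the same two essential inputs as the paper's proof: the identification of a single restriction ${_Di}_{j*}{_Di}_j^!$ with the two-term complex of $V$-graded pieces, and the without-slope package (Proposition \ref{VfiltNils} together with Maisonobe's commutation \eqref{commutmaisonobe}) to turn iterated graded pieces into bigraded ones. Where you differ is in the packaging. The paper never factors the immersion through $H_2$ and never computes the full derived restriction: it quotes from the proof of Proposition \ref{constructiondemidiag} the isomorphism ${_Di}_{(1,2)*}\mathscr{H}^{-2}{_Di}_{(1,2)}^!\simeq{_Di}_{1*}\mathscr{H}^{-1}{_Di}_{1}^!\,{_Di}_{2*}\mathscr{H}^{-1}{_Di}_{2}^!$, then applies the $\mathscr{D}$-module version of Saito's exact sequence \eqref{SEsaito} twice to the double complex \eqref{cplxdouble}, and concludes by a chase in a commutative diagram whose rows and columns are exact, so that everything happens inside the abelian category, with only $\mathscr{H}^{-1}$'s and $\mathscr{H}^{-2}$'s ever appearing. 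Your route instead composes derived functors, $i_{(1,2)}^!=\iota^!i_2^!$, realizes the composite as the total complex of \eqref{cplxdouble}, and takes $\mathscr{H}^{-2}$ once at the end; this identifies the whole complex ${_Di}_{(1,2)*}{_Di}_{(1,2)}^!\mathcal{M}_{\boldsymbol{\alpha},\boldsymbol{k}}$, which is more than the lemma asks, and in exchange you must justify two points you only gesture at: that the two-term-complex description is \emph{functorial} in the module (so that it can be applied termwise to a complex and totalized to compute the composite derived functor), and that, after Kashiwara equivalence, the $V$-filtration of $\textup{gr}^{V^{H_2}}_b\mathcal{M}_{\boldsymbol{\alpha},\boldsymbol{k}}$ along $H_1\cap H_2\subset H_2$ agrees with the one induced by $V^{H_1}$ — both are true, and you correctly locate the second in the without-slope hypothesis. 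One citation should be corrected: the two-term-complex description of ${_Di}_{j*}{_Di}_j^!$ does not come from Definition \ref{imageinverse!}, which concerns inverse image under a \emph{smooth} morphism; it is precisely the $\mathscr{D}$-module counterpart of the exact sequence \eqref{SEsaito} (i.e.\ (2.24.3) of M. Saito), which is the tool the paper itself invokes.
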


\begin{proof}

Following the proof of proposition \ref{constructiondemidiag}, we notice the following natural isomorphisms
\[{_Di}_{2*}\mathscr{H}^{-1}{_Di}_{2}^!{_Di}_{1*}\mathscr{H}^{-1}{_Di}_{1}^!(\mathcal{M}_{\boldsymbol{\alpha},\boldsymbol{k}})\xleftarrow{\sim}{_Di}_{(1,2)*}\mathscr{H}^{-2}{_Di}_{(1,2)}^!(\mathcal{M}_{\boldsymbol{\alpha},\boldsymbol{k}}) 
\xrightarrow{\sim} {_Di}_{1*}\mathscr{H}^{-1}{_Di}_{1}^!{_Di}_{2*}\mathscr{H}^{-1}{_Di}_{2}^!(\mathcal{M}_{\boldsymbol{\alpha},\boldsymbol{k}})
\]

In the category of $\mathscr{D}$-modules, we still have the exact sequence  \eqref{SEsaito} (cf \cite[(2.24.3)]{SaitoMHM}). We apply the sequence twice to the double complex \eqref{cplxdouble} and, thanks to the exactness of the nearby cycles functors, we get the following diagram of exact sequences:
\[\xymatrix{
0 \ar[r] &
\textup{gr}_0^{V^{H^2}}\left({_Di}_{1*}\mathscr{H}^{-1}{_Di}_{1}^!\mathcal{M}_{\boldsymbol{\alpha},\boldsymbol{k}}\right) \ar[r] &
 \textup{gr}^{V^{\mathbf{H}}}_{-1,0}\mathcal{M}_{\boldsymbol{\alpha},\boldsymbol{k}} \ar[r] &
\textup{gr}^{V^{\mathbf{H}}}_{0,0}\mathcal{M}_{\boldsymbol{\alpha},\boldsymbol{k}} \\
0 \ar[r] &
\textup{gr}_{-1}^{V^{H^2}}\left({_Di}_{1*}\mathscr{H}^{-1}{_Di}_{1}^!\mathcal{M}_{\boldsymbol{\alpha},\boldsymbol{k}}\right) \ar[u] \ar[r] &
\textup{gr}^{V^{\mathbf{H}}}_{-1,-1}\mathcal{M}_{\boldsymbol{\alpha},\boldsymbol{k}} \ar[r] \ar[u] &
\textup{gr}^{V^{\mathbf{H}}}_{0,-1}\mathcal{M}_{\boldsymbol{\alpha},\boldsymbol{k}}  \ar[u] \\
0 \ar[r] &
{_Di}_{(1,2)*}\mathscr{H}^{-2}{_Di}_{(1,2)}^!(\mathcal{M}_{\boldsymbol{\alpha},\boldsymbol{k}}) \ar[u] \ar[r] &
\textup{gr}_{-1}^{V^{H^1}}\left({_Di}_{2*}\mathscr{H}^{-1}{_Di}_{2}^!\mathcal{M}_{\boldsymbol{\alpha},\boldsymbol{k}}\right) \ar[r] \ar[u] &
\textup{gr}_0^{V^{H^1}}\left({_Di}_{2*}\mathscr{H}^{-1}{_Di}_{2}^!\mathcal{M}_{\boldsymbol{\alpha},\boldsymbol{k}}\right) \ar[u] \\
& 0 \ar[u] & 0 \ar[u] & 0. \ar[u] &
}\]
the rows and the columns of this diagram are exact, thus we have the isomorphism
\[{_Di}_{(1,2)*}\mathscr{H}^{-2}{_Di}_{(1,2)}^!(\mathcal{M}_{\boldsymbol{\alpha},\boldsymbol{k}}) 
\xrightarrow{\sim}
\mathscr{H}^{-2}(\mathbf{Gr_{\bullet}}).
\]

\end{proof}

We deduce from lemma \ref{isomnat} and proposition \ref{H2tot} an isimorphism
\[\text{gr}^{V^{\mathbf{H}}}_{\boldsymbol{\alpha}}\mathcal{M} \xrightarrow{\sim} {_Di}_{(1,2)*}\mathscr{H}^{-2}{_Di}_{(1,2)}^!(\mathcal{M}_{\boldsymbol{\alpha},\boldsymbol{k}}).\]
We combine this with mophisms \eqref{commutmaisonobe} and diagram \eqref{presquediag}
to which we apply the forgetful functor from the category of mixed Hodge modules to the category of $\mathscr{D}_X$-modules and we get yhe following commutative diagram:
\begin{equation}\label{diag}
\xymatrix{\textup{gr}_{\alpha_1}^{V^{H_1}}\textup{gr}_{\alpha_2}^{V^{H_2}}\mathcal{M} \ar[r]^-{(a)} &
{_Di}_{1*}\mathscr{H}^{-1}{_Di}_{1}^!\left({_Di}_{2*}\mathscr{H}^{-1}{_Di}_{2}^!
(\mathcal{M}_{\alpha_2,k_2})~{_D\otimes}~{_D\pi_1}^*\mathcal{N}_{\alpha_1,k_1}\right) \\
\textup{gr}_{\boldsymbol{\alpha}}^{V^{\mathbf{H}}}\mathcal{M} \ar[r]^-{\simeq} \ar[d] \ar[u]^-{\simeq}
 & {_Di}_{(1,2)*}\mathscr{H}^{-2}{_Di}_{(1,2)}^!(\mathcal{M}_{\boldsymbol{\alpha},\boldsymbol{k}})
 \ar[u] \ar[d] \\
\textup{gr}_{\alpha_2}^{V^{H_2}}\textup{gr}_{\alpha_1}^{V^{H_1}}\mathcal{M} \ar[r] &
{_Di}_{2*}\mathscr{H}^{-1}{_Di}_{2}^!\left({_Di}_{1*}\mathscr{H}^{-1}{_Di}_{1}^!
(\mathcal{M}_{\alpha_1,k_1})~{_D\otimes}~{_D\pi_2}^*\mathcal{N}_{\alpha_2,k_2}\right).}
\end{equation}

\begin{prop}\label{aiso}

Morphism $(a)$ is an isomorphism.

\end{prop}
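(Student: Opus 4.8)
The plan is not to analyse $(a)$ directly but to read it off from the commutative diagram \eqref{diag} by a two-out-of-three argument on its upper square. In that square the left vertical arrow is the Maisonobe isomorphism \eqref{commutmaisonobe} (this is where the hypothesis that $(\mathbf{H},\mathcal{M})$ is without slope is used), and the middle horizontal arrow is the isomorphism $\textup{gr}^{V^{\mathbf{H}}}_{\boldsymbol{\alpha}}\mathcal{M}\xrightarrow{\sim}{_Di}_{(1,2)*}\mathscr{H}^{-2}{_Di}_{(1,2)}^!(\mathcal{M}_{\boldsymbol{\alpha},\boldsymbol{k}})$ produced just above from Proposition \ref{H2tot} and Lemma \ref{isomnat}. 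Hence, once I know the \emph{right} vertical arrow of the upper square is an isomorphism, commutativity forces $(a)$ to be the composite of three isomorphisms and we are done.

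First I would make the right vertical arrow explicit. Running the construction of Proposition \ref{constructiondemidiag} in the category of $\mathscr{D}_{X\times\Delta^2}$-modules, this arrow is the composite of the identification \eqref{isom101}
\[
{_Di}_{(1,2)*}\mathscr{H}^{-2}{_Di}_{(1,2)}^!(\mathcal{M}_{\boldsymbol{\alpha},\boldsymbol{k}})\xrightarrow{\sim}{_Di}_{1*}\mathscr{H}^{-1}{_Di}_{1}^!\bigl({_Di}_{2*}\mathscr{H}^{-1}{_Di}_{2}^!(\mathcal{M}_{\boldsymbol{\alpha},\boldsymbol{k}})\bigr)
\]
with the image under ${_Di}_{1*}\mathscr{H}^{-1}{_Di}_{1}^!$ of the morphism \eqref{adjonc}. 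The relevant factorisation here is $\mathcal{M}_{\boldsymbol{\alpha},\boldsymbol{k}}=\mathcal{M}_{\alpha_2,k_2}~{_D\otimes}~{_D\pi_1}^*\mathcal{N}_{\alpha_1,k_1}$, which is immediate from $\mathcal{N}_{\boldsymbol{\alpha},\boldsymbol{k}}=\mathcal{N}_{\alpha_1,k_1}~{_D\boxtimes}~\mathcal{N}_{\alpha_2,k_2}$ together with $\mathcal{M}_{\alpha_2,k_2}=\mathcal{M}~{_D\otimes}~{_D\pi_2}^*\mathcal{N}_{\alpha_2,k_2}$. The first factor \eqref{isom101} is an isomorphism, being exactly what is established inside the proof of Lemma \ref{isomnat}.

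It remains to check that the morphism \eqref{adjonc} is an isomorphism. Its target is obtained by pulling the factor ${_D\pi_1}^*\mathcal{N}_{\alpha_1,k_1}$ out of ${_Di}_{2*}\mathscr{H}^{-1}{_Di}_{2}^!$. Now ${_D\pi_1}^*\mathcal{N}_{\alpha_1,k_1}$ is a \emph{free} $\mathscr{O}_{X\times\Delta^2}$-module, hence $\mathscr{O}$-flat, and its formation is transverse to $H_2=\{t_2=0\}$, exactly as observed in the proof of Proposition \ref{constructiondemidiag}. Writing ${_Di}_{2*}\mathscr{H}^{-1}{_Di}_{2}^!$ as $\mathscr{H}^{-1}$ of the two-term complex attached to the open inclusion $j_2:U_2\hookrightarrow X\times\Delta^2$ (Remark \ref{defiminv}), I would apply the projection formula for $j_2$ to pull the flat factor out of that complex, and then use exactness of $\cdot~{_D\otimes}~{_D\pi_1}^*\mathcal{N}_{\alpha_1,k_1}$ to commute it past $\mathscr{H}^{-1}$; this is precisely the $\mathscr{D}$-module incarnation of Lemma \ref{morphismenat}, now an isomorphism thanks to freeness. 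Applying ${_Di}_{1*}\mathscr{H}^{-1}{_Di}_{1}^!$ and composing with \eqref{isom101} shows the right vertical arrow is an isomorphism, and the two-out-of-three argument finishes the proof.

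The step I expect to be the main obstacle is the last one: upgrading the natural morphism \eqref{adjonc} of Lemma \ref{morphismenat} to an \emph{isomorphism}. One must be careful that the projection formula for the non-proper inclusion $j_2$ and the flatness-plus-transversality of ${_D\pi_1}^*\mathcal{N}_{\alpha_1,k_1}$ genuinely let the tensor factor commute with ${_Di}_{2*}\mathscr{H}^{-1}{_Di}_{2}^!$; it is ultimately the without-slope hypothesis, through the compatibility of the $V$-filtrations along $H_1$ and $H_2$ established in Proposition \ref{VfiltNils}, that keeps all the intermediate graded objects equal to the honest modules appearing in \eqref{diag}.
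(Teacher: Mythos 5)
Your reduction of Proposition \ref{aiso} to the right-hand vertical arrow of the upper square of \eqref{diag} is sound: the square commutes, its left vertical arrow is Maisonobe's isomorphism \eqref{commutmaisonobe}, its middle horizontal arrow is an isomorphism by Proposition \ref{H2tot} and Lemma \ref{isomnat}, so everything hinges on \eqref{adjonc} (more precisely on its image under ${_Di}_{1*}\mathscr{H}^{-1}{_Di}_{1}^!$) being an isomorphism. The gap is in your proof of that step. Flatness (or freeness) of ${_D\pi_1}^*\mathcal{N}_{\alpha_1,k_1}$ gives exactly what Lemma \ref{morphismenat} gives --- a natural morphism, via exactness of the tensor functor --- and nothing more; no projection-formula argument can upgrade it to an isomorphism, because the statement is \emph{false} without the without-slope hypothesis, which your argument for this step never uses. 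The point is that tensoring with ${_D\pi_1}^*\mathcal{N}_{\alpha_1,k_1}$ is not a transverse operation: since $\mathcal{N}_{\alpha_1,k_1}$ is localized, this tensor product localizes along $H_1$ (topologically it is an $Rj_{1*}$-type operation, not a tensor with a local system), and localization along $H_1$ does not commute with $j_{2!}j_2^{*}$, hence not with the kernel functor ${_Di}_{2*}\mathscr{H}^{-1}{_Di}_{2}^{!}=\ker\bigl(j_{2!}j_2^{*}\to\mathrm{id}\bigr)$ of Remark \ref{defiminv}. Concretely, take $X$ a point, $\mathcal{M}$ the $\mathscr{D}_{\Delta^2}$-module of the diagonal $\{t_1=t_2\}$ (a pair \emph{with} slope), $\alpha_1=\alpha_2=-1$, $k_1=k_2=0$, so that both Nilsson twists are plain localizations. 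Then ${_Di}_{2*}\mathscr{H}^{-1}{_Di}_{2}^{!}\bigl(\mathcal{M}[\tfrac{1}{t_1t_2}]\bigr)$ is a nonzero skyscraper at the origin, while $\bigl({_Di}_{2*}\mathscr{H}^{-1}{_Di}_{2}^{!}(\mathcal{M}[\tfrac{1}{t_2}])\bigr)\otimes{_D\pi_1}^*\mathcal{N}_{-1,0}=0$ because the left factor is $t_1$-torsion: the morphism of Lemma \ref{morphismenat} has a nonzero kernel here, flatness notwithstanding.

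Your closing sentence names the missing ingredient --- compatibility of the $V$-filtrations via Proposition \ref{VfiltNils} --- but invoking it is not a proof: to show \eqref{adjonc} is an isomorphism one must identify its source with $\textup{gr}^{V^{H_2}}_{\alpha_2}(\mathcal{M}_{\alpha_1,k_1})$ and its target with $(\textup{gr}^{V^{H_2}}_{\alpha_2}\mathcal{M})\otimes{_D\pi_1}^*\mathcal{N}_{\alpha_1,k_1}$, and then prove that $\textup{gr}^{V^{H_2}}_{\alpha_2}$ commutes with the $t_1$-twist, which is exactly a without-slope computation. This is what the paper does, organized differently: it proves the one-hypersurface identification $\textup{gr}_{\alpha_j}^{V^{H_j}}\mathcal{M}\simeq {_Di}_{j*}\mathscr{H}^{-1}{_Di}_{j}^!(\mathcal{M}_{\alpha_j,k_j})$ for $k_j$ large enough (the one-variable analogue of Proposition \ref{H2tot}) and applies it twice, first to $\mathcal{M}$ along $H_2$, then to $\textup{gr}_{\alpha_2}^{V^{H_2}}\mathcal{M}$ along $H_1$, the second application being legitimate because, by Maisonobe, the without-slope hypothesis guarantees that $\textup{gr}_{\alpha_2}^{V^{H_2}}\mathcal{M}$ is again specialisable along $H_1$ with the expected $V$-filtration. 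So the hypothesis is the engine of the identification, not a bookkeeping device; your argument for \eqref{adjonc}, if valid, would apply to arbitrary $\mathcal{M}$, which the example above rules out.
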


\begin{proof}

We show, in the same way as for proposition \ref{H2tot}, that for $j\in\{1,2\}$
\[\textup{gr}_{\alpha_j}^{V^{H_j}}\mathcal{M}\simeq {_Di}_{j*}\mathscr{H}^{-1}{_Di}_{j}^!
(\mathcal{M}_{\alpha_j,k_j})\]
when $k_1$ and $k_2$ are big enough. To conclude, it is then enough to apply this isomorphism twice.

\end{proof}

\section{Commutativity theorem}

\begin{theo}\label{commut2MHM}

Let $\mathscr{M}\in \textup{MHM}(X\times{\Delta}^2)$ be a mixed Hodge module on $X\times{\Delta}^2$, such that the pair $(\mathbf{H},\mathcal{M})$ is without slope where $\mathcal{M}$ is the right $\mathscr{D}_{X\times\Delta^2}$-module underlying $\mathscr{M}$. We then have an isomorphism
\[\Psi_{t_1}\Psi_{t_2}\mathscr{M} \simeq \Psi_{t_2}\Psi_{t_1}\mathscr{M}.
\]

\end{theo}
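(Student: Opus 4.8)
The plan is to exploit the two diagrams already constructed: the diagram \eqref{presquediag}, which lives in $\textup{MHM}(X\times\Delta^2)$, and its incarnation \eqref{diag} under the forgetful functor to right $\mathscr{D}_{X\times\Delta^2}$-modules (taken graded piece by graded piece of the $V$-multifiltration, since this is what incarnates the nearby cycles). The two objects $\Psi_{t_1}\Psi_{t_2}\mathscr{M}$ and $\Psi_{t_2}\Psi_{t_1}\mathscr{M}$ sit at the two left corners of \eqref{presquediag} and are linked to the common central object ${_Hi}_{(1,2)*}\mathscr{H}^{-2}{_Hi}_{(1,2)}^*(\mathscr{M}_{\boldsymbol{q},\boldsymbol{k}})$ by a zig-zag of four natural morphisms (the two $\mathbf{Nils}$ morphisms of Definition \ref{Nils} and the two vertical arrows of Proposition \ref{constructiondemidiag}). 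It therefore suffices to prove that each of these four morphisms is an isomorphism \emph{in} $\textup{MHM}(X\times\Delta^2)$; composing the zig-zag then yields the claimed isomorphism.

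First I would record the formal principle that legitimizes the reduction to $\mathscr{D}$-modules: the forgetful functor $\textup{MHM}(X\times\Delta^2)\to\textup{Mod}_{h,r}(\mathscr{D}_{X\times\Delta^2})$ is faithful and exact, hence it reflects isomorphisms, because an exact functor carries kernel and cokernel to kernel and cokernel while a faithful functor detects the vanishing of an object. Thus a morphism of \eqref{presquediag} is an isomorphism as soon as its incarnation in \eqref{diag} is one. To use this I must invoke the compatibility of the Hodge-theoretic functors $\Psi_{t_j}$, ${_Hi}_*\mathscr{H}^{\bullet}{_Hi}^*$ and ${_H\otimes}$ with their $\mathscr{D}$-module counterparts $\textup{gr}^{V^{H_j}}$, ${_Di}_*\mathscr{H}^{\bullet}{_Di}^!$ and ${_D\otimes}$, so that the $\mathscr{D}$-module morphisms underlying \eqref{presquediag} are exactly those of \eqref{diag}, after fixing once and for all a multi-exponent $\boldsymbol{\alpha}\in[-1,0[^2$ and choosing $k_1,k_2$ large enough in the sense of Lemma \ref{canul} and Proposition \ref{H2tot}.

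Next I would verify that \emph{every} arrow of the commutative diagram \eqref{diag} is an isomorphism. The two left vertical arrows are the comparison isomorphisms \eqref{commutmaisonobe} of Maisonobe, valid under the without-slope hypothesis; the central horizontal arrow $\textup{gr}_{\boldsymbol{\alpha}}^{V^{\mathbf{H}}}\mathcal{M}\xrightarrow{\sim}{_Di}_{(1,2)*}\mathscr{H}^{-2}{_Di}_{(1,2)}^!(\mathcal{M}_{\boldsymbol{\alpha},\boldsymbol{k}})$ is the isomorphism obtained by combining Lemma \ref{isomnat} with Proposition \ref{H2tot}; and the top arrow $(a)$ is an isomorphism by Proposition \ref{aiso}, the bottom arrow being one by the symmetric argument. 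It then remains only to deduce that the two right vertical arrows are isomorphisms, and this follows purely formally from the commutativity of the two squares of \eqref{diag}: in each square three of the four arrows are already known to be isomorphisms, forcing the fourth.

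Finally, transporting this back through the forgetful functor, all four natural morphisms of \eqref{presquediag} are isomorphisms in $\textup{MHM}(X\times\Delta^2)$, and composing the resulting zig-zag gives $\Psi_{t_1}\Psi_{t_2}\mathscr{M}\simeq\Psi_{t_2}\Psi_{t_1}\mathscr{M}$. I expect the main obstacle to lie not in this formal diagram chase but in the compatibility step of the second paragraph: namely, checking that $\mathbf{Nils}$ and the adjunction morphisms of Proposition \ref{constructiondemidiag}, once their underlying $\mathscr{D}$-modules are extracted, really coincide with $(a)$ and the vertical arrows of \eqref{diag}, together with the bookkeeping needed to run the argument uniformly over all eigenvalues $\boldsymbol{\alpha}$ and to guarantee that a single pair $k_1,k_2$ simultaneously meets the largeness hypotheses of Lemma \ref{canul}, Proposition \ref{H2tot} and Proposition \ref{aiso}.
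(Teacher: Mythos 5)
Your proposal is correct and follows essentially the same route as the paper: reduce to the underlying $\mathscr{D}$-modules via the faithfulness/exactness of the forgetful functor (Proposition \ref{isoDH}), show every arrow of diagram \eqref{diag} is an isomorphism using \eqref{commutmaisonobe}, Lemma \ref{isomnat}, Propositions \ref{H2tot} and \ref{aiso}, and conclude that the zig-zag in \eqref{presquediag} consists of isomorphisms in $\textup{MHM}(X\times\Delta^2)$. Your explicit three-out-of-four diagram chase and the flagging of the compatibility between \eqref{presquediag} and \eqref{diag} merely spell out steps the paper leaves implicit.
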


\begin{proof}

Let us consider the diagram \eqref{presquediag}
\[
\xymatrix{\Psi_{t_1}\Psi_{t_2}\mathscr{M} \ar[r] &
{_Hi}_{1*}\mathscr{H}^{-1}{_Hi}_{1}^*\left({_Hi}_{2*}\mathscr{H}^{-1}{_Hi}_{2}^*
(\mathscr{M}_{q_2,k_2})~{_H\otimes}~{_H\pi_2}^!\mathscr{N}_{q_1,k_1}\right) \\
 & {_Hi}_{(1,2)*}\mathscr{H}^{-2}{_Hi}_{(1,2)}^*(\mathscr{M}_{\boldsymbol{q},\boldsymbol{k}}) 
 \ar[u] \ar[d] \\
 \Psi_{t_2}\Psi_{t_1}\mathscr{M} \ar[r] &
{_Hi}_{2*}\mathscr{H}^{-1}{_Hi}_{2}^*\left({_Hi}_{1*}\mathscr{H}^{-1}{_Hi}_{1}^*
(\mathscr{M}_{q_1,k_1})~{_H\otimes}~{_H\pi_1}^!\mathscr{N}_{q_2,k_2}\right).}
\]
We are going to show that under the hypothesis of the theorem, every arrow of this diagram is an isomorphism. When we apply the forgetful functor from the category of mixed Hodge modules to the category of right $\mathscr{D}_X$-modules, the diagram \eqref{diag} and proposition \ref{aiso} imply that the arrows of diagram \eqref{presquediag} are isomorphisms in the category of right $\mathscr{D}_X$-modules. Yet, according to proposition \ref{isoDH}, if a morphism of mixed Hodge modules is an isomorphism between the underlying right $\mathscr{D}_X$-modules then it is an isomorphism of mixed Hodge modules. Thus, we get an isomorphism
\[\Psi_{t_1}\Psi_{t_2}\mathscr{M} \simeq \Psi_{t_2}\Psi_{t_1}\mathscr{M}.
\]
\end{proof}

Let us consider the projection 
 \[\begin{array}{cccc}
    \pi=(\pi_1,\dots ,\pi_p):& X\times {\Delta}^p & \to & {\Delta}^p\\
       & (x,t_1,\dots ,t_p) & \mapsto & (t_1,\dots ,t_p)
     \end{array}\] 
and $\mathbf{H}:=\{H_1,\dots ,H_p\}$ where $H_i:=\{t_i=0\}$. We deduce from theorem \ref{commut2MHM} the following corollary.

\begin{coro}\label{theocommuthodge}

Let $\mathscr{M}\in \textup{MHM}(X\times{\Delta}^p)$ be a mixed Hodge module on $X\times{\Delta}^p$ such that the pair $(\mathbf{H},\mathcal{M})$ is without slope where $\mathcal{M}$ is the right $\mathscr{D}_{X\times\Delta^p}$-module underlying $\mathscr{M}$. Then, for all permutation $\sigma$ of $\{1,\dots ,p\}$, we have an isomorphism
\[\Psi_{t_1}\cdots \Psi_{t_p}\mathscr{M} \simeq \Psi_{t_{\sigma(1)}}\cdots \Psi_{t_{\sigma(p)}}\mathscr{M}.
\]

\end{coro}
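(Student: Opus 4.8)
The plan is to reduce the statement for an arbitrary permutation $\sigma$ to the case of an \emph{adjacent} transposition, which is precisely the content of Theorem \ref{commut2MHM}. Recall that the symmetric group $\mathfrak{S}_p$ is generated by the adjacent transpositions $s_i=(i,i+1)$, $1\leq i\leq p-1$. Writing $\sigma$ as a product $s_{i_1}\cdots s_{i_r}$ and composing the corresponding isomorphisms one at a time, it therefore suffices, for each $i$, to produce an isomorphism
\[\Psi_{t_1}\cdots\Psi_{t_p}\mathscr{M}\simeq\Psi_{t_1}\cdots\Psi_{t_{i-1}}\Psi_{t_{i+1}}\Psi_{t_i}\Psi_{t_{i+2}}\cdots\Psi_{t_p}\mathscr{M},\]
that is, to commute two neighbouring functors inside the iterated composition, and then to iterate.

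First I would isolate the two functors to be swapped. With the convention that the rightmost functor acts first, set $\mathscr{N}:=\Psi_{t_{i+2}}\cdots\Psi_{t_p}\mathscr{M}$, which I regard as an object of $\textup{MHM}\big((X\times\Delta^{i-1})\times\Delta^2_{(t_i,t_{i+1})}\big)$ once the coordinates $t_1,\dots,t_{i-1}$ are absorbed into the base $X\times\Delta^{i-1}$. The whole composition then reads $\Psi_{t_1}\cdots\Psi_{t_{i-1}}\big(\Psi_{t_i}\Psi_{t_{i+1}}\mathscr{N}\big)$, so by functoriality of $\Psi_{t_1},\dots,\Psi_{t_{i-1}}$ it is enough to exhibit an isomorphism $\Psi_{t_i}\Psi_{t_{i+1}}\mathscr{N}\simeq\Psi_{t_{i+1}}\Psi_{t_i}\mathscr{N}$; applying the outer functors to it yields the displayed swap. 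Relabelling $(t_i,t_{i+1})$ as the two disc coordinates, this last isomorphism is exactly the conclusion of Theorem \ref{commut2MHM} for $\mathscr{N}$ on $(X\times\Delta^{i-1})\times\Delta^2$.

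The only hypothesis left to check is that the pair $(\{H_i,H_{i+1}\},\mathcal{N})$ is without slope, where $\mathcal{N}$ denotes the $\mathscr{D}$-module underlying $\mathscr{N}$, and this is the step I expect to be the main obstacle. One must verify that the without slope condition for the full pair $(\mathbf{H},\mathcal{M})$ is inherited by the iterated nearby cycles $\Psi_{t_{i+2}}\cdots\Psi_{t_p}$ relative to the surviving hypersurfaces $H_i,H_{i+1}$. This follows from the behaviour of the canonical $V$-multifiltration under the commutation isomorphisms \eqref{commutmaisonobe} (see \cite{Maisonobe} and \cite{kocher1}): the graded piece $\textup{gr}^{V^{H_j}}_{\alpha_j}$ along one hypersurface carries induced $V$-filtrations along the others which again satisfy the without slope compatibility, so each application of $\Psi_{t_j}$ preserves the property for the remaining coordinates. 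Crucially, because the commutation of graded pieces makes these iterated graded objects independent of the order of specialisation, the without slope hypothesis persists at \emph{every} intermediate reordering produced by the successive swaps.

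Granting this stability, Theorem \ref{commut2MHM} applies at each adjacent transposition, and composing the resulting isomorphisms along a factorisation $\sigma=s_{i_1}\cdots s_{i_r}$ gives the desired isomorphism $\Psi_{t_1}\cdots\Psi_{t_p}\mathscr{M}\simeq\Psi_{t_{\sigma(1)}}\cdots\Psi_{t_{\sigma(p)}}\mathscr{M}$, proving the corollary.
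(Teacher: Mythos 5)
Your proposal is correct and follows essentially the same route as the paper: reduce to swapping two consecutive nearby cycles functors, apply Theorem \ref{commut2MHM}, and invoke the stability of the without slope condition under passing to graded pieces of the $V$-multifiltration (this is precisely Proposition 3 of \cite{Maisonobe}, which is the ingredient the paper cites where you appeal to \eqref{commutmaisonobe} and \cite{Maisonobe}, \cite{kocher1}). Your write-up is somewhat more explicit about the bookkeeping of adjacent transpositions, but the substance is identical.
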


\begin{proof}

The proposition 3 of \cite{Maisonobe} asserts that if $(\mathbf{H},\mathcal{M})$ is without slope
then, for all $I\subset\{1,\dots ,p\}$, the pair $(\mathbf{H}_{I^c},\textup{gr}_{\boldsymbol{\alpha}_I}^{V^{\mathbf{H}_I}}\mathcal{M})$ is without slope. 
We get can therefore apply as many times as necessary theorem \ref{commut2MHM} to the pairs of consecutive functors $\Psi_{t_i}$. Thus we get the expected isomorphism.

\end{proof}

\part{Mixed Hodge modules without slope}\label{section2}

In this part, we define a condition analogous to the condition without slope for \emph{filtered} $\mathscr{D}$-modules. It will be called strict $\mathbb{R}$-multispecialisability. Actually, we will rather consider the associated Rees's module (cf. \ref{Rees}). Thus the strict $\mathbb{R}$-multispecialisability will be defined for $R_F\mathscr{D}$-modules.

In section 8.1, we define the strict $\mathbb{R}$-multispecialisability. Then we determine the connection between the filtered $\mathscr{D}$-modules case and the $R_F\mathscr{D}$-modules case when they underlie a mixed Hodge module. We will see that the notion of compatibility of the filtrations plays a key role (cf. \ref{Fcomp}).

In section 8.2, we show that strict $\mathbb{R}$-multispecialisability is preserved by a proper direct image. This result is analogous to the direct image theorem of M. Saito for mixed Hodge modules. However our approach is different from M. Saito's who considers filtered $\mathscr{D}$-modules and not $R_F\mathscr{D}$-modules. This enables us to work in an \emph{abelian} category and thus to handle more easily the direct image functor in the corresponding derived category.

In section 8.3, we study the behaviour of the weight filtrations. We expect this behaviour to be the same as with polarizable variations of Hodge structures in the neighbourhood of a normal crossing divisor. We prove that this condition is preserved by a projective direct image.

In section 8.4, we are interested in the case of a quasi-ordinary hypersurfaces singularities. Some pure Hodge modules with support on such singularities can be naturally seen as the direct image of variations of Hodge structures defined outside of a normal crossing divisor. Thus we will be able to apply the previous results.

\section{Strict $\mathbb{R}$-multispecialisability}

Here, we define the strict $\mathbb{R}$-multispecialisability and we determine the connection between this condition and the compatibility of the filtrations in the sense of definition \ref{Fcomp}.

\begin{defi}[multispecialisability]

Let $\mathbf{H}:=\{H_1,\dots ,H_p\}$ $p$ be smooth hypersurfaces of $X$ and let $\mathfrak{M}$ be a (right) coherent $\tilde{\mathscr{D}}_X$-module. 
\begin{enumerate}
\item $\mathfrak{M}$ is said to be \emph{multispecialisable} along $\mathbf{H}$ if, in the neighbourhood of any point of $X$, there exists a good $V$-multifiltration $U_\bullet\mathfrak{M}$ of $\mathfrak{M}$, polynomials $b_i(s)=\prod(s-\alpha_iz)$ (for $1\leq i\leq p$) and integers $\ell_i\in\mathbb{N}$ (for $1\leq i\leq p$) such that, for all $\boldsymbol{k}\in\mathbb{Z}^p$
\begin{equation}\label{eqbern1}
U_{\boldsymbol{k}}\mathfrak{M}\cdot z^{\ell_i}b_i(E_i-k_iz)\subset U_{\boldsymbol{k-1}_i}\mathfrak{M}.
\end{equation}

\item $\mathfrak{M}$ is said to be \emph{multispecialisable by section} along $\mathbf{H}$ if, for any local section $m$ of $\mathfrak{M}$ and for all $1\leq i\leq p$, there exists $\ell_i\in\mathbb{N}$, $b_i(s)=\prod(s-\alpha_iz)$ and $P_i\in V_{\mathbf{-1}_i}\tilde{\mathscr{D}}_X$ such that
\begin{equation}\label{eqbern2}
m\cdot (z^{\ell_i}b_i(E_i)-P_i) = 0.
\end{equation}

\end{enumerate}

\end{defi}

\begin{rema}

\begin{itemize}
\item These two definitions are equivalent, and if they are satisfied, then condition 1. is satisfied for any good $V$-multifiltration.
\item If $\mathfrak{M}$ is multispecialisable along $\mathbf{H}$, the unitary polynomials of smallest degree satisfying equation \eqref{eqbern1} or equation \eqref{eqbern2} are called \emph{weak Bernstein-Sato polynomials}.
\item If these properties are satisfied and if the roots of the polynomials $b_i$ are real numbers, then $\mathfrak{M}$ is said to be $\mathbb{R}$-{multispecialisable} along $\mathbf{H}$.
\item In the case $\mathbb{R}$-multispecialisable by section, we can define the order multifiltration along $\mathbf{H}$ if we consider the unitary polynomials of smallest degree satisfying equation \eqref{eqbern2} for a local section $m$.

\end{itemize}

\end{rema}

\begin{defi}

let $\mathcal{M}$  $\mathscr{D}_X$-module such that the pair $(\mathbf{H},\mathcal{M})$ is without slope and such that the roots of the Bernstein-Sato polynomials are real numbers. We will say as well that $\mathcal{M}$ is $\mathbb{R}$-multispecialisable along $\mathbf{H}$.

\end{defi}

\begin{rema}\label{multispeF}

Let $\mathcal{M}$ be a coherent $\mathscr{D}_X$-module with a coherent $F$-filtration. If $\mathcal{M}$ is $\mathbb{R}$-multispecialisable along $\mathbf{H}$, then the Rees's module $R_F\mathcal{M}$ is $\mathbb{R}$-multispecialisable along $\mathbf{H}$ and the converse is also true.

\end{rema}

\begin{defi}[strict $\mathbb{R}$-multispecialisability]\label{defmultstrict}

Let $\mathfrak{M}$ be a $\tilde{\mathscr{D}}_X$-module $\mathbb{R}$-multispecialisable along $\mathbf{H}$. $\mathfrak{M}$ is said to be \emph{strictly} $\mathbb{R}$-multispecialisable along $\mathbf{H}$, if

\begin{enumerate}

\item There exists a finite set $A\in ]-1,0]^p$ such that the order multifiltration along $\mathbf{H}$ is a good $V$-multifiltration indexed by $A+\mathbb{Z}^p$,
\item for all $I\subset\{1,\dots ,p\}$ and all $\boldsymbol{\alpha}\in \mathbb{R}^p$, $V_{\boldsymbol{\alpha}}\mathfrak{M}/V_{<\boldsymbol{\alpha}_I,\boldsymbol{\alpha}_{I^c}}\mathfrak{M}$ is strict,
\item for all $1\leq i\leq p$ and all $\alpha_i<0$, the morphism $t_i:\textup{gr}_{\boldsymbol{\alpha}}\mathfrak{M}\to \textup{gr}_{\boldsymbol{\alpha-1}_i}\mathfrak{M}$ is surjective,
\item for all $1\leq i\leq p$ and all $\alpha_i>-1$, the morphism $\eth_i:\textup{gr}_{\boldsymbol{\alpha}}\mathfrak{M}\to \textup{gr}_{\boldsymbol{\alpha+1}_i}\mathfrak{M}(-1)$ is surjective.

\end{enumerate}

In this case, we call the order $V$-multifiltration \emph{the canonical $V$-multifiltration}.

\end{defi}

\begin{rema}
 If $\mathfrak{M}$ is \emph{strictly} $\mathbb{R}$-multispecialisable along $\mathbf{H}$, then
\begin{itemize}
\item for all $1\leq i\leq p$ and all $\alpha_i<0$, the morphism $t_i:\textup{gr}_{\boldsymbol{\alpha}}\mathfrak{M}\to \textup{gr}_{\boldsymbol{\alpha-1}_i}\mathfrak{M}$ is an isomorphism,
\item for all $1\leq i\leq p$ and all $\alpha_i>-1$, the morphism $\eth_i:\textup{gr}_{\boldsymbol{\alpha}}\mathfrak{M}\to \textup{gr}_{\boldsymbol{\alpha+1}_i}\mathfrak{M}(-1)$ is an isomorphism.
\end{itemize}

\end{rema}

\begin{prop}\label{compatstrict}

Let $\mathcal{M}$ be a coherent $\mathscr{D}_X$-module, $\mathbb{R}$-{multispecialisable} along $\mathbf{H}$ and equipped with a coherent $F$-filtration such that
\begin{enumerate}

\item The Rees module $R_F\mathcal{M}$ underlies a mixed Hodge module,
\item The filtrations $(F_\bullet\mathcal{M},V^1_\bullet\mathcal{M},\dots ,V^p_\bullet\mathcal{M})$ are compatible.

\end{enumerate}

Then $R_F\mathcal{M}$ is strictly $\mathbb{R}$-multispecialisable along $\mathbf{H}$.

\end{prop}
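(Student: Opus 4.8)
The plan is to show that the four defining conditions of strict $\mathbb{R}$-multispecialisability (Definition \ref{defmultstrict}) hold for $R_F\mathcal{M}$, using the two hypotheses: that $R_F\mathcal{M}$ underlies a mixed Hodge module, and that the filtrations $(F_\bullet\mathcal{M},V^1_\bullet\mathcal{M},\dots,V^p_\bullet\mathcal{M})$ are compatible in the sense of Definition \ref{Fcomp}. The key translation I would make at the outset is that the $V$-multifiltration of the Rees module $R_F\mathcal{M}$ is, by construction, built from the Rees construction applied to the \emph{bifiltered} object $(\mathcal{M},F_\bullet,V^\bullet)$, so that compatibility of the filtrations is exactly what controls the interaction between the weight variable $z$ of the Rees module and the $V$-grading.

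First I would establish condition (1): since $\mathcal{M}$ is assumed $\mathbb{R}$-multispecialisable along $\mathbf{H}$, its order $V$-multifiltration is already a good $V$-multifiltration indexed by some $A+\mathbb{Z}^p$ with $A\subset{]-1,0]}^p$; by Remark \ref{multispeF} this transfers to $R_F\mathcal{M}$. The heart of the argument is condition (2), the strictness of $V_{\boldsymbol{\alpha}}\mathfrak{M}/V_{<\boldsymbol{\alpha}_I,\boldsymbol{\alpha}_{I^c}}\mathfrak{M}$ for every $I$ and every $\boldsymbol{\alpha}$. Strictness of a graded $\tilde{\mathscr{D}}$-module means it has no $z$-torsion, i.e. it is a flat $\mathbb{C}[z]$-module, or equivalently that the induced $F$-filtration on the corresponding quotient of $\mathcal{M}$ is the one obtained by the natural formula without any drop in dimension of graded pieces. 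This is precisely where I would invoke compatibility: compatibility of $(F_\bullet,V^1_\bullet,\dots,V^p_\bullet)$ guarantees that taking successive $\mathrm{gr}^{V_i}$ (or the partial graded pieces $V_{\boldsymbol{\alpha}}/V_{<\boldsymbol{\alpha}_I,\boldsymbol{\alpha}_{I^c}}$) commutes with the Rees/Hodge filtration, so that the Rees module of the quotient equals the quotient of the Rees modules, with no $z$-torsion appearing. I expect this to be the main obstacle: one must carefully unwind the definition of compatibility of several filtrations into the statement that each relevant multi-graded piece is $z$-torsion-free, and handle all subsets $I$ simultaneously, presumably by an induction on $|I|$ reducing the general partial-graded statement to the one-step graded pieces.

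For conditions (3) and (4), the surjectivity of $t_i:\mathrm{gr}_{\boldsymbol{\alpha}}\mathfrak{M}\to\mathrm{gr}_{\boldsymbol{\alpha-1}_i}\mathfrak{M}$ for $\alpha_i<0$ and of $\eth_i:\mathrm{gr}_{\boldsymbol{\alpha}}\mathfrak{M}\to\mathrm{gr}_{\boldsymbol{\alpha+1}_i}\mathfrak{M}(-1)$ for $\alpha_i>-1$, I would argue that these surjectivities already hold at the level of the underlying $\mathscr{D}$-module $\mathcal{M}$ — this is part of what it means for $(\mathbf{H},\mathcal{M})$ to be without slope, since for a single function the maps $t_i$ and $\partial_i$ induce the standard isomorphisms/surjections on $\mathrm{gr}^{V}$ away from the boundary values, and without slope propagates this to the multifiltration by Maisonobe's results (Proposition 3 of \cite{Maisonobe}). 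It then remains to lift these surjectivities to the filtered level, and here compatibility again does the work: because the graded pieces carry the Rees filtration strictly (by condition (2), already established), the surjection on $\mathcal{M}$-graded pieces is automatically strict with respect to $F_\bullet$, hence lifts to a surjection of the Rees modules $\mathrm{gr}_{\boldsymbol{\alpha}}\mathfrak{M}$. The Tate twist $(-1)$ in condition (4) is exactly the shift $z\mapsto z$ accounting for the degree-one operator $\eth_i=z\partial_i$, so the surjectivity of $\eth_i$ on the Rees level follows from that of $\partial_i$ together with strictness. Thus conditions (3) and (4) reduce, via strictness, to the already-available $\mathscr{D}$-module statements, and the proposition follows.
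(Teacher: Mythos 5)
Your overall skeleton — use compatibility to identify the partial quotients of the candidate multifiltration with Rees modules of the corresponding quotients of $\mathcal{M}$ (hence strict), and use the mixed Hodge module hypothesis plus multispecialisability for the $t_i$ and $\eth_i$ conditions — is the same as the paper's. But the two steps you treat as automatic are exactly the hard parts of the paper's proof, and one of your inferences is false. First, condition (1) of Definition \ref{defmultstrict} does \emph{not} follow from Remark \ref{multispeF}: that remark only gives $\mathbb{R}$-multispecialisability of $R_F\mathcal{M}$, i.e.\ the existence of Bernstein-type equations and of the order multifiltration, not that this order multifiltration is a \emph{good} $V$-multifiltration. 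Goodness (coherence over $V_{\boldsymbol{0}}\tilde{\mathscr{D}}$) is precisely what can fail for $\tilde{\mathscr{D}}$-modules and is the reason the paper works with the explicit candidate $U_{\boldsymbol{\alpha}}R_F\mathcal{M}:=\bigoplus_p (F_pV_{\boldsymbol{\alpha}}\mathcal{M})z^p$: it proves that $F_pV_{\boldsymbol{\alpha}}\mathcal{M}$ is $\mathcal{O}_X$-coherent, then that locally $F_p\mathscr{D}_{X/\mathbb{C}^p}\cdot F_{p_0}V_{\boldsymbol{\alpha}}\mathcal{M}=F_{p+p_0}V_{\boldsymbol{\alpha}}\mathcal{M}$ for some $p_0$, using the $R_F\mathscr{D}_{\cap_iH_i}$-coherence of $R_F\textup{gr}_{\boldsymbol{\alpha}}\mathcal{M}$ (which underlies a mixed Hodge module by compatibility), Nakayama's lemma, and an induction on the number of indices with $\alpha_i=0$. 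Only after this can one conclude that $U$ is the canonical multifiltration, which is what simultaneously gives points (1) and (2). Your proposal contains no substitute for this coherence argument.

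Second, for conditions (3)--(4), your claim that the surjection on the graded pieces of $\mathcal{M}$ is ``automatically strict with respect to $F_\bullet$'' because the graded pieces are strict objects is not a valid inference: strictness of an object (absence of $z$-torsion in the Rees module) does not imply that a surjective morphism of the underlying $\mathscr{D}$-modules is a strict, i.e.\ filtration-level surjective, morphism. What is needed is surjectivity of $F_p\textup{gr}_{\boldsymbol{\alpha}}\mathcal{M}\to F_{p+1}\textup{gr}_{\boldsymbol{\alpha}+\boldsymbol{1}_i}\mathcal{M}$ for every $p$, which is strictly stronger than surjectivity after forgetting $F$. The paper proves this by a genuine argument: it combines the single-hypersurface \emph{filtered} isomorphism $\partial_i:F_p\textup{gr}^{V_i}_{\alpha_i}\mathcal{M}\to F_{p+1}\textup{gr}^{V_i}_{\alpha_i+1}\mathcal{M}$ (available because $R_F\mathcal{M}$ underlies a mixed Hodge module, hence is strictly specialisable along each single $H_i$) with the \emph{unfiltered} multi-graded isomorphism coming from multispecialisability, and then uses the compatibility of the filtrations in an inductive correction argument to show that a filtered single-hypersurface lift can be chosen inside the correct multi-$V$ level. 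This interleaving is the actual content of the step, and your proposal replaces it with an implication that does not hold.
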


\begin{proof}

According to Remark \ref{multispeF}, $R_F\mathcal{M}$ is $\mathbb{R}$-multispecialisable along $\mathbf{H}$. We define a $V$-multifiltration of $R_F\mathcal{M}$ in the following way:
\begin{equation}\label{multifilt}
U_{\boldsymbol{\alpha}}R_F\mathcal{M}:=\bigoplus_{p\in\mathbb{Z}}(F_pV_{\boldsymbol{\alpha}}\mathcal{M})z^p.
\end{equation}

By definition, there exists a finite set $A\subset ]-1,0]^p$ such that this $V$-multifiltration is indexed by $A+\mathbb{Z}^p$. The filtrations $(F_\bullet\mathcal{M},V^1_\bullet\mathcal{M},\dots ,V^p_\bullet\mathcal{M})$ being compatible, we get that, for all $I\subset\{1,\dots ,p\}$ and all $\boldsymbol{\alpha}\in \mathbb{R}^p$,  $U_{\boldsymbol{\alpha}}R_F\mathcal{M}/U_{<\boldsymbol{\alpha}_I,\boldsymbol{\alpha}_{I^c}}R_F\mathcal{M}=
R_F\left(V_{\boldsymbol{\alpha}}\mathcal{M}/V_{<\boldsymbol{\alpha}_I,\boldsymbol{\alpha}_{I^c}}\mathcal{M}\right)$,
thus $U_{\boldsymbol{\alpha}}R_F\mathcal{M}/U_{<\boldsymbol{\alpha}_I,\boldsymbol{\alpha}_{I^c}}R_F\mathcal{M}$ is strict.

 Let $mz^p$ be a local section of $(F_pV_{\boldsymbol{\alpha}}\mathcal{M})z^p$, for all $1\leq i\leq p$, there exists an integer $\nu_i$ such that 
\[mz^p(t_i\eth_i-\alpha_iz)^{\nu_i}\in (F_{p+\nu_i}V_{\alpha_1,\dots ,<\alpha_i,\dots ,\alpha_p}\mathcal{M})z^{p+\nu_i},\]
thus, we have $U_{\boldsymbol{\alpha}}R_F\mathcal{M}\subset V_{\boldsymbol{\alpha}}R_F\mathcal{M}$.

 We are going to show that the $V$-multifiltration $U_{\bullet}R_F\mathcal{M}$ is a good $V$-multifiltration, this will imply that it is the canonical $V$-multifiltration and therefore that points 1. and 2. of definition \ref{defmultstrict} are satisfied. We first prove points 3. and 4. of definition \ref{defmultstrict}. Since The Rees's module $R_F\mathcal{M}$ underlies a mixed Hodge module, it is strictly $\mathbb{R}$-specialisable along any hypersurface, in particular we have for all $1\leq i\leq p$:
\begin{enumerate}[label=(\alph*')]

\item $\forall p$ and $\forall \alpha_i<0, t_i:F_pV_{\alpha_i}^{H_i}\mathcal{M}\to
F_pV_{\alpha_i-1}^{H_i}\mathcal{M}$ is an isomorphism,
\item\label{b'} $\forall p$ and $\forall \alpha_i>-1, \partial_i:F_p\textup{gr}_{\alpha_i}^{V_i}\mathcal{M}\to
F_{p+1}\textup{gr}_{\alpha_i+1}^{V_i}\mathcal{M}$ is an isomorphism.

\end{enumerate}

Furthermore, we deduce from the $\mathbb{R}$-multispecialisability property of $\mathcal{M}$ that, for all $1\leq i\leq p$:

\begin{enumerate}[label=(\alph*'')]
\item $\forall \alpha_i<0, t_i:V_{\boldsymbol{\alpha}}\mathcal{M}\to
V_{\boldsymbol{\alpha-1}_i}\mathcal{M}$ is an isomorphism,
\item\label{b''} $\forall \alpha_i>-1, \partial_i:\textup{gr}_{\boldsymbol{\alpha}}\mathcal{M}\to
\textup{gr}_{\boldsymbol{\alpha+1}_i}\mathcal{M}$ is an isomorphism.

\end{enumerate}
We can then deduce that for all $1\leq i\leq p$:
\begin{enumerate}[label=(\alph*)]

\item\label{a}
 $\forall p$ and $\forall \alpha_i<0, t_i:F_pV_{\boldsymbol{\alpha}}\mathcal{M}\to
F_pV_{\boldsymbol{\alpha-1}_i}\mathcal{M}$ is an isomorphism,
\item\label{b} $\forall p$ and $\forall \alpha_i>-1, \partial_i:F_p\textup{gr}_{\boldsymbol{\alpha}}\mathcal{M}\to
F_{p+1}\textup{gr}_{\boldsymbol{\alpha+1}_i}\mathcal{M}$ is an isomorphism.

\end{enumerate}

let us prove \ref{b}, in order to lighten notation, we set $i=1$. Injectivity is a direct consequence of \ref{b''}. For surjectivity, we use the compatibility of filtrations. Let $\bar{m}\in F_{p+1}\textup{gr}_{\boldsymbol{\alpha+1}_1}\mathcal{M}$ and $m\in F_{p+1}V_{\alpha_2,\dots ,\alpha_p}\textup{gr}^{V_1}_{\alpha_1+1}\mathcal{M}$ be a representative. According to the surjectivity of \ref{b'}, there exists $\tilde{m}\in F_p\textup{gr}^{V_1}_{\alpha_1}\mathcal{M}$ such that $\partial_1\tilde{m}=m$. According to the surjectivity of \ref{b''}, there exists $m'\in V_{\alpha_2,\dots ,\alpha_p}\textup{gr}^{V_1}_{\alpha_1}\mathcal{M}$ and $n\in\sum_{j=2}^pV_{\alpha_2
,\dots ,<\alpha_j,\dots ,\alpha_p}\textup{gr}^{V_1}_{\alpha_1+1}\mathcal{M}$ such that $\partial_1 m'=n+m$. For $2\leq i\leq p$, there exists real numbers $\beta_i$ such that $\beta_i\geq \alpha_i$ and $\tilde{m}-m'\in V_{\beta_2
,\dots ,\beta_p}\textup{gr}^{V_1}_{\alpha_1}\mathcal{M}$. Following \ref{b''}, the morphism $\partial_1:\textup{gr}^{V_2}_{\beta_2}\textup{gr}_{\alpha_1,\beta_3\dots ,\beta_p}\mathcal{M}\to
\textup{gr}^{V_2}_{\beta_2}\textup{gr}_{\alpha_1+1,\beta_3\dots ,\beta_p}\mathcal{M}$ is injective. Yet, the class of $n$ in $\textup{gr}^{V_2}_{\beta_2}\textup{gr}_{\alpha_1+1,\beta_3\dots ,\beta_p}\mathcal{M}$ is zero and $\partial_1(\tilde{m}-m')=n$. we deduce that $\tilde{m}-m'\in V_{<\beta_2
,\dots ,\beta_p}\textup{gr}^{V_1}_{\alpha_1}\mathcal{M}$. Thus, we show by induction that
$\tilde{m}-m'\in V_{\alpha_2
,\dots ,\alpha_p}\textup{gr}^{V_1}_{\alpha_1}\mathcal{M}$. Finally, $\tilde{m}\in F_pV_{\alpha_2
,\dots ,\alpha_p}\textup{gr}^{V_1}_{\alpha_1}\mathcal{M}$ et $\partial_1\tilde{m}=m$, thus we get the surjectivity in \ref{b}. Statement \ref{a} can be proven in the same way.

This implies points 3. and 4. in definition \ref{defmultstrict} for the $V$-multifiltration $U_{\bullet}R_F\mathcal{M}$. 

Thus it remains to prove that it is a good $V$-multifiltration. The set $A$ of indices in 
$[-1,0]^p$ of the multifiltration $U_\bullet R_F\mathcal{M}$ being finite, in order to show coherence it is enough to prove that for all $\boldsymbol{\alpha}\in [-1,0]^p$, the module $U_{\boldsymbol{\alpha}}R_F \mathcal{M}$ is $V_{\mathbf{0}}R_F\mathscr{D}_X$-coherent, and we will conclude by using the two isomorphisms above. More precisely we are going to show that $U_{\boldsymbol{\alpha}}R_F \mathcal{M}$ is $R_F\mathscr{D}_{X/\mathbb{C}^p}$-coherent where we consider the reduced local equations $(t_1,\dots ,t_p):X\to \mathbb{C}^p$ of $\mathbf{H}$.

let us show that $F_pV_{\boldsymbol{\alpha}}\mathcal{M}$ is $\mathcal{O}_X$-coherent for all $p$. The module $\mathcal{M}$ is $\mathbb{R}$-{multispecialisable} along $\mathbf{H}$ therefore 
$V_{\boldsymbol{\alpha}}\mathcal{M}$ is $V_{\boldsymbol{0}}\mathscr{D}_X$-coherent, we can thus filter $V_{\boldsymbol{\alpha}}\mathcal{M}$ by $\mathcal{O}_X$-coherent submodules $(V_{\boldsymbol{\alpha}}\mathcal{M})_l$. The increasing sequence of coherent
 $\mathcal{O}_X$-modules $F_p\mathcal{M}\cap (V_{\boldsymbol{\alpha}}\mathcal{M})_l$ is included in $F_p\mathcal{M}$ therefore it is locally stationary, thus $F_pV_{\boldsymbol{\alpha}}\mathcal{M}$ is $\mathcal{O}_X$-coherent.

It remains to show that, locally, there exists $p_0$ such that $F_p\mathscr{D}_{X/\mathbb{C}^p}\cdot F_{p_0}V_{\boldsymbol{\alpha}}\mathcal{M}=F_{p+p_0}V_{\boldsymbol{\alpha}}\mathcal{M}$ for all $p\geq 0$. The Rees's module $R_F\mathcal{M}$ underlies a mixed Hodge module and the filtrations $(F_\bullet\mathcal{M},V^1_\bullet\mathcal{M},\dots ,V^p_\bullet\mathcal{M})$ are compatible, therefore $R_F\textup{gr}_{\boldsymbol{\alpha}}\mathcal{M}$ also underlies a mixed Hodge module on $\cap_iH_i$ and thus it is $R_F\mathscr{D}_{\cap_iH_i}$-coherent. We deduce from that the $R_F\mathscr{D}_{\cap_iH_i}$-coherence of
\[\frac{R_FV_{\boldsymbol{\alpha}}\mathcal{M}}{\displaystyle\sum_{1\leq i\leq p}R_FV_{\boldsymbol{\alpha-1}_i}\mathcal{M}}
\]
and therefore the existence of $p_0$ such that, for all $p\geq 0$,
\begin{equation}\label{egalitécohérence}
F_p\mathscr{D}_{\cap_iH_i}\cdot 
\frac{F_{p_0}V_{\boldsymbol{\alpha}}\mathcal{M}}{\displaystyle\sum_{1\leq i\leq p}F_{p_0}V_{\boldsymbol{\alpha-1}_i}\mathcal{M}}=\frac{F_{p+p_0}V_{\boldsymbol{\alpha}}\mathcal{M}}{\displaystyle\sum_{1\leq i\leq p}F_{p+p_0}V_{\boldsymbol{\alpha-1}_i}\mathcal{M}}.
\end{equation}

let us denote $U_{\boldsymbol{\alpha},p}:=F_p\mathscr{D}_{X/\mathbb{C}^p}\cdot 
F_{p_0}V_{\boldsymbol{\alpha}}\mathcal{M}$. according to \ref{a}, if $\boldsymbol{\alpha}\in [-1,0[^p$, then we can rewrite the equality \eqref{egalitécohérence}:
\[
\frac{U_{\boldsymbol{\alpha},p}}{\displaystyle\sum_{1\leq i\leq p}U_{\boldsymbol{\alpha},p}\cdot t_i} =
\frac{F_{p+p_0}V_{\boldsymbol{\alpha}}\mathcal{M}}{\displaystyle\sum_{1\leq i\leq p}F_{p+p_0}V_{\boldsymbol{\alpha}}\mathcal{M}\cdot t_i}.
\]
According to Nakayama's lemma, this implies that $U_{\boldsymbol{\alpha},p}=F_{p+p_0}V_{\boldsymbol{\alpha}}\mathcal{M}$ in the neighbourhood of $\cap_i H_i$. 

There remains the case where $\alpha_i=0$ for some index $1\leq i\leq p$. We use an induction on the number of indices  $1\leq i\leq $ such that $\alpha_i=0$. In order to simplify, let us suppose that there exists $1\leq k\leq p$ such that $(\alpha_1,\dots ,\alpha_k)\in  
[-1,0[^k$ and $\alpha_{k+1}=\dots =\alpha_p=0$. We can apply the above argument to $R_F\textup{gr}_{\alpha_{k+1}\dots \alpha_p}^{H_{k+1}\dots H_{p}}\mathcal{M}$ and deduce that
\[F_p\mathscr{D}_{(\bigcap_{i>k}H_i)/\mathbb{C}^k}F_{p_0}V_{\alpha_1\dots \alpha_k}\textup{gr}_{\alpha_{k+1}\dots \alpha_p}\mathcal{M}=F_{p+p_0}V_{\alpha_1\dots \alpha_k}\textup{gr}_{\alpha_{k+1}\dots \alpha_p}\mathcal{M}.\]
We can rewrite this equality
\[
F_{p+p_0}V_{\boldsymbol{\alpha}}\mathcal{M}=F_p\mathscr{D}_{X/\mathbb{C}^p}F_{p_0}
V_{\boldsymbol{\alpha}}\mathcal{M}+\displaystyle\sum_{k+1\leq i\leq p}F_{p+p_0}V_{\alpha_1\dots <\alpha_i\dots \alpha_p}\mathcal{M}.
\]

In the sum on the right side, we decreased the number of indices $1\leq i\leq $ such that $\alpha_i=0$ by $1$. Thus, using the induction hypothesis, we have \[F_{p+p_0}V_{\alpha_1\dots <\alpha_i\dots \alpha_p}\mathcal{M}=F_p\mathscr{D}_{X/\mathbb{C}^p}F_{p_0}
V_{\alpha_1\dots <\alpha_i\dots \alpha_p}\mathcal{M}\]
and we deduce that 
\[F_{p+p_0}V_{\boldsymbol{\alpha}}\mathcal{M}=F_p\mathscr{D}_{X/\mathbb{C}^p}F_{p_0}
V_{\boldsymbol{\alpha}}\mathcal{M}.\]

The multifiltration \eqref{multifilt} is therefore a good $V$-multifiltration of $R_F\mathcal{M}$, then it is the canonical $V$-multifiltration, which gives us points 
 1. and 2. of definition \ref{defmultstrict} and concludes the proof of the proposition.

\end{proof}

Conversely, strict $\mathbb{R}$-multispecialisability implies the compatibility of the filtrations.

\begin{prop}\label{strictmulcompat}

If $R_F\mathcal{M}$ is strictly $\mathbb{R}$-multispecialisable along $\mathbf{H}$, then the filtrations $(F_\bullet\mathcal{M},V^1_\bullet\mathcal{M},\dots ,V^p_\bullet\mathcal{M})$ are compatible. 

\end{prop}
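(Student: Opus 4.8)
The plan is to reverse the strategy of Proposition \ref{compatstrict}: there the compatibility of $(F_\bullet\mathcal{M},V^1_\bullet\mathcal{M},\dots,V^p_\bullet\mathcal{M})$ was used to prove that the candidate multifiltration $U_{\boldsymbol\alpha}R_F\mathcal{M}=\bigoplus_{p\in\mathbb{Z}}(F_pV_{\boldsymbol\alpha}\mathcal{M})z^p$ of \eqref{multifilt} is the canonical $V$-multifiltration of $R_F\mathcal{M}$. Conversely, I would start from the canonical $V$-multifiltration $V_{\boldsymbol\alpha}R_F\mathcal{M}$ provided by the strict $\mathbb{R}$-multispecialisability, prove that it \emph{coincides} with $U_{\boldsymbol\alpha}R_F\mathcal{M}$, and then read the compatibility off the strictness condition (2) of Definition \ref{defmultstrict}. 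The whole difficulty is concentrated in the identification $V_{\boldsymbol\alpha}R_F\mathcal{M}=U_{\boldsymbol\alpha}R_F\mathcal{M}$.

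To set up the identification, note first that $R_F\mathcal{M}=\bigoplus_p(F_p\mathcal{M})z^p$ is a graded $\tilde{\mathscr{D}}_X$-module and that the operators $z^{\ell_i}b_i(E_i-k_iz)$ of \eqref{eqbern1} are $z$-homogeneous, so the canonical $V$-multifiltration is by graded $V_{\boldsymbol 0}\tilde{\mathscr{D}}_X$-submodules; write $V_{\boldsymbol\alpha}R_F\mathcal{M}=\bigoplus_pG_p^{\boldsymbol\alpha}z^p$ with $G_p^{\boldsymbol\alpha}\subseteq F_p\mathcal{M}$ an increasing double filtration. Specialising $z=1$ turns the defining Bernstein relations \eqref{eqbern2} for $R_F\mathcal{M}$ into the corresponding relations for $\mathcal{M}$, so by the uniqueness of the canonical $V$-multifiltration (Remark \ref{multispeF}) the exhaustion $\bigcup_pG_p^{\boldsymbol\alpha}$ equals $V_{\boldsymbol\alpha}\mathcal{M}$. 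Together with $G_p^{\boldsymbol\alpha}\subseteq F_p\mathcal{M}$ this already gives the easy inclusion $G_p^{\boldsymbol\alpha}\subseteq F_p\mathcal{M}\cap V_{\boldsymbol\alpha}\mathcal{M}=F_pV_{\boldsymbol\alpha}\mathcal{M}$, i.e. $V_{\boldsymbol\alpha}R_F\mathcal{M}\subseteq U_{\boldsymbol\alpha}R_F\mathcal{M}$.

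For the reverse inclusion I would show that the quotient $R_F\mathcal{M}/V_{\boldsymbol\alpha}R_F\mathcal{M}$ has no $z$-torsion. Applying condition (2) of Definition \ref{defmultstrict} to the singletons $I=\{i\}$ shows that each one-step quotient $V_{\boldsymbol\beta}R_F\mathcal{M}/V_{\beta_1\dots<\beta_i\dots\beta_p}R_F\mathcal{M}$ is strict, hence $z$-torsion-free; since $z$-torsion-freeness over $\mathbb{C}[z]$ is stable under extensions, every $V_{\boldsymbol\beta}R_F\mathcal{M}/V_{\boldsymbol\alpha}R_F\mathcal{M}$ (filtered by a chain of such one-step steps) is $z$-torsion-free, and then so is their filtered union $R_F\mathcal{M}/V_{\boldsymbol\alpha}R_F\mathcal{M}=\bigcup_{\boldsymbol\beta}\left(V_{\boldsymbol\beta}R_F\mathcal{M}/V_{\boldsymbol\alpha}R_F\mathcal{M}\right)$. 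Now take $m\in F_pV_{\boldsymbol\alpha}\mathcal{M}$; since $m\in V_{\boldsymbol\alpha}\mathcal{M}=\bigcup_qG_q^{\boldsymbol\alpha}$ we have $mz^q\in V_{\boldsymbol\alpha}R_F\mathcal{M}$ for $q\gg p$, that is $z^{q-p}\cdot(mz^p)\in V_{\boldsymbol\alpha}R_F\mathcal{M}$ with $mz^p\in R_F\mathcal{M}$; torsion-freeness of the quotient then forces $mz^p\in V_{\boldsymbol\alpha}R_F\mathcal{M}$, i.e. $m\in G_p^{\boldsymbol\alpha}$. Hence $G_p^{\boldsymbol\alpha}=F_pV_{\boldsymbol\alpha}\mathcal{M}$ and $V_{\boldsymbol\alpha}R_F\mathcal{M}=U_{\boldsymbol\alpha}R_F\mathcal{M}=R_FV_{\boldsymbol\alpha}\mathcal{M}$ for all $\boldsymbol\alpha$.

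With this identification in hand the conclusion is immediate. For every $I$ and $\boldsymbol\alpha$ one has $V_{<\boldsymbol\alpha_I,\boldsymbol\alpha_{I^c}}R_F\mathcal{M}=\sum_{i\in I}R_FV_{\alpha_1\dots<\alpha_i\dots\alpha_p}\mathcal{M}$, so condition (2) of Definition \ref{defmultstrict} asserts exactly that
\[\frac{R_FV_{\boldsymbol\alpha}\mathcal{M}}{\sum_{i\in I}R_FV_{\alpha_1\dots<\alpha_i\dots\alpha_p}\mathcal{M}}\]
is strict, i.e. that $R_F$ commutes with the quotient $V_{\boldsymbol\alpha}\mathcal{M}/V_{<\boldsymbol\alpha_I,\boldsymbol\alpha_{I^c}}\mathcal{M}$; reading this in each graded degree, the injectivity of the action of $z$ is precisely the distributivity of $F_\bullet$ over the sums $\sum_{i\in I}V_{\alpha_1\dots<\alpha_i\dots\alpha_p}\mathcal{M}$, which is the compatibility of $(F_\bullet\mathcal{M},V^1_\bullet\mathcal{M},\dots,V^p_\bullet\mathcal{M})$ in the sense of Definition \ref{Fcomp}. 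I expect the main obstacle to be the identification step of the third paragraph, and more precisely the verification that $R_F\mathcal{M}/V_{\boldsymbol\alpha}R_F\mathcal{M}$ is $z$-torsion-free; once that is granted, everything else is a direct unwinding of the forward proof of Proposition \ref{compatstrict}.
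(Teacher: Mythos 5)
Your second and third paragraphs are correct: the identification $V_{\boldsymbol{\alpha}}R_F\mathcal{M}=R_FV_{\boldsymbol{\alpha}}\mathcal{M}$ does hold, and your proof of it (gradedness of the order multifiltration, the easy inclusion $G_p^{\boldsymbol{\alpha}}\subseteq F_pV_{\boldsymbol{\alpha}}\mathcal{M}$, then $z$-torsion-freeness of $R_F\mathcal{M}/V_{\boldsymbol{\alpha}}R_F\mathcal{M}$ obtained from the singleton case of Definition \ref{defmultstrict}(2) by extensions and filtered unions) is sound. This step is in fact left implicit in the paper, whose proof applies Definition \ref{defmultstrict}(2) to the Rees module built from $(F_\bullet\mathcal{M},V^1_\bullet\mathcal{M},\dots,V^p_\bullet\mathcal{M})$ as if its graded pieces were the canonical $V$-multifiltration of $R_F\mathcal{M}$. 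But you have mislocated the difficulty: the genuine gap is in your last paragraph, which is where the paper does its real work. The relations you read off from strictness,
\[F_j\mathcal{M}\cap\sum_{i\in I}V_{\alpha_1\dots<\alpha_i\dots\alpha_p}\mathcal{M}
=\sum_{i\in I}\bigl(F_j\mathcal{M}\cap V_{\alpha_1\dots<\alpha_i\dots\alpha_p}\mathcal{M}\bigr),\]
are \emph{not} "precisely" the compatibility of $(F_\bullet\mathcal{M},V^1_\bullet\mathcal{M},\dots,V^p_\bullet\mathcal{M})$ in the sense of Definition \ref{Fcomp}. By Proposition \ref{compatReesplat} and Corollary \ref{coroKoszul}, that compatibility amounts to the regularity of \emph{every} permutation of $(z,y_1,\dots,y_p)$ on the $(p+1)$-variable Rees module $\mathscr{A}$; your relations say only that $z$ acts injectively on $\mathscr{A}/\sum_{i\in I}y_i\mathscr{A}$ for every $I$, i.e.\ they account for the sequences ending in $z$, and say nothing about the sequences of $y$'s alone, such as the injectivity of $y_\ell$ on $\mathscr{A}/(y_1,\dots,y_{\ell-1})\mathscr{A}$.

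And they cannot: taking the union over $j$ (exhaustiveness of $F$) your relations become tautologies, so they impose no constraint whatsoever on the $V$-filtrations alone; in particular they do not imply the compatibility of $(V^1_\bullet\mathcal{M},\dots,V^p_\bullet\mathcal{M})$, which for $p\geq 3$ is a nontrivial sub-statement of the conclusion (three or more sub-objects need not be compatible). That sub-statement is exactly where the without-slope hypothesis enters, via \cite[proposition 2.12]{kocher1}, an input your proposal never invokes. The paper closes precisely this hole: it reduces modulo $(z-1)$, where injectivity of $y_\ell$ holds because the $V$-filtrations of $\mathcal{M}$ are compatible, and lifts injectivity back to the graded module $\mathscr{A}/(y_1,\dots,y_{\ell-1})\mathscr{A}$ using its strictness (a degree-zero endomorphism of a $z$-torsion-free graded $\mathbb{C}[z]$-module is injective as soon as it is injective after setting $z=1$). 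To repair your proof, keep your identification, add the compatibility of the $V$-filtrations from \cite[proposition 2.12]{kocher1}, and then either run the paper's regular-sequence argument or check by a lattice computation that your distributivity relations combined with the distributivity of the $V$-lattice yield the remaining exactness conditions; without one of these, the final "immediate" step is unjustified.
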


\begin{proof}

In order to prove this proposition, we denote $\mathscr{M}$ the $\mathbb{C}[z,y_1,\dots ,y_p]$-module given by the Rees's construction of definition \ref{Rees} starting with $\mathcal{M}$, the filtration $F_\bullet\mathcal{M}$ and the canonical $V$-filtrations $V^{H_i}_\bullet\mathcal{M}$. In order to show the compatibility of the filtrations, we will show that $\mathscr{M}$ is a flat $\mathbb{C}[z,y_1,\dots ,y_p]$-module (proposition \ref{compatReesplat}). According to corollary \ref{coroKoszul}, it is enough to show that, for all $0\leq k\leq p$, the sequence $y_1,\dots ,y_k,z$ is regular.

Let us first prove that multiplication by $y_{\ell}$ on $\mathscr{M}/(y_1,\dots ,y_{\ell-1})\mathscr{M}$ is injective for all $1\leq \ell\leq k$. let us consider the following diagram:
\[\xymatrix{\mathscr{M}/(y_1,\dots ,y_{\ell-1})\mathscr{M} \ar[r]^{.y_{\ell}} \ar[d] & 
\mathscr{M}/(y_1,\dots ,y_{\ell-1})\mathscr{M} \ar[d] \\
\mathscr{M}/(y_1,\dots ,y_{\ell-1},z-1)\mathscr{M} \ar[r]^{.y_\ell}  & 
\mathscr{M}/(y_1,\dots ,y_{\ell-1},z-1)\mathscr{M}.}\]
According to lemma \ref{multispeF}, $\mathcal{M}$ is $\mathbb{R}$-multispecialisable along $\mathbf{H}$. Therefore, according to \cite[proposition 2.12]{kocher1}, the filtrations $(V^{H_1}_\bullet\mathcal{M},\dots ,V^{H_p}_\bullet\mathcal{M})$ of $\mathcal{M}$ are compatible. We deduce that the lower horizontal morphism in the above diagram is injective. According to point 2. in definition \ref{defmultstrict}, $\mathscr{M}/(y_1,\dots ,y_{\ell-1})\mathscr{M}$ is a strict graded $\mathbb{C}[z]$-module. Furthermore, the multiplication by $y_{\ell}$ on $\mathscr{M}/(y_1,\dots ,y_{\ell-1})\mathscr{M}$ is a morphism of degree zero graded $\mathbb{C}[z]$-modules. This implies that the multiplication by $y_{\ell}$ on $\mathscr{M}/(y_1,\dots ,y_{\ell-1})\mathscr{M}$ is injective. In the same way, $\mathscr{M}/(y_1,\dots ,y_{k})\mathscr{M}$ is a strict $\mathbb{C}[z]$-module, according to point 2. of definition \ref{defmultstrict}, the multiplication by $z$ is thus injective. The sequence $y_1,\dots ,y_k,z$ is therefore regular and this concludes the proof of the proposition.

\end{proof}

\begin{prop}

 Let $I\subset \{1,\dots ,p\}$ and let us denote $\mathbf{H}_I:=\{H_i\}_{i\in I}$. Let $\mathcal{M}$ be a coherent $\mathscr{D}_X$-module equipped with a coherent $F$-filtration such that $R_F\mathcal{M}$ underlies a mixed Hodge module. If $R_F\mathcal{M}$ is strictly $\mathbb{R}$-multispeciali\-sable along $\mathbf{H}$, then
\begin{itemize}
\item $R_F\mathcal{M}$ is strictly $\mathbb{R}$-multispecialisable along $\mathbf{H}_I$,
\item the canonical $V^{\mathbf{H}_I}$-multifiltration of $R_F\mathcal{M}$ satisfies: $V^{\mathbf{H}_I}_{\boldsymbol{\alpha}_I}R_F\mathcal{M}=\displaystyle
\sum_{\boldsymbol{\alpha}_{I^c} \in\mathbb{C}^{I^c}}V^{\mathbf{H}_I}_{\boldsymbol{\alpha}_I,\boldsymbol{\alpha}_{I^c}}R_F\mathcal{M}$,
\item for all $\boldsymbol{\alpha}_I\in \mathbb{R}^I$, $\textup{gr}^{\mathbf{H}^I}_{\boldsymbol{\alpha_I}}R_F\mathcal{M}$ is strictly $\mathbb{R}$-multispecialisable along $\mathbf{H}_{I^c}$.

\end{itemize}

\end{prop}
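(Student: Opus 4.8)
The plan is to reduce each of the three assertions to the compatibility of the filtrations $(F_\bullet\mathcal{M},V^1_\bullet\mathcal{M},\dots ,V^p_\bullet\mathcal{M})$, which the hypothesis supplies through Proposition \ref{strictmulcompat}, and then to reapply Proposition \ref{compatstrict} to the appropriate sub- or graded module. Throughout I would lean on Proposition 3 of \cite{Maisonobe}, which guarantees that the without-slope condition passes both to subfamilies $\mathbf{H}_I$ and to the graded pieces $\textup{gr}^{V^{\mathbf{H}_I}}_{\boldsymbol{\alpha}_I}\mathcal{M}$. For the first bullet, compatibility of the full family entails compatibility of the subfamily $(F_\bullet\mathcal{M},(V^{H_i}_\bullet\mathcal{M})_{i\in I})$; at the level of the multi-Rees module (the flatness description of compatibility used in Proposition \ref{strictmulcompat}) this is the inheritance of flatness when the variables $y_{I^c}$ are discarded. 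Since $(\mathbf{H}_I,\mathcal{M})$ is then without slope with real Bernstein--Sato roots, $\mathcal{M}$ is $\mathbb{R}$-multispecialisable along $\mathbf{H}_I$, and $R_F\mathcal{M}$ still underlies the given mixed Hodge module; Proposition \ref{compatstrict} applied to $\mathbf{H}_I$ yields that $R_F\mathcal{M}$ is strictly $\mathbb{R}$-multispecialisable along $\mathbf{H}_I$, so that the canonical $V^{\mathbf{H}_I}$-multifiltration is defined.

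For the second bullet, I would first record the corresponding identity for the $\mathscr{D}$-module, $V^{\mathbf{H}_I}_{\boldsymbol{\alpha}_I}\mathcal{M}=\sum_{\boldsymbol{\alpha}_{I^c}}V^{\mathbf{H}}_{\boldsymbol{\alpha}_I,\boldsymbol{\alpha}_{I^c}}\mathcal{M}$, a standard property of the canonical $V$-multifiltration of a without-slope pair. The description $V_{\boldsymbol{\alpha}}R_F\mathcal{M}=\bigoplus_p(F_pV_{\boldsymbol{\alpha}}\mathcal{M})z^p$ of the canonical multifiltration obtained in the proof of Proposition \ref{compatstrict}, valid for $\mathbf{H}$ and, by the first bullet, for $\mathbf{H}_I$, then reduces the claim to the identity $F_pV^{\mathbf{H}_I}_{\boldsymbol{\alpha}_I}\mathcal{M}=\sum_{\boldsymbol{\alpha}_{I^c}}F_pV^{\mathbf{H}}_{\boldsymbol{\alpha}_I,\boldsymbol{\alpha}_{I^c}}\mathcal{M}$. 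Here the inclusion $\supseteq$ is formal, while the reverse inclusion is exactly where compatibility of $F_\bullet$ with the $V$-filtrations is used in order to commute $F_p$ past the locally finite sum.

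For the third bullet, compatibility again lets the graded piece commute with the Rees construction, giving $\textup{gr}^{V^{\mathbf{H}_I}}_{\boldsymbol{\alpha}_I}R_F\mathcal{M}=R_F\,\textup{gr}^{V^{\mathbf{H}_I}}_{\boldsymbol{\alpha}_I}\mathcal{M}$. This module underlies a mixed Hodge module on $\cap_{i\in I}H_i$: each one-variable $\textup{gr}^{V^{H_i}}$ is, up to a Tate twist, a nearby or vanishing cycle and hence preserves mixed Hodge modules, and Corollary \ref{theocommuthodge} makes the iterate independent of the chosen order. Since $(\mathbf{H}_{I^c},\textup{gr}^{V^{\mathbf{H}_I}}_{\boldsymbol{\alpha}_I}\mathcal{M})$ is without slope and the induced family $(F_\bullet,(V^{H_i}_\bullet)_{i\in I^c})$ on this graded module is again compatible (inheritance of compatibility by a graded quotient of the flat multi-Rees module), a final application of Proposition \ref{compatstrict} yields the asserted strict $\mathbb{R}$-multispecialisability along $\mathbf{H}_{I^c}$.

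The main obstacle is the careful bookkeeping of compatibility under passing to subfamilies and to graded pieces: although its flatness reformulation makes these steps essentially formal, one must verify both that the partial filtration one constructs is genuinely the \emph{canonical} $V^{\mathbf{H}_I}$-multifiltration and not merely a good multifiltration, and that $F_\bullet$ truly commutes with the sums and quotients involved. This last commutation is the precise content of compatibility and is the delicate point underlying both the second and the third bullets.
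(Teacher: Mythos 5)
Your proposal is correct and follows essentially the same route as the paper: deduce compatibility of $(F_\bullet\mathcal{M},V^1_\bullet\mathcal{M},\dots,V^p_\bullet\mathcal{M})$ from Proposition \ref{strictmulcompat}, pass to the subfamily and invoke Maisonobe's results to get $\mathbb{R}$-multispecialisability along $\mathbf{H}_I$, reapply Proposition \ref{compatstrict}, use the explicit formula \eqref{multifilt} together with Maisonobe's sum identity for the second bullet, and use the inheritance of compatibility on graded pieces (the appendix) plus the identification $\textup{gr}^{\mathbf{H}_I}_{\boldsymbol{\alpha}_I}R_F\mathcal{M}=R_F\textup{gr}^{\mathbf{H}_I}_{\boldsymbol{\alpha}_I}\mathcal{M}$ for the third. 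The only divergences are cosmetic (your appeal to Corollary \ref{theocommuthodge} for order-independence is already supplied by compatibility via Proposition \ref{commutmultgrad}, and the commutation of $F_p$ with the sum over $\boldsymbol{\alpha}_{I^c}$ is in fact formal since that union is directed).
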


\begin{proof}

Following \cite[corollaire 1]{Maisonobe} $\mathcal{M}$ is $\mathbb{R}$-multispecialisable along $\mathbf{H}_I$. Moreover, following proposition \ref{strictmulcompat}, the filtrations $(F_\bullet\mathcal{M},V^1_\bullet\mathcal{M},\dots ,V^p_\bullet\mathcal{M})$ are compatible. Thus the filtrations $(F_\bullet\mathcal{M},\{V^i_\bullet\mathcal{M}\}_{i\in I})$ are compatible. Then, following proposition \ref{compatstrict}, we have that $R_F\mathcal{M}$ is strictly $\mathbb{R}$-multispecialisable along $\mathbf{H}_I$. 

We deduce from \eqref{multifilt} of the proof of proposition \ref{compatstrict} and from \cite[corollaire 1]{Maisonobe} that the canonical $V^{\mathbf{H}_I}$-multifiltration of $R_F\mathcal{M}$ satisfies: $V^{\mathbf{H}_I}_{\boldsymbol{\alpha}_I}R_F\mathcal{M}=\displaystyle
\sum_{\boldsymbol{\alpha}_{I^c} \in\mathbb{C}^{I^c}}V^{\mathbf{H}_I}_{\boldsymbol{\alpha}_I,\boldsymbol{\alpha}_{I^c}}R_F\mathcal{M}$.

The compatibility of the filtrations implies that $\textup{gr}^{\mathbf{H}^I}_{\boldsymbol{\alpha_I}}R_F\mathcal{M}=
R_F\textup{gr}^{\mathbf{H}^I}_{\boldsymbol{\alpha_I}}\mathcal{M}$ underlies a mixed Hodge module. Moreover, following \ref{appA}, the filtrations $(F_\bullet\textup{gr}^{\mathbf{H}^I}_{\boldsymbol{\alpha_I}}\mathcal{M},\{V^i_\bullet\textup{gr}^{\mathbf{H}^I}_{\boldsymbol{\alpha_I}}\mathcal{M}\}_{i\in I^c})$ are compatible. Following proposition \ref{compatstrict} we deduce that $\textup{gr}^{\mathbf{H}^I}_{\boldsymbol{\alpha_I}}R_F\mathcal{M}$ is strictly $\mathbb{R}$-multispecialisable along $\mathbf{H}_{I^c}$.

\end{proof}

\section{Direct image}

Here we show that strict $\mathbb{R}$-multispecialisability is preserved by a proper direct image. We also show that, under this condition, the canonical $V$-multifiltration and its graded pieces commute with proper direct image.

\begin{theorem}\label{commutVgrH}

Let $h:X\to Y$ be an holomorphic map between two complex manifolds, and $f=h\times \text{Id}:X\times \mathbb{C}^p\to Y\times \mathbb{C}^p$. Let $\mathbf{H}$ be the set of hyperplanes of equation $\{t_i=0\}$ for $1\leq i\leq p$ where the $t_i$ are the coordinates of $\mathbb{C}^p$. Let $\mathcal{M}$ be a coherent $\mathscr{D}_{X\times \mathbb{C}^p}$-module equipped with a good $F$-filtration. let us suppose that $R_F\mathcal{M}$ is strictly $\mathbb{R}$-multispecialisable along $\mathbf{H}$ and that $R_F\mathcal{M}$ underlies a mixed Hodge module.
  Then for all $k\in\mathbb{Z}$:
\begin{itemize}
\item the $R_F\mathscr{D}_{Y\times\mathbb{C}^p}$-module $\mathcal{H}^kf_*(R_F\mathcal{M})$ is strictly $\mathbb{R}$-multispecialisable along $\mathbf{H}$.
\item
 For all $\boldsymbol{\alpha}\in\mathbb{C}^p$, the canonical $V$-multifiltration for the functions $(t_1,\dots ,t_p)$ satisfies
\begin{equation}
\mathcal{H}^kf_*(V_{\boldsymbol{\alpha}}R_F\mathcal{M})\simeq V_{\boldsymbol{\alpha}}\mathcal{H}^kf_*(R_F\mathcal{M})
\label{commutVH}\end{equation}
and
\begin{equation}
\mathcal{H}^kf_*(\text{gr}^{V}_{\boldsymbol{\alpha}}R_F\mathcal{M})\simeq \text{gr}^{V}_{\boldsymbol{\alpha}}\mathcal{H}^kf_*(R_F\mathcal{M}).
\label{commutgrH}\end{equation}

\end{itemize}

\end{theorem}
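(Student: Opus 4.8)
The plan is to argue by induction on $p$, using the single-hypersurface case as the base and reducing the whole statement to one clean assertion: that the canonical $V$-multifiltration commutes with $f_*$, i.e. that the natural maps
\[\mathcal{H}^kf_*(V_{\boldsymbol{\alpha}}R_F\mathcal{M})\longrightarrow \mathcal{H}^kf_*(R_F\mathcal{M})\]
are injective for every $\boldsymbol{\alpha}$ and $k$. Once this \emph{strictness} of the filtered direct image is known, the identifications \eqref{commutVH} and \eqref{commutgrH} follow formally, and from \eqref{commutgrH} the remaining conditions of Definition \ref{defmultstrict} become essentially automatic, as explained below. The reason we may hope to peel off the hypersurfaces one at a time is that $f=h\times\mathrm{Id}$ leaves the coordinates $\boldsymbol{t}$ untouched, so that $f_*$ commutes with each $t_i$, each $\eth_i$, and hence with every $V^{H_i}$-filtration. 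Note also that the canonical $V$-multifiltration on the target exists, since $\mathcal{H}^kf_*(R_F\mathcal{M})$ underlies a mixed Hodge module (Saito's direct image theorem, $f$ being proper) and its underlying $\mathscr{D}$-module is $\mathbb{R}$-multispecialisable by the stability of the without-slope condition under this product-type morphism.

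For the base case $p=1$ I would invoke Saito's direct image theorem \cite{SaitoMHM}: since $R_F\mathcal{M}$ underlies a mixed Hodge module, each $\mathcal{H}^kf_*(R_F\mathcal{M})$ again does, hence is strictly $\mathbb{R}$-specialisable along $H_1$, the filtered complex $f_*(V^{H_1}_{\alpha_1}R_F\mathcal{M})$ is strict, and $V^{H_1}$ together with $\mathrm{gr}^{V^{H_1}}$ commutes with $\mathcal{H}^kf_*$. This is exactly the one-variable content of \eqref{commutVH} and \eqref{commutgrH}, now in the Rees/$\tilde{\mathscr{D}}$-module formulation, where the advantage is that one works in an \emph{abelian} category.

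For the inductive step, assume the theorem for $p-1$ hypersurfaces. Applying the base case to $H_p$ shows that $\mathcal{H}^kf_*(R_F\mathcal{M})$ is strictly $\mathbb{R}$-specialisable along $H_p$ and that $\mathrm{gr}^{V^{H_p}}_{\alpha_p}$ commutes with $\mathcal{H}^kf_*$. By the proposition following \ref{strictmulcompat}, each graded piece $\mathrm{gr}^{V^{H_p}}_{\alpha_p}R_F\mathcal{M}$ is itself strictly $\mathbb{R}$-multispecialisable along $\{H_1,\dots,H_{p-1}\}$, viewed as a module on $X\times\mathbb{C}^{p-1}$; the induction hypothesis, applied to it for the restricted map $h\times\mathrm{Id}_{\mathbb{C}^{p-1}}$, shows that $\mathcal{H}^kf_*\bigl(\mathrm{gr}^{V^{H_p}}_{\alpha_p}R_F\mathcal{M}\bigr)$ is strictly $\mathbb{R}$-multispecialisable along the first $p-1$ hypersurfaces, with its canonical multifiltration commuting with the pushforward. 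Combining this with the commutation $\mathrm{gr}^{V^{H_p}}_{\alpha_p}\mathcal{H}^kf_*\simeq \mathcal{H}^kf_*\,\mathrm{gr}^{V^{H_p}}_{\alpha_p}$, and invoking the compatibility of the filtrations (Propositions \ref{compatstrict} and \ref{strictmulcompat}), which expresses $V_{\boldsymbol{\alpha}}$ as the iterated filtration $V^{\mathbf{H}_{<p}}_{\boldsymbol{\alpha}_{<p}}\bigl(V^{H_p}_{\alpha_p}\bigr)$ both upstairs and downstairs, I would reconstruct the full multifiltration on $\mathcal{H}^kf_*(R_F\mathcal{M})$ and deduce \eqref{commutVH} and \eqref{commutgrH}.

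It then remains to verify the four conditions of Definition \ref{defmultstrict}. Condition 1, a good multifiltration indexed by $A+\mathbb{Z}^p$, follows from the coherence of the proper direct image together with the commutation just proved. Conditions 3 and 4 are immediate: since $f=h\times\mathrm{Id}$ commutes with $t_i$ and $\eth_i$, these operators on the graded pieces of the pushforward are, through \eqref{commutgrH}, exactly $\mathcal{H}^kf_*$ applied to the corresponding isomorphisms on $\mathrm{gr}^V R_F\mathcal{M}$, hence are themselves isomorphisms. Condition 2, the strictness of $V_{\boldsymbol{\alpha}}/V_{<\boldsymbol{\alpha}_I,\boldsymbol{\alpha}_{I^c}}$, is where the real content lies: it amounts to the absence of $z$-torsion in the relevant graded piece, that is, to the strictness of the Hodge filtration under proper direct image. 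I expect this to be the main obstacle. The way to overcome it is to observe that, by compatibility of the filtrations, each $\mathrm{gr}^{V^{\mathbf{H}}}_{\boldsymbol{\alpha}}R_F\mathcal{M}$ underlies a mixed Hodge module, so that Saito's strictness theorem for the direct image applies; the delicate point is to run this strictness \emph{simultaneously} with the $V$-filtration combinatorics, and it is precisely the inductive peeling of the $H_i$ together with the compatibility of the filtrations that is designed to keep this under control.
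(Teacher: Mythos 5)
Your overall skeleton --- induction on $p$, reduction to the injectivity of $\mathcal{H}^kf_*(V_{\boldsymbol{\alpha}}R_F\mathcal{M})\to\mathcal{H}^kf_*(R_F\mathcal{M})$, Saito's direct image theorem as the source of strictness, and compatibility of filtrations to pass between the multifiltration and its iterated one-variable pieces --- is the same as the paper's. But there is a genuine gap exactly at the step you label ``reconstruct'': you correctly identify the injectivity as the heart of the matter, and then your inductive step never proves it. Factor the map as
\[\mathcal{H}^kf_*(V_{\boldsymbol{\alpha}}R_F\mathcal{M})\xrightarrow{(1)}\mathcal{H}^kf_*(V^{H_p}_{\alpha_p}R_F\mathcal{M})\xrightarrow{(2)}\mathcal{H}^kf_*(R_F\mathcal{M}).\]
Your base case (Saito applied to $H_p$) handles $(2)$, and your induction hypothesis handles the \emph{graded} objects $\mathrm{gr}^{V^{H_p}}_{\alpha_p}R_F\mathcal{M}$, which do underlie mixed Hodge modules. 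Neither applies to $(1)$: the intermediate object $V^{H_p}_{\alpha_p}R_F\mathcal{M}$ is only a $V_0\tilde{\mathscr{D}}$-module, it underlies no mixed Hodge module, so no functorial theorem at your disposal says anything about the complex $f_*(V^{H_p}_{\alpha_p}R_F\mathcal{M})$ filtered by the $U_{\alpha}:=f_*(V_{\boldsymbol{\alpha}}R_F\mathcal{M})$. Passing from commutation on the graded pieces to commutation on the filtered object is not formal: the filtration has infinitely many real-indexed steps, and controlling it requires precisely a strictness criterion for filtered complexes. This is what the paper supplies, invoking property 7.5.8 of \cite{kaiserslautern} for the complex $\mathcal{N}^\bullet:=f_*(V^{\mathbf{H}_{2,\dots,p}}_{\alpha_2,\dots,\alpha_p}R_F\mathcal{M})$ with filtration $U_{\alpha_1}\mathcal{N}^\bullet:=f_*(V_{\boldsymbol{\alpha}}R_F\mathcal{M})$, and verifying its four hypotheses one by one: strictness of $\mathrm{gr}^U\mathcal{N}^\bullet$ (via the compatibility of filtrations, the induction hypothesis applied to $\mathrm{gr}^{H_1}_{\alpha_1}R_F\mathcal{M}$, and Saito's theorem), local cancelling polynomials with roots in $[-1,0[$ (obtained from properness and compactness of the fibres $f^{-1}(y)$), bijectivity of $t_1$ on $U_{\alpha_1}$ for $\alpha_1<0$, and vanishing of $\mathcal{H}^k(U_{\alpha_1}\mathcal{N}^\bullet)$ for $k$ large. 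Without this tool, or an equivalent one, your word ``formally'' conceals the entire content of the proof.

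Two secondary points are also underspecified, though they are repairable by the paper's arguments once the injectivity is in place. First, condition 1 of Definition \ref{defmultstrict} (goodness of the multifiltration on the pushforward) is not a soft consequence of ``coherence of the proper direct image'': the paper needs the identity $V_{\boldsymbol{k}}R_F\mathcal{M}=R_F(V_{\boldsymbol{k}}\mathcal{M})$ (compatibility again), Grauert's coherence theorem applied to the $\mathcal{O}$-coherent pieces, and a snake-lemma argument showing $U_{\boldsymbol{k+1}_i}=U_{\boldsymbol{k}}+U_{\boldsymbol{k}}\cdot\eth_{t_i}$ for $k_i\geq 0$. Second, your treatment of condition 2 (strictness of $U_{\boldsymbol{\alpha}}/U_{<\boldsymbol{\alpha}_I,\boldsymbol{\alpha}_{I^c}}$) is essentially right and matches the paper: one realises this quotient as a sub-object of $\mathcal{H}^kf_*(\mathrm{gr}^{\mathbf{H}_I}_{\boldsymbol{\alpha}_I}R_F\mathcal{M})$, which is strict by Saito's theorem --- but note that this realisation itself depends on the graded-piece identification \eqref{commutgrH}, i.e.\ again on the injectivity you have not established.
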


\begin{proof}

We define a $V$-multifiltration of $\mathcal{H}^kf_*(R_F\mathcal{M})$ in the following way:
\begin{equation}U_{\boldsymbol{\alpha}}\mathcal{H}^kf_*(R_F\mathcal{M}):=\text{image}[\mathcal{H}^kf_*(V_{\boldsymbol{\alpha}}R_F\mathcal{M})\to \mathcal{H}^kf_*(R_F\mathcal{M})].
\label{deffiltrH}\end{equation}
First we will find locally, for all $1\leq i\leq p$, polynomials satisfying  
\[(U_{\boldsymbol{\alpha}}\mathcal{H}^kf_*(R_F\mathcal{M}))b_i(E_i-\alpha_iz)\subset (U_{\boldsymbol{\alpha-1}_i}\mathcal{H}^kf_*(R_F\mathcal{M}))\]
and with roots in $[-1,0[$. Second we will show that the natural morphism
\[\mathcal{H}^kf_*(V_{\boldsymbol{\alpha}}R_F\mathcal{M})\to \mathcal{H}^kf_*(R_F\mathcal{M})\]
is injective, that the $V$-multifiltration \eqref{deffiltrH} is a good $V$-multifiltration and that it satisfy the properties of the canonical $V$-multifiltration.
This will imply that $\mathcal{H}^kf_*(R_F\mathcal{M})$ is strictly $\mathbb{R}$-multispecialisable along $\mathbf{H}$ and this will provide us with the natural isomorphism \eqref{commutVH}. Finally, we will construct the isomorphism \eqref{commutgrH}.

\subsubsection*{Cancelling polynomials.}

Let $(y,t)\in Y\times \mathbb{C}^p$ and let $1\leq i\leq p$, we are going to find a polynomial $b_i(s)\in \mathbb{C}[s]$ satisfying for all $\boldsymbol{\alpha}\in\mathbb{C}^p$, 
$(U_{\boldsymbol{\alpha}}\mathcal{H}^kf_*(R_F\mathcal{M}))_{(y,t)}b_i(E_i-\alpha_iz)\subset (U_{\boldsymbol{\alpha-1}_i}\mathcal{H}^kf_*(R_F\mathcal{M}))_{(y,t)}$. Let $x\in f^{-1}(y)$, by the definition of the canonical $V$-multifiltration there exists an open neighbourhood $U$ of $(x,t)$ and a polynomial $b_i^x(s)\in \mathbb{C}[s]$ satisfying for all $\boldsymbol{\alpha}\in\mathbb{C}^p$, $\restriction{V_{\boldsymbol{\alpha}}R_F\mathcal{M}}{U}b_i^x(E_i-\alpha_iz)\subset \restriction{V_{\boldsymbol{\alpha-1}_i}R_F\mathcal{M}}{U}$ and with roots in $[-1,0[$. The map $f$ being proper, $f^{-1}(y)$ is compact, thus there exists a finite set $J$, points $x_j\in f^{-1}(y)$ for all $j\in I$ and an open neighbourhood $V$ of $f^{-1}(y)\times \{t\}$ such that
\[\restriction{V_{\boldsymbol{\alpha}}R_F\mathcal{M}}{V}\prod_{j\in J}b_i^{x_j}(E_i-\alpha_iz)\subset \restriction{V_{\boldsymbol{\alpha-1}_i}R_F\mathcal{M}}{V}.\]
By functoriality, the action of $E_i$ on the $\mathscr{D}_X$-module $V_{\boldsymbol{\alpha}}R_F\mathcal{M}/V_{\boldsymbol{\alpha-1}_i}R_F\mathcal{M}$ induce an action on
\[\left(U_{\boldsymbol{\alpha}}\mathcal{H}^kf_*(R_F\mathcal{M})\right)/\left(U_{\boldsymbol{\alpha-1}_i}
\mathcal{H}^kf_*(R_F\mathcal{M})\right)\]
and
\[(U_{\boldsymbol{\alpha}}\mathcal{H}^kf_*(R_F\mathcal{M}))_{(y,t)}\prod_{j\in J}b_i^{x_j}(E_i-\alpha_iz)\subset (U_{\boldsymbol{\alpha-1}_i}\mathcal{H}^kf_*(R_F\mathcal{M}))_{(y,t)}.\]
The roots of the polynomials $b_i^{x_j}(s)$ are in $[-1,0[$.

\subsubsection*{Injectivity.}

let us show that the natural morphism
\[\mathcal{H}^kf_*(V_{\boldsymbol{\alpha}}R_F\mathcal{M})\to \mathcal{H}^kf_*(R_F\mathcal{M})\]
is injective. We will show this by induction on the integer $p$, using property 7.5.8 of \cite{kaiserslautern}. We are going to show that the two following morphisms are injective:
\[\mathcal{H}^kf_*(V_{\boldsymbol{\alpha}}R_F\mathcal{M})
\underset{(1)}{\to} \mathcal{H}^kf_*(V^{\boldsymbol{H}_{2,\dots ,p}}_{\alpha_2,\dots ,\alpha_p}R_F\mathcal{M}) 
\underset{(2)}{\to} \mathcal{H}^kf_*(R_F\mathcal{M}),\]
where $V^{\boldsymbol{H}_{2,\dots ,p}}_{\alpha_2,\dots ,\alpha_p}R_F\mathcal{M}$ is the canonical $V$-multifiltration associated to the $p-1$ functions $t_2,\dots ,t_p$. let us denote $\mathcal{N}^\bullet:=f_*(V^{\boldsymbol{H}_{2,\dots ,p}}_{\alpha_2,\dots ,\alpha_p}R_F\mathcal{M})$ equipped with the $V$-filtration  $U_{\alpha_1}\mathcal{N}^\bullet:=f_*(V_{\boldsymbol{\alpha}}R_F\mathcal{M})$ for the hyperplane $\{t_1=0\}$. In order to consider $U_{\alpha_1}\mathcal{N}^\bullet$ as a sub-object of $\mathcal{N}^\bullet$, we use a definition of the functor $f_*$ involving explicit resolutions given in \cite[definition 4.3.3]{kaiserslautern}, namely $f_*R_F\mathcal{M}$ is the simple complex associated to the double complex
\[f_*R_F\mathcal{M}:=f_*\textup{God}^\bullet\left(R_F\mathcal{M}
\otimes_{R_F\mathscr{D}_{X\times\mathbb{C}^p}}
\textup{Sp}^\bullet_{X\times\mathbb{C}^p\to Y\times\mathbb{C}^p}(R_F\mathscr{D}_{X\times\mathbb{C}^p})\right).\]
Morphism (1) can be rewritten
\[\mathcal{H}^k(U_{\alpha_1}\mathcal{N}^\bullet)
\underset{(1)}{\to} \mathcal{H}^k(\mathcal{N}^\bullet).\]
let us check the three hypothesis of property 7.5.8 of \cite{kaiserslautern}:
\begin{enumerate}
\item let us show that $\text{gr}^{U}\mathcal{N}^\bullet$ is strict, namely for all $k\in\mathbb{N}$, $\mathcal{H}^k\text{gr}^{U}\mathcal{N}^\bullet$ is strict. We have, thanks to the compatibility of the filtrations, $\mathcal{H}^k\text{gr}^{U}\mathcal{N}^\bullet=\mathcal{H}^kf_*(V^{\boldsymbol{H}_{2,\dots ,p}}_{\alpha_2,\dots ,\alpha_p}\text{gr}^{H_1}_{\alpha_1}R_F\mathcal{M})$. By definition of mixed Hodge modules, $\text{gr}^{H_1}_{\alpha_1}R_F\mathcal{M}$ also satisfy the hypothesis of the theorem. By induction we thus get
\[\mathcal{H}^kf_*(V^{\boldsymbol{H}_{2,\dots ,p}}_{\alpha_2,\dots ,\alpha_p}\text{gr}^{H_1}_{\alpha_1}R_F\mathcal{M})\simeq V^{\boldsymbol{H}_{2,\dots ,p}}_{\alpha_2,\dots ,\alpha_p}\mathcal{H}^kf_*(\text{gr}^{H_1}_{\alpha_1}R_F\mathcal{M}).
\] 
Following the direct image theorem \cite[Proposition 2.16]{SaitoMHM}, $\text{gr}^{H_1}_{\alpha_1}R_F\mathcal{M}$ underlying a mixed Hodge module, $\mathcal{H}^kf_*(\text{gr}^{H_1}_{\alpha_1}R_F\mathcal{M})$ is strict. Thus $\text{gr}^{U}\mathcal{N}^\bullet$ is strict.
\item Strict $\mathbb{R}$-specialisability and $f$ being proper enable us to show, as above, the existence of cancelling polynomials of $\text{gr}^{U}\mathcal{N}^\bullet$ 
and thus the hypothesis of monodromy of property 7.5.8 of \cite{kaiserslautern}.
\item By the definition of the canonical $V$-multifiltration, for $\alpha_1<0$, right multiplication by $t_1$ gives an isomorphism $t_1:U_{\alpha_1}\mathcal{N}^j\xrightarrow{\sim} U_{\alpha_1-1}\mathcal{N}^j$ for all $j$.
\item Remark 4.3.4(4) of \cite{kaiserslautern} ensure that for $k>2\text{dim}X+p$ and for all $\alpha_1$, $\mathcal{H}^k(U_{{\alpha}_1}\mathcal{N}^\bullet)=0$.
\end{enumerate}
Thus, morphism (1) is injective. We apply the induction hypothesis to show that morphism (2) is injective. The natural morphism
\begin{equation}\mathcal{H}^kf_*(V_{\boldsymbol{\alpha}}R_F\mathcal{M})\to \mathcal{H}^kf_*(R_F\mathcal{M})
\label{commutVbisH}\end{equation}
is therefore injective. Thus we have

\begin{equation}\label{commutVU}
\mathcal{H}^kf_*(V_{\boldsymbol{\alpha}}R_F\mathcal{M})=
U_{\boldsymbol{\alpha}}\mathcal{H}^kf_*(R_F\mathcal{M})
\end{equation}

Furthermore property 7.5.8 of \cite{kaiserslautern} also provide the equality

\begin{equation}\label{commutVUgr}
\text{gr}_{\alpha_1}^{U_1}\mathcal{H}^kf_*(V_{\alpha_2,\dots ,\alpha_p}R_F\mathcal{M})=
\mathcal{H}^kf_*(V_{\alpha_2,\dots ,\alpha_p}\text{gr}_{\alpha_1}^{V_1}R_F\mathcal{M}).
\end{equation}

\subsubsection*{Good $V$-multifiltration property.}

let us show that the $V$-multifiltration \eqref{deffiltrH} is a good $V$-multifiltration.
We have to show that, for all $\boldsymbol{\alpha}\in\mathbb{C}^p$, the $V$-multifiltration  $U_{\boldsymbol{\alpha}+\bullet}(\mathcal{H}^kf_*(R_F\mathcal{M}))$ (indexed by $\mathbb{Z}^p$) is good. In order to lighten notations, we deal with the case $\boldsymbol{\alpha}=0$. Following \eqref{commutVU} we have
\begin{equation}
\mathcal{H}^kf_*(V_{{\boldsymbol{k}}}R_F\mathcal{M})\xrightarrow{\sim} 
U_{{\boldsymbol{k}}}(\mathcal{H}^kf_*(R_F\mathcal{M})).
\end{equation}
The compatibility property ensure that $V_{{\boldsymbol{k}}}R_F\mathcal{M}=R_F(V_{{\boldsymbol{k}}}\mathcal{M})$, thus $V_{{\boldsymbol{k}}}R_F\mathcal{M}$ is a coherent strict $V_{\boldsymbol{0}}\tilde{\mathscr{D}}_{Y\times\mathbb{C}^p}$-module. It is therefore filtered by the $\mathcal{O}_{Y\times\mathbb{C}^p}$-modules
\[\sum_{0\leq \ell\leq p} F_\ell V_{{\boldsymbol{k}}}\mathcal{M}z^\ell
\]
that give a good filtration. Since $f$ is proper we can then apply Grauert coherence theorem to these $\mathcal{O}_{Y\times\mathbb{C}^p}$-modules. We deduce that $\mathcal{H}^kf_*(V_{{\boldsymbol{k}}}R_F\mathcal{M})$ is $V_{\boldsymbol{0}}\tilde{\mathscr{D}}_{Y\times\mathbb{C}^p}$-coherent. Thus, $U_{{\boldsymbol{k}}}(\mathcal{H}^kf_*(R_F\mathcal{M}))$ is $V_{\boldsymbol{0}}\tilde{\mathscr{D}}_{Y\times\mathbb{C}^p}$-coherent.

We remark that, for all $1\leq i\leq p$ and ${\boldsymbol{k}}\in\mathbb{Z}^p$ satisfy $k_i\leq -1$, the isomorphism of multiplication by $t_i$
\[t_i:V_{\boldsymbol{k}}\mathcal{M}\xrightarrow{\sim} V_{\boldsymbol{k-1}_i}\mathcal{M}\]
induce an isomorphism 
\begin{equation}t_i:U_{{\boldsymbol{k}}}(\mathcal{H}^kf_*(R_F\mathcal{M}))
\xrightarrow{\sim} 
U_{\boldsymbol{k-1}_i}(\mathcal{H}^kf_*(R_F\mathcal{M})).\label{isotH}\end{equation}

In order to lighten notations we will denote here $\mathfrak{M}$ instead of $R_F\mathcal{M}$.
let us consider the short exact sequence
\[0 \to V_{\boldsymbol{k-1}_i}\mathfrak{M} \to V_{{\boldsymbol{k}}}\mathfrak{M} \to
V_{{\boldsymbol{k}}}\mathfrak{M}/V_{\boldsymbol{k-1}_i}\mathfrak{M}\to 0\]
and let us apply the functor $f_*$. We get the associated long exact sequence,
\begin{multline}\notag\dots \to\mathcal{H}^kf_*(V_{\boldsymbol{k-1}_i}\mathfrak{M}) \to 
\mathcal{H}^kf_*(V_{{\boldsymbol{k}}}\mathfrak{M}) \to
\mathcal{H}^kf_*(V_{{\boldsymbol{k}}}\mathfrak{M}/V_{\boldsymbol{k-1}_i}\mathfrak{M})\to \\
\mathcal{H}^{k+1}f_*(V_{\boldsymbol{k-1}_i}\mathfrak{M}) \to \mathcal{H}^{k+1}f_*(V_{{\boldsymbol{k}}}\mathfrak{M})\to\dots  \end{multline}
Following \eqref{commutVbisH}, the first and last morphisms are injective, we thus get the short exact sequence
\[0 \to U_{\boldsymbol{k-1}_i}\mathcal{H}^kf_*(\mathfrak{M}) \to U_{\boldsymbol{k}}\mathcal{H}^kf_*(\mathfrak{M}) \to
\mathcal{H}^kf_*(V_{{\boldsymbol{k}}}\mathfrak{M}
/V_{\boldsymbol{k-1}_i}\mathfrak{M})\to 0.\]

let us consider the following commutative diagram:

\[\xymatrix{0 \ar[r] &
 U_{\boldsymbol{k-1}_i}\mathcal{H}^kf_*(\mathfrak{M}) \ar[r] \ar[d]^{\cdot\eth_{t_i}} &  U_{{\boldsymbol{k}}}\mathcal{H}^kf_*(\mathfrak{M}) \ar[r] \ar[d]^{\cdot\eth_{t_i}} & 
\mathcal{H}^kf_*(V_{{\boldsymbol{k}}}\mathfrak{M}/V_{\boldsymbol{k-1}_i}\mathfrak{M})\ar[r] \ar[d]^{\cdot\eth_{t_i}} &  0 \\
0 \ar[r] &
 U_{{\boldsymbol{k}}}\mathcal{H}^kf_*(\mathfrak{M}) \ar[r] &  U_{\boldsymbol{k+1}_i}\mathcal{H}^kf_*(\mathfrak{M}) \ar[r] & 
\mathcal{H}^kf_*(V_{\boldsymbol{k+1}_i}\mathfrak{M}/V_{{\boldsymbol{k}}}\mathfrak{M})\ar[r] &  0.
}\]
For $k_i\geq 0$, following the properties of the canonical $V$-multifiltration, the third vertical arrow of the diagram is an isomorphism. Using the snake lemma, we deduce that 
\[U_{\boldsymbol{k+1}_i}\mathcal{H}^kf_*(\mathfrak{M})=
U_{{\boldsymbol{k}}}\mathcal{H}^kf_*(\mathfrak{M})
+U_{{\boldsymbol{k}}}\mathcal{H}^kf_*(\mathfrak{M})\cdot\eth_{t_i}.\]
This result and the isomorphism \eqref{isotH} give the good $V$-multifiltration property of \[U_{{\bullet}}\mathcal{H}^kf_*(R_F\mathcal{M}).\]

\subsubsection*{Graded pieces.}

We are going to show that for all $I\subset\{1,\dots ,p\}$ and all $\boldsymbol{\alpha}\in\mathbb{R}^p$,
\begin{equation}\label{commutUgrV}
\frac{U_{\boldsymbol{\alpha}}\mathcal{H}^kf_*(R_F\mathcal{M})}{U_{<\boldsymbol{\alpha}_{I},\boldsymbol{\alpha}_{I^c}}\mathcal{H}^kf_*(R_F\mathcal{M})}\simeq U_{\boldsymbol{\alpha}_{I^c}}\mathcal{H}^kf_*
(\textup{gr}_{\boldsymbol{\alpha}_I}^{\mathbf{H}_I}R_F\mathcal{M}).
\end{equation} 

We use an induction on the cardinality if $I$. Upon renumbering we can suppose that $1\in I$. If we omit to write $\mathcal{H}^kf_*(R_F\mathcal{M})$ we have

\begin{equation}\label{divis}
\frac{U_{\boldsymbol{\alpha}}}{U_{<\boldsymbol{\alpha}_{I},\boldsymbol{\alpha}_{I^c}}}=
\left.
\frac{U_{\boldsymbol{\alpha}}}{U_{<\alpha_1,\boldsymbol{\alpha}_{\{1\}^c}}}
\middle/\sum_{i\in I-\{1\}}
\frac{U_{\alpha_1,\dots ,<\alpha_i,\dots }}{U_{<\alpha_1,\dots ,<\alpha_i,\dots }}
\right.
\end{equation}
Following \eqref{commutVU}, \eqref{commutVUgr} and the compatibility of the canonical filtrations of $R_F\mathcal{M}$  we have 

\[
\frac{U_{\boldsymbol{\alpha}}}{U_{<\alpha_1,\boldsymbol{\alpha}_{\{1\}^c}}}=
\text{gr}_{\alpha_1}^{U_1}\mathcal{H}^kf_*(V_{\alpha_2,\dots ,\alpha_p}R_F\mathcal{M})=
\mathcal{H}^kf_*(V_{\alpha_2,\dots ,\alpha_p}\text{gr}_{\alpha_1}^{V_1}R_F\mathcal{M})=
U_{\alpha_2,\dots ,\alpha_p}\mathcal{H}^kf_*(\text{gr}_{\alpha_1}^{V_1}R_F\mathcal{M}).
\]
In the same way we get, using \eqref{divis},

\[\frac{U_{\boldsymbol{\alpha}}}{U_{<\boldsymbol{\alpha}_{I},\boldsymbol{\alpha}_{I^c}}}=
\frac{U_{\boldsymbol{\alpha}_{ \{1\}^c}}\mathcal{H}^kf_*(\text{gr}_{\alpha_1}^{V_1}R_F\mathcal{M})}
{U_{<\boldsymbol{\alpha}_{I-\{1\}},\boldsymbol{\alpha}_{I^c}}\mathcal{H}^kf_*(\text{gr}_{\alpha_1}^{V_1}R_F\mathcal{M})}.
\]
Since $\text{gr}_{\alpha_1}^{V_1}R_F\mathcal{M}$ stisfy the hypothesis of the theorem we have, using the induction hypothesis,

\[\frac{U_{\boldsymbol{\alpha}}\mathcal{H}^kf_*(R_F\mathcal{M})}{U_{<\boldsymbol{\alpha}_{I},\boldsymbol{\alpha}_{I^c}}\mathcal{H}^kf_*(R_F\mathcal{M})}=
U_{\boldsymbol{\alpha}_{I^c}}\mathcal{H}^kf_*
(\textup{gr}_{\boldsymbol{\alpha}_{I-\{1\}}}^{\mathbf{H}_{I-\{1\}}}\text{gr}_{\alpha_1}^{V_1}R_F\mathcal{M})=
U_{\boldsymbol{\alpha}_{I^c}}\mathcal{H}^kf_*
(\textup{gr}_{\boldsymbol{\alpha}_I}^{\mathbf{H}_I}R_F\mathcal{M})
\]
where the last equality follow from the compatibility of the filtrations.

\subsubsection*{Canonical $V$-multifiltration properties.}

let us show that the $V$-multifiltration $U_{{\bullet}}\mathscr{H}^kf_*(R_F\mathcal{M})$
satisfies the properties of definition \ref{defmultstrict} of the canonical $V$-multifiltration. We have already shown the propertiy of good $V$-multifiltration.

Following the isomorphism \eqref{commutUgrV}, the module 
\[\frac{U_{\boldsymbol{\alpha}}\mathscr{H}^kf_*(R_F\mathcal{M})}{U_{<\boldsymbol{\alpha}_{I},\boldsymbol{\alpha}_{I^c}}\mathscr{H}^kf_*(R_F\mathcal{M})}\]
is a sub-object of $\mathscr{H}^kf_*
(\textup{gr}_{\boldsymbol{\alpha}_I}^{\mathbf{H}_I}R_F\mathcal{M})$. Yet, since $R_F\mathcal{M}$ underlies a mixed Hodge module, $\textup{gr}_{\boldsymbol{\alpha}_I}^{\mathbf{H}_I}R_F\mathcal{M}$ also do. Thus its direct image is strict and we deduce that
\[\frac{U_{\boldsymbol{\alpha}}\mathscr{H}^kf_*(R_F\mathcal{M})}{U_{<\boldsymbol{\alpha}_{I},\boldsymbol{\alpha}_{I^c}}\mathscr{H}^kf_*(R_F\mathcal{M})}\]
is strict for all $I\subset\{1,\dots ,p\}$ and all $\boldsymbol{\alpha}\in \mathbb{R}^p$.

We deduce surjectivity properties of the multiplication by $t_i$ and $\eth_i$ from the isomorphism
\[\mathscr{H}^kf_*(\text{gr}_{\boldsymbol{\alpha}}R_F\mathcal{M})\simeq \text{gr}^{U}_{\boldsymbol{\alpha}}\mathscr{H}^kf_*(R_F\mathcal{M}),\]
and from the following facts
\begin{itemize}
\item for all $1\leq i\leq p$ and all $\alpha_i<0$, the morphism $t_i:\textup{gr}_{\boldsymbol{\alpha}}R_F\mathcal{M}\to \textup{gr}_{\boldsymbol{\alpha-1}_i}R_F\mathcal{M}$ is an isomorphism,
\item for all $1\leq i\leq p$ and all $\alpha_i>-1$, the morphism $\eth_i:\textup{gr}_{\boldsymbol{\alpha}}R_F\mathcal{M}\to \textup{gr}_{\boldsymbol{\alpha+1}_i}R_F\mathcal{M}(-1)$ is an isomorphism.
\end{itemize}

Thus, the $V$-multifiltration 
\[U_{{\bullet}}\mathscr{H}^kf_*(R_F\mathcal{M})\]
is the canonical $V$-multifiltration, $\mathscr{H}^kf_*(R_F\mathcal{M})$ is therefore strictly $\mathbb{R}$-multispecialisable along $\mathbf{H}$ and we have the following isomorphisms
\[
\mathscr{H}^kf_*(V_{\boldsymbol{\alpha}}R_F\mathcal{M})\simeq V_{\boldsymbol{\alpha}}\mathscr{H}^kf_*(R_F\mathcal{M})
\]
and
\[
\mathscr{H}^kf_*(\text{gr}^V_{\boldsymbol{\alpha}}R_F\mathcal{M})\simeq \text{gr}^V_{\boldsymbol{\alpha}}\mathscr{H}^kf_*(R_F\mathcal{M}).
\]

\end{proof}

\section{The direct image of relative monodromy filtrations}

Let $X$ be a complex manifold, $\mathscr{M}\in \textup{MH}(X\times \mathbb{C}^p_{\boldsymbol{t}})$ a pure Hodge module on $X\times \mathbb{C}^p_{\boldsymbol{t}}$ and $S(.,.)$ a polarisation of $\mathscr{M}$. We suppose that $\mathscr{M}$ is strictly $\mathbb{R}$-multispecialisable along $\mathbf{H}$ where $H_i:=\{t_i=0\}$. The mixed Hodge module $\Psi_{\boldsymbol{t}}\mathscr{M}$ is equipped with $p$ nilpotent endomorphisms $N_i$, for $1\leq i\leq p$.

\begin{defi}

Let $A$ be an object in and abelian category equipped with $p$ nilpotent endomorphisms  $N_1,\dots ,N_p$. We say that $(A,N_1,\dots ,N_p)$ satisfy property $(MF)$ if the relative monodromic filtrations (definition \ref{defifiltmonorelat}) satisfy
\[W(N_1,W(N_2,W(\dots W(N_p))))A=W(N_1+\dots +N_p)A.\] 

\end{defi}

\begin{rema}

Following \cite[4.12]{NilpSchmid} and \cite[Proposition 4.72]{CKS} this condition is satified in the case of a nilpotent orbit.

\end{rema}

\begin{theo}\label{imagedirecteW}

Let $f:X\times \mathbb{C}^p_{\boldsymbol{t}}\to Y\times \mathbb{C}^p_{\boldsymbol{t}}$ be a projective morphism and let us supose that $(\Psi_{\boldsymbol{t}}\mathscr{M},N_1,\dots ,N_p)$ satisfy property $(MF)$, then $(\Psi_{\boldsymbol{t}}\mathscr{H}^jf_*\mathscr{M},N_1,\dots ,N_p)$ also satisfy $(MF)$.

\end{theo}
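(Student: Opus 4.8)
The plan is to reduce everything, by means of the commutation theorem already proved, to a statement about the single nilpotent operator $N:=N_{1}+\dots +N_{p}$, and then to propagate that statement through the direct image using relative hard Lefschetz. First, since $\mathscr{M}$ is strictly $\mathbb{R}$-multispecialisable along $\mathbf{H}$, Theorem \ref{commutVgrH} furnishes a canonical isomorphism
\[
\Psi_{\boldsymbol{t}}\,\mathscr{H}^{j}f_{*}\mathscr{M}\;\simeq\;\mathscr{H}^{j}f_{*}\,\Psi_{\boldsymbol{t}}\mathscr{M}
\]
compatible with the actions of $N_{1},\dots ,N_{p}$. It is therefore enough to prove that $\mathscr{H}^{j}f_{*}\Psi_{\boldsymbol{t}}\mathscr{M}$ satisfies $(MF)$. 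That $f$ is \emph{projective}, and not merely proper, is essential here: it is precisely what makes the relative hard Lefschetz theorem of M. Saito available, and this is the tool that will do the real work.

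Next I would recognise the iterated relative monodromy filtration $W(N_{1},W(\dots ,W(N_{p})))$ as an \emph{intrinsic} weight filtration. By Saito's direct image theorem (\cite[Proposition 2.16]{SaitoMHM}) the object $\mathscr{H}^{j}f_{*}\mathscr{M}$ is again a polarisable pure Hodge module, and the weight filtration of the iterated nearby cycles of a pure Hodge module is, by the very construction of mixed Hodge modules, the iterated relative monodromy filtration of the $N_{i}$. Thus on both $\Psi_{\boldsymbol{t}}\mathscr{M}$ and $\mathscr{H}^{j}f_{*}\Psi_{\boldsymbol{t}}\mathscr{M}$ the left-hand side of $(MF)$ coincides with the weight filtration $W$ of the underlying mixed Hodge module, and this side is automatically preserved as the weight filtration. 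Consequently $(MF)$ for $\mathscr{H}^{j}f_{*}\Psi_{\boldsymbol{t}}\mathscr{M}$ is equivalent to the single assertion that the monodromy filtration $W(N)$ of the \emph{one} operator $N=N_{1}+\dots +N_{p}$ again equals $W$. By hypothesis $(MF)$ holds on the source, i.e.\ $W(N)$ already equals $W$ on $\Psi_{\boldsymbol{t}}\mathscr{M}$; equivalently, the graded pieces $\mathrm{gr}^{W}_{\bullet}\Psi_{\boldsymbol{t}}\mathscr{M}$ are pure and $N^{\ell}\colon \mathrm{gr}^{W}_{w+\ell}\to \mathrm{gr}^{W}_{w-\ell}(-\ell)$ is an isomorphism for every $\ell$.

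The remaining and decisive step is to transport this last property across $\mathscr{H}^{j}f_{*}$. Here I would invoke relative hard Lefschetz: the monodromy operator $N$ commutes with $f_{*}$ and with cup product by a relative ample class, so that the weight spectral sequence of the filtered direct image degenerates and the direct image of each polarisable pure Hodge module $\mathrm{gr}^{W}_{\bullet}\Psi_{\boldsymbol{t}}\mathscr{M}$ decomposes. Combining this with the strictness of direct images of mixed Hodge modules, one obtains that $\mathrm{gr}^{W}_{\bullet}\mathscr{H}^{j}f_{*}\Psi_{\boldsymbol{t}}\mathscr{M}$ is again pure and that $N^{\ell}$ induces the Lefschetz isomorphisms on its graded pieces; this forces $W(N)=W$ on the target, which is exactly $(MF)$. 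I expect this transport to be the main obstacle: one must verify that $N$ and the relative Lefschetz operator are \emph{simultaneously} compatible with the direct image and with the polarisation $S$, so that the degeneration and the semisimplicity supplied by relative hard Lefschetz genuinely upgrade the isomorphism property of $N^{\ell}$ from source to target — rather than yielding only a one-sided inclusion of filtrations — and one must keep track of the weight shifts so that the monodromy filtration of $N$ is read off correctly on $\mathscr{H}^{j}f_{*}\Psi_{\boldsymbol{t}}\mathscr{M}$.
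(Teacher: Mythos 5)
Your first two steps are sound and essentially match the paper: the reduction via Theorem \ref{commutVgrH}, and the observation that the iterated relative monodromy filtration $W(N_1,W(\dots,W(N_p)))$ on $\Psi_{\boldsymbol{t}}\mathscr{H}^jf_*\mathscr{M}$ is intrinsically the weight filtration of that mixed Hodge module (the paper establishes the corresponding statement as equality \eqref{pegal1}, by iterating Saito's direct image theorem \cite[Proposition 2.16]{SaitoMHM}). The gap is in your final ``decisive'' step, which is exactly the one you flag as the main obstacle without resolving it. Knowing that $N^{\ell}$ is an isomorphism on the weight-graded pieces of the source does not transport across $\mathscr{H}^jf_*$ by functoriality plus degeneration: the target graded pieces $\textup{gr}^W_{\bullet}\mathscr{H}^jf_*\mathscr{N}$ (with $\mathscr{N}:=\Psi_{\boldsymbol{t}}\mathscr{M}$) are $E_2$-subquotients of the $E_1$-terms $\mathscr{H}^jf_*\textup{gr}^W_{\bullet}\mathscr{N}$, and the map that $N^{\ell}$ induces between the relevant three-term complexes
\[
\mathscr{H}^{j-1}f_*\textup{gr}^W_{c+\ell+1}\mathscr{N}\to\mathscr{H}^{j}f_*\textup{gr}^W_{c+\ell}\mathscr{N}\to\mathscr{H}^{j+1}f_*\textup{gr}^W_{c+\ell-1}\mathscr{N}
\]
and its counterpart in weights $c-\ell+1,\,c-\ell,\,c-\ell-1$ is only injective on the left term, an isomorphism in the middle, and surjective on the right term (on the outer terms $N^{\ell}$ is not the Lefschetz power of the correct exponent). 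A map of complexes with these properties does not induce an isomorphism on middle cohomology in general, so purity of the target graded pieces and relative hard Lefschetz for the ample class --- which concerns a different operator than $N$ --- do not by themselves force $W(N)=W$ on the target.

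What is missing is the paper's Lemma \ref{pHLP} combined with Kashiwara's theorem \cite[Theorem 3.2.9]{Kash86}: starting from the polarized $p$-graded Hodge--Lefschetz module $\bigl(\bigoplus_{\boldsymbol{\ell}}\textup{Gr}^{W(N_1)}_{\ell_1}\cdots\textup{Gr}^{W(N_p)}_{\ell_p}\mathscr{N},N_1,\dots,N_p,S\bigr)$, Kashiwara's theorem identifies $\textup{Gr}^{W(N_1,W(\dots,W(N_p)))}_{\ell}\mathscr{N}$ with the total-degree regrading, the hypothesis $(MF)$ replaces the iterated filtration by $W(N_1+\dots+N_p)$, and Lemma \ref{pHLP} (proved by induction on the dimension of the support, reducing to the $\mathfrak{sl}(2,\mathbb{R})^p$-statement with polarization, Proposition \ref{SHLP}) shows that $\bigl(\bigoplus_{\ell}\textup{Gr}^{W(N_1+\dots+N_p)}_{\ell}\mathscr{N},\,N_1+\dots+N_p,\,S\bigr)$ is a \emph{polarized} graded Hodge--Lefschetz module in the sense of Definition \ref{defiMHLP}. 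Only with this structure in hand can one invoke Saito's direct image theorem for polarized graded Hodge--Lefschetz modules \cite[Proposition 5.3.5]{HM1} --- this is where hard Lefschetz and the polarisation genuinely do the work you are asking of them --- to conclude that $\textup{Im}\bigl(\mathscr{H}^jf_*W(N_1+\dots+N_p)\mathscr{N}\to\mathscr{H}^jf_*\mathscr{N}\bigr)$ equals the monodromy filtration $W(N_1+\dots+N_p)$ of the target, and hence, comparing with \eqref{pegal1}, that $(MF)$ holds for $\Psi_{\boldsymbol{t}}\mathscr{H}^jf_*\mathscr{M}$. Your proposal contains no counterpart of this verification for the sum operator $N_1+\dots+N_p$; without it, there is simply no theorem available that transports the equality $W(N)=W$ across the direct image.
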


First we will prove a lemma.

\begin{lemm}\label{pHLP}

Let $(\mathscr{M},N_1,\dots ,N_p,S)$ be a polarized $p$-graded Hodge-Lefschetz module with polarization $S$ (definition \ref{defiMHLP}). If we define the $(p-1)$-graduation 
\[\mathscr{M}_{\ell,\ell_3,\dots ,\ell_p}=\bigoplus_{\ell_1+\ell_2=\ell} \mathscr{M}_{\ell_1,\ell_2,\ell_3,\dots ,\ell_p},\]
then $(\mathscr{M},N_1+N_2,N_3,\dots ,N_p,S)$ is a polarized graded Hodge-Lefschetz module with polarization $S$.

\end{lemm}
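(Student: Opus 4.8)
The plan is to reduce the assertion to a statement about representations of $\mathfrak{sl}_2$, since all the content lies in the way $N_1$ and $N_2$ interact with the first two gradings: the operators $N_3,\dots,N_p$, together with their Lefschetz isomorphisms and the associated part of the polarization, are left unchanged by the construction.

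First I would dispose of the formal points. For fixed $\ell,\ell_3,\dots,\ell_p$ the summands $\mathscr{M}_{\ell_1,\ell_2,\ell_3,\dots,\ell_p}$ with $\ell_1+\ell_2=\ell$ are Hodge structures of one and the same weight, so their direct sum $\mathscr{M}_{\ell,\ell_3,\dots,\ell_p}$ is again a Hodge structure of that weight. The endomorphism $N_1+N_2$ has the same Hodge type $(-1,-1)$ as $N_1$ and $N_2$, it lowers the new first degree $\ell$ by $2$, and it commutes with $N_3,\dots,N_p$. Finally $N_1+N_2$ is an infinitesimal isometry of $S$ because $N_1$ and $N_2$ are, so $S$ still induces the expected pairing between $\mathscr{M}_{\ell,\ell_3,\dots}$ and $\mathscr{M}_{-\ell,\ell_3,\dots}$.

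Next I would reduce to the case $p=2$. Applying the Lefschetz decomposition for the commuting operators $N_3,\dots,N_p$, the conditions of Definition \ref{defiMHLP} for the combined structure need only be verified on the parts primitive for $N_3,\dots,N_p$. On such a part $S(\,\cdot\,,N_3^{\ell_3}\cdots N_p^{\ell_p}\,\cdot\,)$ restricts to a polarization, and $N_1,N_2$ act as a polarized bigraded Hodge-Lefschetz module; it therefore suffices to prove that the passage from $(\mathscr{M},N_1,N_2,S)$ to $(\mathscr{M},N_1+N_2,S)$ takes a polarized bigraded Hodge-Lefschetz module to a polarized graded one.

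In the bigraded case the data $(N_1,N_2)$ and the bigrading amount to a representation of $\mathfrak{sl}_2\times\mathfrak{sl}_2$, with $N_i$ the lowering operator of the $i$-th factor and the raising operators supplied by the hard-Lefschetz isomorphisms. Decomposing $\mathscr{M}$ into isotypic components $V(a)\boxtimes V(b)$ and applying the Clebsch--Gordan formula $V(a)\boxtimes V(b)\cong\bigoplus_{j=0}^{\min(a,b)}V(a+b-2j)$ for the diagonal $\mathfrak{sl}_2$ with lowering operator $N_1+N_2$, each diagonal summand is an irreducible $\mathfrak{sl}_2$-Lefschetz module. This gives at once the hard-Lefschetz isomorphism $(N_1+N_2)^{\ell}\colon\mathscr{M}_{\ell}\xrightarrow{\sim}\mathscr{M}_{-\ell}$ and the primitive decomposition for $N_1+N_2$. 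It then remains to check that $S$ polarizes the diagonal structure, i.e.\ that on each $(N_1+N_2)$-primitive part the form $S(\,\cdot\,,(N_1+N_2)^{\ell}\,\cdot\,)$, twisted by the Weil operator, is positive definite. Because $S$ is invariant under $\mathfrak{sl}_2\times\mathfrak{sl}_2$, this is exactly the assertion that the tensor product of two polarized $\mathfrak{sl}_2$-Hodge-Lefschetz modules is again polarized, which one obtains from the explicit Clebsch--Gordan coefficients as in the nilpotent orbit computations of \cite{CKS}. I expect this positivity, together with the bookkeeping of signs coming from the expression of diagonal-primitive vectors in terms of the bi-primitive decomposition, to be the main obstacle; the Lefschetz isomorphism and the Hodge-theoretic compatibilities above are purely formal.
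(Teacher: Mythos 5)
Your argument never leaves the world of Hodge--Lefschetz \emph{structures}: multigraded vector spaces with Hodge structures on the graded pieces, commuting $\mathfrak{sl}_2$-actions, and positive definiteness of a bilinear form on primitive parts. The lemma, however, is about Hodge--Lefschetz \emph{modules}, and in this paper (Definitions \ref{defiMHL} and \ref{defiMHLP}) both the Lefschetz condition and the polarization condition for modules are \emph{recursive} notions: when $\text{supp}\,\mathcal{M}$ is a point the module is, via Kashiwara's equivalence, a Hodge--Lefschetz structure and your picture applies; when $\dim\text{supp}\,\mathcal{M}>0$ the defining condition is that $\textup{gr}^M_\bullet\Psi_g(\mathcal{M},F,\mathcal{F},N_1,\dots,N_p,N)$ is a $(p+1)$-graded Hodge--Lefschetz module centred one lower, for every local germ $g$, and a polarization is a pairing valued in $a_X^!\underline{\mathbb{Q}}_X(-n)$ whose ``positivity'' on primitive parts means ``is a polarization of a pure Hodge module'', itself a recursively defined notion. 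For a module with positive-dimensional support there is no Weil operator and no positive-definite form to compute with, so the isotypic decomposition into $V(a)\boxtimes V(b)$ and the Clebsch--Gordan positivity computation, which are the heart of your proposal, do not apply as stated; at best they prove the statement for structures.

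Concretely, what is missing is the induction on $\dim\text{supp}\,\mathcal{M}$ that the paper's proof consists of: decompose $\mathscr{M}$ locally by strict support; for a germ $g$, the inductive definition makes $(P_\ell\Psi_{g,\lambda}\mathscr{M},P_\ell\Psi_{g,\lambda}N_1,\dots,P_\ell\Psi_{g,\lambda}N_p,N,P_\ell\Psi_{g,\lambda}S)$ a polarized $(p+1)$-graded Hodge--Lefschetz module whose support has strictly smaller dimension; the induction hypothesis applied to this object (summing the first two of its $p+1$ nilpotent operators) then shows that $(\mathscr{M},N_1+N_2,N_3,\dots,N_p,S)$ satisfies the recursive definition; and the base case, point support, is exactly the structure-level statement (Proposition \ref{SHLP}). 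Your $\mathfrak{sl}_2\times\mathfrak{sl}_2$ analysis is an attempt at that base case only, and even there the decisive positivity is deferred rather than proved; the paper settles it without Clebsch--Gordan, via the criterion of Proposition \ref{caracmultpolar} that $k$ is a polarization if and only if $k(\cdot,\boldsymbol{w}\,\cdot)$ is symmetric positive definite, combined with the identity $\boldsymbol{w}=w_1w_2w_3\cdots w_p=(w_1w_2)w_3\cdots w_p$, where $w_1w_2$ is precisely the Weil element of your diagonal $\mathfrak{sl}_2$. So as written the proposal does not prove the lemma, and completing it would require both the support induction and the positivity argument it postpones.
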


\begin{proof}

We will make an induction on the dimension of the support of $\mathscr{M}$. Using Kshiwara equivalence, the $0$ dimension case correspond to the case of polarised Hodge-Lefschetz structures of proposition \ref{SHLP}. 

Let $g$ be a germ of holomorphic function. The definition by induction \ref{defiMHL} of polarised graded H-L modules implies that
\[(P_\ell\Psi_{g,\lambda}\mathscr{M},P_\ell\Psi_{g,\lambda}N_1,\dots ,P_\ell\Psi_{g,\lambda}N_p,N,P_\ell\Psi_{g,\lambda}S)
\]
is a $(p+1)$-graded polarised H-L module. Locally, we have a decomposition
\[\mathscr{M}=\bigoplus_{Z}\mathscr{M}_Z
\]
where $Z$ is irreducible and $\mathscr{M}_Z$ has strict support $Z$. If $g$ is zero on $Z$, then $\Psi_{g,\lambda}\mathscr{M}_Z=0$ for all $\lambda$, otherwise $\Psi_{g,\lambda}\mathscr{M}_Z$ has codimension $1$ support in $Z$. Thus dimension of the support has strictly decrease and, by induction, we get that 
\[\begin{array}{cll}
& & (P_\ell\Psi_{g,\lambda}\mathscr{M},(P_\ell\Psi_{g,\lambda}N_1+P_\ell\Psi_{g,\lambda}N_2),
P_\ell\Psi_{g,\lambda}N_3,\dots ,P_\ell\Psi_{g,\lambda}N_p,N,P_\ell\Psi_{g,\lambda}S)\\
& = & (P_\ell\Psi_{g,\lambda}\mathscr{M},P_\ell\Psi_{g,\lambda}(N_1+N_2),P_\ell\Psi_{g,\lambda}N_3,\dots ,P_\ell\Psi_{g,\lambda}N_p,N,P_\ell\Psi_{g,\lambda}S)
\end{array}
\]
is a bigraded polarised H-L module. Thus, $(\mathscr{M},N_1+N_2,N_3,\dots ,N_p,S)$ satisfies the inductive definition of graded polarised Hodge-Lefschetz modules.

\end{proof}

\begin{proof}[Proof of theorem \ref{imagedirecteW}]

We have to prove that
\begin{equation}\label{pfiltW}
W(N_1,W(N_2,W(\dots W(N_p))))\Psi_{\boldsymbol{t}}\mathscr{H}^jf_*\mathscr{M}
=W(N_1+\dots +N_p)\Psi_{\boldsymbol{t}}\mathscr{H}^jf_*\mathscr{M}.
\end{equation}

let us denote $\mathscr{N}:=\Psi_{\boldsymbol{t}}\mathscr{M}$ and $W\mathscr{N}:=W(N_1,W(N_2,W(\dots W(N_p))))\mathscr{N}=W(N_1+\dots +N_p)\mathscr{N}$. Following the strict $\mathbb{R}$-multispecialisability and theorem \ref{commutVgrH}, we have
\[\Psi_{\boldsymbol{t}}\mathscr{H}^jf_*\mathscr{M}\simeq \mathscr{H}^jf_*\Psi_{\boldsymbol{t}}\mathscr{M}=\mathscr{H}^jf_*\mathscr{N}.
\]
We define the following filtration:
 
\begin{equation}\label{pdefW}
W\Psi_{\boldsymbol{t}}\mathscr{H}^jf_*\mathscr{M}:=
\text{Im}(\mathscr{H}^jf_*W\mathscr{N}\to \mathscr{H}^jf_*\mathscr{N}).
\end{equation}

We are going to show that this filtration is the same as the two filtrations of \eqref{pfiltW}. Following \cite[Theorem 5.3.1 and Proposition 5.3.4]{HM1}, the filtration 
\[\text{Im}(\mathscr{H}^jf_*W(N_p)\Psi_{t_p}\mathscr{M}\to \mathscr{H}^jf_*\Psi_{t_p}\mathscr{M})
\]
of $\mathscr{H}^jf_*\Psi_{t_p}\mathscr{M}$ is the filtration associated to $N_p$. Furthermore, by definition of pure Hodge modules, we can consider the mixed Hodge module $(\Psi_{t_p}\mathscr{M},W(N_p))$. Proposition 2.16 of \cite{SaitoMHM} for mixed Hodge modules ensure that the filtration
\[\text{Im}(\mathscr{H}^jf_*W(N_{p-1},W(N_p))\Psi_{t_{p-1}}
\Psi_{t_p}\mathscr{M}\to \mathscr{H}^jf_*\Psi_{t_{p-1}}\Psi_{t_p}\mathscr{M})
\]
is equal to
\[W(N_{p-1},W(N_{p}))\Psi_{t_{p-1}}\mathscr{H}^jf_*
\Psi_{t_p}\mathscr{M}\simeq 
W(N_{p-1},W(N_{p}))\Psi_{t_{p-1}}\Psi_{t_p}\mathscr{H}^jf_*
\mathscr{M}.\]
Thus, using an induction and applying direct image theorems at each step, we can get the equality between the first filtration of \eqref{pfiltW} and the filtration \eqref{pdefW}:
\begin{equation}\label{pegal1}
W\Psi_{\boldsymbol{t}}\mathscr{H}^jf_*\mathscr{M}\simeq 
W(N_1,W(N_2,W(\dots W(N_p))))\Psi_{\boldsymbol{t}}
\mathscr{H}^jf_*\mathscr{M}.
\end{equation}

For the second filtration of \eqref{pfiltW}, we are going to use lemma \ref{pHLP}.
let us consider the following polarized bigraded H-L module:
\[\left(\bigoplus_{\boldsymbol{\ell}} \textup{Gr}_{\ell_1}^{W(N_1)}\dots \textup{Gr}_{\ell_p}^{W(N_p)}\mathscr{N},
N_1,\dots ,N_p,
S\right).
\]
Iterating \cite[Theorem 3.2.9]{Kash86}, we get
\begin{equation}
\textup{Gr}_{\ell}^{W(N_1,W(N_2,W(\dots W(N_p))))}\mathscr{N}  = 
\bigoplus_{k_1+\dots +k_p=\ell} \textup{Gr}_{k_1}^{W(N_1)}\dots \textup{Gr}_{k_p}^{W(N_p)}\mathscr{N}.
\end{equation}
The equality $W(N_1,W(N_2,W(\dots W(N_p))))\mathscr{N}=W(N_1+\dots +N_p)\mathscr{N}$ and lemma \ref{pHLP} imply that
\[
\left(\bigoplus_{\ell} \textup{Gr}_{\ell}^{W(N_1+\dots +N_p)}\mathscr{N},
N_1+\dots +N_p,
S\right)
\]
is a polarized graded H-L module. The direct image theorem \cite[Proposition 5.3.5]{HM1} ensure then that the filtration 
\[\text{Im}(\mathscr{H}^jf_*W(N_1+\dots +N_p)\mathscr{N}\to \mathscr{H}^jf_*\mathscr{N})\]
is equal to the monodromy filtration
\[W(N_1+\dots +N_p)\mathscr{H}^jf_*\mathscr{N}.\]
Thus we have the equality
\begin{equation}\label{pegal2}
W\Psi_{\boldsymbol{t}}\mathscr{H}^jf_*\mathscr{M}\simeq 
W(N_1+\dots +N_p)\Psi_{\boldsymbol{t}}\mathscr{H}^jf_*\mathscr{M}.
\end{equation}
Combining \eqref{pegal1} and \eqref{pegal2}, we get the expected equality
\[W(N_1,W(N_2,W(\dots W(N_p))))\Psi_{\boldsymbol{t}}
\mathscr{H}^jf_*\mathscr{M}
=W(N_1+\dots +N_p)\Psi_{\boldsymbol{t}}\mathscr{H}^jf_*\mathscr{M}.\]

\end{proof}

\section{Quasi-ordinary hypersurface singularities}

We apply here the above results to some mixed Hodge modules with support a quasi-ordinary hypersurface singularity.

\begin{defi}

A germ of a complex analytic space $(Z,0)$ of dimension $p$ is a \emph{quasi-ordinary} singularity if there exists a finite projection $(Z,0)\to (\mathbb{C}^{p},0)$ unramified outside a normal crossing divisor in $(\mathbb{C}^{p},0)$.

If $(Z,0)$ is a irreducible quasi-ordinary hypersurface singularity, then there exists a parametrisation
\[\begin{array}{ccccc}
y & :& \mathbb{C}^p_{\boldsymbol{t}} & \to & \mathbb{C}^{p+1}_{\boldsymbol{z}}\\
& & (t_1,\dots ,t_p) & \mapsto & (t_1^k,\dots ,t_p^k,H(t_1,\dots ,t_p))
\end{array}\]
where $H$ is a convergent series.

\end{defi}

Let $D:=\{t_1\dots t_p=0\}\subset \mathbb{C}^p_{\boldsymbol{t}}$ and $U:=\mathbb{C}^p_{\boldsymbol{t}} \setminus D$. 

\begin{prop}

Let $(\mathcal{M},F_\bullet\mathcal{M})$ be a filtered $\mathscr{D}_X$-module underlying a polarizable pure Hodge module $\mathscr{M}$ with strict support $\mathbb{C}^p_{\boldsymbol{t}}$ and with smooth restriction to $U$ (in other words $\mathscr{M}$ is the intermediate extension of a polarised variation of Hodge structures with support $U$). Then the direct image $y_+R_F\mathcal{M}$, wich also underlies a polarizable pure Hodge module, is strictly $\mathbb{R}$-multispecialisable along the family $\mathbf{H}_{\boldsymbol{z}}$ where $H_{z_i}:=\{z_i=0\}$ pour $1\leq i\leq p$. Furthermore, for all $j\in\mathbb{Z}$, the Hodge-Lefschetz module $(\textup{gr}_{\boldsymbol{\alpha}}^{V}\mathscr{H}^jy_+\mathscr{M},N_1,\dots ,N_p)$, satisfies property $(FM)$.

\end{prop}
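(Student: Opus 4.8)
The plan is to reduce the quasi-ordinary situation to the normal crossing situation on the normalisation $\mathbb{C}^p_{\boldsymbol{t}}$, and then to transport the resulting strict $\mathbb{R}$-multispecialisability and the property $(MF)$ through the finite map $y$ by means of the direct image theorems \ref{commutVgrH} and \ref{imagedirecteW}. First I would observe that, since $\mathscr{M}$ is the intermediate extension of a polarisable variation of Hodge structure smooth on $U=\mathbb{C}^p_{\boldsymbol{t}}\setminus D$ with $D=\{t_1\cdots t_p=0\}$ a normal crossing divisor, the pair $(\mathbf{H}_{\boldsymbol{t}},\mathcal{M})$ with $\mathbf{H}_{\boldsymbol{t}}=\{t_i=0\}_{1\le i\le p}$ is of normal crossing type. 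By M. Saito's results on mixed Hodge modules of normal crossing type \cite{SaitoMHM} the filtrations $(F_\bullet\mathcal{M},V^1_\bullet\mathcal{M},\dots,V^p_\bullet\mathcal{M})$ are compatible, so Proposition \ref{compatstrict} shows that $R_F\mathcal{M}$ is strictly $\mathbb{R}$-multispecialisable along $\mathbf{H}_{\boldsymbol{t}}$. Moreover, using the commutativity of the iterated graded pieces (Corollary \ref{theocommuthodge}) to make $(\textup{gr}^V_{\boldsymbol{\alpha}}\mathscr{M},N_1,\dots,N_p)$ well defined, these iterated nearby cycles form a nilpotent orbit, so they satisfy property $(MF)$ by the Remark following its definition, i.e. by \cite[Proposition 4.72]{CKS} and \cite[4.12]{NilpSchmid}.

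Next I would factor $y$ so that the direct image theorems, which are stated for maps of the form $h\times\textup{Id}$ on the coordinates carrying the hyperplanes, become applicable. Writing $\Gamma_y\subset \mathbb{C}^p_{\boldsymbol{t}}\times\mathbb{C}^{p+1}_{\boldsymbol{z}}$ for the graph of $y$, set $Y:\mathbb{C}^p_{\boldsymbol{t}}\hookrightarrow \mathbb{C}^p_{\boldsymbol{t}}\times\mathbb{C}^{p+1}_{\boldsymbol{z}}$ to be the closed immersion onto $\Gamma_y$ and $\textup{pr}:\mathbb{C}^p_{\boldsymbol{t}}\times\mathbb{C}^{p+1}_{\boldsymbol{z}}\to\mathbb{C}^{p+1}_{\boldsymbol{z}}$ the second projection, so that $y=\textup{pr}\circ Y$. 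Viewing the target as $\mathbb{C}_{z_{p+1}}\times\mathbb{C}^p_{(z_1,\dots,z_p)}$ and the source of $\textup{pr}$ as $(\mathbb{C}^p_{\boldsymbol{t}}\times\mathbb{C}_{z_{p+1}})\times\mathbb{C}^p_{(z_1,\dots,z_p)}$, the projection $\textup{pr}$ is of the form $h\times\textup{Id}_{\mathbb{C}^p_{(z_1,\dots,z_p)}}$, and it is proper on the support $\Gamma_y$ of $Y_+\mathscr{M}$ since $y$ is finite, hence projective. Thus the entire ramification of $y$ is concentrated in the immersion $Y$: on $\Gamma_y$ one has $z_i=t_i^k$, so that $Y^*z_i=t_i^k$.

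I would then analyse the behaviour of $Y_+R_F\mathcal{M}$ along $\mathbf{H}_{\boldsymbol{z}}=\{z_i=0\}_{1\le i\le p}$. Because $z_i=t_i^k$ on $\Gamma_y$, the Kashiwara-Malgrange filtration of $Y_+R_F\mathcal{M}$ along $\{z_i=0\}$ is obtained from that of $R_F\mathcal{M}$ along $\{t_i=0\}$ by the rescaling of indices induced by the ramification $s\mapsto s^k$; this is exactly the computation carried out for the Kummer cover $r_q$ in Definition \ref{MHMNilss} and Proposition \ref{VfilttildeMalpha} (here with $q=k$), applied in each of the $p$ variables. This shows that $Y_+R_F\mathcal{M}$ is strictly $\mathbb{R}$-multispecialisable along $\mathbf{H}_{\boldsymbol{z}}$, and, since a Kummer cover of a nilpotent orbit is again a nilpotent orbit with uniformly rescaled weights, that $(\textup{gr}^V_{\boldsymbol{\alpha}}Y_+\mathscr{M},N_1,\dots,N_p)$, equivalently $(\Psi_{\boldsymbol{z}}Y_+\mathscr{M},N_1,\dots,N_p)$, still satisfies property $(MF)$.

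Finally I would apply the direct image theorems to $\textup{pr}$. Theorem \ref{commutVgrH} yields that $y_+R_F\mathcal{M}=\textup{pr}_+Y_+R_F\mathcal{M}$ is strictly $\mathbb{R}$-multispecialisable along $\mathbf{H}_{\boldsymbol{z}}$ and that the $V$-multifiltration together with its graded pieces commutes with $\textup{pr}_+$; in particular $y_+R_F\mathcal{M}$ underlies a polarisable pure Hodge module by Saito's direct image theorem. Theorem \ref{imagedirecteW}, applied to $\textup{pr}$ (projective on the support) using the property $(MF)$ for $\Psi_{\boldsymbol{z}}Y_+\mathscr{M}$ established above, then gives that $(\Psi_{\boldsymbol{z}}\mathscr{H}^j\textup{pr}_+Y_+\mathscr{M},N_1,\dots,N_p)\simeq(\textup{gr}^V_{\boldsymbol{\alpha}}\mathscr{H}^jy_+\mathscr{M},N_1,\dots,N_p)$ satisfies $(MF)$, which is the property $(FM)$ of the statement. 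The main obstacle, and the only step requiring genuine computation, is the third one: making precise, simultaneously in all $p$ variables, the relation between the $V$-multifiltration along $\{z_i=0\}$ and along $\{t_i=0\}$ through $z_i=t_i^k$, and checking that the strictness and surjectivity conditions of Definition \ref{defmultstrict}, as well as the equality $(MF)$, survive this multivariable Kummer rescaling. This is the multivariable counterpart of the computations of Sections 1 and 2.
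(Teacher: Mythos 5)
Your overall architecture is the same as the paper's: establish strict $\mathbb{R}$-multispecialisability of $R_F\mathcal{M}$ along $\mathbf{H}_{\boldsymbol{t}}$ from the normal crossing situation (Saito's compatibility result plus Proposition \ref{compatstrict}), factor $y$ through a graph embedding followed by a map of the product form $h\times\textup{Id}$ which is proper on the support, and then conclude with Theorems \ref{commutVgrH} and \ref{imagedirecteW}; the paper's factorisation $(y\times\textup{id}_{\mathbb{C}^p_{\boldsymbol{z}}})\circ i=j\circ y$, with $i$ the graph embedding of $(t_1^k,\dots,t_p^k)$ and $j$ a graph embedding by coordinate functions, is equivalent to your $y=\textup{pr}\circ Y$ up to Kashiwara equivalence. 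The genuine gap is in your third step, the one you yourself flag as ``the main obstacle'': the transfer of strict $\mathbb{R}$-multispecialisability through the ramification $z_i=t_i^k$. You assert that the $V$-multifiltration of $Y_+R_F\mathcal{M}$ along $\mathbf{H}_{\boldsymbol{z}}$ is a mere ``rescaling of indices'' of that of $R_F\mathcal{M}$ along $\mathbf{H}_{\boldsymbol{t}}$, and you cite Definition \ref{MHMNilss} and Proposition \ref{VfilttildeMalpha} as being ``exactly this computation''. They are not: Proposition \ref{VfilttildeMalpha} computes the $V$-filtration of the tensor product $\mathfrak{M}_{\alpha,k}$ of $\mathfrak{M}$ with the flat Nilsson module ${_{\tilde{D}}\pi^*}\mathfrak{N}_{\alpha,k}$ (the effect is an index \emph{translation} by $\alpha+1$), and the Kummer cover $r_q$ enters Definition \ref{MHMNilss} only to construct the specific modules $\mathscr{N}_{q,k}=r_{q*}\mathscr{N}_k$. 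Neither statement says anything about the $V$-filtration, let alone the strictness conditions of Definition \ref{defmultstrict}, of the graph embedding of an \emph{arbitrary} Hodge module along the non-smooth functions $t_i^k$.

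Note also that the ``rescaling'' picture is too naive even set-theoretically: already for $\mathcal{M}=\mathcal{O}_{\mathbb{C}}$ and $g=t^k$, the module $(i_g)_+\mathcal{M}$ has $V$-jumps along $z$ at all of $\frac{1}{k}\mathbb{Z}$ with $k$ distinct eigenvalue sectors, whereas $\mathcal{O}_{\mathbb{C}}$ has only integral jumps along $t$; the graded pieces along $z$ assemble integer translates of the $t$-jumps together with the $\eth$-directions of the graph embedding, so a real argument is required, especially for the interaction with the Hodge filtration. What is needed at this exact spot --- and what the paper invokes --- is Saito's theorem on strict specialisability, \cite[Théorème 3.4]{SaitoMHM}, applied to the graph embedding $i$ of $(t_1^k,\dots,t_p^k)$: since $R_F\mathcal{M}$ underlies a pure Hodge module and is strictly $\mathbb{R}$-multispecialisable along $\mathbf{H}_{\boldsymbol{t}}$, the module $i_+R_F\mathcal{M}$ is strictly $\mathbb{R}$-multispecialisable along $\mathbf{H}_{\boldsymbol{z}}$. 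Once this external input replaces your inapplicable citation, the rest of your argument goes through as in the paper: Theorem \ref{commutVgrH} applied to the proper map $y\times\textup{id}$ (or to your $\textup{pr}$, granting properness on the support), and, for property (MF), the nilpotent orbit theorem together with \cite[Proposition 4.72]{CKS} applied to $(\textup{gr}^V_{\boldsymbol{\alpha}}\mathscr{M},N_1,\dots,N_p)$ on the source followed by Theorem \ref{imagedirecteW} --- which makes your additional unproved claim that ``a Kummer cover of a nilpotent orbit is again a nilpotent orbit'' unnecessary, since rescaling the $N_i$ does not change their monodromy filtrations.
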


\begin{proof}

let us consider the family of hyperplanes $\mathbf{H}_{\boldsymbol{t}}$ given by the coordinate hyperplanes $H_{t_i}:=\{t_i=0\}$ for $1\leq i\leq p$.
Following \cite[3.b]{SaitoMHM}, the filtrations of $\mathcal{M}$ $(F_\bullet\mathcal{M},V^{H_{t_1}}_\bullet\mathcal{M},\dots ,V^{H_{t_i}}_\bullet\mathcal{M})$ are compatible. Moreover, using definition \cite[Théorème 3]{Maisonobe},
we get that $\mathcal{M}$ is $\mathbb{R}$-multispecialisable along $\mathbf{H}_{\boldsymbol{t}}$. Following proposition \ref{compatstrict}, $R_F\mathcal{M}$ is strictly $\mathbb{R}$-multispecialisable along $\mathbf{H}_{\boldsymbol{t}}$.

Let
\[\begin{array}{ccccc}
i & :& \mathbb{C}^p_{\boldsymbol{t}}  & \to & \mathbb{C}^p_{\boldsymbol{t}}\times\mathbb{C}^{p}_{\boldsymbol{z}}\\
& & (t_1,\dots ,t_p) & \mapsto & (t_1,\dots ,t_p,t_1^k,\dots ,t_p^k),
\end{array}\]
be the inclusion by the graph map. Following \cite[Théorème 3.4]{SaitoMHM}, $i_+R_F\mathcal{M}$ is also strictly $\mathbb{R}$-multispecialisable along $\mathbf{H}_{\boldsymbol{z}}$.

Following theorem \ref{commutVgrH}, $(y\times id_{\mathbb{C}_{\boldsymbol{z}}^p})_+i_+R_F\mathcal{M}$ is strictly $\mathbb{R}$-multispecialisable along $\mathbf{H}_{\boldsymbol{z}}$. 
Yet, $(y\times id_{\mathbb{C}_z^p})\circ i=j\circ y$ where
\[\begin{array}{ccccc}
j & :& \mathbb{C}^{p+1}_{\boldsymbol{z}}  & \to & \mathbb{C}^{p+1}_{\boldsymbol{z}}\times\mathbb{C}^{p}_{\boldsymbol{z}}\\
& & (z_1,\dots ,z_{p+1}) & \mapsto & (z_1,\dots ,z_{p+1},z_1,\dots ,z_p)
\end{array}\]
is the inclusion of the graph of the projection on the $p$ first coordinates. Thus $j_+y_+R_F\mathcal{M}$ is strictly $\mathbb{R}$-multispecialisable along $\mathbf{H}_{\boldsymbol{z}}$. 

Following the nilpotent orbit theorem \cite[4.12]{NilpSchmid} and \cite[Proposition 4.72]{CKS},$(\textup{gr}_{\boldsymbol{\alpha}}^{V}\mathscr{M},N_1,\dots ,N_p)$ satisfy property $(FM)$. Theorem \ref{imagedirecteW} implies then that $(\textup{gr}_{\boldsymbol{\alpha}}^{V}\mathscr{H}^jy_+\mathscr{M},N_1,\dots ,N_p)$ satisfy property $(FM)$.

\end{proof}

\begin{coro}

With the notation of the above proposition, the filtrations
\[(F_\bullet y_+\mathcal{M},V^{H_{z_1}}_\bullet y_+\mathcal{M},\dots ,
V^{H_{z_i}}_\bullet y_+\mathcal{M})\]
are compatible.

\end{coro}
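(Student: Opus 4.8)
The plan is to read off the corollary from the strict $\mathbb{R}$-multispecialisability established in the preceding proposition by invoking Proposition \ref{strictmulcompat}, which is precisely the implication passing from strict multispecialisability to the compatibility of the Hodge and Kashiwara--Malgrange filtrations.

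First I would record that $y$, being the parametrisation of the irreducible quasi-ordinary hypersurface, is a finite morphism, so the direct image functor $y_+$ is exact and $y_+\mathcal{M}=\mathcal{H}^0y_+\mathcal{M}$ underlies a single polarisable pure Hodge module $y_+\mathscr{M}$, carrying a Hodge filtration $F_\bullet y_+\mathcal{M}$. The strictness of the direct image in Saito's theory, which is the same strictness that is used (together with Grauert's coherence theorem) in the proof of Theorem \ref{commutVgrH}, then yields the identification $y_+R_F\mathcal{M}\simeq R_F(y_+\mathcal{M})$, so that the Rees module appearing in the proposition is exactly the Rees module built from the Hodge filtration of $y_+\mathcal{M}$ and its canonical $V$-filtrations.

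Next, the preceding proposition asserts that this module $y_+R_F\mathcal{M}\simeq R_F(y_+\mathcal{M})$ is strictly $\mathbb{R}$-multispecialisable along the family $\mathbf{H}_{\boldsymbol{z}}$ consisting of the hyperplanes $H_{z_i}=\{z_i=0\}$ for $1\leq i\leq p$. I would then apply Proposition \ref{strictmulcompat}, with $\mathcal{M}$ there replaced by $y_+\mathcal{M}$ and $\mathbf{H}$ replaced by $\mathbf{H}_{\boldsymbol{z}}$: strict $\mathbb{R}$-multispecialisability of $R_F(y_+\mathcal{M})$ forces the filtrations $(F_\bullet y_+\mathcal{M},V^{H_{z_1}}_\bullet y_+\mathcal{M},\dots ,V^{H_{z_p}}_\bullet y_+\mathcal{M})$ to be compatible, which is exactly the statement of the corollary.

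The only delicate point will be the identification $y_+R_F\mathcal{M}\simeq R_F(y_+\mathcal{M})$: one must check that the pushed-forward filtered $\mathscr{D}$-module carries a Hodge filtration matching the Rees module $y_+R_F\mathcal{M}$, and that no higher cohomology intervenes. Both facts follow from the strictness of the direct image complex, which holds here because $y_+R_F\mathcal{M}$ underlies a pure Hodge module; with this identification in hand the corollary is an immediate consequence of Proposition \ref{strictmulcompat}.
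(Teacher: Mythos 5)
Your proposal is correct and follows essentially the same route as the paper: the corollary is obtained by combining the strict $\mathbb{R}$-multispecialisability of $y_+R_F\mathcal{M}$ along $\mathbf{H}_{\boldsymbol{z}}$ (established in the preceding proposition via Theorem \ref{commutVgrH}) with Proposition \ref{strictmulcompat}. Your extra care in identifying $y_+R_F\mathcal{M}\simeq R_F(y_+\mathcal{M})$ via strictness of the direct image (so that Proposition \ref{strictmulcompat} genuinely applies to the filtered module $y_+\mathcal{M}$) is a point the paper leaves implicit, and it is handled correctly.
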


\begin{proof}

This is a consequence of theorem \ref{commutVgrH} and proposition \ref{strictmulcompat}.

\end{proof}

\appendix

\addtocontents{toc}{\protect\setcounter{tocdepth}{1}}
\part{Appendix}

\section{Some notions on mixed Hodge modules}

We recollect some definitions and properties for mixed Hodge modules that are used in this article.

\subsection{Definitions}

\subsubsection*{Inverse image}

We give two definitions of the inverse image for mixed Hodge modules depending on the type of the morphism involved.

\begin{defi}\label{defimageinverseH}

Let $\mathscr{M}\in \textup{MHM}(Y,\mathbb{Q})$ and let $i:X\hookrightarrow Y$ be a closed immersion such that $X=\cap g_i^{0}$ where $g_1,...,g_r$ are holomorphic functions on $Y$. We denote $U_i:=\{g_i\neq 0\}$ and, for $I\subset \{1,...,r\}$, 
\[j_I:U_I:=\bigcap_{i\in I}U_i\to Y
\]
the inclusion. We define $i_*\mathcal{H}^j i^* \mathscr{M}$ as the $j$-t cohomology of the complex
\[\bigoplus_{|I|=-\bullet}(j_I)_!j_I^{-1}\mathscr{M}.
\]

\end{defi}

\begin{defi}\label{imageinverse!}

Let $f:X\to Y$ be a smooth morphism between two complex manifolds with $\text{dim}X-\text{dim}Y=\ell$ and $\mathscr{M}=(\mathcal{M},F,\mathcal{F};W)\in \textup{MHW}(X,\mathbb{Q})$. We define $\mathscr{H}^{-\ell}f^{!}\mathscr{M}=(\mathcal{M}',F',\mathcal{F}';W')$ with
\[\begin{array}{ccc}
(\mathcal{M}',F';W')=f^{*}(\mathcal{M},F;W[-\ell]) \\
(\mathcal{F}';W')=f^{!}(\mathcal{F};W[-\ell])[-\ell].
\end{array}
\]
We also define 
\[\mathscr{H}^{\ell}f^{*}\mathscr{M}
=\mathscr{H}^{-\ell}f^{!}\mathscr{M}(-\ell)\]

\end{defi}

\subsubsection*{Localisation}

Let $j:Y\hookrightarrow X$ be the inclusion of a smooth hypersurface, we consider the Kashiwara-Malgrange filtration along this hypersurface. We define the localisation along $Y$

\begin{defi}\label{localisationF}

Met $\mathscr{M}=(\mathcal{M},F,\mathcal{F};W)\in \textup{MHM}^{(p)}(X,\mathbb{Q})$, we define the naive localisation of $R_F\mathcal{M}$ as
\[R_F\mathcal{M}(*Y):=R_F\mathcal{M}\otimes_{\tilde{\mathscr{O}}_X}\tilde{\mathscr{O}}_X(*Y).
\]
Let
\[V_0R_F\mathcal{M}(*Y):=V_{-1}(R_F\mathcal{M})t^{-1}\subset R_F\mathcal{M}(*Y)
\]
we define the localisation of $R_F\mathcal{M}$ as
\[R_F\mathcal{M}[*Y]:=V_0R_F\mathcal{M}(*Y)\cdot\tilde{\mathscr{D}}_X.
\]

\end{defi}

\begin{prop}\label{isoDH}

Let $\varphi_H:\mathscr{M}\to\mathscr{N}$ be a morphism in the category $\textup{MHM}(X,\mathbb{Q})$. Let's denote $\varphi_D:\mathcal{M}\to\mathcal{N}$ the underlying morphism in the category of holonomic $\mathscr{D}_X$-modules. If $\varphi_D$ is an isomorphism, then $\varphi_H$ is also an isomorphism.

\end{prop}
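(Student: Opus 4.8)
The plan is to deduce the statement from two structural properties of the category of mixed Hodge modules: that $\textup{MHM}(X,\mathbb{Q})$ is abelian, and that the forgetful functor
\[\textup{rat}_{\mathscr{D}}:\textup{MHM}(X,\mathbb{Q})\to\textup{Mod}_{h,r}(\mathscr{D}_X),\qquad \mathscr{M}\mapsto\mathcal{M},\]
is exact and faithful. Granting these, the proposition is an instance of the general fact that an exact and faithful functor between abelian categories reflects isomorphisms, so the real content lies in the two structural properties.

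First I would recall why $\textup{rat}_{\mathscr{D}}$ is faithful. By the description recalled at the beginning of the article, a morphism $\varphi_H$ in $\textup{MHM}(X,\mathbb{Q})$ amounts to a pair $(\varphi_D,\varphi_P)$, where $\varphi_D:\mathcal{M}\to\mathcal{N}$ is a morphism of the underlying regular holonomic $\mathscr{D}_X$-modules and $\varphi_P:\mathcal{F}\to\mathcal{G}$ is a morphism of the underlying $\mathbb{Q}$-perverse sheaves, subject to the compatibility $\mathbf{DR}(\varphi_D)=\mathbb{C}\otimes_{\mathbb{Q}}\varphi_P$ under the Riemann-Hilbert correspondence. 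Since $\mathbb{C}$ is faithfully flat over $\mathbb{Q}$, the complexification map $\mathrm{Hom}(\mathcal{F},\mathcal{G})\to\mathrm{Hom}(\mathbb{C}\otimes_{\mathbb{Q}}\mathcal{F},\mathbb{C}\otimes_{\mathbb{Q}}\mathcal{G})$ is injective, so $\varphi_P$ is determined by $\mathbb{C}\otimes_{\mathbb{Q}}\varphi_P=\mathbf{DR}(\varphi_D)$, hence by $\varphi_D$. Therefore $\varphi_H$ is entirely determined by $\varphi_D$, which is faithfulness. Exactness of $\textup{rat}_{\mathscr{D}}$ is Saito's strictness theorem: morphisms of mixed Hodge modules are strictly compatible with the Hodge and weight filtrations, so kernels and cokernels formed in $\textup{MHM}(X,\mathbb{Q})$ have underlying $\mathscr{D}_X$-modules equal to $\ker\varphi_D$ and $\textup{coker}\,\varphi_D$ respectively.

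Next I would run the categorical argument. Form $\ker\varphi_H$ and $\textup{coker}\,\varphi_H$ in the abelian category $\textup{MHM}(X,\mathbb{Q})$. Exactness gives $\textup{rat}_{\mathscr{D}}(\ker\varphi_H)=\ker\varphi_D$ and $\textup{rat}_{\mathscr{D}}(\textup{coker}\,\varphi_H)=\textup{coker}\,\varphi_D$, and both vanish since $\varphi_D$ is assumed to be an isomorphism. Faithfulness then forces these two Hodge modules to be zero: if $\textup{rat}_{\mathscr{D}}(A)=0$, then $\mathrm{id}_A$ is sent to $\mathrm{id}_0=0$, so $\mathrm{id}_A=0$ and $A=0$. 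Hence $\varphi_H$ has zero kernel and cokernel and is therefore an isomorphism in $\textup{MHM}(X,\mathbb{Q})$.

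I expect the only genuinely delicate ingredient to be the exactness of $\textup{rat}_{\mathscr{D}}$, equivalently the strict compatibility of morphisms of mixed Hodge modules with the Hodge filtration $F_\bullet\mathcal{M}$; this is one of Saito's foundational results and is simply cited. Once it is granted, both the faithful-flatness step controlling the $\mathbb{Q}$-structure and the reflection-of-isomorphisms lemma are formal. One could equivalently run the whole argument through the rationalisation functor $\mathscr{M}\mapsto\mathcal{F}$ to $\textup{Perv}(\mathbb{Q}_X)$, first upgrading ``$\varphi_D$ an isomorphism'' to ``$\varphi_P$ an isomorphism'' via Riemann-Hilbert and faithful flatness, then reflecting isomorphisms along the exact faithful functor $\textup{rat}$.
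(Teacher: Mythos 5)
Your proof is correct and follows essentially the same route as the paper: both form the kernel and cokernel of $\varphi_H$ in the abelian category $\textup{MHM}(X,\mathbb{Q})$, use exactness of the forgetful functor to see that their underlying $\mathscr{D}_X$-modules vanish, and use faithfulness to conclude that the kernel and cokernel themselves vanish. The only difference is that you spell out the justifications for faithfulness (via Riemann--Hilbert and faithful flatness of $\mathbb{C}$ over $\mathbb{Q}$) and exactness (Saito's strictness), which the paper simply asserts.
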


\begin{proof}

The category $\textup{MHM}(X,\mathbb{Q})$ being abelian, we can consider $\mathscr{K}$ and $\mathscr{C}$ the kernel and the cokernel of $\varphi_H$. The morphism $\varphi_D$ being an isomorphism, and the forgetful functor being exact, the underlying holonomic  $\mathscr{D}_X$-modules of $\mathscr{K}$ and $\mathscr{C}$ are zero. Furthermore, the forgetful functor is faithful, this implies that $\mathscr{K}$ and $\mathscr{C}$ are zero and thus that 
$\varphi_H$ is an isomorphism.

\end{proof}

\subsection{Polarized $p$-graded Hodge-Lefschetz structures}

\subsubsection{Relative monodromy filtration}\label{relatmonofilt}

\begin{prop}

Let $\mathscr{A}$ be an abelian category and $H\in \mathscr{A}$ equipped with a nilpotent endomorphism $N$. There exists a unique filtration $M_\bullet H$ satisfying: 
\begin{enumerate}
\item for all $\ell\in\mathbb{Z}$, $N(M_\ell H)\subset M_{\ell-2} H$
\item for all $\ell\geq 1$, $N^\ell$ induces an isomorphism, 
$\textup{gr}^M_\ell H \xrightarrow{\sim} \textup{gr}^M_{-\ell} H$.
\end{enumerate}
This filtration is called the \emph{monodromy filtration} associated to $N$.
\end{prop}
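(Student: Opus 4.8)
The plan is to prove existence and uniqueness separately, both organised around the nilpotency index: let $k$ be the least integer with $N^{k+1}=0$, and argue by induction on $k$. The base case $k=0$, i.e. $N=0$, is immediate: axiom (2) for $\ell\geq 1$ forces $\textup{gr}^M_\ell H=\textup{gr}^M_{-\ell}H=0$, so the only possibility is the filtration concentrated in degree $0$, which works. For the inductive step the key mechanism is a passage to the subquotient $\bar H:=\ker N^k/\operatorname{im}N^k$, on which $N$ descends to a nilpotent endomorphism $\bar N$ of index strictly less than $k$.

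For uniqueness, suppose $M_\bullet H$ satisfies (1)--(2). First I would observe that (2) forces the filtration to be supported in $[-k,k]$: for $\ell>k$ one has $N^\ell=0$, so an isomorphism $\textup{gr}^M_\ell H\xrightarrow{\sim}\textup{gr}^M_{-\ell}H$ is possible only when both pieces vanish; hence $M_kH=H$ and $M_{-k-1}H=0$. Next I would pin down the two outermost genuine steps. The inclusions $\operatorname{im}N^k=N^kM_kH\subseteq M_{-k}H$ and $M_{k-1}H\subseteq\ker N^k$ follow from (1) applied $k$ times, while the reverse inclusions follow from the isomorphism in (2) at $\ell=k$: surjectivity of $N^k\colon\textup{gr}^M_kH\to\textup{gr}^M_{-k}H$ (together with $M_{-k-1}H=0$) gives $M_{-k}H\subseteq\operatorname{im}N^k$, and its injectivity gives $\ker N^k\subseteq M_{k-1}H$. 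Thus $M_{-k}H=\operatorname{im}N^k$ and $M_{k-1}H=\ker N^k$ are canonical. Since $\operatorname{im}N^k\subseteq\ker N\subseteq\ker N^k$, the filtration $M_\bullet H$ induces on $\bar H$ a filtration satisfying (1)--(2) for $\bar N$ (the relevant graded pieces $\textup{gr}^M_\ell H$ for $|\ell|\leq k-1$ are unchanged by passing to $\bar H$), so the induction hypothesis determines it uniquely; combined with the forced outer steps and the order-preserving correspondence between subobjects of $\bar H$ and subobjects between $M_{-k}H$ and $M_{k-1}H$, this determines $M_\bullet H$ completely.

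For existence I would exhibit an explicit candidate and verify the axioms. Set
\[
M_\ell H:=\sum_{p\geq 0}N^p\bigl(\ker N^{2p+\ell+1}\bigr),
\]
with the convention $\ker N^{m}=0$ for $m\leq 0$. This is visibly an increasing filtration, and axiom (1) is immediate, since $N$ carries the $p$-th summand of $M_\ell H$ into the $(p+1)$-st summand of $M_{\ell-2}H$ (because $2(p+1)+(\ell-2)+1=2p+\ell+1$). One also checks directly that $M_kH=H$ (the $p=0$ term is $\ker N^{k+1}=H$) and that $M_{-k-1}H=0$. Alternatively, existence can be run through the same inductive device: take the monodromy filtration of $\bar N$ on $\bar H$ (which exists by induction), lift it to the steps between $\operatorname{im}N^k$ and $\ker N^k$, and adjoin the canonical outer steps $M_kH=H$ and $M_{-k}H=\operatorname{im}N^k$.

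The main obstacle is the verification of axiom (2), that is, that $N^\ell$ induces an \emph{isomorphism} $\textup{gr}^M_\ell H\xrightarrow{\sim}\textup{gr}^M_{-\ell}H$ for $\ell\geq 1$. In an arbitrary abelian category Jordan decomposition is unavailable, so this isomorphism must be extracted from the combinatorics of the subobjects $N^p(\ker N^{q})$, i.e. from the $\mathfrak{sl}_2$-type relations that $N$ satisfies on the associated graded; this is the analogue of hard Lefschetz for a single nilpotent operator and is where essentially all the work concentrates. In the inductive approach the corresponding difficulty is localised in the single boundary statement that $N^k\colon\textup{gr}^M_kH\xrightarrow{\sim}\textup{gr}^M_{-k}H$ after the lift, the interior isomorphisms being inherited from the monodromy filtration of $\bar N$ on $\bar H$.
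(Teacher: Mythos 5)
Your uniqueness argument is correct and essentially complete, and it is the standard one: the paper itself gives no proof here, citing \cite[Lemma-Definition 11.9]{P-S08}, where exactly this induction on the nilpotency index appears. The identities $M_{-k}H=\operatorname{im}N^k$ and $M_{k-1}H=\ker N^k$ are indeed forced by axioms (1)--(2), and the rest of the filtration is then determined by the induced filtration on $\bar{H}=\ker N^k/\operatorname{im}N^k$ via the induction hypothesis.

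The gap is in existence. You write down the correct candidate $M_\ell H=\sum_{p\geq 0}N^p(\ker N^{2p+\ell+1})$, verify axiom (1) and the boundedness, and then state that the verification of axiom (2) ``is where essentially all the work concentrates'' --- and stop. Since existence is one of the two assertions of the proposition and axiom (2) is its entire content, this is a restatement of the problem, not a proof. Moreover, your fallback inductive route misplaces the difficulty. With $M_kH=H$, $M_{k-1}H=\ker N^k$, $M_{-k}H=\operatorname{im}N^k$, $M_{-k-1}H=0$ and the interior steps lifted from the monodromy filtration of $\bar{N}$, the boundary map $N^k\colon\textup{gr}^M_kH\to\textup{gr}^M_{-k}H$ is the canonical map $H/\ker N^k\to\operatorname{im}N^k$, which is an isomorphism in any abelian category (coimage equals image); that step is free, not hard. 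What actually requires proof is axiom (1) at the two edges, namely $N(H)\subseteq M_{k-2}H$ and $N(M_{-k+1}H)=0$, neither of which you address. Both follow from axiom (2) for $\bar{N}$ in top degree $k-1$: injectivity of $\bar{N}^{k-1}$ on $\textup{gr}^{\bar{M}}_{k-1}\bar{H}$ gives the first, since $\bar{N}^{k-1}(\overline{Ny})=\overline{N^k y}=0$ forces the class of $\overline{Ny}$ in $\textup{gr}^{\bar{M}}_{k-1}\bar{H}$ to vanish; surjectivity gives the second, since any section of $M_{-k+1}H$ is of the form $N^{k-1}y+N^kz$ with $y\in\ker N^k$, whence $N$ kills it. Adding these two checks closes the inductive construction and proves existence; without them (or without the hard-Lefschetz-type verification for your explicit formula), the existence half of the proposition remains unproved.
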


\begin{proof}

\cite[Lemma-Definition 11.9]{P-S08}

\end{proof}

\begin{prop}\label{defifiltmonorelat}

Let $\mathscr{A}$ be an abelian category and $(H,L_\bullet H)$ a filtered object in $\mathscr{A}$ equipped with a filtered nilpotent endomorphism $N$. There exists at the most one filtration $M_\bullet H$ satisfying:
\begin{enumerate}
\item for all $\ell\in\mathbb{Z}$, $N(M_\ell H)\subset M_{\ell-2} H$,
\item for all $\ell\geq 1$, and all $k\in\mathbb{Z}$, $N^\ell$ induces an isomorphism,
$\textup{gr}^M_{\ell+k} \textup{gr}^L_{k}H \xrightarrow{\sim} \textup{gr}^M_{-\ell+k} \textup{gr}^L_{k}H$.
\end{enumerate}
If this filtration exists, it is called the \emph{relative monodromy filtration} associated to $N$.

\end{prop}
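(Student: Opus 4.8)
The plan is to establish uniqueness by reducing to the absolute case treated in the previous proposition, arguing by induction on the \emph{length} of $L_\bullet H$, that is, on the number of indices $k$ with $\textup{gr}^L_k H\neq 0$. Here ``filtered nilpotent endomorphism'' means $N(L_k H)\subseteq L_k H$ for all $k$, so that $N$ acts on every subquotient of $H$ attached to $L$.

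First I would dispose of the base case. If $L$ is concentrated in a single degree $k_0$, so that $\textup{gr}^L_{k_0}H=H$, then condition (2) asserts precisely that $M_\bullet$ is the shift by $k_0$ of the monodromy filtration of $N$ on $H$; by the preceding proposition this filtration is unique, whence any two filtrations satisfying (1) and (2) must coincide.

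For the inductive step, suppose $M$ and $M'$ both satisfy (1) and (2). Let $k_0$ be maximal with $\textup{gr}^L_{k_0}H\neq 0$ and put $H':=L_{k_0-1}H$, a subobject stable under $N$, with quotient $H'':=H/H'=\textup{gr}^L_{k_0}H$; the filtration $L$ induces on $H'$ and on $H''$ filtrations of strictly smaller length. The technical heart, which goes back to Deligne, is a compatibility statement: any filtration satisfying (1) and (2) relative to $L$ automatically induces on $H'$ the relative monodromy filtration for $(H',N,L|_{H'})$ and on $H''$ the shift by $k_0$ of the monodromy filtration of $N$. Granting this, the induction hypothesis gives $M\cap H'=M'\cap H'$ and equal images of $M$ and $M'$ in $H''$, and a final diagram chase, using condition (1) to control how $M_\ell$ extends its trace on $H'$ by its image in $H''$, yields $M_\ell=M'_\ell$ for every $\ell$.

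The hard part is exactly this compatibility and rigidity step, and it is where condition (1) is indispensable: two filtrations of $H$ that induce the same filtration on each $\textup{gr}^L_k H$ need not be equal, so condition (2), which only sees the $L$-associated graded, cannot suffice on its own; one must exploit the $N$-stability in (1) to rigidify the comparison. An equivalent and perhaps cleaner packaging, which I would prefer for the final write-up, is to show by descending induction on $\ell$ that conditions (1) and (2) force an explicit description of $M_\ell$ as a canonical subobject built from $N$ and the steps $L_\bullet$, depending only on the datum $(H,N,L)$; this exhibits $M$ as determined by the data and yields uniqueness directly, bypassing the sub/quotient recursion.
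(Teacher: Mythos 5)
Note first that the paper does not actually prove this proposition: its ``proof'' is the citation \cite[1.1]{SaitoMHM}, i.e.\ it defers to the classical uniqueness statement of Deligne, so your attempt has to be measured against that standard argument. Your skeleton (induction on the length of $L_\bullet$, splitting $H$ into $H'=L_{k_0-1}H$ and $H''=\textup{gr}^L_{k_0}H$) is indeed the usual one, but you have mislocated the difficulty. The part you call the ``technical heart'' --- that a filtration $M$ satisfying (1) and (2) induces on $H'$ the relative monodromy filtration of $(H',N,L|_{H'})$ and on $H''$ the shifted monodromy filtration --- is in fact immediate: for $k\leq k_0-1$ one has $L_kH\subseteq H'$, hence $(M_\ell\cap H')\cap L_kH=M_\ell\cap L_kH$, so the trace filtration $M_\bullet\cap H'$ induces on each $\textup{gr}^L_kH$, $k<k_0$, the same filtration as $M_\bullet$ does; likewise the filtration induced by $M_\bullet$ on $\textup{gr}^L_{k_0}H=H''$ is $(M_\bullet+H')/H'$. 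Conditions (1) and (2) therefore restrict verbatim, and the induction hypothesis gives $M_\ell\cap H'=M'_\ell\cap H'$ and $(M_\ell+H')/H'=(M'_\ell+H')/H'$ for two candidate filtrations $M,M'$.

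The genuine gap is the step you dismiss as ``a final diagram chase''. A subobject of $H$ is \emph{not} determined by its intersection with $H'$ and its image in $H''$: in $H=\mathbb{C}^2$ with $H'$ the first coordinate line, the second coordinate line and the diagonal have the same (zero) trace and the same (full) image. Your own closing remark concedes exactly this point, yet the argument that conditions (1) and (2) rigidify the extension is never supplied --- and that argument \emph{is} the proposition. Concretely, at an index $\ell$ where $M_\ell\cap H'\subsetneq H'$ and the image $(M_\ell+H')/H'$ is nonzero, the candidates $M_\ell$ and $M'_\ell$ differ by a morphism from that image into $H'/(M_\ell\cap H')$, and one must prove that the relations $NM_\ell\subset M_{\ell-2}$, $NM'_\ell\subset M'_{\ell-2}$, played against condition (2) at \emph{other} indices, force this morphism to vanish. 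This is Deligne's lemma; in the literature (Steenbrink--Zucker, or Kashiwara's study of variations of mixed Hodge structure \cite{Kash86}) it is carried out by establishing closed recursive formulas expressing $M_\ell\cap L_k$ in terms of $N$, $L_\bullet$ and the $M_j$ for $j$ in a controlled range, from which uniqueness is read off. Your alternative ``cleaner packaging'' --- an explicit canonical description of $M_\ell$ by descending induction --- is precisely that system of formulas, and announcing that such a description should exist is not a proof of it. As written, the proposal is a correct plan whose only nontrivial step is missing.
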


\begin{proof}

\cite[1.1]{SaitoMHM}

\end{proof}

\subsubsection{Polarized graded Lefschetz structures}

\begin{defi}

A \emph{graded Lefschetz structure} consists of a graded $\mathbb{R}$-vector space $\mathcal{H}$ equipped with a nilpotent endomorphism $N$ satisfying, for all $\ell\in\mathbb{Z}$ 
\[\mathcal{H}_\ell=\textup{gr}_\ell^M\mathcal{H}\]
where $M_\bullet$ is the monodromy filtration associated to $N$. For $\ell\in\mathbb{N}$, we denote $P_\ell\mathcal{H}:=\ker N^{\ell+1}:\mathcal{H}_\ell \to \mathcal{H}_{-\ell}$ the primitive part of degree $\ell$.

\end{defi}

The Lie algebra $\mathfrak{sl}(2,\mathbb{R})$ has basis:
\[
X=
  \begin{pmatrix}
    0 & 0 \\
    1 & 0
  \end{pmatrix},
~~Y=
  \begin{pmatrix}
    0 & 1 \\
    0 & 0
  \end{pmatrix},
~~H=
  \begin{pmatrix}
    -1 & 0 \\
     0 & 1
  \end{pmatrix}.
\]
With the following formulas:
\[
[X,Y]=H,~~[X,H]=-2X,~~[Y,H]=2Y.
\]

\begin{lemm}\label{proprepr}

\begin{enumerate}

\item Let $(\mathcal{H},N)$ be a graded Lefschetz structure. There exists a unique representation $\rho:\mathfrak{sl}(2,\mathbb{R})\to \mathcal{H}$ such that $\rho(Y)=N$ and, for all $\ell\in\mathbb{Z}$, $\mathcal{H}_\ell$ is the eigenspace of $\rho(H)$ for the eigenvalue $\ell$.
\item Let $M\in\text{End}(\mathcal{H})$, if $M$ commutes with $\rho(H)$ and $\rho(Y)$, then $M$ commutes with $\rho(X)$.

\end{enumerate}

\end{lemm}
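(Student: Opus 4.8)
The plan is to recognise a graded Lefschetz structure as nothing more than a finite-dimensional representation of $\mathfrak{sl}(2,\mathbb{R})$ presented through its $Y$- and $H$-actions, and to reconstruct the missing $X$-action by the classical Lefschetz decomposition. The essential input is the \emph{hard Lefschetz} property, which is built into the definition: since $\mathcal{H}$ coincides with the graded object attached to the monodromy filtration $M_\bullet$ of $N$, condition (2) of the proposition defining the monodromy filtration gives at once that
\[N^\ell : \mathcal{H}_\ell \xrightarrow{\sim} \mathcal{H}_{-\ell} \quad \text{for every } \ell \geq 0.\]
From this I would first derive the primitive decomposition: setting $P_k = \ker\!\big(N^{k+1}\big) \cap \mathcal{H}_k$ for $k \geq 0$, a standard argument by descending induction on $|\ell|$ shows
\[\mathcal{H}_\ell = \bigoplus_{j \geq \max(0,-\ell)} N^{j}\, P_{\ell + 2j}.\]

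For part (1) I would then define $\rho$ by setting $\rho(Y) = N$, letting $\rho(H)$ act by the scalar $\ell$ on $\mathcal{H}_\ell$, and prescribing $\rho(X)$ on each \emph{string} $\{N^{j} v\}_{0 \leq j \leq k}$ issuing from a primitive vector $v \in P_k$ by the universal $\mathfrak{sl}(2)$ formula $\rho(X)(N^{j} v) = j(k-j+1)\, N^{j-1} v$, so that $\rho(X)$ annihilates the primitive part. The primitive decomposition guarantees this prescription is well defined, and the bracket relations $[\rho(X),\rho(Y)] = \rho(H)$, $[\rho(H),\rho(X)] = 2\rho(X)$ and $[\rho(H),\rho(Y)] = -2\rho(Y)$ are verified one string at a time, where they reduce to the computation in the irreducible $(k+1)$-dimensional module. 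Uniqueness is the same computation read backwards: any admissible $\rho(X)$ must have degree $+2$ and satisfy $[\rho(X),N] = \rho(H)$; since the spaces $P_k$ are exactly the highest-weight lines, they are forced to be killed by $\rho(X)$, and the relation $[\rho(X),N]=\rho(H)$ then propagates the values of $\rho(X)$ along each string and determines it completely.

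For part (2) I would pass to the adjoint action of $\mathfrak{sl}(2,\mathbb{R})$ on $\mathrm{End}(\mathcal{H})$ defined by $\mathrm{ad}(\xi)(\phi) = [\rho(\xi),\phi]$, making $\mathrm{End}(\mathcal{H})$ a finite-dimensional $\mathfrak{sl}(2)$-module. Set $D = [M,\rho(X)]$. Because $M$ commutes with $\rho(H)$ it preserves the grading, so $D$ has adjoint $H$-weight $+2$, that is $\mathrm{ad}(H)(D) = 2D$. The Jacobi identity, combined with $[M,\rho(H)] = 0$ and $[M,N] = 0$, yields
\[\mathrm{ad}(Y)(D) = [N,D] = \big[[N,M],\rho(X)\big] + \big[M,[N,\rho(X)]\big] = 0.\]
Thus $D$ is a weight-$(+2)$ vector annihilated by the lowering operator $\mathrm{ad}(Y)$; since any vector killed by $\mathrm{ad}(Y)$ is a sum of lowest-weight vectors and every lowest weight is $\leq 0$, the weight-$(+2)$ subspace of $\ker \mathrm{ad}(Y)$ is zero. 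Hence $D = 0$ and $M$ commutes with $\rho(X)$.

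The only genuine content lies in establishing hard Lefschetz and the primitive decomposition of the first paragraph; once these are available both parts follow formally from $\mathfrak{sl}(2,\mathbb{R})$ representation theory, and I expect the string-by-string verification of the bracket relations in part (1) to be the most tedious but entirely routine step. I would also take care that each $\mathcal{H}_\ell$ is finite-dimensional, so that the strings have finite length and the adjoint module in part (2) is finite-dimensional; this is part of the standing hypotheses on Lefschetz structures.
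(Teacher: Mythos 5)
Your proposal is correct, but it diverges from the paper's proof in both parts. For part (1) the paper simply cites \cite[Lemma 1.21]{P-S08}, whereas you reconstruct that classical argument in full: hard Lefschetz (which is indeed built into the definition via the monodromy filtration) gives the primitive decomposition, $\rho(X)$ is defined string by string with the universal coefficients $j(k-j+1)$, and uniqueness follows because primitives must be highest-weight vectors; this is the same mathematics as the cited lemma, just carried out rather than quoted. For part (2) your route is genuinely different. The paper exploits the uniqueness statement of part (1) directly: since $M$ commutes with $\rho(Y)$ and $\rho(H)$, the triple $(e^{-tM}\rho(X)e^{tM},\rho(Y),\rho(H))$ is again a representation with the same $Y$- and $H$-actions, so uniqueness forces $e^{-tM}\rho(X)e^{tM}=\rho(X)$ for all $t$, and differentiating at $t=0$ gives $[\rho(X),M]=0$. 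You instead set $D=[M,\rho(X)]$ and use the Jacobi identity to see that $D$ is a weight-$(+2)$ vector in the adjoint module killed by $\operatorname{ad}(Y)$, hence zero by $\mathfrak{sl}(2,\mathbb{R})$ weight theory. The trade-off: the paper's conjugation trick is a two-line corollary of uniqueness and requires no structure theory of $\operatorname{End}(\mathcal{H})$ as an $\mathfrak{sl}(2)$-module, while your argument is self-contained representation theory but leans on complete reducibility (or at least bounded weights) of the adjoint module, i.e.\ on the finite-dimensionality you rightly flag as a standing hypothesis --- a hypothesis the paper's proof also needs implicitly, since $e^{tM}$ must make sense. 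Both proofs are valid.
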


\begin{proof}

\begin{enumerate}

\item \cite[Lemma 1.21]{P-S08}
\item For all $t\in\mathbb{R}$, the endomorphisms $(e^{-tM}\rho(X)e^{tM},\rho(Y),\rho(H))$ are also a representation of $\mathfrak{sl}(2,\mathbb{R})$, by uniqueness we have $e^{-tM}\rho(X)e^{tM}=\rho(X)$. This being true for all $t\in\mathbb{R}$ we have $[\rho(X),M]=0$.

\end{enumerate}

\end{proof}

\begin{defi}

For a graded Lefschetz structure $(\mathcal{H},N)$ with representation of $\mathfrak{sl}(2,\mathbb{R})$ denoted $\rho$, we define $w:=e^{-\rho(X)}e^{\rho(Y)}e^{-\rho(X)}$.

\end{defi}

\begin{defi}

A \emph{polarization} of a graded Lefschetz structure is a graded bilinear form
\[k:\mathcal{H}\otimes\mathcal{H}\to \mathbb{R}
\]
such that for all $x,y\in\mathcal{H}$
\[k(Nx,y)+k(x,Ny)=0\]
and such that, for all $\ell\in\mathbb{N}$, the restriction of $k(.,N^\ell.)$ to $P_\ell\mathcal{H}$ is symmetric positive definite.

\end{defi}

\begin{prop}

A bilinear form $k$ is a polarization of $\mathcal{H}$ if and only if the bilinear form
\[h(.,.):=k(.,w.)
\]
is symmetric positive definite.
\end{prop}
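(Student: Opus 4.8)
The plan is to reduce both conditions to the primitive parts $P_\ell\mathcal{H}$ via the Lefschetz decomposition attached to the $\mathfrak{sl}(2,\mathbb{R})$-representation $\rho$ of Lemma \ref{proprepr}, and then to use the explicit action of $w=e^{-\rho(X)}e^{\rho(Y)}e^{-\rho(X)}$ on each primitive orbit. Writing $N=\rho(Y)$, I would record the primitive decomposition $\mathcal{H}=\bigoplus_{\ell\geq 0}\bigoplus_{0\leq j\leq \ell}N^jP_\ell\mathcal{H}$, compatible with the grading through $N^jP_\ell\mathcal{H}\subset\mathcal{H}_{\ell-2j}$. Since a polarization is by definition a \emph{graded} bilinear form, $k$ pairs $\mathcal{H}_a$ with $\mathcal{H}_b$ nontrivially only when $a+b=0$; and since $w\rho(H)w^{-1}=-\rho(H)$, the element $w$ sends $\mathcal{H}_m$ into $\mathcal{H}_{-m}$.

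First I would show that, once $N$ is $k$-skew, the form $h$ is block-diagonal for this decomposition. For $v\in P_\ell$ and $v'\in P_{\ell'}$ with $\ell\neq\ell'$, the identity $k(Nx,y)=-k(x,Ny)$ lets one transfer every factor of $N$ onto the term of smaller primitive degree; the grading constraint $(\ell-2i)+(\ell'-2m)=0$ forces the total number of transferred $N$'s to exceed that smaller degree, so primitivity ($N^{\ell'+1}v'=0$) kills the pairing. Within a fixed $\ell$, the grading together with $w(\mathcal{H}_{\ell-2j})\subset\mathcal{H}_{2j-\ell}$ shows that $h(N^iv,N^jv')=k(N^iv,wN^jv')$ vanishes unless $i=j$. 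Thus only the diagonal blocks $N^jP_\ell\times N^jP_\ell$ survive.

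Next I would compute $w$ on a primitive orbit. From $w\rho(Y)w^{-1}=-\rho(X)$ and $\rho(X)v=0$ for $v\in P_\ell$ one gets $wN^jv=(-1)^j c_\ell\,\rho(X)^jN^\ell v$, and the standard $\mathfrak{sl}(2)$ relation $\rho(X)^jN^\ell v=d_{\ell,j}N^{\ell-j}v$ with $d_{\ell,j}>0$, together with $c_\ell=1/\ell!>0$ (verified on the irreducible factors, where $wv=\tfrac{1}{\ell!}N^\ell v$), yields $wN^jv=(-1)^j c_\ell d_{\ell,j}N^{\ell-j}v$. Hence $h(N^jv,N^jv')=(-1)^j c_\ell d_{\ell,j}\,k(N^jv,N^{\ell-j}v')$, and moving the $j$ powers of $N$ across by skewness contributes a second factor $(-1)^j$, so that $h(N^jv,N^jv')=c_\ell d_{\ell,j}\,k(v,N^\ell v')$ with a \emph{positive} constant $c_\ell d_{\ell,j}$. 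Therefore on each block $h$ is a positive multiple of the primitive form $k(\cdot,N^\ell\cdot)$. The forward implication follows at once: if $k$ is a polarization these primitive forms are symmetric positive definite, the blocks are mutually orthogonal, and so $h$ is symmetric positive definite.

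For the converse I would reverse this computation, writing $k(x,y)=h(x,w^{-1}y)$ and using the conjugation relations $w\rho(X)w^{-1}=-\rho(Y)$, $w\rho(H)w^{-1}=-\rho(H)$ to transport the symmetry and positivity of $h$ back onto $k$. I expect \textbf{this direction to be the main obstacle}: the block-diagonalization of the second paragraph used the $k$-skewness of $N$, which is not available a priori, so one must first recover $k(Nx,y)+k(x,Ny)=0$ from the symmetry of $h$ via the relation $wN=-\rho(X)w$ and the uniqueness of $\rho$ in Lemma \ref{proprepr}; and one must control the sign of the scalar $c_\ell>0$ so that positivity of $h$ translates into positivity of $k(\cdot,N^\ell\cdot)$ on $P_\ell$ rather than its negative. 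Both points are settled by the explicit description of $w$ on the irreducible $\mathfrak{sl}(2)$-factors, after which the block reduction applies verbatim and gives that $k$ satisfies the two defining conditions of a polarization.
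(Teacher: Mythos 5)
Your forward implication is correct, and since the paper itself offers no argument for this proposition (its ``proof'' is just the citation \cite[(4.3)]{GuillenHL}), there is nothing in the text to compare it against except that reference; your computation is the standard one. The Lefschetz decomposition $\mathcal{H}=\bigoplus_{\ell,j}N^{j}P_{\ell}\mathcal{H}$, the orthogonality of distinct blocks under $h$ (which uses the gradedness of $k$, the $k$-skewness of $N$, and primitivity), and the identity $h(N^{j}v,N^{j}v')=c_{\ell}d_{\ell,j}\,k(v,N^{\ell}v')$ with $c_{\ell}=1/\ell!>0$ and $d_{\ell,j}=\frac{\ell!}{(\ell-j)!}\,j!>0$ are all right, and they do reduce the symmetry and positivity of $h$ to the symmetry and positivity of the primitive forms $k(\cdot,N^{\ell}\cdot)$ on $P_{\ell}\mathcal{H}$.

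The converse, however, contains a genuine gap, and it sits exactly at the point you flag: you cannot ``recover $k(Nx,y)+k(x,Ny)=0$ from the symmetry of $h$,'' because that implication is false. Take $\mathcal{H}=\mathbb{R}e_{2}\oplus\mathbb{R}e_{1}$ the standard representation, with $e_{2}\in\mathcal{H}_{1}$, $e_{1}=Ne_{2}\in\mathcal{H}_{-1}$, so that $we_{2}=e_{1}$ and $we_{1}=-e_{2}$, and define the \emph{graded} form $k$ by $k(e_{2},e_{1})=a+\epsilon$, $k(e_{1},e_{2})=-a$, $k(e_{1},e_{1})=k(e_{2},e_{2})=0$. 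Then $h=k(\cdot,w\cdot)$ is the diagonal form with entries $a+\epsilon$ and $a$, hence symmetric positive definite for $a>0$ and small $\epsilon\neq 0$, yet $k(Ne_{2},e_{2})+k(e_{2},Ne_{2})=\epsilon\neq 0$, so $k$ is not a polarization. No description of $w$ on the irreducible $\mathfrak{sl}(2,\mathbb{R})$-factors, nor the uniqueness statement of Lemma \ref{proprepr}, can repair this: the ``if'' direction is simply false for an arbitrary (even graded) bilinear form. The statement has to be read with the compatibility $k(Nx,y)+k(x,Ny)=0$, together with gradedness, as a standing hypothesis on $k$; this is how the cited source sets things up, and it is what the paper's own $p$-graded analogue, Proposition \ref{caracmultpolar}, does explicitly by carrying the conditions $k(N_{i}x,y)+k(x,N_{i}y)=0$ on both sides of the equivalence. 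Once skewness is assumed rather than derived, your own block computation, read backwards, finishes the converse with no further work: block orthogonality and $h(N^{j}v,N^{j}v')=c_{\ell}d_{\ell,j}\,k(v,N^{\ell}v')$ show that $h$ is symmetric positive definite if and only if each primitive form $k(\cdot,N^{\ell}\cdot)$ is symmetric positive definite on $P_{\ell}\mathcal{H}$.
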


\begin{proof}

\cite[(4.3)]{GuillenHL}

\end{proof}

\subsubsection{Polarized $p$-graded Lefschetz structures}

\begin{defi}

A \emph{$p$-graded Lefschetz structure} consists of a $p$-graded $\mathbb{R}$-vector space $\mathcal{H}$ equipped with nilpotent endomorphisms $N_i$, for $1\leq i\leq p$, commuting pairwise and such that, for all $1\leq i\leq p$ and for all $(\ell_1,...,\ell_{i-1},\ell_{i+1},..,\ell_p)\in \mathbb{Z}^{p-1}$, $(\mathcal{H}_{\ell_1,...,\ell_{i-1},\bullet,\ell_{i+1},..,\ell_p},N_i)$ is a graded Lefschetz structure.
\end{defi}

Thus, a $p$-graded Lefschetz structure admits $p$ actions of $\mathfrak{sl}(2,\mathbb{R})$. 
Identifying the elements of $\mathfrak{sl}(2,\mathbb{R})$ and their images in $\text{End}(\mathcal{H})$, we get, for $1\leq i\leq p$, the endomorphisms $(X_i,Y_i,H_i)\in \text{End}(\mathcal{H})$.

\begin{lemm}

For $i\neq j$, $Y_i,X_i$ and $H_i$ commute with $X_j,Y_j$ and $H_j$. Thus $w_i$ and $w_j$ commute.

\end{lemm}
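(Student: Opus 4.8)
The plan is to reduce every commutation to Lemma \ref{proprepr}(2), which says that an endomorphism commuting with $\rho(H)$ and $\rho(Y)$ automatically commutes with $\rho(X)$. First I would make precise the global $\mathfrak{sl}(2,\mathbb{R})$-representation $\rho_i$ attached to the $i$-th direction: regrading $\mathcal{H}$ by the single index $\ell_i$ (i.e.\ summing over all the remaining multidegrees), the pair $(\mathcal{H},N_i)$ is an ordinary graded Lefschetz structure, because the monodromy filtration of $N_i$ is compatible with the direct sum decomposition over the other indices, so the slice-wise hypotheses of the $p$-graded definition assemble into the single-grading one. Hence, by Lemma \ref{proprepr}(1), $\rho_i$ is the representation with $\rho_i(Y)=Y_i=N_i$, $\rho_i(H)=H_i$ and $\rho_i(X)=X_i$, and Lemma \ref{proprepr}(2) becomes available for \emph{arbitrary} global endomorphisms $M\in\textup{End}(\mathcal{H})$.

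Next I would dispose of the commutations that do not require the lemma. One has $[Y_i,Y_j]=[N_i,N_j]=0$ by hypothesis, and $H_i,H_j$ are simultaneously diagonalised by the $p$-grading, so $[H_i,H_j]=0$. The one genuinely structural observation is that, for $i\neq j$, the operator $Y_j=N_j$ \emph{preserves} the $i$-th grading: by the very definition of a $p$-graded Lefschetz structure, $N_j$ acts inside each slice on which the $i$-th index is free. Consequently $H_i$ and $Y_j$ commute, that is $[H_i,Y_j]=0$ for $i\neq j$.

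Then comes the bootstrap, applied with $\rho=\rho_i$. Taking $M=H_j$, which by the previous paragraph commutes with both $H_i$ and $Y_i$, Lemma \ref{proprepr}(2) yields $[H_j,X_i]=0$; by the symmetry $i\leftrightarrow j$ this also gives $[H_i,X_j]=0$. Taking $M=Y_j$, which commutes with $H_i$ and with $Y_i$, the lemma gives $[Y_j,X_i]=0$, and symmetrically $[Y_i,X_j]=0$. Finally, taking $M=X_j$, which by the two preceding steps commutes with $H_i$ and with $Y_i$, the lemma gives $[X_j,X_i]=0$. Running through the nine pairs shows that all of $X_i,Y_i,H_i$ commute with all of $X_j,Y_j,H_j$ for $i\neq j$. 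The concluding assertion is then immediate: since $w_i=e^{-X_i}e^{Y_i}e^{-X_i}$ is built only from $X_i$ and $Y_i$, and these commute with $X_j$ and $Y_j$, the products $w_i$ and $w_j$ commute.

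The only delicate point, rather than a serious obstacle, is the \emph{order} of the argument: one must first secure the commutations with the grading operators $H_i$ (the step where the grading-preservation of $N_j$ is used), because those commutations are exactly the hypotheses needed to legitimately invoke Lemma \ref{proprepr}(2) for the raising operators $X_i$. Once that ordering is respected, the rest is a formal iteration of the lemma.
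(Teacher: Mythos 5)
Your proof is correct and takes essentially the same approach as the paper: the paper's own proof is the single line ``This is a consequence of lemma \ref{proprepr}'', and your argument is precisely the careful unwinding of that reference---the grading and the pairwise commutation of the $N_i$ give the commutations with the $H$'s and $Y$'s, and part (2) of Lemma \ref{proprepr} then bootstraps these to the $X_i$'s. The ordering you insist on (secure the $H$- and $Y$-commutations first, then apply the lemma for $H_j$, $Y_j$, and finally $X_j$) is exactly what makes the paper's one-line proof rigorous.
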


\begin{proof}

This is a consequence of lemma \ref{proprepr}.

\end{proof}

We define $\boldsymbol{w}:=w_1...w_p.$

\begin{defi}

A \emph{polarization} of a $p$-graded Lefschetz structure is a $p$-graded bilinear form
\[k:\mathcal{H}\otimes\mathcal{H}\to \mathbb{R}
\]
such that for all $x,y\in\mathcal{H}$ and all $1\leq i\leq p$
\[k(N_ix,y)+k(x,N_iy)=0\]
and such that, for all $\boldsymbol{\ell}\in\mathbb{N}^p$, the restriction of  $k(.,N_1^{\ell_1} ... N_p^{\ell_p}.)$ to $P_{\boldsymbol{\ell}}\mathcal{H}$ is symmetric positive definite. Here,
\[P_{\boldsymbol{\ell}}\mathcal{H}:=\mathcal{H}_{\boldsymbol{\ell}}\cap \ker N_1^{\ell_1+1}\cap ...\cap N_p^{\ell_p+1}.
\]

\end{defi}

\begin{prop}\label{caracmultpolar}

A bilinear form $k$ is a polarization of $(\mathcal{H},N_1,...,N_p)$ if and only if, for all $x,y\in\mathcal{H}$ and all $1\leq i\leq p$
\[k(N_i x,y)+k(x,N_i y)=0\]
and the bilinear form
\[h(.,.):=k(.,\boldsymbol{w}.)
\]
is symmetric positive definite.
\end{prop}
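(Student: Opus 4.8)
The plan is to argue by induction on $p$, the case $p=1$ being exactly the rank-one criterion stated above (proved in \cite[(4.3)]{GuillenHL}). In both characterisations the relations $k(N_ix,y)+k(x,N_iy)=0$ ($1\leq i\leq p$) are imposed, so the substance of the statement is the equivalence between the positivity of the restrictions of $k(\cdot,N_1^{\ell_1}\cdots N_p^{\ell_p}\,\cdot)$ to $P_{\boldsymbol{\ell}}\mathcal{H}$ for all $\boldsymbol{\ell}\in\mathbb{N}^p$, on the one hand, and the symmetric positive-definiteness of $h=k(\cdot,\boldsymbol{w}\,\cdot)$ on the other. Assuming these skew relations hold, I will peel off the last variable.

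For the inductive step, write $\boldsymbol{w}':=w_1\cdots w_{p-1}$, so that $\boldsymbol{w}=\boldsymbol{w}'w_p=w_p\boldsymbol{w}'$ by the commutation lemma. For $a\geq 0$ let $P_a^{(p)}\mathcal{H}$ be the $N_p$-primitive part of degree $a$, i.e. the intersection of the $H_p$-eigenspace of eigenvalue $a$ with $\ker N_p^{a+1}$. Since $X_i,Y_i,H_i$ commute with $X_p,Y_p,H_p$ for $i<p$, each $P_a^{(p)}\mathcal{H}$ is stable under $N_1,\dots,N_{p-1}$ and under $\boldsymbol{w}'$, and inherits the structure of a $(p-1)$-graded Lefschetz structure whose associated element is exactly the restriction of $\boldsymbol{w}'$. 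Writing $k_a:=k(\cdot,N_p^a\,\cdot)$, one checks that on $P_a^{(p)}\mathcal{H}$ this is a graded bilinear form satisfying $k_a(N_ix,y)+k_a(x,N_iy)=0$ for $i<p$, using that $N_i$ commutes with $N_p^a$ and is skew for $k$.

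The proof then consists of a chain of three equivalences. First, a bookkeeping identity: for every $\boldsymbol{\ell}=(\ell_1,\dots,\ell_p)$ one has $P_{\boldsymbol{\ell}}\mathcal{H}=P_{(\ell_1,\dots,\ell_{p-1})}\big(P_{\ell_p}^{(p)}\mathcal{H}\big)$ and $k(\cdot,N_1^{\ell_1}\cdots N_p^{\ell_p}\,\cdot)=k_{\ell_p}(\cdot,N_1^{\ell_1}\cdots N_{p-1}^{\ell_{p-1}}\,\cdot)$; hence $k$ is a polarization of $(\mathcal{H},N_1,\dots,N_p)$ if and only if, for every $a\geq 0$, $k_a$ is a polarization of the $(p-1)$-graded Lefschetz structure $P_a^{(p)}\mathcal{H}$. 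Second, the induction hypothesis applied to each $P_a^{(p)}\mathcal{H}$ turns this into: for every $a$, the form $k_a(\cdot,\boldsymbol{w}'\,\cdot)=k(\cdot,N_p^a\boldsymbol{w}'\,\cdot)$ is symmetric positive definite on $P_a^{(p)}\mathcal{H}$. Third, I introduce the twisted form $\tilde{k}(\cdot,\cdot):=k(\cdot,\boldsymbol{w}'\,\cdot)$ and apply the rank-one criterion in the $p$-th direction to $(\mathcal{H},N_p,\tilde{k})$: this says precisely that $\tilde{k}$ is a polarization of $(\mathcal{H},N_p)$ --- that is, $k(\cdot,N_p^a\boldsymbol{w}'\,\cdot)$ is symmetric positive definite on $P_a^{(p)}\mathcal{H}$ for all $a$ --- if and only if $\tilde{k}(\cdot,w_p\,\cdot)=k(\cdot,\boldsymbol{w}\,\cdot)=h$ is symmetric positive definite. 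Concatenating the three equivalences closes the induction.

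The step requiring the most care is the third one, which is where the rank-one result is genuinely used: I must verify that $\tilde{k}$ is an admissible input for it, namely that $\boldsymbol{w}'$ is of degree zero for the $p$-th grading --- so that $\tilde{k}$ remains a graded form in that grading and $(\mathcal{H},N_p)$ remains a graded Lefschetz structure --- and that $\tilde{k}$ still satisfies the skew relation $\tilde{k}(N_px,y)+\tilde{k}(x,N_py)=0$. Both hinge on the commutation lemma: $\boldsymbol{w}'$ commutes with $H_p$ and with $N_p$, whence $\tilde{k}(N_px,y)=k(N_px,\boldsymbol{w}'y)=-k(x,\boldsymbol{w}'N_py)=-\tilde{k}(x,N_py)$. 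The remaining verifications --- stability of $P_a^{(p)}\mathcal{H}$ under the first $p-1$ actions and under $\boldsymbol{w}'$, and the matching of primitive parts and of the monomials $N_1^{\ell_1}\cdots N_p^{\ell_p}$ --- are routine consequences of the pairwise commutation of the $p$ copies of $\mathfrak{sl}(2,\mathbb{R})$.
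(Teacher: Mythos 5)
Your argument is correct, but it is not the paper's: the paper's entire proof of this proposition is the citation \cite[(4.3)]{GuillenHL}, deferring the $p$-graded statement wholesale to Guillén--Navarro Aznar, exactly as it does for the rank-one criterion stated immediately before it. You instead derive the $p$-graded case from that rank-one criterion by induction on $p$, and all the scaffolding --- the identification $P_{\boldsymbol{\ell}}\mathcal{H}=P_{(\ell_1,\dots,\ell_{p-1})}\bigl(P_{\ell_p}^{(p)}\mathcal{H}\bigr)$, the auxiliary forms $k_a=k(\cdot,N_p^a\,\cdot)$ and $\tilde{k}=k(\cdot,\boldsymbol{w}'\,\cdot)$, and the use of Lemma \ref{proprepr} to see that $P_a^{(p)}\mathcal{H}$ is stable under the first $p-1$ actions with the restricted $w_i$ as their canonical elements, and that $\tilde{k}$ remains graded and $N_p$-skew --- is yours, not the paper's. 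The steps you flag as routine are indeed routine (everything reduces to decomposing $\mathcal{H}$ into irreducible modules over the $p$ commuting copies of $\mathfrak{sl}(2,\mathbb{R})$), and there is no circularity: the rank-one input is itself only quoted, never proved, in the paper, and the paper's Proposition \ref{SLP} deduces a different statement (replacing $N_1,N_2$ by $N_1+N_2$) from the present proposition, not the other way around. As for what each approach buys: the citation is shortest but asks the reader to match the conventions and generality of \cite{GuillenHL} in the multi-graded setting, whereas your induction needs only the rank-one criterion as external input, makes the reduction explicit, and is in the same spirit as the induction the paper itself runs at the module level in Lemma \ref{pHLP}.
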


\begin{proof}

\cite[(4.3)]{GuillenHL}

\end{proof}

Up to changing the indexes, let's consider that $i=1$ and $j=2$, the endomorphisms ${X}:=X_1+X_2$, ${Y}:=Y_1+Y_2$ and ${H}:=H_1+H_2$ provide an action of $\mathfrak{sl}(2,\mathbb{R})$ such that ${w}=w_1w_2$. Thus, with
\[\mathcal{H}_{\ell,\ell_3,...,\ell_p}:=\bigoplus_{\ell_1+\ell_2=\ell}\mathcal{H}_{\ell_1,\ell_2,\ell_3,...,\ell_p},\]
we get a $(p-1)$-graded Lefschetz structure.

\begin{prop}\label{SLP}

If $(\mathcal{H},N_1,...N_p,k)$ is a polarized $p$-graded Lefschetz structure, then $(\mathcal{H},N=N_1+N_2,N_3...N_p,k)$ is a polarized $(p-1)$-graded Lefschetz structure.

\end{prop}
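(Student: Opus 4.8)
The plan is to reduce everything to the characterization of polarizations in Proposition \ref{caracmultpolar}: I would first check that $(\mathcal{H}, N:=N_1+N_2, N_3, \dots, N_p)$ is a $(p-1)$-graded Lefschetz structure for the coarsened grading $\mathcal{H}_{\ell,\ell_3,\dots,\ell_p}:=\bigoplus_{\ell_1+\ell_2=\ell}\mathcal{H}_{\ell_1,\ell_2,\ell_3,\dots,\ell_p}$, and then verify that $k$ is still a polarization. The endomorphisms commute pairwise because the $N_i$ do. For the first (merged) index, the triple $(X,Y,H):=(X_1+X_2,Y_1+Y_2,H_1+H_2)$ is a genuine $\mathfrak{sl}(2,\mathbb{R})$-triple, the index-$1$ and index-$2$ triples commuting; since $H=H_1+H_2$ acts by $\ell_1+\ell_2=\ell$ on $\mathcal{H}_{\ell_1,\ell_2,\dots}$, the space $\mathcal{H}_{\ell,\ell_3,\dots,\ell_p}$ is precisely the $\ell$-eigenspace of $\rho(H)$, so by Lemma \ref{proprepr}(1) and the definition $(\mathcal{H}_{\bullet,\ell_3,\dots,\ell_p},N)$ is a graded Lefschetz structure. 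For each $j\geq 3$ and fixed remaining indices, the relevant slice is a direct sum $\bigoplus_{\ell_1+\ell_2=\ell}(\mathcal{H}_{\ell_1,\ell_2,\dots,\bullet,\dots},N_j)$ of graded Lefschetz structures; as $N_j$ has degree $0$ in the first two indices it preserves this decomposition, and a direct sum of graded Lefschetz structures is again one, which settles the underlying structure.

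Next I would verify the polarization via Proposition \ref{caracmultpolar}. The form $k$ is automatically $(p-1)$-graded for the coarser grading, since a nontrivial pairing of $\mathcal{H}_{\ell_1,\ell_2,\dots}$ forces $\ell_i+\ell_i'=0$ for every $i$, hence in particular $\ell+\ell'=0$. The infinitesimal conditions $k(N_jx,y)+k(x,N_jy)=0$ for $j\geq 3$ hold by hypothesis, and the one for $N=N_1+N_2$ follows by adding the identities for $N_1$ and $N_2$. For the positivity clause, the Weyl element attached to the new structure is $\boldsymbol{w}':=w\,w_3\cdots w_p$, where $w$ is the Weyl element of the triple $(X,Y,H)$; by the identity $w=w_1w_2$ recorded just before the statement, $\boldsymbol{w}'=w_1w_2w_3\cdots w_p=\boldsymbol{w}$. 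Consequently $h'(\cdot,\cdot)=k(\cdot,\boldsymbol{w}'\cdot)=k(\cdot,\boldsymbol{w}\cdot)=h(\cdot,\cdot)$, which is symmetric positive definite by Proposition \ref{caracmultpolar} applied to the original polarized $p$-graded structure. This is exactly the second condition needed for $k$ to polarize $(\mathcal{H},N,N_3,\dots,N_p)$, completing the argument.

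The main obstacle is the identity $w=w_1w_2$, which is the only place where the commutativity of the two $\mathfrak{sl}(2,\mathbb{R})$-actions is genuinely used and which is what lets the positive-definite form $h$ be transported unchanged to the coarser grading. Concretely it rests on the factorisations $e^{-(X_1+X_2)}=e^{-X_1}e^{-X_2}$ and $e^{Y_1+Y_2}=e^{Y_1}e^{Y_2}$ together with the commutation of index-$1$ and index-$2$ operators, which allows one to regroup $e^{-X_1}e^{-X_2}e^{Y_1}e^{Y_2}e^{-X_1}e^{-X_2}$ into $(e^{-X_1}e^{Y_1}e^{-X_1})(e^{-X_2}e^{Y_2}e^{-X_2})=w_1w_2$. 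Once this is granted, as it is in the paragraph preceding the statement, the remainder is a purely formal application of Proposition \ref{caracmultpolar}, with no further estimates required.
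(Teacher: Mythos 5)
Your proposal is correct and follows the same route as the paper: the paper's proof is exactly the observation that $\boldsymbol{w}=w_1w_2w_3\cdots w_p=ww_3\cdots w_p$ combined with the characterization of polarizations in Proposition \ref{caracmultpolar}, with the underlying $(p-1)$-graded Lefschetz structure having been set up (via the merged $\mathfrak{sl}(2,\mathbb{R})$-triple and the identity $w=w_1w_2$) in the paragraph preceding the statement. You simply spell out in more detail the steps the paper leaves implicit — the gradedness of $k$ for the coarser grading, the additivity of the infinitesimal invariance condition, and the regrouping of exponentials behind $w=w_1w_2$ — so there is nothing to correct.
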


\begin{proof}

This is an immediate consequence of proposition \ref{caracmultpolar} if we note that $\boldsymbol{w}=w_1w_2w_3...w_p=ww_3...w_p$.

\end{proof}

\subsubsection{Polarized $p$-graded Hodge-Lefschetz structures}

\begin{defi}\label{HLcentrée}

A $p$-graded Hodge-Lefschetz structure centred at $m$ is a $p$-graded Lefschetz structure $(\mathcal{H},N_1,...,N_p)$ such that, for all $\boldsymbol{\ell}\in\mathbb{Z}$, $\mathcal{H}_{\boldsymbol{\ell}}$ admits a real Hodge structure of weight $m+\ell_1+...+\ell_p$ and such that, for all $1\leq i\leq p$, $N_i:\mathcal{H}\to\mathcal{H}(-1)$ is a morphism of Hodge structure. Here $(-1)$ is the Tate twist.

\end{defi}

\begin{defi}

A bilinear form
\[k:\mathcal{H}\otimes\mathcal{H}\to \mathbb{R}
\]

is a polarisation of the $p$-graded Hodge-Lefschetz structures centred at $m$ $(\mathcal{H},N_1,...,N_p)$, if for all $x,y\in\mathcal{H}$ and all $1\leq i\leq p$
\[k(N_i x,y)+k(x,N_i y)=0\]
and for all $\boldsymbol{\ell}\in\mathbb{N}^p$, 
\[P_{\boldsymbol{\ell}}k: P_{\boldsymbol{\ell}}\mathcal{H}\otimes P_{\boldsymbol{\ell}}\mathcal{H} \to \mathbb{R}
\]
is a polarization of the Hodge structure of weight $m+\ell_1+...+\ell_p$ $P_{\boldsymbol{\ell}}\mathcal{H}$. 
\end{defi}

\begin{prop}\label{SHLP}

If $(\mathcal{H},N_1,...N_p,k)$ is a polarized $p$-graded Hodge-Lefschetz structures centred at $m$, then $(\mathcal{H},N=N_1+N_2,N_3...N_p,k)$ is a polarized $(p-1)$-graded Hodge-Lefschetz structure centred at $m$.

\end{prop}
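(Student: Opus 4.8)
The plan is to bootstrap from Proposition \ref{SLP}, which already settles the underlying Lefschetz structure, and then to feed in the Hodge-theoretic data. First I would apply Proposition \ref{SLP} to conclude that $(\mathcal{H}, N, N_3, \dots, N_p)$, with $N := N_1+N_2$ and the coarsened grading $\mathcal{H}_{\ell,\ell_3,\dots,\ell_p} = \bigoplus_{\ell_1+\ell_2=\ell}\mathcal{H}_{\ell_1,\ell_2,\ell_3,\dots,\ell_p}$, is a polarized $(p-1)$-graded Lefschetz structure with polarization $k$. This furnishes the full $\mathfrak{sl}(2,\mathbb{R})^{p-1}$-representation theory of the coarsened structure and, through Proposition \ref{caracmultpolar}, the positive-definiteness of the real form $k(\cdot,\boldsymbol{w}\,\cdot)$, where now $\boldsymbol{w}=w\,w_3\cdots w_p$ with $w=w_1w_2$ the operator attached to the diagonal $\mathfrak{sl}(2,\mathbb{R})$-action of $N=N_1+N_2$.

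Next I would verify that the coarsened structure genuinely satisfies Definition \ref{HLcentrée}. Each summand $\mathcal{H}_{\ell_1,\ell_2,\ell_3,\dots,\ell_p}$ of $\mathcal{H}_{\ell,\ell_3,\dots,\ell_p}$ carries a real Hodge structure of weight $m+\ell_1+\ell_2+\ell_3+\dots+\ell_p = m+\ell+\ell_3+\dots+\ell_p$, and a finite direct sum of Hodge structures of a common weight is again pure of that weight, so $\mathcal{H}_{\ell,\ell_3,\dots,\ell_p}$ is a Hodge structure of weight $m+\ell+\ell_3+\dots+\ell_p$. Since $N_1$ and $N_2$ are each morphisms $\mathcal{H}\to\mathcal{H}(-1)$ of Hodge structures, so is their sum $N=N_1+N_2$, as are $N_3,\dots,N_p$; this gives the Hodge-Lefschetz axioms centred at $m$.

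Finally I would treat the polarization. The antisymmetry $k(Nx,y)+k(x,Ny)=0$ is immediate from the corresponding identities for $N_1$ and $N_2$. The substantive point is that for each $\boldsymbol{\ell}'=(\ell,\ell_3,\dots,\ell_p)\in\mathbb{N}^{p-1}$ the form $k(\cdot,N^{\ell}N_3^{\ell_3}\cdots N_p^{\ell_p}\,\cdot)$ must restrict to a polarization of the weight $m+\ell+\ell_3+\dots+\ell_p$ Hodge structure on the new primitive part $P'_{\boldsymbol{\ell}'}\mathcal{H}=\mathcal{H}_{\boldsymbol{\ell}'}\cap\ker N^{\ell+1}\cap\bigcap_{i\geq 3}\ker N_i^{\ell_i+1}$. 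Here I would record that the Weil operator $C$ of the Hodge structures commutes with each $\mathfrak{sl}(2,\mathbb{R})$-triple up to Tate twist — precisely the assertion that the $N_i$ are morphisms of Hodge structures — so $C$ commutes with $\boldsymbol{w}$, preserves the coarsened grading, and respects the primitive decomposition. Exactly as in the proof of Proposition \ref{caracmultpolar}, polarizing the Hodge structure then amounts to positive-definiteness of the Hermitian form $k(C\,\cdot,\boldsymbol{w}\,\overline{\cdot})$, and this form is left unchanged by the regrouping since $\boldsymbol{w}=w\,w_3\cdots w_p$; its positivity is inherited from the $p$-graded structure.

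The main obstacle is this last step: identifying the primitive decomposition for the diagonal operator $N_1+N_2$ with the one coming from $N_1$ and $N_2$ separately — a Clebsch-Gordan decomposition of the two commuting $\mathfrak{sl}(2,\mathbb{R})$-actions into the diagonal action — and checking that the Hodge polarizations on the fine primitive parts $P_{\boldsymbol{\ell}}\mathcal{H}$ assemble into the required polarization on the coarse primitive parts $P'_{\boldsymbol{\ell}'}\mathcal{H}$. Concretely, this is where the \emph{real} Lefschetz positivity supplied by Proposition \ref{SLP} must be upgraded to \emph{Hermitian} positivity against $C$, using the Weil-operator compatibility to control how $C$ interacts with the powers of $N$ on primitives.
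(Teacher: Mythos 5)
Your proposal is correct and follows essentially the same route as the paper: the paper's proof of Proposition \ref{SHLP} simply says it is ``similar to the proof of Proposition \ref{SLP}'', which in turn rests on the characterization of polarizations via positivity of $k(\cdot,\boldsymbol{w}\,\cdot)$ (Proposition \ref{caracmultpolar}, after \cite{GuillenHL}) together with the observation that $\boldsymbol{w}=w_1w_2w_3\cdots w_p=w\,w_3\cdots w_p$ is unchanged by the regrouping. Your added verifications (purity of the coarsened grading, $N_1+N_2$ being a morphism of Hodge structures, and the Weil-operator form $k(C\,\cdot,\boldsymbol{w}\,\overline{\cdot\,})$) are exactly the Hodge-theoretic content implicit in the paper's appeal to that criterion, and the ``obstacle'' you flag is precisely what this criterion is designed to bypass.
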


\begin{proof}

The proof is similar to the proof of proposition \ref{SLP}.

\end{proof}

\subsection{Polarized $p$-graded Hodge-Lefschetz modules}

\begin{defi}\label{defiMHL}

We define the category $\textup{MHL}(X,\mathbb{Q},n,p)$ of \emph{$p$-graded Hodge-Lefschetz modules centred at $n$} whose object are the tuples $(\mathcal{M},F,\mathcal{F},N_1,...,N_p)$ such that
\begin{itemize}

\item if $\text{supp}\mathcal{M}=\{x\}$, then there exists a $p$-graded Hodge-Lefschetz structure centred at $n$ (definition \ref{HLcentrée}) $(\mathcal{H}_\mathbb{C},F,\mathcal{H}_\mathbb{Q},N'_1,...,N'_p)$ such that $(\mathcal{M},F,\mathcal{F},N_1,...,N_p)\simeq i_{x*}(\mathcal{H}_\mathbb{C},F,\mathcal{H}_\mathbb{Q},N'_1,...,N'_p)$,
\item if $\dim \text{supp}\mathcal{M}>0$ and $g$ is a locally defined holomorphic function on $X$, then
\[\textup{gr}^M_\bullet\Psi_g(\mathcal{M},F,\mathcal{F},N_1,...,N_p,N) \in \textup{MHL}(X,\mathbb{Q},n-1,p+1).
\]

\end{itemize}

\end{defi}

\begin{defi}\label{defiMHLP}

Let $(\mathcal{M},F,\mathcal{F},N_1,...,N_p)\in \textup{MHL}(X,\mathbb{Q},n,p)$ and let 
\[S:\mathcal{F}\otimes \mathcal{F}\to a_X^!\underline{\mathbb{Q}}_X(-n)
\]
be a pairing. $S$ is a polarization of $(\mathcal{M},F,\mathcal{F},N_1,...,N_p)$ if, for all $1\leq i\leq p$,
\[S\circ (\text{Id}\otimes N_i)+S\circ (N_i\otimes \text{Id} )=0
\]
and for all $\boldsymbol{\ell}\in\mathbb{Z}^{p}$, 
\[S\circ(\text{Id}\otimes N_1^{\ell_1}...N_p^{\ell_p}):P_{\boldsymbol{\ell}}\mathcal{F}\otimes P_{\boldsymbol{\ell}}\mathcal{F}\to a_X^!\underline{\mathbb{Q}}_X(-n-\ell_1-...-\ell_p)
\]
is a polarization of $P_{\boldsymbol{\ell}}\mathcal{F}\in \textup{MH}(X,\mathbb{Q},n+\ell_1+...+\ell_p).$

\end{defi}

We denote $\textup{MHL}^{(p)}(X,\mathbb{Q},n,p)$ the category of polarizable
$p$-graded Hodge-Lefschetz modules centred at $n$.

\section{Compatible filtrations, regular sequences and Koszul complexes}\label{appA}

In this section, we define the compatibility of a family of filtrations and we give several properties of such filtrations. We also describe a way to check the compatibility of filtrations in terms of the flatness of the associated Rees module.

First we define $n$-hypercomplexes corresponding to the naive $n^{uple}$ of \cite[0.4]{DeligneSGA4}.

\begin{de}
 
Let $\mathcal{C}$ be an abelian category, we define by induction the abelian category of 
\emph{n-hypercomplexes} in the following way:
\begin{itemize}
 \item 1-hypercomplexes are the usual complexes of objects in $\mathcal{C}$.
 \item n-hypercomplexes are complexes of (n-1)-hypercomplexes.
\end{itemize}
We will denote $\boldsymbol{C}^n(\mathcal{C})$ the abelian category of n-hypercomplexes composed with objets of $\mathcal{C}$. For instance, the 2-hypercomplexes are the double complexes. A n-hypercomplex is thus equivalent to the following data, for all ${\boldsymbol{k}}\in \mathbb{Z}^n$, an object 
$X^{\boldsymbol{k}}$ of $\mathcal{C}$ and, for all $1\leq i\leq n$, morphisms $d^{(i){\boldsymbol{k}}}:X^{\boldsymbol{k}}\to X^{{\boldsymbol{k}}+\boldsymbol{1}_i}$ satisfying the following properties:
\[\begin{array}{lllccc}
       d^{(i)}\circ d^{(i)}=0 & \text{for all} & i \\
       d^{(i)}\circ d^{(j)}=d^{(j)}\circ d^{(i)} & \text{for all} & (i,j)
   \end{array}\]
for suitable exponents ${\boldsymbol{k}}$.
\end{de}

\subsection{Compatible filtrations}

The following definitions have been introduced by Morihiko Saito in \cite{HM1}

\begin{defi}

Let $A$ be an object in the abelian category $\mathcal{C}$ and $A_1,...,A_n\subseteq A$ sub-objects of $A$. We say that $A_1,...,A_n$ are \emph{compatible sub-objects} of $A$ 
if there exists a $n$-hypercomplex $X$ satisfying:
\begin{enumerate}
\item $X^{\boldsymbol{k}}=0$ if $\boldsymbol{k}\not\in\{-1,0,1\}^n$.
\item $X^\mathbf{0}=A$.
\item $X^{\mathbf{0}-\mathbf{1}_i}=A_i$ for $1\leq i\leq n$.
\item For all $1\leq i\leq n$ and all $\boldsymbol{k}\in\{-1,0,1\}^n$ such that $k_i=0$, the sequence
\[0\to X^{\boldsymbol{k-1}_i}\to X^{\boldsymbol{k}}\to X^{\boldsymbol{k+1}_i}\to 0\]
is a short exact sequence.
\end{enumerate}

\end{defi}

\begin{rema}\label{remcompatibilité}
\begin{itemize}
\item Using the universal properties of short exact sequences, we note that if the sub-objects $A_1,...,A_n$ are compatible, then the $n$-hypercomplex $X$ is uniquely determined. For instance, if $\boldsymbol{k}\in \{-1,0\}^n$ and if $I=\{i;k_i=-1\}\subset\{1,...,n\}$, then \[X^{\boldsymbol{k}}=\bigcap_{i\in I}A_i.\]
\item If $n=1$, the complex $X$ is the short exact sequence
\[0\to A_1\to A\to A/A_1\to 0.\]
\item If $n=2$, two sub-objects $A_1$ and $A_2$ are always compatible and $X$ is the following double complex:
\[\xymatrix{A_1/(A_1\cap A_2) \ar[r]  & A/A_2 \ar[r] & A/(A_1+A_2) \\
A_1 \ar[r]\ar[u] & A \ar[r]\ar[u] & A/A_1\ar[u]\\
A_1\cap A_2 \ar[r]\ar[u] & A_2 \ar[r]\ar[u] & A_2/(A_1\cap A_2).\ar[u]
}\]
\item If $n\geq 3$, sub-objects $A_1,...,A_n$ are not compatible in general.
\item By definition, if $A_1,...,A_n\subseteq A$ are compatible, then for all $I\subset\{1,...,n\}$, the sub-objects $(A_i)_{i\in I}\subseteq A$ are compatible and the corresponding hypercomplex is the $\#I$-hypercomplex $X_I$ with objects $X^{\boldsymbol{k}}$ such that $k_i=0$ for all $i\in I^c$.
\end{itemize}
\end{rema}

\begin{defi}\label{Fcomp}

Let $F_{\bullet}^1,...,F_\bullet^n$ be increasing filtrations indexed by $\mathbb{Z}$ of an object $A$, these filtrations are said to be \emph{compatible} if, for all $\boldsymbol{\ell}\in\mathbb{Z}^n$, the sub-objects $F_{\ell_1}^1,...,F_{\ell_n}^n$ of $A$ are compatible.

\end{defi}

\begin{rema}

\begin{itemize}

\item Following the above remark, avery sub-family of a family of compatible  filtrations is compatible. 
\item We can show that if $F_{\bullet}^1,...,F_\bullet^n$ are compatible, then for all $\ell\in\mathbb{Z}$, the filtrations induced by $F_\bullet^1,...,F_\bullet^{n-1}$ on $\textup{gr}_\ell^{F_n}$ are compatible.
\item If $F_{\bullet}^1,...,F_\bullet^n$ are compatible, the filtrations induced on $F_{\ell_1}^1\cap...\cap F_{\ell_n}^n$ are compatible.
\end{itemize}

\end{rema}

The following proposition correspond to \cite[corollaire 1.2.13]{HM1}.

\begin{prop}\label{commutmultgrad}

Let $F_{\bullet}^1,...,F_\bullet^n$ be compatible filtrations of an object $A$. If we take successively the graded pieces $\textup{gr}_{\ell_{\sigma(j)}}^{F_{\sigma(j)}}$ 
relative to the filtration $F_{\sigma(j)}$ induced on $\textup{gr}_{\ell_{\sigma(j-1)}}^{F_{\sigma(j-1)}}...\textup{gr}_{\ell_{\sigma(1)}}^{F_{\sigma(1)}}A$ for $1\leq j\leq n$, the result does not depend on the permutation $\sigma$ of $\{1,...,n\}$ and is equal to
\[\frac{F_{\ell_1}^1A\cap...\cap F_{\ell_n}^nA}{\sum_j F_{\ell_1}^1A\cap...\cap F_{\ell_{j}-1}^1A\cap...\cap F_{\ell_n}^nA}.\]

\end{prop}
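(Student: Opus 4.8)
The plan is to argue by induction on $n$, leveraging the two closure properties of compatible filtrations recorded in the remarks following Definition \ref{Fcomp}: every subfamily of a compatible family is compatible, and the filtrations induced by $F^1_\bullet,\dots,F^{n-1}_\bullet$ on each graded piece $\textup{gr}^{F_n}_{\ell_n}A$ are again compatible. Write $G(\ell_1,\dots,\ell_n)$ for the subquotient displayed in the statement; since this expression is manifestly symmetric in the indices $1,\dots,n$, once we show that any iterated graded equals $G$, independence of the permutation $\sigma$ is automatic. The base case $n=1$ is the tautology $\textup{gr}^{F_1}_{\ell_1}A=F^1_{\ell_1}A/F^1_{\ell_1-1}A$.

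For the inductive step I would fix a permutation and let $j=\sigma(1)$ be the first index at which a graded piece is taken. Set $B:=\textup{gr}^{F_j}_{\ell_j}A=F^j_{\ell_j}A/F^j_{\ell_j-1}A$ with quotient map $q\colon F^j_{\ell_j}A\to B$, equipped with the induced filtrations $(F^i_\bullet)_{i\neq j}$. By the closure property these are compatible, so the induction hypothesis applies: the remaining iterated graded of $B$ is independent of the order on $\{1,\dots,n\}\setminus\{j\}$ and equals the symmetric $(n-1)$-fold formula $G_{n-1}(B)$ built from the subobjects $F^i_{\ell_i}B=q\big(F^i_{\ell_i}A\cap F^j_{\ell_j}A\big)$.

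It then remains to identify $G_{n-1}(B)$ with $G(\ell_1,\dots,\ell_n)$, which I would do by pulling every intersection and sum back through $q$. The numerator of $G_{n-1}(B)$ is $\bigcap_{i\neq j}F^i_{\ell_i}B$, and the decisive point is the distributivity identity $\bigcap_{i\neq j}\big(F^i_{\ell_i}A\cap F^j_{\ell_j}A+\ker q\big)=\big(\bigcap_{i}F^i_{\ell_i}A\big)+\ker q$, which rewrites this numerator as $q\big(\bigcap_i F^i_{\ell_i}A\big)$; the same identity, now with one of the factors taken at a level lowered by one, rewrites each summand of the denominator as the image under $q$ of the corresponding term of $G$. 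Once numerator and denominator of $G_{n-1}(B)$ are expressed as images under $q$, a purely formal computation in the modular lattice of subobjects — using the Dedekind identity $A\cap(C+D)=C+(A\cap D)$ for $C\subseteq A$, and the fact that $\ker q=F^j_{\ell_j-1}A$ contributes exactly the missing $j$-th summand of the denominator — yields $G_{n-1}(B)=G(\ell_1,\dots,\ell_n)$. As $j=\sigma(1)$ was arbitrary and $G$ is symmetric, this proves both the order independence and the explicit formula.

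The main obstacle is the distributivity identity highlighted above, namely commuting the intersection $\bigcap_{i\neq j}$ past the quotient by $\ker q$. This is exactly the step where the full compatibility of $F^1_\bullet,\dots,F^n_\bullet$ is needed: pairwise compatibility, always available for two filtrations as noted in Remark \ref{remcompatibilité}, does not suffice when $n\geq 3$. I would derive it from the defining $n$-hypercomplex of the compatible family, whose exactness in each direction, combined with the identification $X^{\boldsymbol{k}}=\bigcap_{i\in I}A_i$ of Remark \ref{remcompatibilité}, forces precisely the distributive behaviour of intersections, sums and images that the computation requires.
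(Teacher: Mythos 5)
The paper itself gives no proof of this proposition: it is quoted from Saito (\cite{HM1}, Corollaire 1.2.13), so your proposal has to stand on its own merits rather than be compared with an in-paper argument. Judged that way, your skeleton is sound: the induction on $n$, the appeal to the remark that the filtrations induced on $\textup{gr}^{F_j}_{\ell_j}A$ remain compatible, the observation that it suffices to identify every iterated graded with the symmetric expression $G(\ell_1,\dots,\ell_n)$, and the final Zassenhaus-type bookkeeping (images under $q$, Dedekind's identity, $\ker q$ supplying the missing $j$-th summand of the denominator) are all correct. You have also correctly isolated the crux, namely the identity
\[\bigcap_{i\neq j}\bigl(F^i_{\ell_i}A\cap F^j_{\ell_j}A+F^j_{\ell_j-1}A\bigr)=\Bigl(\bigcap_{i}F^i_{\ell_i}A\Bigr)+F^j_{\ell_j-1}A .\]

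The gap is that this identity is asserted rather than proved, and the justification you offer for it is mispointed. You say it is forced by ``the defining $n$-hypercomplex of the compatible family'', but the identity mixes two adjacent levels $F^j_{\ell_j}A$ and $F^j_{\ell_j-1}A=\ker q$ of the \emph{same} filtration, whereas Definition \ref{Fcomp} only provides, for each multi-index, a hypercomplex built from \emph{one} level per filtration: no single hypercomplex supplied by the hypothesis contains both of these subobjects, so its exactness cannot literally force the identity. What is actually needed is a two-step argument: (i) from the hypercomplex at the \emph{shifted} index $(\ell_1,\dots,\ell_j-1,\dots,\ell_n)$, the injectivity of (composites of) the maps into the corner entries $X^{\boldsymbol{k}}$ with some $k_i=+1$ yields distributivity identities of the form $\bigl(\bigcap_{i\in S}F^i_{\ell_i}A\bigr)\cap\bigl(F^{i'}_{\ell_{i'}}A+F^j_{\ell_j-1}A\bigr)=\bigl(\bigcap_{i\in S}F^i_{\ell_i}A\cap F^{i'}_{\ell_{i'}}A\bigr)+\bigl(\bigcap_{i\in S}F^i_{\ell_i}A\cap F^j_{\ell_j-1}A\bigr)$, which involve only one level per filtration and hence are within reach of the hypothesis; (ii) an element chase using $F^j_{\ell_j-1}A\subseteq F^j_{\ell_j}A$ then reinstates $F^j_{\ell_j}A$: writing an element of the left-hand side of your identity as $v+w$ and applying (i), the component landing in the intersection automatically lies back in $F^j_{\ell_j}A$ because the element and $w$ do, and one iterates over the indices $i\neq j$. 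Since every other step of your argument is purely formal, this derivation \emph{is} the mathematical content of the proposition; stopping at ``forces precisely the distributive behaviour that the computation requires'' leaves a genuine gap, although one that can be filled along the lines just sketched.
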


\subsection{Rees's construction}

\begin{defi}\label{Rees}

Let $(A,F^1_{\bullet} A,...,F^n_\bullet A)$ be a filtered object such that the filtrations $F^i_{\bullet}A$ are indexed by $S+\mathbb{Z}$ where $S\subset [0,1[$ is a finite set containing $0$. Let $\varphi:\mathbb{Z}\to S+\mathbb{Z}$ the unique bijection preserving the order and satisfying $\varphi(0)=0$. We define \emph{Rees's graded $\mathbb{C}[y_1,...,y_p]$-module} $\mathscr{A}$ associated to this object as being the following sub-$\mathbb{C}[x_1,...,x_p]$-module of $A\otimes \mathbb{C}[x_1,x_1^{-1}...,x_p,x_p^{-1}]$:
\[\mathscr{A}:=\bigoplus_{\boldsymbol{k}\in\mathbb{Z}^n}
F_{\varphi({\boldsymbol{k}})}A\cdot x_1^{k_i}...x_p^{k_n}\]
where $F_{\varphi({\boldsymbol{k}})}A:=
F_{\varphi(k_1)}A\cap...\cap F_{\varphi(k_n)}A$.

\end{defi}

\begin{prop}\label{compatReesplat}

A graded $\mathbb{C}[x_1,...,x_n]$-module $\mathscr{A}$ is the Rees's module of an object equipped with $n$ compatible filtrations if and only if it is a flat $\mathbb{C}[x_1,...,x_n]$-module.

\end{prop}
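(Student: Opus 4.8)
The plan is to translate flatness into the acyclicity of a Koszul complex and then to match that complex, multidegree by multidegree, with the hypercomplex description of compatibility in Remark \ref{remcompatibilité}. First I would pass to $\mathbb{Z}$-indexed filtrations: composing the indices with the order bijection $\varphi$ of Definition \ref{Rees} turns each $F^i$ into a $\mathbb{Z}$-indexed filtration, and compatibility of the $(S+\mathbb{Z})$-indexed family is, by Definition \ref{Fcomp}, literally compatibility of the reindexed family. Thus $\mathscr{A}$ becomes a $\mathbb{Z}^n$-graded module over $R:=\mathbb{C}[x_1,\dots,x_n]$ whose homogeneous piece in multidegree $\boldsymbol{k}$ is $\mathscr{A}_{\boldsymbol{k}}=\bigcap_{j}F^j_{\varphi(k_j)}A$, with multiplication by $x_i$ realised as the inclusion raising the $i$-th index by one step.

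Next I would invoke Corollary \ref{coroKoszul} to reduce flatness of $\mathscr{A}$ over $R$ to the acyclicity in positive degrees of the Koszul complex $K_\bullet(x_1,\dots,x_n;\mathscr{A})$, i.e.\ to the condition that $x_1,\dots,x_n$ is a regular sequence on $\mathscr{A}$. The crucial observation is that this Koszul complex splits over the multigrading, its homogeneous component in multidegree $\boldsymbol{m}$ being the ``cube of intersections'': the term in homological degree $p$ is $\bigoplus_{|I|=p}\bigl(\bigcap_{i\in I}F^i_{\varphi(m_i-1)}\cap\bigcap_{j\notin I}F^j_{\varphi(m_j)}\bigr)$, with differential induced by the evident inclusions. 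In homological degree $0$ the cokernel of the first differential is exactly the multigraded piece $\operatorname{gr}_{\boldsymbol{m}}$ appearing in Proposition \ref{commutmultgrad}, so the entire content of flatness is the vanishing of the higher homology of this cube for every $\boldsymbol{m}$. In particular each $x_i$ is always injective on $\mathscr{A}$, so the first genuinely nontrivial condition appears only for $n\ge 3$, consistent with Remark \ref{remcompatibilité}.

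Finally I would identify positive-degree acyclicity of these cubes, for all $\boldsymbol{m}$, with compatibility of $F^1,\dots,F^n$, arguing by induction on $n$ (the cases $n\le 2$ being the always-acyclic, always-compatible situations of Remark \ref{remcompatibilité}). Dividing $\mathscr{A}$ by $x_n$ and chasing the snake lemma identifies $\mathscr{A}/x_n\mathscr{A}$, slice by slice in the $n$-th grading, with the multi-Rees module of the filtrations induced by $F^1,\dots,F^{n-1}$ on $\operatorname{gr}^{F^n}_{\ell}A$; regularity of $x_1,\dots,x_n$ on $\mathscr{A}$ then becomes injectivity of $x_n$ together with regularity of $x_1,\dots,x_{n-1}$ on each slice, which by the induction hypothesis is compatibility of the induced filtrations — precisely the inductive clause of compatibility recorded after Definition \ref{Fcomp}. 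Reading this in both directions yields the equivalence: compatibility supplies the exact rows and columns making each cube acyclic, hence $\mathscr{A}$ flat, while flatness forces each cube to be acyclic, which is exactly the exactness demanded by the hypercomplex of Remark \ref{remcompatibilité}. The main obstacle will be the bookkeeping in this last step, namely checking that the identification of $\mathscr{A}/x_n\mathscr{A}$ with the Rees module of the induced filtrations is compatible in \emph{both} directions with the hypercomplex exactness conditions, so that the inductive clause of compatibility can be not only extracted from but also reassembled into flatness; this is where the true interaction among the $n$ filtrations, invisible for $n\le 2$, enters.
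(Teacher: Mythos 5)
The paper itself never proves Proposition \ref{compatReesplat}: it is stated without proof (as is the lemma following it), being recalled from M.~Saito's work, so there is no in-paper argument to compare yours against and I judge the proposal on its own terms. Your multigraded bookkeeping is correct as far as it goes: the Koszul complex of $\mathscr{A}$ does split over the $\mathbb{Z}^n$-grading into the cubes of intersections you describe, the degree-$p$ term in multidegree $\boldsymbol{m}$ is indeed $\bigoplus_{|I|=p}\bigl(\bigcap_{i\in I}F^i_{\varphi(m_i-1)}\cap\bigcap_{j\notin I}F^j_{\varphi(m_j)}\bigr)$, its $H_0$ is the multigraded piece of Proposition \ref{commutmultgrad}, and inducting through the slices of $\mathscr{A}/x_n\mathscr{A}$ is the natural way to match these cubes with the inductive structure of compatibility.

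The genuine gap is at your pivotal reduction step. You reduce flatness of $\mathscr{A}$ to acyclicity in positive degrees of the single Koszul complex $K_\bullet(x_1,\dots,x_n;\mathscr{A})$, ``i.e.\ to the condition that $x_1,\dots,x_n$ is a regular sequence'', citing Corollary \ref{coroKoszul}. That corollary says nothing about flatness; the bridge between flatness and regularity is the (itself unproven) lemma following the proposition, and that lemma requires \emph{every permutation} of $x_1,\dots,x_n$ to be regular --- equivalently, by Corollary \ref{coroKoszul}, that \emph{every} sub-Koszul complex $K(\{x_j\}_{j\in J};\mathscr{A})$, $J\subset\{1,\dots,n\}$, be a resolution, not merely the top one. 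This distinction is not cosmetic: for $\mathbb{Z}^n$-graded modules the single-complex criterion is false. For $n=2$, take $\mathscr{A}=\mathbb{C}[x_2,x_2^{-1}]$ with $x_1$ acting by zero: the full Koszul complex is exact in every degree (because $x_2$ is invertible on $\mathscr{A}$), yet $\mathscr{A}$ is not flat and $x_1,x_2$ is not regular; similarly $\mathbb{C}[x_1,x_2]\oplus\mathbb{C}[x_1,x_1^{-1}]$, with $x_2$ acting by zero on the second summand, has $x_1,x_2$ regular in that order but is not flat. So your induction must carry all subsets $J$ and all orderings simultaneously, not one cube per multidegree.

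This same defect infects the converse direction, whose crux you correctly flag but leave unresolved. Identifying $\mathscr{A}/x_n\mathscr{A}$, slice by slice, with the multi-Rees module of the filtrations induced on $\textup{gr}^{F^n}_\ell A$ amounts to exchanging ``image of intersections'' with ``intersection of images'', i.e.\ to identities of the form $\bigcap_{j<n}\bigl(F^j_{m_j}\cap F^n_{m_n}+F^n_{m_n-1}\bigr)=\bigl(\bigcap_{j<n}F^j_{m_j}\cap F^n_{m_n}\bigr)+F^n_{m_n-1}$ inside $F^n_{m_n}$. These are exactly compatibility-type statements, and to extract them from flatness you need the vanishing of $H_1$ of the relevant \emph{sub}-cubes, which the full-cube acyclicity you work with does not supply. (A smaller omission: the ``if'' direction of the proposition as stated also requires reconstructing a filtered object from a flat graded module, which your plan does not address.) The scheme is repairable --- run everything for all subfamilies, using the lemma in its all-permutations form --- but as written the proof does not go through.
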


\begin{lemm}

A $\mathbb{C}[x_1,...,x_n]$-module $\mathscr{A}$ is flat if and only if every permutation of $x_1,...,x_n$ is a $\mathscr{A}$-regular sequence.

\end{lemm}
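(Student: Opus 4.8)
The plan is to reduce both implications to a single homological criterion, working throughout with the grading. Although the lemma is phrased for an abstract module, the relevant $\mathscr{A}$ are graded Rees modules (Definition \ref{Rees}): I would fix the $\mathbb{Z}^n$-grading of $R:=\mathbb{C}[x_1,\dots,x_n]$ in which each $x_i$ has positive degree, and in which the degrees of $\mathscr{A}$ are bounded below (true for Rees modules of filtrations that are bounded below). Write $\mathfrak{m}:=(x_1,\dots,x_n)$, so $R/\mathfrak{m}=\mathbb{C}$. The first thing I would record is a graded Nakayama lemma: a graded $R$-module $N$ with bounded-below degrees and $\mathfrak{m}N=N$ vanishes (inspect its lowest nonzero degree, using $\deg x_i>0$). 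From this I would extract the reformulation I use on both sides: $\mathscr{A}$ is flat over $R$ if and only if $\mathrm{Tor}_1^R(\mathscr{A},\mathbb{C})=0$. For the nontrivial direction, lift a homogeneous $\mathbb{C}$-basis of $\mathscr{A}/\mathfrak{m}\mathscr{A}$ to homogeneous elements of $\mathscr{A}$; graded Nakayama makes the resulting graded map $\phi\colon F\twoheadrightarrow\mathscr{A}$ from a graded free module surjective, and the long exact $\mathrm{Tor}$-sequence of $0\to\ker\phi\to F\to\mathscr{A}\to 0$ tensored with $\mathbb{C}$ identifies $\mathrm{Tor}_1^R(\mathscr{A},\mathbb{C})$ with $\ker\phi\otimes_R\mathbb{C}$; hence $\mathrm{Tor}_1=0$ forces $\ker\phi=0$ by Nakayama, so $\mathscr{A}$ is free, in particular flat.

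For the implication \emph{regular sequence $\Rightarrow$ flat} I would invoke the Koszul complex. Because $x_1,\dots,x_n$ is a regular sequence on the polynomial ring $R$ itself, the Koszul complex $K_\bullet(x_1,\dots,x_n;R)$ is a free resolution of $\mathbb{C}$, so that $\mathrm{Tor}_i^R(\mathscr{A},\mathbb{C})\simeq H_i\bigl(K_\bullet(x_1,\dots,x_n;\mathscr{A})\bigr)$. By hypothesis the standard order $x_1,\dots,x_n$ is $\mathscr{A}$-regular, and an $\mathscr{A}$-regular sequence makes its own Koszul complex acyclic in positive degrees; in particular $\mathrm{Tor}_1^R(\mathscr{A},\mathbb{C})=0$, and the reformulation above gives flatness. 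Note that for this direction a single ordering suffices.

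For the converse \emph{flat $\Rightarrow$ every permutation is $\mathscr{A}$-regular} I would induct on $n$, fixing an arbitrary permutation $\sigma$. Tensoring the exact sequence $0\to R\xrightarrow{x_{\sigma(1)}}R$ with the flat module $\mathscr{A}$ shows $x_{\sigma(1)}$ is a non-zero-divisor on $\mathscr{A}$; moreover $\mathscr{A}/x_{\sigma(1)}\mathscr{A}=\mathscr{A}\otimes_R R/(x_{\sigma(1)})$ is flat over $R/(x_{\sigma(1)})\simeq\mathbb{C}[x_i:i\neq\sigma(1)]$ and is again graded and bounded below, so the inductive hypothesis makes $x_{\sigma(2)},\dots,x_{\sigma(n)}$ regular on it. Together with $x_{\sigma(1)}\mathscr{A}\neq\mathscr{A}$ (graded Nakayama, assuming $\mathscr{A}\neq0$) this exhibits $x_{\sigma(1)},\dots,x_{\sigma(n)}$ as an $\mathscr{A}$-regular sequence, the base case $n=1$ being precisely the non-zero-divisor plus properness statement.

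The main obstacle is conceptual: the grading is indispensable, and every step upgrading a vanishing of $\mathrm{Tor}_1$ (or of torsion) to freeness must pass through graded Nakayama and the bounded-below hypothesis. Without the grading the equivalence is simply false; for instance over $\mathbb{C}[x_1]$ the module $\mathbb{C}[x_1]/(x_1-1)\oplus\mathbb{C}[x_1]$ carries $x_1$ as a proper regular sequence yet is not flat, because its first summand is torsion but not homogeneous. The only other point demanding care is the identification of the Koszul homology of $\mathscr{A}$ with $\mathrm{Tor}^R(\mathscr{A},\mathbb{C})$, which relies on $x_1,\dots,x_n$ being a regular sequence on $R$ so that the Koszul complex genuinely resolves the residue field.
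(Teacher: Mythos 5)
The paper states this lemma with no proof at all, so your argument has to stand on its own; there is no ``paper's route'' to compare against. Two things you did are exactly right. First, you recognized that the statement is false for a bare module and that a grading is what is really intended (the preceding Proposition~\ref{compatReesplat} does say ``graded''), and your counterexample $\mathbb{C}[x_1]/(x_1-1)\oplus\mathbb{C}[x_1]$ correctly witnesses the failure in the ungraded case. Second, granting your two standing hypotheses --- $\mathbb{Z}^n$-graded \emph{and} degrees bounded below --- every step is sound: graded Nakayama, the resulting criterion ``flat $\Leftrightarrow\mathrm{Tor}_1^R(\mathscr{A},\mathbb{C})=0\Leftrightarrow$ free'', the Koszul computation of $\mathrm{Tor}_\bullet^R(\mathscr{A},\mathbb{C})$ giving ``regular $\Rightarrow$ flat'' from a single ordering, and the base-change induction giving ``flat $\Rightarrow$ regular''.

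The genuine gap is the bounded-below hypothesis. Your parenthetical justification --- ``true for Rees modules of filtrations that are bounded below'' --- is precisely what fails in this paper: the lemma is invoked (proof of Proposition~\ref{strictmulcompat}) for the Rees module of $(F_\bullet\mathcal{M},V^1_\bullet\mathcal{M},\dots,V^p_\bullet\mathcal{M})$, and Kashiwara--Malgrange filtrations are never bounded below, since $t\colon V_\alpha\mathcal{M}\to V_{\alpha-1}\mathcal{M}$ is bijective for $\alpha<0$; already for $\mathcal{M}=\mathcal{O}_\Delta$ underlying the trivial Hodge module one has $F_{p_0}\mathcal{M}\cap V_{-m}\mathcal{M}\supseteq t^m\mathcal{O}_\Delta\neq0$ for every $m$, so the Rees module has nonzero pieces in degrees $(p_0,-m)$ for all $m$. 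Without boundedness, graded Nakayama --- the engine of both your $\mathrm{Tor}_1$-criterion and your properness claim $x_{\sigma(1)}\mathscr{A}\neq\mathscr{A}$ --- genuinely fails (take $\mathscr{A}=\mathbb{C}[x,x^{-1}]$, which is graded, nonzero, and satisfies $x\mathscr{A}=\mathscr{A}$); worse, the properness half of your conclusion is then actually false, since $\mathbb{C}[x,x^{-1}]$ is flat, so ``regular sequence'' in this lemma must be taken in the weak sense (injectivity conditions only), which is also how regularity is verified in the paper's proof of Proposition~\ref{strictmulcompat} and how it is characterized by the Koszul complexes of the following subsection. The statement in the generality the paper actually needs --- arbitrary $\mathbb{Z}^n$-graded $\mathscr{A}$, weak regularity --- is still true, but needs an argument avoiding Nakayama, e.g.\ induction on $n$: the hypotheses pass to $\mathscr{A}/x_n\mathscr{A}$ over $R/(x_n)$ and to the degree-zero slice $B$ of $\mathscr{A}[x_n^{-1}]\simeq B\otimes_{\mathbb{C}}\mathbb{C}[x_n,x_n^{-1}]$ over $\mathbb{C}[x_1,\dots,x_{n-1}]$, both flat by induction; then $\mathscr{A}$ is flat by d\'evissage, because $\mathscr{A}[x_n^{-1}]/\mathscr{A}$ is a filtered union of successive extensions of copies of $\mathscr{A}/x_n\mathscr{A}$ and hence has Tor-dimension at most $1$ over $R$, so that $\mathrm{Tor}_1^R(\mathscr{A},N)\simeq\mathrm{Tor}_2^R(\mathscr{A}[x_n^{-1}]/\mathscr{A},N)=0$ for every $N$. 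Note that your ``flat $\Rightarrow$ regular'' direction, with the properness clause deleted, needs no grading at all and survives unchanged.
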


\subsection{Regular sequences and Koszul complexes}

Let $\mathcal{R}$ be a commutative ring and $\mathscr{A}$ a $\mathcal{R}$-module. Let $x_1,...,x_p\in \mathcal{R}$, we denote $K(x_1,...,x_p;\mathscr{A})$ the Koszul complex of $\mathscr{A}$ associated to $x_1,...,x_p$ and located between the degrees $-p$ and $0$.

\begin{prop}

The sequence $x_1,...,x_p$ of $\mathcal{R}$ is $\mathscr{A}$-regular if and only if for all $1\leq k\leq p$, the Koszul complex $K(x_1,x_2,...,x_k;\mathscr{A})$ is a left resolution of $\mathscr{A}/(x_1,x_2,...,x_k)\mathscr{A}$.

\end{prop}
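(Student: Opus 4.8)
The plan is to argue by induction on the length of the truncated sequence, using the standard description of Koszul complexes as iterated mapping cones. Write $K_k:=K(x_1,\dots ,x_k;\mathscr{A})$, with $K_0=\mathscr{A}$ concentrated in degree $0$, and recall that $K_k$ is canonically the mapping cone of multiplication by $x_k$ on $K_{k-1}$, i.e. $K_k\simeq \mathrm{Cone}(x_k\colon K_{k-1}\to K_{k-1})$. Here I index homologically, $H_i=\mathscr{H}^{-i}$, so that the complex sits in degrees $0,\dots ,k$ and ``$K_k$ is a left resolution of $\mathscr{A}/(x_1,\dots ,x_k)\mathscr{A}$'' means exactly $H_i(K_k)=0$ for $i\neq 0$ (the equality $H_0(K_k)=\mathscr{A}/(x_1,\dots ,x_k)\mathscr{A}$ being automatic as the cokernel of the last differential). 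The cone description yields, for each $k$, the long exact Koszul sequence
\[
\cdots \to H_i(K_{k-1}) \xrightarrow{\,x_k\,} H_i(K_{k-1}) \to H_i(K_k) \to H_{i-1}(K_{k-1}) \xrightarrow{\,x_k\,} H_{i-1}(K_{k-1}) \to \cdots
\]
in which the maps labelled $x_k$ are induced by multiplication; both implications will be read off from this single sequence.

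For the direct implication, suppose $x_1,\dots ,x_p$ is $\mathscr{A}$-regular and assume inductively that $K_{k-1}$ is a resolution of $\bar{\mathscr{A}}:=\mathscr{A}/(x_1,\dots ,x_{k-1})\mathscr{A}$, so $H_i(K_{k-1})=0$ for $i\neq 0$ and $H_0(K_{k-1})=\bar{\mathscr{A}}$; the base case $k=1$ is precisely the statement that $x_1$ is a non-zero-divisor on $\mathscr{A}$. For $i\geq 2$ the sequence gives $H_i(K_k)=0$ since both neighbouring terms vanish. In degree $1$ it reduces to $0\to H_1(K_k)\to \bar{\mathscr{A}}\xrightarrow{\,x_k\,}\bar{\mathscr{A}}$, whence $H_1(K_k)=\ker(x_k\mid \bar{\mathscr{A}})=0$ because regularity says $x_k$ is a non-zero-divisor on $\bar{\mathscr{A}}$. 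In degree $0$ it gives $H_0(K_k)=\operatorname{coker}(x_k\mid \bar{\mathscr{A}})=\mathscr{A}/(x_1,\dots ,x_k)\mathscr{A}$. Hence $K_k$ is a resolution, closing the induction.

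For the converse, assume $K_k$ is a resolution of $\mathscr{A}/(x_1,\dots ,x_k)\mathscr{A}$ for every $1\leq k\leq p$, and fix $k$. From the resolution hypothesis for $k-1$ we get $H_1(K_{k-1})=0$ and $H_0(K_{k-1})=\bar{\mathscr{A}}$, and from that for $k$ we get $H_1(K_k)=0$ (for $k=1$ these facts hold trivially from $K_0=\mathscr{A}$). The relevant segment of the long exact sequence,
\[
0=H_1(K_k)\to H_0(K_{k-1}) \xrightarrow{\,x_k\,} H_0(K_{k-1}),
\]
then shows that multiplication by $x_k$ is injective on $H_0(K_{k-1})=\mathscr{A}/(x_1,\dots ,x_{k-1})\mathscr{A}$, which is exactly the regularity of $x_k$ on that quotient. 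Since $k$ is arbitrary, $x_1,\dots ,x_p$ is $\mathscr{A}$-regular.

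The argument is entirely formal once the cone description of $K_k$ and its long exact sequence are available; the only point demanding genuine care is the degree bookkeeping together with the verification that the connecting maps in the cone sequence are indeed induced by multiplication by $x_k$ (with the correct signs), and this is where I would slow down. I do not expect any deeper obstacle, since the statement is the classical homological characterisation of regular sequences via the acyclicity of truncated Koszul complexes.
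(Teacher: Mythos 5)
Your proof is correct and takes essentially the same route as the paper: the forward direction is the standard acyclicity of the Koszul complex on a regular sequence (which the paper simply cites as well known and you establish by induction via the mapping-cone long exact sequence), and your converse rests on exactly the observation the paper uses, namely that $\mathscr{H}^{-1}(K(x_1,\dots ,x_k;\mathscr{A}))=0$ forces multiplication by $x_k$ to be injective on $\mathscr{A}/(x_1,\dots ,x_{k-1})\mathscr{A}$. The only difference is that you make explicit, through the cone description of $K_k$, the homological details the paper leaves implicit.
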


\begin{proof}

Let's suppose that $x_1,...,x_p$ is $\mathscr{A}$-regular, then for all $1\leq k\leq p$, the sequence $x_1,...,x_k$ is $\mathscr{A}$-regular. It is a well-known result that the Koszul complex $K(x_1,x_2,...,x_k;\mathscr{A})$ is a left resolution of $\mathscr{A}/(x_1,x_2,...,x_k)\mathscr{A}$. Conversely, we note that the fact that $\mathcal{H}^{-1}(K(x_1,x_2,...,x_k;\mathscr{A}))$ is zero for $1\leq k\leq p$ implies that the morphism of multiplication by $x_k$ 
\[\mathscr{A}/(x_1,x_2,...,x_{k-1})\mathscr{A} \xrightarrow{x_k} \mathscr{A}/(x_1,x_2,...,x_{k-1})\mathscr{A}\]
is injective. This fact for all $1\leq k\leq p$ means that the sequence $x_1,...,x_p$ is $\mathscr{A}$-regular.

\end{proof}

\begin{coro}\label{coroKoszul}

The following statements are equivalent:
\begin{enumerate}
\item Every permutation of $x_1,...,x_p$ is $\mathscr{A}$-regular.
\item Every sub-sequence of $x_1,...,x_p$ is $\mathscr{A}$-regular.
\end{enumerate}

\end{coro}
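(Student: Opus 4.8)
The plan is to deduce both implications from the Proposition preceding this corollary, combined with the elementary symmetry of the Koszul complex. The one fact I would isolate at the outset is that, since $K(x_1,\dots,x_k;\mathscr{A})=\mathscr{A}\otimes_{\mathcal{R}}\bigotimes_{i=1}^{k}K(x_i;\mathcal{R})$, permuting the entries of the sequence produces an isomorphic complex; consequently the cohomology objects $\mathcal{H}^{-i}(K(x_1,\dots,x_k;\mathscr{A}))$ are independent of the order of $x_1,\dots,x_k$. Recall also that, by the Proposition, being $\mathscr{A}$-regular is equivalent to each partial complex $K(x_1,\dots,x_j;\mathscr{A})$ ($1\leq j\leq k$) being a left resolution of $\mathscr{A}/(x_1,\dots,x_j)\mathscr{A}$, i.e.\ to the vanishing $\mathcal{H}^{-i}=0$ for all $i\geq 1$.

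The implication $(1)\Rightarrow(2)$ is immediate and does not use the Koszul machinery. Given a sub-sequence $x_{i_1},\dots,x_{i_k}$ with $i_1<\dots<i_k$, extend it to a permutation of $x_1,\dots,x_p$ by listing these entries first and appending the remaining ones in any order. By $(1)$ this permutation is $\mathscr{A}$-regular, and an initial segment of an $\mathscr{A}$-regular sequence is again $\mathscr{A}$-regular directly from the definition; hence $x_{i_1},\dots,x_{i_k}$ is $\mathscr{A}$-regular.

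For $(2)\Rightarrow(1)$, fix a permutation $\sigma$ of $\{1,\dots,p\}$; I must show that $x_{\sigma(1)},\dots,x_{\sigma(p)}$ is $\mathscr{A}$-regular. By the Proposition it suffices to verify, for each $1\leq k\leq p$, that $K(x_{\sigma(1)},\dots,x_{\sigma(k)};\mathscr{A})$ is a left resolution of $\mathscr{A}/(x_{\sigma(1)},\dots,x_{\sigma(k)})\mathscr{A}$. To this end I reorder the $k$ entries $x_{\sigma(1)},\dots,x_{\sigma(k)}$ into increasing index order; the result is a genuine sub-sequence of $x_1,\dots,x_p$, which is $\mathscr{A}$-regular by $(2)$. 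The ``only if'' direction of the Proposition applied to this sub-sequence shows in particular (the case $j=k$) that its full Koszul complex is a left resolution, hence $\mathcal{H}^{-i}=0$ for $i\geq 1$. By the order-independence recorded above, $K(x_{\sigma(1)},\dots,x_{\sigma(k)};\mathscr{A})$ has the same cohomology and therefore also satisfies $\mathcal{H}^{-i}=0$ for $i\geq 1$; since the ideal $(x_{\sigma(1)},\dots,x_{\sigma(k)})$ and the degree-zero cohomology $\mathcal{H}^{0}$ are manifestly independent of order, this complex is the required resolution. Letting $k$ range over $1,\dots,p$ and invoking the ``if'' direction of the Proposition yields the regularity of the permuted sequence.

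The only substantive ingredient is the symmetry of the Koszul complex, which is precisely what lets one pass from the \emph{a priori} order-dependent notion of regularity to the order-independent vanishing of Koszul cohomology; everything else is a bookkeeping reduction to the Proposition. I expect the point requiring care to be exactly this: regular sequences are \emph{not} permutable in general, so the argument must route through the cohomology of $K$ rather than attempt to rearrange regular sequences directly.
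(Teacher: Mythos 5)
Your proof is correct and rests on the same two ingredients as the paper's own proof: the order-independence of the Koszul complex and the Proposition characterizing regularity through Koszul resolutions of the initial segments. The only difference is organizational --- the paper shows that statements (1) and (2) are each equivalent to the single condition that $K(\{x_j\}_{j\in J};\mathscr{A})$ is a left resolution of $\mathscr{A}/\mathscr{J}\mathscr{A}$ for every subset $J\subset\{1,\dots,p\}$, whereas you prove the two implications separately (handling $(1)\Rightarrow(2)$ elementarily by extending the sub-sequence to a permutation and truncating); this is the same argument in slightly less symmetric packaging.
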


\begin{proof}

The Koszul complex associated to a sequence of elements does not depend on the ordering.
We then deduce from the above proposition that the statements 1. and 2. are both equivalent to the following statement: For every subset $J\subset \{1,...,p\}$, the Koszul complex $K(\{x_j\}_{j\in J};\mathscr{A})$ is a left resolution of $\mathscr{A}/\mathscr{J}\mathscr{A}$ where $\mathscr{J}$ is the ideal generated by the $x_j$ for $j\in J$.

\end{proof}

\bibliographystyle{smfalpha}
\bibliography{biblio}

\end{document}